\documentclass{amsart}
\usepackage{subfiles}
\usepackage{amssymb,euscript,amsmath, mathrsfs,tensor}
\usepackage[dvips]{graphicx}
\usepackage[dvips]{color}
\usepackage[all,cmtip]{xy}
\usepackage{xcolor}
\usepackage[margin=1in]{geometry}
\usepackage{tikz-cd}
\usepackage{hyperref}
\usepackage[capitalize]{cleveref}
\usepackage{enumerate}

\newcounter{ENUM}

\def\Ad{\mathrm{Ad}}
\def\Lie{\mathrm{Lie}}
\def\unip{\mathrm{unip}}
\def\cG{{}^c \mathbf G}
\def\Loc{\mathrm{Loc}}

\def\QQ{\mathbb{Q}}
\def\ZZ{\mathbb{Z}}
\def\CC{\mathbb{C}}
\def\RR{\mathbb{R}}
\def\GG{\mathbb{G}}

\def\CS{\mathcal S}
\def\CT{\mathcal{T}}

\def\CA{\mathcal{A}}
\def\CD{\mathcal{D}}
\def\CalC{\mathcal C}

\def\Spec{\mathrm{Spec}}
\def\Stab{\mathrm{Stab}}

\def\Hom{\mathrm{Hom}}
\def\End{\mathrm{End}}
\def\Res{\mathrm{Res}}
\def\Fr{\mathrm{Fr}}
\def\Rep{\mathcal{R}ep}
\def\Rmod{R\mathrm{\text{-}mod}}
\def\ind{\mathrm{ind}}
\def\der{\mathrm{der}}

\def\Irr{\mathrm{Irr}}
\def\LocL{\mathrm{LLC}}
\def\GL{\operatorname{GL}}
\def\SL{\operatorname{SL}}
\def\SO{\operatorname{SO}}
\def\Sp{\operatorname{Sp}}

\def\PGL{\operatorname{PGL}}

\def\bT{\mathbf{T}}
\def\hT{\widehat{\mathbf{T}}}

\def\LT{{\tensor*[^L]{\mathbf{T}}{}}}

\def\nr{\mathrm{nr}}

\def\C{\mathrm{C}}

\def\G{\mathrm{G}}

\def\X{\mathrm{X}}

\def\CH{\mathcal{H}}
\def\CO{\mathcal{O}}

\def\Ind{\mathrm{Ind}}
\def\GL{\mathrm{GL}}

\def\LG{{\tensor*[^L]{\mathbf{G}}{}}}
\def\LP{{\tensor*[^L]{\mathbf{P}}{}}}
\def\LQ{{\tensor*[^L]{\mathbf{Q}}{}}}
\def\LT{{\tensor*[^L]{\mathbf{T}}{}}}
\def\LL{{\tensor*[^L]{\mathbf{L}}{}}}
\def\LB{{\tensor*[^L]{\mathbf{B}}{}}}
\def\LM{{\tensor*[^L]{\mathbf{M}}{}}}
\def\LH{{\tensor*[^L]{\mathbf{H}}{}}}
\def\hG{\widehat{\mathbf{G}}}
\def\hH{\widehat{\mathbf{H}}}
\def\hP{\widehat{\mathbf{P}}}
\def\hQ{\widehat{\mathbf{Q}}}
\def\hT{\widehat{\mathbf{T}}}
\def\hL{\widehat{\mathbf{L}}}
\def\hB{\widehat{\mathbf{B}}}
\def\hM{\widehat{\mathbf{M}}}
\def\hU{\widehat{\mathbf{U}}}

\def\bG{\mathbf{G}}
\def\bP{\mathbf{P}}
\def\bQ{\mathbf{Q}}
\def\bL{\mathbf{L}}
\def\bB{\mathbf{B}}
\def\bM{\mathbf{M}}

\def\bU{\mathbf{U}}

\def\bA{\mathbf{A}}
\def\bW{\mathbf{W}}
\def\cW{\mathcal{W}}

\def\ad{\mathrm{ad}}

\def\un{\mathrm{un}}
\def\gen{\mathrm{gen}}

\def\Coh{\mathrm{Coh}}
\def\IndCoh{\mathrm{IndCoh}}

\def\St{\mathrm{St}}
\def\Xnr{\mathfrak{X}_{\mathrm{nr}}}

\newcommand{\margh}[1]{}

\newtheorem{thm}{Theorem}[section]
\newtheorem{prop}[thm]{Proposition}
\newtheorem{lemma}[thm]{Lemma}
\newtheorem{remark}[thm]{Remark}
\newtheorem{cor}[thm]{Corollary}
\newtheorem{conj}[thm]{Conjecture}

\theoremstyle{definition}
\newtheorem{definition}[thm]{Definition}

\numberwithin{equation}{section}
\def\red{\mathrm{red}}
\AtBeginDocument{
   \def\MR#1{}
}

\begin{document}

\title[Categorical Local Langlands Correspondence]{A generic categorical local Langlands correspondence\\
for quasisplit reductive groups} 
\subjclass[2020]{22E57, 20G25, 14F06, 20C08}

\author{David Helm}
\address{Department of Mathematics, Imperial College London, London, England, UK}
\email{d.helm@imperial.ac.uk}

\author{Maarten Solleveld}
\address{Institute for Mathematics, Astrophysics and Particle Physics, 
Radboud Universiteit Nijmegen, Nijmegen,
The Netherlands}
\email{m.solleveld@science.ru.nl}

\author{Yujie Xu}
\address{Department of Mathematics, 
Columbia University, 
New York, NY, USA}
\email{xu.yujie@columbia.edu}

\begin{abstract} 
We \textit{unconditionally} prove a \textit{generic} categorical local Langlands conjecture for a large class of quasi-split reductive $p$-adic groups $G$, including all quasi-split classical groups and some non-classical groups. More precisely, we construct a natural fully faithful functor from the stable $\infty$-category of \textit{generic} Bernstein blocks on the automorphic side to the stable $\infty$-category of ind-coherent sheaves on the moduli stack of (arithmetic) $L$-parameters, generalizing earlier work of the first author with Ben-Zvi, Chen and Nadler \cite{BZCHN} for $\GL_n$. Moreover, for an \textit{arbitrary} quasi-split reductive $p$-adic group $G$, we formulate a classical local Langlands framework under which a classical correspondence can be lifted to an $\infty$-categorical correspondence. 

Our result builds upon the phenomenal recent work of Zhu \cite{Zhu-tame} on the unipotent block, as well as structure results such as \cite{solleveld-endomorphisms} on the automorphic side and \cite{moduli} on the spectral side.

In particular, our work establishes \cite[Conjecture 8.2.1]{Hansen-Mann}, which implies that the conditional proof of \cite{Hansen-Mann} for the Fargues-Scholze categorical local Langlands equivalence \cite{Fargues-Scholze} (conditional on the conjectured compatibility of the Fargues-Scholze construction with spectral Eisenstein series) applies as well to a large class of quasi-split reductive $p$-adic groups $G$. 
\end{abstract}

\maketitle

\tableofcontents

\section{Introduction}
\subsection{Overview and main results}
Classically, the local Langlands correspondence is a conjectural finite-to-one map from the set of isomorphism classes of representations of a reductive group $G$, defined over a non-archimedean local field $F$, to the set of Langlands parameters for $G = \bG (F)$.  In the past several years, there has been considerable interest--and progress--in reformulating the local Langlands correspondence not as a map between these sets, but rather as a {\em fully faithful functor}, or even an equivalence, between certain categorifications of these two sets.  Such conjectures have been formulated in various ways by various authors (see for example \cite{Zhu-stack, BZCHN, Fargues-Scholze}). 

On the spectral side, the target of such a functor is a certain derived category $\IndCoh(X_{\LG})$ of {\em ind-coherent sheaves} on a moduli stack $X_{\LG}$ parameterizing Langlands parameters for the group $G$.  On the automorphic side, there are two (conjecturally equivalent) candidates for the domain of this functor (sheaves on the Kottwitz stack in Zhu's formulation~\cite{Zhu-stack} and sheaves on the moduli stack $\mathrm{Bun}_G$ of $G$-bundles on the Fargues-Fontaine curve in the Fargues-Scholze formulation~\cite{Fargues-Scholze}).  When $G$ is quasi-split, both of these contain, as natural full subcategories, the derived category $\Rep(G)$ of smooth representations of $G$. Although both of these constructions work most naturally with $\ell$-adic coefficients, in this paper we will instead consider smooth complex representations of $G$; this is harmless after fixing an isomorphism $\overline{\QQ}_{\ell} \cong \CC$.

If one restricts these conjectural functors to $\Rep(G)$, for $G$ quasi-split, one obtains the following conjecture, which we shall henceforth refer to as the ``categorical local Langlands correspondence for $G$'':

\begin{conj}[Categorical Local Langlands Conjecture] \label{conj:categorical-intro} \ \\
    For each standard Levi subgroup $M$ of $G$ (including $G$ itself), there is a fully faithful functor:
    \begin{equation}\label{eqn:LLC-functor-conj-intro}
    \LocL_M: \Rep(M) \hookrightarrow \IndCoh(X_{\LM})\end{equation}
    such that
    \begin{enumerate}
        \item the image of the Whittaker module $\mathcal W_M$ under $\LocL_M$ is isomorphic to the structure sheaf $\CO_{X_{\LM}}$; 
        \item for each standard parabolic $F$-subgroup $P = M U_P$ of $G$, the following diagram commutes:
        \begin{equation}\label{eqn:diagram-conjecture-intro}
        \begin{tikzcd}\Rep(M)\arrow[hook]{r}{\LocL_M}\arrow[]{d}[swap]{i_{P}^G} & \IndCoh(X_{\LM})\arrow[]{d}{(\pi_P)_* r_P^*}\\ 
        \Rep(G)\arrow[hook]{r}{\LocL_G} &\IndCoh(X_{\LG})\end{tikzcd} .
        \end{equation}
    \end{enumerate}
\end{conj}

The functor $(\pi_P)_* r_P^*$ induced by the correspondence $X_\LM \leftarrow X_\LP \to X_\LG$ 
is sometimes referred to as the ``spectral parabolic induction'' functor. We remind the reader that these functors are 
their derived versions, as we are regarding $\IndCoh$ as a derived category (that is, as a stable $\infty$-category). 

Many special cases of this conjecture have now been established. Work of the first author and his collaborators Ben-Zvi, Chen, Nadler \cite{BZCHN} proves this conjecture for $G = \GL_n$ and for Iwahori-spherical representations of an arbitrary split $G$, by identifying the relevant affine Hecke algebra $\mathcal{H}$ with the endomorphism algebra of a certain \textit{coherent Springer sheaf} on a certain stack of unipotent Langlands parameters. The recent preprint \cite{Zhu-tame} establishes this conjecture for the full subcategory of depth zero representations of $G$ (and much more besides).

The goal of this paper is to generalize the main results in~\cite[\S 5]{BZCHN}, and obtain a fully faithful functor beyond the tame direct factor of $\Rep(G)$. To precisely state our results, we first recall that the seminal work of Bernstein \cite{BD} allows us to decompose the automorphic side as a direct product:
\begin{equation}
    \Rep(G)\cong \coprod\limits_{[M,\sigma]}\Rep(G)_{[M,\sigma]},
\end{equation}
where each factor $\Rep(G)_{[M,\sigma]}$ is a full subcategory called a \textit{Bernstein block}, indexed by \textit{inertial classes} of the form $[M,\sigma]$ consisting of a Levi $M$ of $G$ and a supercuspidal representation $\sigma$ on $M$.~Here $[,]$ refers to the $G$-conjugacy class of the pair $(M,\mathfrak{X}_{\nr}(M)\cdot\sigma)$, with $\mathfrak{X}_{\nr}(M)$ the group of unramified characters of $M$.~On the spectral side, one considers the connected component $X_{\LG}^{\varphi_{\sigma}}$ containing the (cuspidal) L-parameter $\varphi_{\sigma}$ of~$\sigma$. Conjecture \ref{conj:categorical-intro} then specializes to a fully faithful embedding block by block:
\begin{equation}\label{eqn:fully-faithful-intro}
    \Rep_{[M,\sigma]}(G)\hookrightarrow \IndCoh(X_{\LG}^{\varphi_{\sigma}}).
\end{equation}
When the supercuspidal representation $\sigma$ is \textit{generic} (see \S\ref{subsec:repthy} for a precise definition) with respect to a fixed Whittaker datum, we say the Bernstein block 
$\Rep_{[M,\sigma]}(G)$ is a \textit{generic} block. 

In this article, we prove the following: 

\begin{thm}\label{thm:CLLCgeneric}
Let $\bG$ be one of the quasi-split groups 
\[
\GL_n, \mathrm{SL}_n, \PGL_n, \mathrm{U}_n, \Sp_{2n}, \SO_n, \SO_{2n}^*, 
\mathrm{GSpin}_n, \mathrm{GSpin}_{2n}^*, G_2.
\]
(Here * means a quasi-split group defined by an order two automorphism of the Dynkin diagram.)\\
Conjecture \ref{conj:categorical-intro} holds for generic Bernstein blocks in $\Rep (G)$; 
that is, for $[M,\sigma]$ generic, the fully faithful functor~\eqref{eqn:fully-faithful-intro} exists and has the properties predicted in Conjecture~\ref{conj:categorical-intro}.
\end{thm}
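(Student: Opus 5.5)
The plan follows the strategy of \cite[\S 5]{BZCHN}: identify each generic block with modules over an explicit algebra, and realize that algebra as the endomorphism ring of a single coherent sheaf on the relevant component of $X_{\LG}$.

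\textbf{Automorphic side.} For a generic inertial class $[M,\sigma]$, take the progenerator $\Pi_{[M,\sigma]} = i_P^G(\ind_{M^1}^M \sigma)$ of $\Rep(G)_{[M,\sigma]}$. By \cite{solleveld-endomorphisms}, $\End_G(\Pi_{[M,\sigma]})$ is (up to Morita equivalence) a twisted affine Hecke algebra. It is built from $\mathcal O(\Xnr(M)\sigma)$ and the finite group $W_{[M,\sigma]}$, with parameters read off from reducibility points. Since $\sigma$ is generic, the Whittaker module projects to a direct summand of this block. I expect it to be isomorphic to $\Pi_{[M,\sigma]}$ itself, or at least to be a progenerator; this is the analogue of the Gelfand--Graev argument used for $\GL_n$. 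Hence $\Rep(G)_{[M,\sigma]} \simeq \End(\mathcal W_{[M,\sigma]})^{op}\text{-mod}$.

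\textbf{Spectral side.} On $X_{\LG}^{\varphi_\sigma}$, I would define a coherent "Springer-type" sheaf $\mathcal S = (\pi_P)_* r_P^* \mathcal O_{X_{\LM}^{\varphi_\sigma}}$. Using the structure results of \cite{moduli}, this component should be identified with a quotient stack of unipotent-type parameters for a (possibly disconnected) twisted Levi $\hat{\mathbf{G}}^{\varphi_\sigma|_{I_F}}$. I then compute $\End(\mathcal S)$ by a Steinberg-variety argument. Via Zhu's results \cite{Zhu-tame} for the depth-zero/unipotent case, applied to that endoscopic-type reductive group, it should be the corresponding affine Hecke algebra and in particular concentrated in degree $0$. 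Define $\LocL_G$ on the block by $\Pi_{[M,\sigma]}\mapsto \mathcal S$, extended by colimits. Full faithfulness then reduces to the following two facts.
\begin{enumerate}
\item $\End^\bullet(\mathcal S)$ vanishes in nonzero degrees and matches $\End_G(\Pi_{[M,\sigma]})$.
\item $\mathcal S$ is compact in $\IndCoh$, so the functor preserves colimits.
\end{enumerate}
Property (1) of Conjecture~\ref{conj:categorical-intro} then says that the Whittaker summand maps to $\mathcal O$. This follows because, for generic $\sigma$, the structure sheaf is a summand of $\mathcal S$ corresponding to the generic idempotent, exactly as in \cite{BZCHN}.

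\textbf{Compatibility with parabolic induction.} This is formal once the functors are defined via $(\pi_P)_* r_P^*$. Transitivity of parabolic induction on both sides reduces to checking that $i_P^G \Pi^M_{[L,\sigma]} = \Pi^G_{[L,\sigma]}$. On the spectral side, the composite of correspondences $X_{\LL}\leftarrow X_{\LP\cap\LM}\to X_{\LM}$ and $X_{\LM}\leftarrow X_{\LP}\to X_{\LG}$ must agree, via base change, with the correspondence for the smaller parabolic. One then matches the induced map on endomorphism algebras, the inclusion of Hecke algebras for $M$ into those for $G$.

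\textbf{Main obstacle.} The hardest step is the isomorphism $\End_G(\Pi)\cong\End(\mathcal S)$ as algebras, and not merely as vector spaces. One must match the Hecke parameters and $2$-cocycles from \cite{solleveld-endomorphisms} with those arising geometrically. This requires the classical local Langlands correspondence for the listed groups, together with its compatibility with cuspidal supports and reducibility points. That input is exactly why the theorem is restricted to this list. I would first verify the match on the commutative part $\mathcal O(\Xnr(M)\sigma)$ and on the rank-one Levis, then deduce the rest via the Bernstein presentation.
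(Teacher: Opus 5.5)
Your overall architecture matches the paper's. You take the progenerator $i_P^G\cW_M^{[M,\sigma]}$ and the sheaf $\mathcal S=(\pi_P)_*r_P^*\CO$, get full faithfulness from an isomorphism of degree-zero endomorphism algebras, and reduce the spectral side to Zhu's unipotent correspondence for an unramified group. The gap is the step ``$\End(\mathcal S)$ is the corresponding affine Hecke algebra''.

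Zhu's theorem only applies to the connected group $\hG_\nu=C_{\hG}(\nu)^\circ$, which is dual to an unramified $G_\nu$, and it gives $\End(\CS^1_{G_\nu})\cong\CH(G_\nu,I_\nu)$. But $C_{\hG}(\nu)$ is in general disconnected. The component is $X^{\phi}_{\LG}\cong X^1_{\LG_\nu}/\pi_{0,G,\phi}$ with $\pi_{0,G,\phi}=\pi_0(C_{\hG}(\nu))^{\phi(\Fr)}$. The extension $\pi_{0,M,\phi}\to\pi_{0,G,\phi}\to\widehat R_{G,\phi}$ then gives $\End(\mathcal S)\cong\End(\CS^1_{G_\nu})^{K_{M,\phi}}\rtimes\CC[\widehat R_{G,\phi}]$. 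Here $\pi_{0,M,\phi}$ acts through a finite group $K_{M,\phi}$ of translations, central in $\hG_\nu$; this is what turns $\CC[\Xnr(M_\nu)]$ into $\CC[\CO_\sigma]$. The group $\widehat R_{G,\phi}$ is a spectral $R$-group.

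A minimal example: take $G=\SL_2$ and $\sigma$ a ramified quadratic character of $T$. Then $C_{\PGL_2(\CC)}(\nu)=N(\hT)$ and $\hG_\nu=\hT$. Your recipe yields the commutative ring $\CC[\hT]$. In fact $\End_G(i_B^G\cW_T^{[T,\sigma]})\cong\CC[\CO_\sigma]\rtimes\CC[\ZZ/2]$, and $\mathcal S$ is the pushforward along the double cover $X^1_{\LT}\to X^\phi_{\LG}$. You anticipate that $X^\phi_{\LG}$ is a quotient, but your endomorphism computation then ignores this.

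Consequently, matching the commutative part and the rank-one Levis and invoking the Bernstein presentation can only identify the affine part $\CH_{\CO_\sigma}\cong\CH(G_\nu,I_\nu)^{K_{M,\phi}}$. That part is essentially what the paper does, via Plancherel compatibility, equality of adjoint $L$-functions under $\iota_\phi$, and taking $\sigma$ as base point. It says nothing about the finite-group part, which is where the missing ideas are:

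\begin{enumerate}
\item[(a)] Use $\LP_\nu$ to split $\bW(\hM,\phi)\cong\bW(\hG_\nu,\hM_\nu)^{\Fr}\rtimes\widehat R_{G,\phi}$. Then show that elements of $\widehat R_{G,\phi}$ act on $\CH(G_\nu,I_\nu)^{K_{M,\phi}}$ by a pinned automorphism followed by an unramified twist $\zeta(x)$. Transporting this action to the automorphic side requires Zhu's functor to be compatible with pinned automorphisms and with unramified twists.
\item[(b)] Identify $\widehat R_{G,\phi}$ with $R(\CO_\sigma)$ via Weyl compatibility, so that this action matches the action $\chi_r\circ\psi_r$ from \cite{solleveld-endomorphisms}.
\item[(c)] Know that the automorphic $2$-cocycle is trivial for generic blocks \cite[Theorem A.1]{OSgeneric}, normalized canonically by the Whittaker datum. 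There is no spectral cocycle to ``match'', since $\mathcal S$ is pushed forward along an honest finite \'etale quotient.
\end{enumerate}

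Separately, $\cW_G^{[M,\sigma]}$ is not a progenerator of the block. Any non-generic irreducible in the block, e.g. the trivial representation in the principal block, receives no nonzero map from it, so $\Rep(G)_{[M,\sigma]}\not\simeq\End(\cW_G^{[M,\sigma]})\text{-mod}$. It corresponds to the proper summand $\CO$ of $\mathcal S$. Property (1) therefore requires identifying the $\End(\Pi)$-module $\Hom(\Pi,\cW_G^{[M,\sigma]})$ with $\Hom(\mathcal S,\CO)$. By \cite[Theorem 6.2]{solleveld-endomorphisms} the former is an induced Steinberg module. This identification again passes through $K_{M,\phi}$-invariants for $G_\nu$, not ``exactly as in \cite{BZCHN}''.
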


\begin{remark}
We expect that the same methods apply to the groups $\mathrm{GSp}_{2n}, \mathrm{PSp}_{2n}, \mathrm{GSO}_{2n}, \mathrm{PSO}_{2n}$, but this requires more work.
\end{remark}

Theorem \ref{thm:CLLCgeneric} is a crucial ingredient in the strategy of Hansen--Mann towards 
a categorical local Langlands correspondence for reductive $p$-adic groups.  In particular, 
our Theorem \ref{thm:CLLCgeneric} establishes \cite[Conjecture 8.2.1]{Hansen-Mann} for all
generic Bernstein blocks of the quasi-split reductive groups $G = \bG (F)$ listed above. 
The role this conjecture plays in the Hansen-Mann strategy is given by the following result:

\begin{thm}\label{thm:intro-FS-application}
\cite[Theorem 8.3.7]{Hansen-Mann} 
Let $G$ be a very well-understood quasi-split group (in the sense of \cite[\S 8]{Hansen-Mann}). Suppose that \cite[Conjecture 8.2.1]{Hansen-Mann} holds for all generic 
Bernstein blocks of $G$ and all its Levi subgroups, and that $m_{\phi}^{\mathrm{spec}} = m_{\phi}^{\mathrm{aut}}$ for all discrete parameters $\phi$.  Assume further that the Fargues-Scholze construction is compatible with spectral Eisenstein series for $G$ and its Levi subgroups \cite[Conjecture 5.4.1]{Hansen-Mann}. Then the full Fargues--Scholze categorical local Langlands correspondence holds for the group $G$.
\end{thm}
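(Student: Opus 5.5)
This statement is quoted from \cite[Theorem 8.3.7]{Hansen-Mann}, so in the paper it is enough to check that our hypotheses match theirs. If one wanted to reconstruct the argument, I would follow the Hansen--Mann strategy. The idea is to compare two candidate functors: the Fargues--Scholze spectral action functor $\Rep(G)\hookrightarrow D(\mathrm{Bun}_G)\to \IndCoh(X_{\LG})$, obtained by acting on the Whittaker sheaf, and the functor $\LocL_G$ supplied on generic blocks by \cite[Conjecture 8.2.1]{Hansen-Mann}.

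The first step is to show that these two functors agree on generic blocks of $G$ and of all its Levi subgroups. Since $\LocL_M$ sends $\mathcal W_M$ to $\CO_{X_{\LM}}$, and the Fargues--Scholze functor also sends the Whittaker object to the structure sheaf, both functors send the Whittaker generator to the same object. By construction, both are linear over the spectral Bernstein center. Because $\mathcal W_M$ generates every generic block, agreement on the generator, together with these linearity properties, pins both functors down on generic blocks.

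The second step handles the rest of the category. Here I would use the assumed compatibility of Fargues--Scholze with spectral Eisenstein series \cite[Conjecture 5.4.1]{Hansen-Mann}, together with the commutative square \eqref{eqn:diagram-conjecture-intro}. Together these identify parabolic induction from generic blocks of Levis on both sides. Since $G$ is ``very well-understood'', parabolic inductions of generic objects from Levis, together with the generic supercuspidal blocks, produce enough objects. Full faithfulness of $\LocL_M$ on generic blocks of every $M$ then gives full faithfulness on the subcategory they generate.

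The main obstacle is the non-generic discrete (supercuspidal) part. Discrete parameters cannot be reached from generic objects by induction, so full faithfulness and essential surjectivity there must come from a counting argument. I would compare, for each discrete $\phi$, the number of irreducible objects in the fiber on the automorphic side with the spectral side, i.e. the number of irreducible representations of the component group (equivalently, the size of the local $L$-packet). The hypothesis $m_\phi^{\mathrm{spec}}=m_\phi^{\mathrm{aut}}$ is what allows one to conclude that the natural map built from the generic member is an equivalence on the corresponding summands. Finally, one assembles these pieces via the semiorthogonal decompositions of $D(\mathrm{Bun}_G)$ along the strata indexed by $B(G)$.
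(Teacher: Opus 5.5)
Citing \cite[Theorem 8.3.7]{Hansen-Mann}, as your first sentence does, is exactly how the paper treats this statement; it gives no proof of its own. The paper's role is only to supply the hypotheses. It provides \cite[Conjecture 8.2.1]{Hansen-Mann} on generic blocks (Theorem~\ref{thm:CLLCgeneric}, via Theorem~\ref{thm:single block}), and it points to Appendix~\ref{appendix} and \cite{AMS4} for $m_\phi^{\mathrm{spec}}=m_\phi^{\mathrm{aut}}$.

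The reconstruction you sketch afterwards contains a step that fails. You claim that the Whittaker object generates each generic block, so that agreement on it plus Bernstein-center linearity pins the functors down. This holds only for the cuspidal blocks $\Rep(M)_{[M,\sigma]}$, where $\End_M(\cW_M^{[M,\sigma]})\cong\CC[\CO_\sigma]$ is commutative. For a block $[M,\sigma]$ with $M\neq G$, only generic irreducibles receive nonzero maps from $\cW_G^{[M,\sigma]}$. So it generates a proper subcategory as soon as the block has a non-generic irreducible. For example, in the principal block of an unramified group, $\cW_G^{[T,1]}\cong\ind_K^G\St$ is a proper summand of $\ind_I^G 1$, and $\Hom_G(\cW_G^{[T,1]},\mathrm{triv})=0$. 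This is exactly why the paper works with the progenerator $i_P^G\cW_M^{[M,\sigma]}$ and with $\CS^\phi_G$, of which $\CO_{X^\phi_{\LG}}$ is only a direct summand (Theorem~\ref{thm:Springer reduction}).

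Even on a genuine progenerator $\Pi$, your argument would not go through. A colimit-preserving functor $F$ out of $\Rep(G)_{[M,\sigma]}\simeq\End(\Pi)\text{-mod}$ is determined by $F(\Pi)$ together with the algebra map $\End(\Pi)\to\End(F(\Pi))$. Here $\End(\Pi)\cong\CH_{\CO_\sigma}\rtimes\CC[R(\CO_\sigma)]$ is noncommutative. Center-linearity and compatibility with $i_P^G$ from $M$ only see $\CC[\CO_\sigma]$; they do not see the operators $T_w$ ($w\in\bW_{\CO_\sigma}$), $N_r$ ($r\in R(\CO_\sigma)$), or the $q$-parameters. Matching this algebra with $\End(\CS^\phi_G)$ is the whole content of \S\ref{sec:spectral-side-reduction}--\S\ref{sec:Proof-main1}. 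It is a substantive input, which is precisely what \cite[Conjecture 8.2.1]{Hansen-Mann} supplies, not a formal consequence.

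Two smaller inaccuracies:
\begin{enumerate}
\item The spectral action on the Whittaker sheaf goes from sheaves on the parameter stack to $D(\mathrm{Bun}_G)$, not the other way.
\item $m_\phi^{\mathrm{aut}}$ and $m_\phi^{\mathrm{spec}}$ count the supercuspidal members of the packet and the relevant \emph{cuspidal} enhancements. They do not count the full packet or all irreducible representations of the component group. Your split into ``generic via induction'' versus ``discrete via counting'' is therefore off: the Steinberg representation has a discrete parameter and lies in a generic block.
\end{enumerate}
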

Here $m_{\phi}^{\mathrm{aut}}$ is the number of supercuspidal $G$-representations in the 
$L$-packet for $\phi$, whereas $m_{\phi}^{\mathrm{spec}}$ is defined as a count of certain cuspidal coherent sheaves and equals the number of cuspidal enhancements of $\phi$ which 
are relevant for $G$ (as shown in work-in-progress of Koshikawa and Bertoloni Meli).
The equality $m_\phi^{\mathrm{spec}} = m_\phi^{\mathrm{aut}}$ is 
already known in literature for all the groups in Theorem \ref{thm:CLLCgeneric}, 
see Appendix \ref{appendix} and \cite{AMS4}.

\begin{cor}[Theorem \ref{thm:CLLCgeneric} + Theorem \ref{thm:intro-FS-application}]
Assume the compatibility of \cite{Fargues-Scholze} with spectral Eisenstein series, and that $m_{\phi}^{\mathrm{spec}} = m_{\phi}^{\mathrm{aut}}$ for all discrete parameters $\phi$ of very well-understood quasi-split group $G$ and all its Levi subgroups. Then the
full Fargues-Scholze \textit{equivalence} holds for $G$.
\end{cor}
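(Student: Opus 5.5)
The corollary is a formal combination of Theorem \ref{thm:CLLCgeneric} and Theorem \ref{thm:intro-FS-application}. The plan is to verify each hypothesis of \cite[Theorem 8.3.7]{Hansen-Mann} for $G$ and its Levi subgroups, then apply that theorem.

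First, I need \cite[Conjecture 8.2.1]{Hansen-Mann} for all generic Bernstein blocks of $G$ and of every Levi subgroup $M$ of $G$. For $G$ in the list of Theorem \ref{thm:CLLCgeneric}, I will check that every standard Levi subgroup is, up to central isogeny and products, again built from the groups in that list. For instance:
\begin{itemize}
\item Levis of $\Sp_{2n}$ are $\prod \GL_{n_i}\times \Sp_{2m}$;
\item Levis of $\SO_n$ are $\prod\GL_{n_i}\times\SO_m$;
\item Levis of $\mathrm{GSpin}$ are $\prod\GL_{n_i}\times\mathrm{GSpin}_m$;
\item Levis of $\mathrm{U}_n$ are products of restrictions of scalars of $\GL$'s with a smaller unitary group;
\item Levis of $G_2$ are $\GL_2$'s and the torus.
\end{itemize}
Theorem \ref{thm:CLLCgeneric}, applied blockwise to these Levis, then supplies the fully faithful embedding with properties (1) and (2) of Conjecture \ref{conj:categorical-intro}. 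I then match this with the formulation of \cite[Conjecture 8.2.1]{Hansen-Mann}: structure sheaf as image of the Whittaker module, and compatibility with spectral Eisenstein series. Compatibility with parabolic induction is exactly property (2).

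To cover products and restrictions of scalars I use two facts. The spectral stacks satisfy $X_{{}^L(M_1\times M_2)}=X_{{}^LM_1}\times X_{{}^LM_2}$. Restriction of scalars corresponds to Shapiro's lemma for $L$-parameters. So the functor for a product is the external tensor product of the factor functors, and full faithfulness is preserved because the relevant blocks are compactly generated by progenerators whose endomorphism algebras tensor.

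Second, the multiplicity hypothesis $m_\phi^{\mathrm{spec}}=m_\phi^{\mathrm{aut}}$ is assumed in the statement of the corollary. The paper notes it is known for the listed groups (Appendix \ref{appendix}, \cite{AMS4}). The remaining hypotheses are that $G$ is very well-understood and that \cite{Fargues-Scholze} is compatible with spectral Eisenstein series; both are also assumed. Theorem \ref{thm:intro-FS-application} then gives the Fargues--Scholze equivalence.

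The main obstacle is the Levi-closure step: verifying that generic blocks of every Levi of $G$ fall within the scope of Theorem \ref{thm:CLLCgeneric}. This is most delicate for central isogenies such as the $\SL_n$-type factors in Levis of $\SL_n$, or Levis of quasi-split $\SO_{2n}^*$. There I would first try to invoke the theorem's proof directly for such Levis, since its method presumably applies to any group whose blocks satisfy the same structural inputs. Failing that, I would restrict the corollary to those $G$ for which the list is Levi-closed.
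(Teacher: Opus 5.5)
Reading the corollary as a formal combination of Theorem~\ref{thm:CLLCgeneric} with \cite[Theorem 8.3.7]{Hansen-Mann} is what the paper intends. However, the step you single out as the main obstacle rests on a misreading, and your proposed way around it does not work as written.

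You treat Theorem~\ref{thm:CLLCgeneric} as a statement about each listed group in isolation. To obtain \cite[Conjecture 8.2.1]{Hansen-Mann} for a Levi $M$ of $G$, you therefore re-apply it to $M$ as an ambient group. This runs into several problems:
\begin{itemize}
\item For $G=\SL_n$ or $\PGL_n$ the Levis are $S(\GL_{n_1}\times\cdots\times\GL_{n_k})$ and $(\GL_{n_1}\times\cdots\times\GL_{n_k})/\GG_m$. In general these are neither in the list nor products of listed groups; examples are $S(\GL_2\times\GL_2)\subset\SL_4$ and $(\GL_2\times\GL_2)/\GG_m\subset\PGL_4$.
\item Theorem~\ref{thm:CLLCgeneric} says nothing about groups ``up to central isogeny'', so your check fails there. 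Your fallback of restricting to Levi-closed cases would prove a weaker corollary.
\item For $\mathrm{U}_n$ you would also need a separate Shapiro-lemma argument for the factors $\Res_{E/F}\GL_{n_i}$, and for products a tensor-product argument for $\IndCoh$.
\item Even where closure holds, functors obtained by applying the theorem separately to each Levi would still have to be identified with the functors $\LocL_M$ occurring in the parabolic-induction diagrams for $G$. This means the same induced Whittaker datum, the same choice of parameters, and the same normalizations of the Hecke-algebra isomorphisms. You do not address this.
\end{itemize}

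The missing observation is that the Levi statement is already part of what Theorem~\ref{thm:CLLCgeneric} asserts.
\begin{itemize}
\item Conjecture~\ref{conj:categorical-intro} quantifies over \emph{every} standard Levi $M$ of $G$.
\item Theorem~\ref{thm:main2} constructs all the functors $\LocL^{\gen}_M$ at once. It does so blockwise via Theorem~\ref{thm:single block}, which is stated for an arbitrary pair of standard Levis $M\subset L$.
\item The input is a single weak generic supercuspidal correspondence, which Definition~\ref{def:wgsc} requires for $G$ \emph{and its standard Levis}.
\item Appendix~\ref{appendix} supplies that correspondence for the Levis directly. For Levis of $\SL_n$ it comes by restriction from Levis of $\GL_n$; for Levis of $\PGL_n$, as representations of Levis of $\GL_n$ trivial on the centre; and similarly for the other listed groups.
\item A standard Levi of a standard Levi $L$ is again a standard Levi of $G$. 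So the same data, with $L$ as ambient group, give the functors and induction compatibilities needed to apply \cite[Conjecture 8.2.1]{Hansen-Mann} to $L$.
\end{itemize}
Hence the Levi hypothesis of \cite[Theorem 8.3.7]{Hansen-Mann} holds immediately, with no Levi-closure, product or Shapiro arguments. Your first fallback (``invoke the theorem's proof directly for such Levis'') is the right instinct, and it is exactly how the theorem is set up.
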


For an arbitrary quasi-split reductive group, we formulate a classical local Langlands framework--inspired from certain standard desiderata--under which a classical correspondence can be lifted to an $\infty$-categorical correspondence:

\begin{thm}\label{thm:intro-wgsc-thm}
Let $G$ be a quasi-split connected reductive group over $F$ that admits a weak generic supercuspidal correspondence (See Definition~\ref{def:wgsc}).~Then Conjecture \ref{conj:categorical-intro} holds for any generic Bernstein block of $\Rep(G)$.
\end{thm}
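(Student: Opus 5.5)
Our approach follows the strategy of \cite[\S 5]{BZCHN}: on each side we identify a compact generator of the relevant block, compare endomorphism algebras, and deduce full faithfulness from an isomorphism of derived endomorphism rings. Fix a generic inertial class $[M,\sigma]$. On the automorphic side, $\Rep(G)_{[M,\sigma]}$ is generated by the progenerator $\Pi = i_P^G(\ind_{M^1}^M \sigma)$. By \cite{solleveld-endomorphisms}, $\End_G(\Pi)$ is a twisted affine Hecke algebra of the form $\CH(\mathcal{R}, q) \rtimes \CalC[\Gamma,\natural]$ attached to $\Xnr(M)\cdot\sigma$; for generic $\sigma$ the $2$-cocycle $\natural$ should be trivial. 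On the spectral side, the weak generic supercuspidal correspondence (Definition~\ref{def:wgsc}) supplies a cuspidal parameter $\varphi_\sigma$ for $\LM$ compatible with unramified twists and with $W_G(M)$-stabilizers. By \cite{moduli}, the component $X_{\LG}^{\varphi_\sigma}$ is then identified with a quotient stack of unipotent-type parameters for the (possibly disconnected) centralizer group $\LH = Z_{\LG}(\varphi_\sigma|_{I_F})$, up to a finite twist.

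We define $\LocL_G$ on the block by sending $\Pi$ to the coherent Springer-type sheaf $\mathcal{S} = (\pi_P)_* r_P^* \CO_{X_{\LM}^{\varphi_\sigma}}$. This choice is forced by Conjecture~\ref{conj:categorical-intro}(1) and (2), since the cuspidal Whittaker summand on $M$ must go to the structure sheaf of the cuspidal component, which is the unramified-twist torus modulo its stabilizer. We then compute $\mathrm{REnd}(\mathcal{S})$. By the results of \cite{Zhu-tame} (or of \cite{BZCHN} in the connected case), applied to $\LH$, this derived endomorphism algebra is concentrated in degree $0$ and equals the affine Hecke algebra of the tame/unipotent block attached to $\LH$, with parameters determined by the adjoint action on $\Lie$ of the unipotent radicals. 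The genericity hypothesis in Definition~\ref{def:wgsc} is exactly what should guarantee that these root data and parameters $q$ coincide with those of \cite{solleveld-endomorphisms}; here we would invoke the appendix and \cite{AMS4} to match $\mathcal{R}$, the labels $q_\alpha$, and $\Gamma$. This gives an algebra isomorphism $\End_G(\Pi) \cong \End(\mathcal{S})$. Since $\Pi$ is a compact projective generator, the functor $\mathrm{RHom}(\Pi,-)$ identifies the block with modules over $\End_G(\Pi)$. Tensoring with $\mathcal{S}$ then yields a functor into $\IndCoh(X_{\LG}^{\varphi_\sigma})$, and this functor is fully faithful because $\mathcal{S}$ is compact in $\IndCoh$ and its derived endomorphisms are discrete.

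It remains to check properties (1) and (2). For (2), both composites send the generator of the $M$-block to $(\pi_P)_* r_P^*$ of the corresponding $M$-object, by transitivity of parabolic induction and of spectral induction. We must also check that the endomorphism isomorphisms intertwine the induction maps on Bernstein centers and Hecke algebras. For that we would use a compatible normalisation across Levis containing $M$, comparing the Bernstein presentations on each side. For (1), the Whittaker module's block component $\mathcal W_G e_{[M,\sigma]}$ is the direct summand of $\Pi$ corresponding to the sign/trivial idempotent, by the uniqueness of generic constituents. Its image is the matching summand of $\mathcal{S}$, which \cite{BZCHN} and \cite{Zhu-tame} identify with $\CO$.

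The main obstacle is the exact matching of Hecke algebras, including the $q$-parameters, the finite group $\Gamma$, and the absence of twisting cocycles on both sides. We would also need this matching to be canonical enough to be compatible with all Levis simultaneously. We would first reduce to $\varphi_\sigma$ with $W_G(M)$-stabilizer determined by the weak correspondence, and then compare parameters root by root via rank-one Levis, where generic reducibility points are governed by $L$-functions (Shahidi).
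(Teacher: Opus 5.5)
Your outline has the same architecture as the paper: compare $\End_G$ of the Whittaker progenerator with $\End$ of the spectral Springer sheaf, then apply compact-generator Morita theory. However, the step carrying all the content, the algebra isomorphism $\End_G(\Pi)\cong\End(\mathcal S)$, is asserted rather than proved, and your computation of the spectral side is not correct.

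\emph{The spectral side.} \cite{Zhu-tame} applies to connected unramified groups. There is no ``unipotent block attached to'' the disconnected centralizer of $\varphi_\sigma|_{I_F}$. The relevant group is $G_\nu$, dual to $\hG_\nu=Z_{\hG}(\nu)^\circ$ with $\nu=\phi|_{I_F}$, after moving $\phi$ within its component so that $\phi(\Fr)$ preserves a pinning of $\hG_\nu$. Then $X^\phi_{\LG}\cong X^1_{\LG_\nu}/\pi_{0,G,\phi}$, and the finite group enters in two different ways:
\begin{enumerate}
\item the subgroup $\pi_{0,M,\phi}$ acts on $\End(\CS^1_{G_\nu})$ through translations by $K_{M,\phi}\subset\hM_{\nu,\Fr}$, which is central in $\hG_\nu$, and contributes \emph{invariants};
\item only the quotient $\widehat R_{G,\phi}$ contributes a crossed product.
\end{enumerate}
This gives $\End(\CS^\phi_G)\cong\End(\CS^1_{G_\nu})^{\pi_{0,M,\phi}}\rtimes\CC[\widehat R_{G,\phi}]$. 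This descent through finite {\'e}tale quotients is also where discreteness of the derived endomorphisms comes from. The invariants are exactly what produce the automorphic lattice $(M/M^1)^{\Stab(\sigma)}\cong X^*(\hM_{\nu,\Fr}/K_{M,\phi})$. Putting all of $\pi_0$ into a single crossed-product group $\Gamma$ gives the wrong algebra as soon as $\pi_{0,M,\phi}\neq 1$.

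To compare with the automorphic $R(\CO_\sigma)$-action (an automorphism $\psi_r$ of the based root datum followed by a translation $\chi_r$), you also need two further facts. First, the splitting $\bW(\hM,\phi)\cong\bW(\hG_\nu,\hM_\nu)^{\Fr}\rtimes\widehat R_{G,\phi}$ determined by $\hP_\nu$. Second, that $x\in\pi_{0,G,\phi}$ acts by the pinned automorphism $\Ad_{x_P}$ followed by an unramified twist $\zeta(x)\in\Xnr(G_\nu)$. Similarly, for property (1) the splitting $\CO\subset\CS^1_{G_\nu}$ must be descended through these quotients. You must then check that $\Hom(\CS^\phi_G,\CO_{X^\phi_{\LG}})\cong\Hom(\CS^1_{G_\nu},\CO)^{\pi_{0,M,\phi}}$ matches the induced Steinberg module $\Hom(\Pi,\cW_G^{[M,\sigma]})$ of \cite{solleveld-endomorphisms} under your isomorphism. ``The matching summand'' is not automatic.

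\emph{The matching of root data and parameters.} Under the theorem's hypothesis you only have an abstract correspondence. The appendix and \cite{AMS4} verify Definition~\ref{def:wgsc} for particular groups, so they are irrelevant here. Shahidi's results also do not apply: they describe reducibility via Langlands--Shahidi $L$-functions of $\sigma$, and nothing in the hypotheses relates these to adjoint $L$-functions of $\Phi_M(\sigma)$. What does the work is condition (3) of Definition~\ref{def:wgsc}, Plancherel compatibility, which you omitted from what the correspondence supplies. With it:
\begin{enumerate}
\item $\mu_{M_\alpha,\sigma}$ has a pole on $\CO_\sigma$ if and only if $(\hM_\alpha)_\nu\supsetneq\hM_\nu$. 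This gives a bijection between $\Sigma_{\CO_\sigma,\mu}/\pm 1$ and the reduced roots of $(G_\nu,M_\nu)$ with matching coroots, using centrality of $K_{M,\phi}$.
\item The identity $L(s,\Ad_{\hM_\alpha}(\iota_\phi\circ\tau))=L(s,\Ad_{(\hM_\alpha)_\nu}\tau)$ transports pole loci. This forces $q_\alpha,q^*_\alpha$ to equal the known Iwahori--Hecke parameters of $G_\nu$.
\item Weyl compatibility identifies $R(\CO_\sigma)$ with $\widehat R_{G,\phi}$, compatibly with the actions above.
\end{enumerate}
You also need a base point for Solleveld's presentation, whose cocycle is trivial by \cite{OSgeneric}. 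With $\phi$ pinning-normalized, $\sigma$ itself is a zero of $\mu_{M_\alpha,\sigma}$ for all simple $\alpha$ and satisfies $q_\alpha\ge q^*_\alpha$. This is what makes the isomorphisms canonical and compatible across Levi subgroups, as property (2) requires.

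A small point: take $\Pi=i_P^G\ind_{M^1}^M\sigma^1$, with $\sigma^1$ the Whittaker-generic constituent of $\sigma|_{M^1}$; this is $i_P^G\cW_M^{[M,\sigma]}$. By contrast, $\ind_{M^1}^M(\sigma|_{M^1})$ is a sum of several copies of it and has a matrix-algebra endomorphism ring.
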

We expect \textit{all} quasi-split connected reductive groups over $F$ to satisfy the requirement in Theorem
\ref{thm:intro-wgsc-thm}. As such, we expect that if one can enlarge the list of ``well-understood'' groups 
in \cite[Definition 6.1.1]{Hansen-Mann}, then the strategy of \cite{Hansen-Mann} towards a Fargues--Scholze categorical 
local Langlands equivalence will apply to those groups as well.

\subsection{Strategy and further technical results}
Our approach to this question is inspired by the approach of~\cite[\S 5]{BZCHN} in the case of $G=\GL_n$, which we now describe.  On the one hand, when $G = \GL_n$, the connected components of $\GL_n$ are indexed by ``inertial types'', i.e.~representations of $I_F$ that extend to Langlands parameters.  Moreover, the connected component $X_{\LG}^{\nu}$ of $X_{\LG}$ corresponding to an inertial type $\nu$ can be identified with the unipotent component (that is, the component $X^1_{\LG_{\nu}}$ corresponding to the trivial representation of $I_F$) for an associated group $G_{\nu}$, which for $G = \GL_n$ is simply a product of restrictions of scalars of general linear groups.  This gives an equivalence of categories between $\IndCoh(X_{\LG}^{\nu})$ and $\IndCoh(X^1_{\LG_{\nu}})$. 
On the other hand, the inertial type $\nu$ corresponds to a Bernstein block $\Rep(\GL_n(F))_{[M,\sigma]}$ for $\GL_n(F)$, and an explicit calculation provides an equivalence of categories between $\Rep(\GL_n(F))_{[M,\sigma]}$ and the 
Iwahori-spherical block $\Rep(G_{\nu})_{[T,1]}$.  Combining these equivalences with the fully faithful functor from $\Rep(G_{\nu})_{[T,1]}$ to $\IndCoh(X^1_{\LG_{\nu}})$ constructed in \cite[\S 1-\S4]{BZCHN} yields the desired functor \eqref{eqn:fully-faithful-intro} in the case of $\GL_n$.~The resulting functor carries a natural progenerator $\Pi_{[M,\sigma]}$ in $\Rep(G)_{[M,\sigma]}$ to a natural sheaf $\CS_G^{\phi}$ called the \textit{coherent Springer sheaf} on $X^{\nu}_{\LG}$. 
The endomorphism algebras of both $\Pi_{[M,\sigma]}$ and $S^{\phi}_{G_{\nu}}$ are isomorphic to the affine Hecke algebra associated to $G_{\nu}$.

When $G$ is not a general linear group, several complications arise.  On the spectral side, it is no longer true that an arbitrary component of $X_{\LG}$ may be identified with a space of unipotent Langlands parameters.  Similarly, on the automorphic side, it is not always possible to identify a block $\Rep(G)_{[M,\sigma]}$ with the Iwahori-spherical block for a related group. Instead, the relationship between the general situation and the unipotent situation is more subtle, as the corresponding Hecke algebras in question are no longer ``purely'' affine and they can be \textit{extended affine Hecke algebras with unequal q-parameters}. As such, the technical core of the paper is devoted to intricate analyses of the root systems and various Weyl groups associated to these Hecke algebras (realized as the endomorphism algebras of certain progenerators), as well as the endomorphisms of the relevant coherent Springer sheaves. 

More precisely, given a pair $[M,\sigma]$ and an associated parameter $\phi$, one can still construct the 
progenerator $\Pi_{[M,\sigma]} := i_P^G \cW^{[M,\sigma]}_M$ and the corresponding coherent Springer sheaf 
$\CS_G^{\phi} := (\pi_P)_* r_P^* \CO^{\phi}_{X_\LM}$ on the component $X_{\LG}^{\phi}$, as well as the relevant 
``centralizer'' group $G_{\nu}$.~Conjecture \ref{conj:categorical-intro} predicts (and indeed we prove this expectation in the proof of Theorem~\ref{thm:single block}) that the following holds:
\begin{equation}\End_G \big(i_P^G \cW^{[M,\sigma]}_M \big)\cong \End (\CS_G^\phi).
\end{equation}
However, it will no longer be true that the endomorphism algebra of $\CS_G^{\phi}$ is isomorphic to the endomorphism algebra of $\CS_{G_{\nu}}^1$: the latter is an affine Hecke algebra with possibly unequal parameters, whereas the former endomorphism algebra is often an extended affine Hecke algebra, which consists of an affine part extended by some group algebra. The issue that arises is that (unlike in the $\GL_n$ case) the centralizer of the inertial part $\nu$ of a Langlands parameter need not be connected. 
By an intricate analysis of the component groups of these centralizers, and their action on the relevant moduli spaces of L-parameters, we are able to establish a precise relationship between these two rings in Theorem~\ref{thm:Springer reduction}; in particular, the endomorphism algebra of $\CS_G^{\phi}$ is a semidirect product of the endomorphism algebra of $\CS_{G_{\nu}}^1$ with the group algebra of a certain ``spectral R-group'' that we denote by $\widehat{R}_{G,\phi}$ (see \eqref{eqn:pi-0-G-SES}). 
\begin{thm}[Spectral Side] There is a canonical isomorphism 
\[
\End(\CS^{\phi}_G) \cong \End(\CS^1_{G_{\nu}})^{\pi_{0,M,\phi}} \rtimes \CC[{\widehat R}_{G,\phi}]
\]
satisfying compatibilities with various group actions. 
\end{thm}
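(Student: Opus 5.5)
The plan is to compute both sides through the geometry of the component $X_{\LG}^{\phi}$ and its relation to unipotent parameters for the group $G_\nu$, where $\nu = \phi|_{I_F}$.

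\textbf{Step 1: describe the component.} Following the structure results of \cite{moduli}, every parameter in $X_{\LG}^{\phi}$ is, up to $\hG$-conjugacy, one whose restriction to inertia is $\nu$. This gives
\[
X_{\LG}^{\phi} \cong \big[ Z^{1}_{\nu} / Z_{\hG}(\nu) \big],
\]
where $Z^1_\nu$ is the scheme of extensions of $\nu$ to $W_F$, together with a monodromy nilpotent. Let $Z_{\hG}(\nu)^\circ$ be the identity component of the centralizer. It is the Langlands dual of $G_\nu$, and the relevant $Z^1_\nu$ can be identified with a union of translates of the unipotent parameter scheme for $\LG_\nu$. Writing $\pi_0 := Z_{\hG}(\nu)/Z_{\hG}(\nu)^\circ$, this gives a finite étale map
\[
q\colon X^1_{\LG_\nu} \to X_{\LG}^{\phi},
\]
which is a torsor for a subquotient of $\pi_0$.

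\textbf{Step 2: the same picture for $M$ and $P$.} Apply Step 1 to $\LM$ and $\LP$. This identifies the correspondence $X_\LM^\phi \leftarrow X_\LP^\phi \to X_\LG^\phi$ with the quotient, by the relevant component groups, of the unipotent correspondence for $G_\nu$ with Levi $M_\nu$. Base change and the projection formula for the finite maps $q$ then give
\[
q^*\CS^{\phi}_G \cong \bigoplus_{w} w^*\CS^1_{G_\nu},
\]
where $w$ runs over $\pi_0$-representatives modulo the stabilizer $\pi_{0,M,\phi}$ of the cuspidal datum.

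\textbf{Step 3: compute $\End(\CS^\phi_G)$.} Compute it as $\pi_0$-invariants of $\End(q^*\CS^\phi_G)$. Each diagonal summand contributes a copy of $\End(\CS^1_{G_\nu})$. Taking invariants under $\pi_{0,M,\phi}$ produces the subalgebra $\End(\CS^1_{G_\nu})^{\pi_{0,M,\phi}}$. The off-diagonal components come from elements of the normalizer of $M$ stabilizing $\phi|_{W_F}$ up to unramified twist but not lying in the Weyl group of $G_\nu$. These are indexed by the spectral R-group $\widehat R_{G,\phi}$, defined through the exact sequence \eqref{eqn:pi-0-G-SES} as the cokernel of $W(G_\nu,M_\nu)\to \Stab$. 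For each $r\in\widehat R_{G,\phi}$ we choose a lift and construct an isomorphism $T_r$ between the corresponding summands. This yields a decomposition
\[
\End(\CS^\phi_G)= \bigoplus_{r} \End(\CS^1_{G_\nu})^{\pi_{0,M,\phi}}\, T_r .
\]

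\textbf{Step 4: the main obstacle.} The hard part is showing that the $T_r$ can be normalized to multiply as a group, with no 2-cocycle, and that the resulting algebra is a semidirect product. First, I would use that $\widehat R_{G,\phi}$ lifts to automorphisms of the whole unipotent correspondence for $G_\nu$ preserving a chosen Borel and pinning. These automorphisms act on $X_{\LM_\nu}$ fixing $\CO$ canonically, which gives canonical $T_r$ satisfying $T_rT_{r'}=T_{rr'}$. Second, I would establish that such a pinning-preserving splitting exists, case by case if necessary, via the component group analysis above. Compatibility with the group actions follows because all identifications are equivariant for $\pi_0$.
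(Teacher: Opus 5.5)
Your proposal follows essentially the same route as the paper. It realizes $X^\phi_{\LP}\to X^\phi_{\LG}$ as the quotient of $X^1_{\LP_\nu}\to X^1_{\LG_\nu}$ by $\pi_{0,M,\phi}$ resp.\ $\pi_{0,G,\phi}$ via a Cartesian square, and it obtains $\End(\CS^1_{G_\nu})^{\pi_{0,M,\phi}}$ by descent. The paper packages that descent through the intermediate sheaf $\CT$ on $X^1_{\LG_\nu}/\pi_{0,M,\phi}$ rather than your Mackey-type computation after pulling all the way back to $X^1_{\LG_\nu}$. The factor $\CC[\widehat R_{G,\phi}]$ then comes from the remaining finite \'etale quotient, with $\widehat R_{G,\phi}=\pi_{0,G,\phi}/\pi_{0,M,\phi}$ by definition; its Weyl-group description is the separate Lemma~\ref{lemma:spectral Weyl sequence} and is not needed here.

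The cocycle issue you single out needs no case-by-case analysis. Every component of $C_{\hG}(\nu)_\phi$ contains an element normalizing the Borel pair $(\hM_\nu,\hP_\nu)$, by conjugacy of Borel pairs in $\hG_\nu$, and after an $\hM_\nu$-adjustment that element also fixes the pinning. So $\pi_{0,G,\phi}$ acts on $X^1_{\LP_\nu}$ compatibly with its action on $X^1_{\LG_\nu}$, and $\CT$ is genuinely $\widehat R_{G,\phi}$-equivariant, as in Lemma~\ref{lem:4.splittingR} and Proposition~\ref{prop:4.actionXonSGnu}. Note that this action is $\Ad_{x_P}$ composed with the central translation $\zeta(x)$, not a pinned automorphism alone.
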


On the automorphic side of the picture, there is a long history of results
that describe the endomorphism algebra of a progenerator $\Pi_{[M,\sigma]}$ as a (twisted) semidirect product of an affine Hecke algebra with a group algebra.  The results relevant to us in this setting include recent results of Opdam and the second author \cite{solleveld-endomorphisms,OSgeneric}, extending earlier work of Heiermann~\cite{Heiermann}.  We apply these results in \S\ref{sec:Proof-main1} to obtain, under certain technical hypotheses, an isomorphism of the endomorphism algebra of $\Pi_{[M,\sigma]}$ with a semidirect product of the endomorphism algebra of $\Pi_{[M_{\nu},1]}$ and a group algebra; indeed, the ideas along this line of argument go as far back as \cite{Morris-Inventiones,Roche-Hecke-algebra,Reeder-2002}; see Theorem~\ref{thm:automorphic reduction} and Theorem~\ref{thm:rep thy isom} for the precise statements we prove on the automorphic side.
\begin{thm}[Automorphic Side]
There is a canonical algebra isomorphism:
\[
\End \big( i_{P}^G \mathcal{W}_M^{[M,\sigma]} \big) \cong 
\End \big( i_{P_{\nu}}^{G_{\nu}} \chi^{\un} \big)^{K_{M,\phi}} \rtimes \CC [R(\CO_{\sigma})]
\]
satisfying the relevant compatibilities. 
\end{thm}

Combining these isomorphisms of Theorem~\ref{thm:Springer reduction} and Theorem~\ref{thm:automorphic reduction} with the known unipotent categorical Langlands correspondence (see \eqref{eqn:LLC1M}) we obtain an isomorphism between the endomorphism algebras of $\Pi_{[M,\sigma]}$ and $\CS^{\phi}_G$,  which we denote by $\mathcal{E}$, then the functor 
\begin{equation}V \mapsto \Hom_G(\Pi_{[M,\sigma]}, V) \otimes_{\mathcal{E}} \CS^{\phi}_G
\end{equation}
yields the desired (derived) fully faithful $\infty$-categorical functor from $\Rep(G)_{[M,\sigma]}$ to $\IndCoh(X^{\phi}_{\LG})$.

\subsection{Relation to other work}\label{par:1.3}

\subsubsection{Relation to Zhu's correspondence}
A key input to our construction is the phenomenal recent manuscript \cite{Zhu-tame}; in particular, our approach involves a (highly technical) reduction step to the known unipotent categorical Langlands correspondence for unramified groups, which is proven in \cite[Theorem 1.9]{Zhu-tame}.~Zhu's proof of this correspondence proceeds by applying a categorical trace construction to a form of Bezrukavnikov's seminal equivalence between two categorifications of the affine Hecke algebra~\cite{Bezrukavnikov}.  We note that even if one is only interested in split groups, the result of~\cite{BZCHN} is not sufficient, as even when $G$ is split the associated group $G_{\nu}$ (for which we need the unipotent categorical local Langlands correspondence) need not be.

The overlap between our results and Zhu's consists of the depth-zero (``tame'') generic blocks of $\Rep(G)$; that is, those $[M,\sigma]$ for which $\sigma$ is of depth zero and generic.  In this case we expect that our categorical correspondence coincides with the one he constructs.  In contrast with our results, Zhu's results also include non-generic blocks and non-quasi-split groups, but remain limited to depth zero representations of those groups.  On the other hand, our results are, at the time of writing, the only results in the literature that construct a categorical local Langlands correspondence beyond the tame setting outside the case of $G = \GL_n$.

\subsubsection{Relation to the classical Local Langlands Correpondence}

As with the argument in \cite[\S 5]{BZCHN},
our result crucially uses a ``weak generic supercuspidal 
correspondence'': for each standard Levi $M$ of $G$ we need a bijection from irreducible generic supercuspidal representations of $M$ to supercuspidal parameters $W_F \rightarrow \LM$, satisfying some fairly standard compatibilities; c.f.~Definition~\ref{def:wgsc}. 
The functor we construct will then, by its very construction, be compatible with this classical correspondence in the sense that for any irreducible smooth representation $\pi$ of $G$, the corresponding object of $\IndCoh(X_{\LG})$ will be supported on the locus in $X_{\LG}$ of parameters that agree, up to semisimplification, with the Langlands parameters associated to $\pi$.

\subsection{Organization of the paper}
Section \ref{sec:prelim} contains background information on the various tools used in the paper, particularly the theory of stable $\infty$-categories, the Bernstein decomposition, and the notion of genericity with respect to a Whittaker datum. In \S\ref{sec:categorical-LLC}, we recall the categorical local Langlands conjecture in the form most relevant for our arguments. We give the precise statement of the known unipotent categorical local Langlands correspondence that we use as an ingredient, and we prove that it is compatible with parabolic induction.

The new ideas in this paper begin in \S\ref{sec:spectral-side-reduction}, where we obtain a geometric relationship between an arbitrary component of the moduli stack of Langlands parameters (for an arbitrary quasi-split group) and a unipotent component of the stack of parameters for a related group.  This leads us to a relationship between the endomorphism algebras of the Springer sheaves that live on the two components.

Sections \ref{sec:conjecture} and \ref{sec:Proof-main1} are dedicated to formulating, and proving, the representation-theoretic analogue of the main results of \S\ref{sec:spectral-side-reduction}; that is, the analogous relationship between the endomorphism algebras of two progenerators.  We note that to precisely formulate this statement one needs a weak generic supercuspidal correspondence; since this is not available in all cases we instead work in terms of a pair $(\sigma,\phi)$, where $\sigma$ is a supercuspidal generic representation and $\phi$ is a Langlands parameter, that are compatible in a weak sense that we make precise in \S\ref{sec:conjecture}. In cases where a weak generic supercuspidal correspondence is known and $\phi$ is the parameter attached of $\sigma$, these conditions are satisfied.

Section \ref{sec:main} combines the representation-theoretic and spectral results to prove our main results. In the Appendix, we prove the existence of a weak generic supercuspidal correspondence for the groups listed in Theorem \ref{thm:CLLCgeneric}.

\subsection{Notation and conventions} \label{par:notation}
Throughout, $F$ will be a non-archimedean local field, and $W_F$ will denote the Weil group of $F$. 
Let $\Fr \in W_F$ be a geometric Frobenius element. For our Langlands correspondence we use the version of
Artin reciprocity normalized so that $\Fr$ is sent to a uniformizing element of $F$.

In the introduction we did not distinguish between a connected reductive group and its group of $F$-points, but it will be necessary to do so in the sequel. We will let the bold-font $\bG$ denote a connected reductive group over $F$, and we let $G:=\mathbf{G}(F)$ denote its $F$-rational points. The (complex) dual group to $\bG$ will be denoted $\hG$, and will be considered as a pinned group over $\CC$, with an action of $W_F$.  We let $\LG$ denote the L-group of $G$; strictly speaking this is a group of the form $\hG \rtimes W_F/K$, where $K$ is a subgroup of $W_F$ acting trivially on $\hG$.  Since the $K$ will be difficult to keep track of throughout our arguments, we instead write $\LG = \hG \rtimes W_F$; this should cause no confusion.

We follow the geometric Langlands convention in which all categories and functors are implicitly derived; thus $\Rep(G)$ denotes the {\em derived}\footnote{By ``derived category'' we always mean the enhancement of the classical derived category to a stable $\infty$-category.} category of smooth complex representations of $G$, and for a morphism $f$ of stacks, $f_*$ denotes derived pushforward. 
Similarly, we denote by $\otimes$ the \textit{derived} tensor product. The underlying abelian category of a derived category is indicated by the symbol $\heartsuit$, so that the usual category of smooth representations of $G$ is denoted $\Rep(G)^{\heartsuit}$.

In general, we use ``$\vee$'' upperscript to denote coroot spaces, and ``$\;\widehat{}\;$'' to denote objects that live naturally on the spectral side of the Langlands correspondence.

\subsection{Acknowledgements}
The first author is grateful to Jean-Fran\c{c}ois Dat for useful conversations on the subject of this paper, to David Ben-Zvi for helpful discussions about categorical traces, and to David Hansen for explaining the relevance of these results to the Hansen-Mann approach to the Fargues-Scholze correspondence. The authors would like to thank Xinwen Zhu for answering a question about his work and Alexander Bertoloni Meli and Teruhisa Koshikawa for explaining their cuspidal coherent sheaves to us. The third author would like to thank Xinyu Li for communications about his forthcoming work, and the U.S. National Science Foundation for financial support under Award No.~2202677 and Award No.~2546353, as well as the IHES for its wonderful hospitality and intellectually stimulating environment during the final writing stage of this paper.  

\section{Preliminaries}\label{sec:prelim}

\subsection{Stable \texorpdfstring{$\infty$}{infty}-categories and derived Morita equivalences}\label{subsec:prelim-stable-infty}

The \textit{arithmetic} categorical local Langlands correspondence is formulated in terms of stable $\infty$-categories, and although it is beyond the scope of this paper to give a complete development of this theory, we give a brief summary of the aspects of stable $\infty$-categories most relevant to our purposes in this subsection. For a more in-depth discussion that still elides the technical foundations, we refer the reader to~\cite[Appendix A]{EGH}, which was our primary reference for the summary here. For a thorough and rigorous development of the theory of stable $\infty$-categories, we refer the reader to \cite[Chapter 1]{Lurie-HA}.

A stable $\infty$-category is, morally speaking, an $\infty$-categorical version of a triangulated category.  Indeed, given a stable $\infty$-category, its homotopy category admits a natural triangulated structure, and thus one may regard a stable $\infty$-category as an ``$\infty$-categorical enrichment'' of the corresponding homotopy category.  Moreover, the most natural examples of triangulated categories--namely, \textit{classical} derived categories, can be naturally enriched to stable $\infty$-categories; more precisely, given an abelian category $\CA$, there is a functorially associated stable $\infty$-category whose homotopy category is the \textit{classical} derived category of $\CA$.  In the following, we will refer to this stable $\infty$-category associated to $\CA$ as the ``derived category of $\CA$''.

A fundamental property of stable $\infty$-categories is that the $\Hom$-spaces between two objects are naturally spectra. A functor $\mathcal{F}: \CalC \rightarrow \CD$ of stable $\infty$-categories induces morphisms of spectra on the $\Hom$-spaces between any two objects; if $\mathcal{F}$ is fully faithful, then the induced morphisms on Hom-spaces are homotopy equivalences of spectra. In particular, for any object $V$ of a stable $\infty$-category $\CalC$, the space $\End_{\CalC}(V)$ of endomorphisms of $V$ is naturally an $E_1$-ring spectrum or, equivalently, a differential graded algebra (henceforth written ``dg-algebra'').  

Let $R$ be a dg-algebra. The category $\Rmod$ of right $R$-modules is naturally triangulated,
and this triangulated category admits a natural lift to a stable $\infty$-category. 
Henceforth the notation $\Rmod$ will refer to this $\infty$-categorical lift. 
A quasi-isomorphism $R \rightarrow R'$ of dg-algebras induces an equivalence between 
$\Rmod$ and $R'$-mod. 

Fix any object $V$ of a stable $\infty$-category $\CalC$. 
For any $W\in\mathrm{Obj}(\CalC)$, the $\Hom$-space $\Hom_{\CalC}(V,W)$ has the natural structure of a right $\End_{\CalC}(V)$-module. In this way, the functor $\Hom_{\CalC}(V,-)$ may be regarded as a functor (of stable $\infty$-categories) from $\CalC$ to $\End_{\CalC}(V)\mathrm{\text{-}mod}$.  This functor has a natural left adjoint, given by the functor $M \mapsto M \otimes_{\End_{\CalC}(V)} V$; as explained above $\otimes$ denotes the {\em derived} tensor product.

Let $\CalC$ be a stable $\infty$-category. For a full subcategory $\CD$ of $\CalC$, let $\CD^{\perp}$ denote the {\em right orthogonal} of $\CD$, i.e., the full subcategory of $\CalC$ whose objects are the objects $W$ of $\CalC$ such that $\Hom_{\CalC}(V,W) = 0$ for all objects $V$ of $\CD$. The {\em left orthogonal} $\tensor*[^{\perp}]{\CD}{}$ is the full subcategory whose objects $W$ satisfy $\Hom_{\CalC}(W,V) = 0$ for all objects $V$ of $\CD$.  If $V$ is a single object of $\CalC$, considered as a full subcategory, we let $\langle V \rangle$ denote the full subcategory $\tensor*[^{\perp}]{(V^{\perp})}{}$  of $\CalC$; it is the smallest full subcategory of $\CalC$ containing $V$ and closed under colimits. We shall refer to $\langle V\rangle$ as the full subcategory of $\CalC$ generated by $V$.

Recall that an object $V$ of an $\infty$-category $\CalC$ is called {\em compact} if the functor $\Hom_{\CalC}(V,-)$ commutes with colimits.~For a compact object $V$, the functor $W \mapsto \Hom_{\CalC}(V,W)$, together with its left adjoint $M \mapsto M \otimes_{\End_{\CalC}(V)}  V$, defines an equivalence $\langle V \rangle \rightarrow \End_{\CalC}(V)\mathrm{\text{-}mod}$ of stable $\infty$-categories.

In the following, we will consider certain stable $\infty$-categories, and functors between them, both in the context of representation theory (the ``automorphic side'') and in the context of coherent sheaves on certain stacks described in \S\ref{subsec:moduli-stack-Lpar} (the ``spectral side''). 

On the \textit{automorphic} side, the primary category of interest will be the
stable $\infty$-category $\Rep(G)$, 
where $G$ is the $F$-points of a $p$-adic group $\mathbf{G}$,
associated to the abelian category $\Rep(G)^{\heartsuit}$ of smooth complex representations of $G=\bG(F)$. The relevant functors in this setting are those of parabolic induction and restriction (these functors are exact so there is no ambiguity between a functor and its derived counterpart). 
On the \textit{spectral} side of the Langlands correspondence, we will be interested in derived categories of the form $\IndCoh(X)$, where $X$ is a derived Artin stack over $\Spec\;\CC$.  (In most cases the stacks we consider will be classical Artin stacks.)  Here $\IndCoh$ denotes the category of ind-coherent sheaves on an algebraic stack $X$ (or, more generally, a derived stack).  This is the ind-completion of the bounded derived category of coherent sheaves on $X$.  As such, its compact objects are precisely those complexes with bounded, coherent cohomology.  The relevant functors will be pullbacks or pushforwards along maps $f:X \rightarrow Y$; we will use the symbols $f^*$ and $f_*$ to denote the {\em derived} versions of these functors, as we never consider the underived versions.

\subsection{The Bernstein decomposition and Whittaker data} \label{subsec:repthy}

We now recall some basic facts in the representation theory of reductive groups over nonarchimedean local fields. 
Let $\bG$ be a connected reductive group over $F$ and $G:=\mathbf{G}(F)$. For any parabolic subgroup 
$\bP = \bM \bU_\bP$ of $\bG$, defined over $F$, we have (normalized) parabolic induction and restriction functors: 
$i_P^G: \Rep(M) \rightarrow \Rep(G)$ and $r^G_P: \Rep(M) \rightarrow \Rep(G)$. These functors are exact at the level 
of abelian categories and $i_P^G$ is right adjoint to $r^G_P$. The left adjoint of $r^G_P$ is $i_{\overline P}^G$,
where $\overline P$ denotes the parabolic subgroup of $G$ opposite to $(P,M)$.

Recall that a smooth complex representation $\sigma$ of $G$ is said to be {\em supercuspidal} if $\sigma$ is not isomorphic to a $G$-subquotient of $i_P^G \tau$, for any \textit{proper} parabolic $\bP = \bM\bU$ of $\bG$ defined over $F$ and any irreducible smooth representation $\tau$ of $M$.  Over the complex numbers, this is equivalent to requiring that the parabolic restrictions $r_P^G \sigma$ vanish for all proper parabolics $\bP$ as above.

A pair $(M,\sigma)$, where $\bM$ is a Levi subgroup of a parabolic $\bP$ of $\bG$ defined over $F$ and $\sigma$ is an irreducible supercuspidal representation of $M$, is called a {\em supercuspidal pair} for $G$.  A supercuspidal pair $(M,\sigma)$ is said to be a {\em supercuspidal support} 
for $\pi$ if $\pi$ is equivalent to a $G$-subquotient of $i_P^G \sigma$ (note that this condition is independent of the choice of a parabolic $\bP$ containing $\bM$).  Any two supercuspidal supports of $\pi$ are $G$-conjugate, so we will often refer to the $G$-conjugacy class of the pair $(M,\sigma)$ as \textit{the} supercuspidal support of $\pi$.

Let $(M,\sigma)$ and $(M',\sigma')$ be two supercuspidal pairs for $G$.  We say they are {\em inertially equivalent} if there is an element $g$ of $G$ that conjugates $M$ to $M'$ and $\sigma$ to a twist of $\sigma'$ by an unramified character of $M$ (henceforth referred to as an \textit{unramified twist} of $\sigma'$). This defines an equivalence relation on the set of all such pairs $(M,\sigma)$, and the equivalence classes are referred to as \textit{Bernstein inertial classes}. For a given supercuspidal pair $(M,\sigma)$, we denote its inertial equivalence class by $[M,\sigma]_G$, or simply $[M,\sigma]$ when the group $G$ is understood. This gives rise to a full (abelian) subcategory $\Rep(G)^{\heartsuit}_{[M,\sigma]}$ whose objects consists of all objects $\pi$ of $\Rep(G)^{\heartsuit}$ for which every irreducible $G$-subquotient of $\pi$ has supercuspidal support in the inertial class $[M,\sigma]_G$.

A fundamental theorem of Bernstein and Deligne~\cite{BD} gives us a direct product decomposition (called the \textit{Bernstein decomposition}):
$$\Rep(G)^{\heartsuit} \cong \prod_{[M,\sigma]} \Rep(G)^{\heartsuit}_{[M,\sigma]},$$
where $[M,\sigma]$ runs through the inertial equivalence classes of supercuspidal pairs for $G$.  This induces a corresponding decomposition of $\Rep(G)$ as a product of full subcategories $\Rep(G)_{[M,\sigma]}$, called \textit{Bernstein blocks} (or sometimes just ``blocks'' for short); each $\Rep(G)_{[M,\sigma]}$ is then the derived stable $\infty$-category associated to the abelian category $\Rep(G)_{[M,\sigma]}^{\heartsuit}$.  For any object $V$ of $\Rep(G)$, we will let $V^{[M,\sigma]}$ denote its projection to the direct factor $\Rep(G)_{[M,\sigma]}$ of $\Rep(G)$.

If $\bQ = \bL\bU_\bQ$ is a parabolic subgroup of $\bG$ over $F$ such that $\bL$ contains $\bM$, then the functor $i_Q^G$ restricts to a functor $\Rep(L)_{[M,\sigma]} \rightarrow \Rep(G)_{[M,\sigma]}$ for any inertial equivalence class of the form $[M,\sigma]$.

By a {\em Whittaker datum} for $G$, we mean a pair $(U,\psi)$, where $U:=\bU(F)$ for the unipotent radical $\bU$ of a Borel subgroup $\bB$ defined over $F$, and $\psi: U \rightarrow \CC^{\times}$ is a {\em generic} character of $U$, i.e.~a character whose orbit under conjugation by $T$, for some maximal torus $\bT$ of $\bB$, is of maximal dimension. 
The $G$-conjugacy classes of such data form a torsor for the group $G^{\ad}/G$, under conjugation.

Associated to $(U,\psi)$, we have the space $\mathcal{W}_{U,\psi}$ of {\em compact Whittaker functions} defined by the compact induction $\ind_U^G \psi$. This is a projective object of $\Rep(G)^{\heartsuit}$, and its projection to each direct factor $\Rep(G)_{[M,\sigma]}^{\heartsuit}$ of $\Rep(G)^{\heartsuit}$ is finitely generated as a $\CC[G]$-module. The representation $\cW_{U,\psi}$ depends only on the $G$-conjugacy class of $(U,\psi)$.

For any Levi subgroup $\bM$ of a parabolic subgroup $\bP$ of $\bG$ defined over $F$, the choice of a Whittaker datum $(U,\psi)$ for $G$ induces a corresponding choice of a Whittaker datum $(U_M,\psi_M)$ for $M$, up to $M$-conjugacy.  The datum $(U_M,\psi_M)$ is characterized, up to $M$-conjugacy, by the existence of an isomorphism: $r_{\overline P}^G \cW_{U,\psi} \cong \cW_{U_M,\psi_M}$. Upon fixing a Whittaker datum $(U,\psi)$, we let $\cW_G$ denote the space $\cW_{U,\psi}$, and $\cW_G^{[M,\sigma]}$ its projection to the direct factor $\Rep(G)^{\heartsuit}_{[M,\sigma]}$ of $\Rep(G)^{\heartsuit}$. Similarly, for any standard Levi $L$ of $G$ let $\cW_L$ denote $\cW_{U_L,\psi_L}$. 

Recall that an irreducible representation $\pi$ of $G$ is said to be {\em $(U,\psi)$-generic} (or simply {\em generic} when there is an implicit choice of Whittaker datum), if there exists a nonzero map $\cW_{U,\psi} \rightarrow \pi$.  We will say that a supercuspidal pair
$(M,\sigma)$ is generic if there exists a nonzero map $\cW_{U_M,\psi_M} \rightarrow \sigma$; note that genericity depends only on the inertial equivalence class $[M,\sigma]$ of $(M,\sigma)$. In particular, we will refer to the Bernstein block $\Rep(G)_{[M,\sigma]}$ as a \textit{generic} Bernstein block when $[M,\sigma]_G$ is generic. We have:

\begin{lemma}
Let $[M,\sigma]$ be an inertial class and fix a Whittaker datum $(U,\psi)$. Then $[M,\sigma]$ is generic if, and only if, the projection $\cW_G^{[M,\sigma]}$ to $\Rep(G)^{\heartsuit}_{[M,\sigma]}$ of $\cW_G$ is nonzero.  

Moreover, in this case, for any parabolic subgroup $\bP$ of $\bG$ defined over $F$ with Levi $\bM$, the representation $i_P^G \cW_M^{[M,\sigma]}$ is a progenerator of $\Rep(G)_{[M,\sigma]}^{\heartsuit}$.
\end{lemma}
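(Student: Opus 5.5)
The argument combines the adjunctions between $i_P^G$ and $r_P^G$ with the description $r_{\overline P}^G \cW_G \cong \cW_M$ of Whittaker data on Levi subgroups recalled above. It splits into the equivalence (genericity $\Leftrightarrow$ $\cW_G^{[M,\sigma]} \neq 0$) and the progenerator statement.

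\emph{Genericity implies $\cW_G^{[M,\sigma]}\neq 0$.} Suppose $\cW_M \to \sigma$ is nonzero. I would apply second adjointness: $i_{\overline P}^G$ is left adjoint to $r_P^G$, equivalently $i_P^G$ is right adjoint to $r_P^G$ after exchanging $P$ and $\overline P$. Choosing the parabolic so that the relevant Jacquet module of $\cW_G$ is $\cW_M$, this gives
\[\Hom_G(\cW_G, i_P^G\sigma) \cong \Hom_M(r_P^G\cW_G,\sigma)\cong \Hom_M(\cW_M,\sigma)\neq 0.\]
Here I use that the parabolic opposite to the standard one can be exchanged by a Weyl element, and that genericity of $[M,\sigma]$ is insensitive to this conjugation. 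Since $i_P^G\sigma$ lies in $\Rep(G)^{\heartsuit}_{[M,\sigma]}$ and the Bernstein decomposition is a product, the nonzero map factors through $\cW_G^{[M,\sigma]}$, which is therefore nonzero.

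\emph{Converse.} Suppose $\cW_G^{[M,\sigma]}\neq 0$. It is finitely generated, so it has an irreducible quotient $\pi$, which is generic and lies in the block. Then $\pi$ embeds into $i_P^G\sigma'$ for some unramified twist $\sigma'$ of (a $G$-conjugate of) $\sigma$. Hence
\[\Hom(\cW_G,i_P^G\sigma')\cong\Hom(\cW_M,\sigma')\neq 0,\]
so $\sigma'$, and therefore $[M,\sigma]$, is generic. This is Rodier's heredity run backwards.

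\emph{Progenerator: projectivity and finite generation.} The functor $i_P^G$ has the exact right adjoint $r_{\overline P}^G$, so it preserves projectives. The object $\cW_M^{[M,\sigma]}$ is a direct summand of the projective $\cW_M$, hence projective. It is also finitely generated, and parabolic induction preserves finite generation since $P\backslash G$ is compact. Thus $i_P^G\cW_M^{[M,\sigma]}$ is a finitely generated projective object of $\Rep(G)^{\heartsuit}_{[M,\sigma]}$.

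\emph{Progenerator: generation.} I would first show that $\cW_M^{[M,\sigma]}$ is a progenerator of the cuspidal $M$-block $\Rep(M)^{\heartsuit}_{[M,\sigma]_M}$.
\begin{itemize}
\item Every irreducible object of this block is an unramified twist $\sigma\chi$.
\item Each $\sigma\chi$ is generic, because $\cW_M$ is stable under twisting by characters of $M/M^1$.
\item Hence $\Hom(\cW_M^{[M,\sigma]},\sigma\chi)\neq 0$ for every $\chi$.
\end{itemize}
A finitely generated projective with nonzero Hom to every irreducible is a progenerator. I would then invoke Bernstein's theorem, in the form used by Roche, that $i_P^G$ of a progenerator of the cuspidal block $[M,\sigma]_M$ is a progenerator of $\Rep(G)^{\heartsuit}_{[M,\sigma]}$. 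Concretely, for each irreducible $\pi$ in the $G$-block, Frobenius reciprocity gives
\[\Hom(i_P^G\cW_M^{[M,\sigma]},\pi)\cong\Hom(\cW_M^{[M,\sigma]}, r_{\overline P}^G\pi),\]
and one needs $r_{\overline P}^G\pi$ to have a quotient in $[M,\sigma]_M$.

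The main obstacle is exactly this last point. The geometric lemma only guarantees that the cuspidal constituents of $r_{\overline P}^G\pi$ are twists of $w\sigma$ for $w\in N_G(M)/M$, and these may lie in different $M$-inertial classes. I would handle it in two steps. First, use that every irreducible subquotient of $i_P^G\sigma\chi$ is a quotient of $i_P^G(\sigma\chi')$ for suitable $\chi'$, after replacing $(P,\sigma)$ by a conjugate. Second, use that $i_P^G$ and $i_{wPw^{-1}}^G\circ w$ agree up to isomorphism on progenerators of the cuspidal block; this is standard via the description of $i_P^G(\ind_{M^1}^M\sigma)$ as a progenerator independent of $P$. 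Together these give the required nonvanishing.
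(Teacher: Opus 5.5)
Your argument is correct and is essentially the paper's: the forward direction is the same computation $\Hom_G(\cW_G,i_{\overline P}^G\sigma)\cong\Hom_M(\cW_M,\sigma)$, which is ordinary Frobenius reciprocity with $\overline P$, so no second adjointness or Weyl-element exchange is needed. Your converse via an irreducible quotient of $\cW_G^{[M,\sigma]}$ is the direct form of the paper's contrapositive through the injective $\Ind_{M^1}^M\sigma|_{M^1}$, and your progenerator step is the same ``finitely generated projective surjecting onto every simple'' argument. The point you flag---that every simple object of the block is a quotient of $i_P^G(\sigma\chi)$ for the fixed $P$, not merely for some other parabolic with Levi $M$---is used tacitly in the paper, and citing Bernstein's progenerator theorem for it is the right move; note, though, that your two-step sketch does not prove it independently, because its second step is exactly the $P$-independence that theorem supplies.
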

\begin{proof}
First assume that $(M,\sigma)$ is generic.  Then we have a nonzero map 
$\cW_M \rightarrow \sigma$, 
and hence (via the isomorphism $r^G_{\overline P} W_G \cong \cW_M$) a nonzero map  $\cW_G\to i_{\overline P}^G \sigma$.  Since $i_{\overline P}^G \sigma$ lies in the block $\Rep(G)_{[M,\sigma]}^{\heartsuit}$, it follows that the projection of $\cW_G$ to this block is nonzero.

Conversely, assume by contradiction that $\Hom_M(\cW_M,\sigma)$ is zero.  Let $I(\sigma)$ denote the object $\Ind_{M^1}^M \sigma|_{M^1}$, where $M^1$ is the smallest subroup of $M$ containing all compact subgroups of $M$ (equivalently, $M^1$ is the intersection of all $\ker\chi$ as $\chi$ ranges over the group $\mathfrak{X}_{\mathrm{nr}}(M)$ of unramified characters of $M$).  Then $I(\sigma)$ is an injective object in $\Rep(G)_{[M,\sigma]}^{\heartsuit}$, and an easy application of the Mackey formula shows that $\Hom_M(\cW_M,I(\sigma))$ is zero.  Moreover, every unramified twist of $\sigma$ embeds in $I(\sigma)$.  The same argument as the previous paragraph shows that $\Hom_G(\cW_G,i_{\overline P}^G I(\sigma))$ is zero.  But every simple object of $\Rep(G)^{\heartsuit}_{[M,\sigma]}$ embeds in $i_{\overline P}^G I(\sigma)$, so this implies that the projection of $\cW_G$ to this block is zero.

To see the second claim, when $(M,\sigma)$ is generic, every unramified twist of $\sigma$ is a quotient of $\cW_M$, so that every simple object of $\Rep(G)^{\heartsuit}_{[M,\sigma]}$ is a quotient of $i_P^G \cW_M^{[M,\sigma]}$.  But the latter is projective and finitely generated as a $\CC[G]$-module, thus is a progenerator of $\Rep(G)_{[M,\sigma]}^{\heartsuit}$ as claimed.
\end{proof}

\begin{remark}
The generic case is more concrete than the general case of Conjecture \ref{conj:categorical-intro}, because we have explicit generators on both sides of \eqref{eqn:fully-faithful-intro}, which match via $\LocL_G$: on the automorphic side we have the part of the Whittaker module $\mathcal W_G$ in $\Rep_{[M,\sigma]}(G)$, while on the spectral side we have the parabolic induction of the structure sheaf on a component of $X_\LM$.
\end{remark}

\section{The categorical local Langlands correspondence}\label{sec:categorical-LLC}

As in the previous sections, let $F$ be a nonarchimedean local field, and $\bG$ a quasi-split, connected, reductive group over $F$.  We denote by ${\hG}$ the complex dual group of $\bG$, and by $\LG$ the L-group ${\hG} \rtimes W_F$.  We begin by recalling the conjectural categorical local Langlands correspondence for (the quasisplit form of) $\bG$.

\subsection{The moduli space of Langlands parameters}\label{subsec:moduli-stack-Lpar}

Let $X_{\LG}$ denote the moduli space of Langlands parameters for $\bG$ (see for example \cite{moduli}, where this stack is denoted $[Z^1(W_F,{\hat G})/{\hat G}]$, or \cite{Zhu-tame}, where this stack is denoted $\mathrm{Loc}_{\LG,F}$).  
Since we are working over $\CC$ throughout, we may regard $X_{\LG}$ as the quotient stack ${\widetilde X}_{\LG}/{\hG}$, where ${\widetilde X}_{\LG}$ is the moduli scheme parameterizing Langlands parameters $\phi$ for $\LG$ over $\CC$.  Such a parameter is given by a pair $(\rho,N)$, where $\rho: W_F \rightarrow \LG$ is an L-homomorphism with open kernel, and $N$ is a (necessarily nilpotent) element of $\Lie({\hG})$ such that for all $w$ in $W_F$, we have $\Ad_{\rho(w)} N = \| w \| \cdot N$. The scheme ${\widetilde X}_{\LG}$ is an infinite disjoint union of connected affine schemes, each of which is a reduced, finite type, local complete intersection over $\CC$.  The quotient $X_{\LG}$ is thus locally of finite type and a local complete intersection.

Let $\bP = \bM \bU_{\bP}$ be a parabolic subgroup of $G$ over $F$. Then we may consider the spaces of Langlands parameters $X_{\LM}$ and $X_{\LP}$ for $\bM$ and $\bP$, respectively; these are defined analogously as above. There are natural derived structures on the stacks $X_{\LG}$, $X_{\LP}$, and $X_{\LM}$, respectively. We refer the reader to~\cite{Zhu-tame} for details. Note that since $\LG$ and $\LM$ are reductive
\footnote{More precisely, the groups $\hG \rtimes W_F / K$ and $\hM \rtimes W_F / K$ 
as in \S\ref{par:notation} are reductive.}, 
the derived structures on these stacks are trivial; by contrast the derived structure on $X_{\LP}$ can be highly nontrivial in general. 
There are natural maps $r_P: X_{\LP} \rightarrow X_{\LM}$ and $\pi_P: X_{\LP} \rightarrow X_{\LG}$ 
induced by the maps $\LP \rightarrow \LM$ and $\LP \rightarrow \LG$. Note that the map $\pi_P$ is proper, whereas the map $r_P$ is neither flat nor proper.

\subsection{The conjectured correspondence}

Recall that $G$ denotes the group of $F$-points of $\bG$.  Let $\Rep(G)$ denote the derived category of complex representations of $G$, considered as a stable $\infty$-category as in \S \ref{subsec:prelim-stable-infty}, and let $\Rep(G)^{\omega}$ be the full subcategory of compact objects in $\Rep(G)$. More concretely, the objects of $\Rep(G)^{\omega}$ are those complexes with bounded cohomology, whose cohomology groups are each finitely generated as $\CC[G]$-modules. The categorical local Langlands correspondence is a conjectural relationship between the derived categories $\Rep(G)$ and $\IndCoh(X_{\LG})$.  

The categorical local Langlands correspondence depends on a ``normalizing'' choice of Whittaker datum $(U,\psi)$ for $G$, and a Borel pair $(\bB,\bT)$ of $\bG$; we assume that $\bT$ contains a maximal $F$-split torus of $\bG$.   We call a parabolic subgroup of $\bG$ \textit{standard} if it contains $\bB$, and a Levi subgroup \textit{standard} if it contains $\bT$.  For any standard Levi subgroup $\bM$ of $\bG$, we have the corresponding Whittaker datum $(U_M,\psi_M)$ as in \S\ref{subsec:repthy}, and the space $\cW_M$ of compact Whittaker functions $\ind_{U_M}^M \psi_M$.

There are several formulations of the categorical local Langlands correspondence in the literature (see for example \cite{Zhu-stack, Fargues-Scholze}), but when one only considers quasi-split groups $\bG$ (as we do here), all these various formulations has the following special form (which we will henceforth refer to as ``the'' categorical local Langlands correspondence for $\bG$ in this article):

\begin{conj}[Categorical Local Langlands Conjecture] \label{conj:categorical} \ \\
For each standard Levi subgroup $M$ of $G$, there is a fully faithful functor 
$$\LocL_M: \Rep(M) \rightarrow \IndCoh(X_{\LM})$$
such that
    \begin{enumerate}
        \item $\LocL_M(\cW_M)$ is isomorphic to the structure sheaf $\CO_{X_{\LM}}$; 
        \item for each standard parabolic subgroup $P = M U_P$ of $G$, defined over $F$, the diagram
        \begin{equation}\label{eqn:diagram-conjecture}
        \begin{tikzcd}\Rep(M)\arrow[]{r}{\LocL_M}\arrow[]{d}[swap]{i_P^G} & \IndCoh(X_{\LM})\arrow[]{d}{(\pi_P)_* r_P^*}\\ 
        \Rep(G)\arrow[]{r}{\LocL_G} &\IndCoh(X_{\LG})\end{tikzcd} .
        \end{equation}  
    \end{enumerate}
\end{conj}
The functor $(\pi_P)_* r_P^*$ is sometimes referred to as the ``spectral parabolic induction'' functor. We remind the reader that these functors are their derived versions, as we are regarding $\IndCoh$ as a stable $\infty$-category.

Note that the functor $\LocL_T$ exists unconditionally, and may be constructed in a straightforward way from local class field theory; in this case, $\LocL_T$ arises from a full embedding on the level of abelian categories. 
However, this is not true in general, not even for $\GL_2$.

\subsection{The unipotent correspondence}\label{subsec:unipotent-LLC}

Now suppose that $\bG$ is an unramified group; that is, that $\bG$ splits over an unramified extension of $F$. We fix a Borel subgroup $\bB$ of $\bG$ and a maximal torus $\bT$ of $\bB$.  For such groups, the categorical local Langlands correspondence is known on the {\em principal block} $\Rep(G)_{[T,1]}$ of $G$.  (Note that this is the block of $\Rep(G)$ containing all unramified principal series representations, and in particular the trivial representation.)  

For any standard Levi subgroup $\bM$ of $\bG$, we denote by $\cW_M^{[T,1]}$ the projection of $\cW_M$ to the block $\Rep(M)_{[T,1]}$ of $\Rep(M)$.

Let $T^1$ be the maximal compact subgroup of $T$; then one has the projective representation $\ind_{T^1}^T 1$ of $T$.  The endomorphism algebra of this representation is the group algebra $\CC[T/T^1]$, as can easily be seen via the Mackey formula; the action of $T$ is then given by the ``universal unramified character''
$$\chi^{\un}: T \rightarrow \CC[T/T^1]^{\times}$$
that takes an element $t$ of $T$ to its class in $T/T^1$.  For this reason we will often denote $\ind_{T^1}^T 1$ by $\chi^{\un}$ in what follows.
The representation $\chi^{\un}$ is a projective generator of the direct factor $\Rep(T)_{[T,1]}$ of $T$; indeed, it is isomorphic to $\cW_T^{[T,1]}$.

The parabolic induction $i_{B}^G \ind_{T^1}^T 1$ is a projective object of $\Rep(G)$, and it generates the direct factor $\Rep(G)_{[T,1]}$ of $\Rep(G)$, which we henceforth call the {\em principal block} of $G$. This is the block of $\Rep(G)$ containing the trivial representation of $G$.  This parabolic induction $i_{B}^G \chi^{\un}$ is isomorphic to the induction $\ind_I^G 1$, where $I$ is an Iwahori subgroup of $G$; in particular, its endomorphism algebra is the Iwahori--Hecke algebra $\mathcal{H}(G,I)$.   This is an affine Hecke algebra with, in general, \textit{unequal parameters} in the sense of Lusztig \cite{Lusztig-AHAgraded}.

The Langlands parameters of representations in $\Rep(G)_{[T,1]}$ are precisely the {\em unipotent} Langlands parameters, i.e., those parameters corresponding to pairs $(\rho,N)$ such that the restriction of $\rho$ to $I_F$ is trivial.  (Note that since $G$ is unramified, this condition is invariant under conjugation by $\hG$.) More generally, every irreducible unipotent $G$-representation has a unipotent Langlands parameter.
Denote by $X^1_{\LT}$, $X^1_{\LB}$ and $X^1_{\LG}$ the spaces of {\em unipotent} Langlands parameters for $T$, $B$ and $G$, respectively; these are closed substacks (in fact, connected components) of $X_{\LT}$, $X_{\LB}$, and $X_{\LG}$, respectively.  Note that the maps $r_B$ and $\pi_B$ restrict to maps from $X^1_{\LB}$ to $X^1_{\LT}$ and $X^1_{\LG}$, respectively.
An important role will be played by the sheaf
\begin{equation}\label{eqn:usual-coherent-Spring-sheaf}
\CS^1_G := (\pi_B)_* r_B^* \CO_{X^1_{\LT}} \cong (\pi_B)_* \CO_{X^1_{\LB}}
\end{equation}
on $X^1_{\LG}$, which we will call the {\em (usual) coherent Springer sheaf}. 

We then have the following theorem, due to Ben-Zvi, Chen, Nadler, and the first author~\cite{BZCHN} when $G$ is split and one restricts to the block consisting of Iwahori-spherical representations, and due to Zhu~\cite{Zhu-tame} in general:

\begin{thm}[Unipotent Categorical Local Langlands Correspondence] \label{thm:unipotent categorical} \ \\
Let $G$ be an unramified reductive group over $F$. Let $M$ be a standard Levi subgroup of $G$ and let 
$\Rep (M)_{[M',\sigma]}$ be a Bernstein block consisting of unipotent $M$-representations. 
There exists a fully faithful functor:
\begin{equation}\label{eqn:LLC1M}
    \LocL^1_M: \Rep(M)_{[M',\sigma]} \rightarrow \IndCoh(X^1_{\LM})
\end{equation}
    such that 
    \begin{enumerate}
        \item    
    $\LocL^1_M \big( \cW_M^{[T,1]} \big)$ is isomorphic to the structure sheaf $\CO_{X^1_{\LM}}$;
    \item \label{compatibility-Springer} $\LocL^1_M(\ind_{I_M}^M 1)$ is isomorphic to the coherent Springer sheaf $\CS^1_M$.
    \item\label{compability-parabolic-induction} The functors \eqref{eqn:LLC1M} are compatible with parabolic induction, in the following sense: for each standard parabolic subgroup $P=M U_P$ of $G$, the following diagram commutes:
    \begin{equation}\label{eqn:diagram-unipotent-LLC}
        \begin{tikzcd}\Rep(M)_{[T,1]}\arrow[]{r}{\LocL^1_M}\arrow[]{d}[swap]{i_{P}^G} & \IndCoh(X^1_{\LM})\arrow[]{d}{(\pi_P)_* r_P^*}\\ 
        \Rep(G)_{[T,1]}\arrow[]{r}{\LocL^1_G} &\IndCoh(X^1_{\LG})\end{tikzcd}
        \end{equation}
\item\label{compatibility-pinned-aut} The functor \eqref{eqn:LLC1M} is compatible with pinned automorphisms of reductive groups; 
\item\label{compatibility-unr-char} The functor 
$\LocL^1_G: \Rep(G)_{[M',\sigma]} \rightarrow \IndCoh(X^1_{\LG})$
is compatible with twisting by unramified characters. 
    \end{enumerate}
\end{thm}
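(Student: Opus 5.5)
Theorem \ref{thm:unipotent categorical} is essentially a repackaging of \cite[Theorem 1.9]{Zhu-tame}, supplemented by a few compatibilities we must check by hand. Zhu constructs, for unramified $\bG$, a fully faithful functor on the whole unipotent part $\Rep(G)_{\unip}$ to $\IndCoh(X^1_{\LG})$. It is normalized so that the Whittaker object goes to the structure sheaf and $\ind_I^G 1$ goes to the coherent Springer sheaf. We apply this to the unramified standard Levi $M$ and restrict to the given unipotent block $\Rep(M)_{[M',\sigma]}$. Fully faithfulness survives restriction to a direct factor. For (1), the projection $\cW_M^{[T,1]}$ is the summand of the Whittaker module in the principal block. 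Since the full Whittaker module of the unipotent part goes to $\CO_{X^1_{\LM}}$, and $X^1_{\LM}$ is connected, we get the claim. Concretely, $\cW_M^{[T,1]}$ is the only unipotent block summand of $\cW_M$, because generic unipotent supercuspidal supports are $[T,1]$.

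For (2), we use $\ind_{I_M}^M 1\cong i_{B_M}^M\chi^{\un}$. So (2) reduces to (3) applied to $T\subset M$, together with $\LocL^1_T(\chi^{\un})\cong\CO_{X^1_{\LT}}$, which is local class field theory. Then $\LocL^1_M(i_{B_M}^M\chi^{\un})\cong(\pi_{B})_*r_{B}^*\CO_{X^1_{\LT}}=\CS^1_M$.

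The main point is (3), compatibility with parabolic induction. My plan is to compare both composites on the compact generator $\ind_{I_M}^M 1$ of the principal block and then extend by colimits. On generators, $i_P^G\ind_{I_M}^M1\cong\ind_{I}^G1$ by transitivity of induction. Also, $(\pi_P)_*r_P^*\CS^1_M\cong\CS^1_G$, by base change along the derived fiber square $X_{\LB}\simeq X_{\LP}\times_{X_{\LM}}X_{\LB_M}$ and composition of the correspondences. Both functors in \eqref{eqn:diagram-unipotent-LLC} are continuous. The source is $\End(\ind_{I_M}^M1)\mathrm{\text{-}mod}$ by the Morita discussion of \S\ref{subsec:prelim-stable-infty}. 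So it suffices to show that the induced maps on endomorphism algebras
\[
\mathcal H(M,I_M)\to\mathcal H(G,I)\quad\text{and}\quad \End(\CS^1_M)\to\End(\CS^1_G)
\]
agree under Zhu's identifications. The automorphic map is the standard embedding of the parabolic subalgebra. I expect the spectral map to be its geometric counterpart in Zhu's Bezrukavnikov-type description. Checking this matching is the hardest step. I would handle it by reducing to the Bernstein presentation: on the lattice part $\CC[X_*(T)]$ both sides restrict to pullback along $X^1_{\LT}$, and on the finite Hecke generators of $M$ both are compatible with the Steinberg-type convolution. The non-principal unipotent blocks are handled using Zhu's statement for general parahorics, or we simply restrict to $[T,1]$ as in the diagram.

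For (4), we use Zhu's construction via the categorical trace, which is functorial in the pinned data. A pinned automorphism acts on both sides, and the trace is natural, so (4) holds. For (5), tensoring with an unramified character $\chi$ corresponds to translation on $X^1_{\LG}$ by the central element $\hat\chi\in Z(\hG)^{W_F}$, twisting the Frobenius image. We check this on the generator, since $\chi\otimes\ind_I^G1\cong\ind_I^G1$ intertwines the Bernstein lattice action by $\chi$. This matches the translation action on $\CS^1_G$.
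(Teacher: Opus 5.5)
Your treatment of (1)--(3) follows the paper's route. (1) and (2) come from Zhu's Theorem 5.3(2). (3) is proved by evaluating on $\ind_{I_M}^M 1$, using base change along the Cartesian square to get $(\pi_P)_* r_P^* \CS^1_M \cong \CS^1_G$, and reducing via Morita theory to a comparison of algebra maps. Three corrections there:
\begin{enumerate}
\item \emph{Circularity in (2).} Deriving (2) from (3) is circular, because your proof of (3) already needs $\LocL^1_M(\ind_{I_M}^M 1)\cong \CS^1_M$ to compute the top composite on the generator. Simply cite Zhu for (2), as your first paragraph does.
\item \emph{The finite Hecke part.} Appealing to compatibility with Steinberg-type convolution is not enough. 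Zhu's trace functor $\mathrm{Ch}^{\unip}_{L\bG,\phi}$ is linear but not multiplicative, and the span of the $T_w$, $w \in \bW(\bG,\bT)^\phi$, is in general not naturally isomorphic to a subalgebra of $\End(\Delta_e)$. The paper instead uses Zhu's explicit images $\LocL^1_G(T_w\theta_\lambda) = (\tilde\pi^{\unip}_{B,w})_*\omega_{\widetilde{\Loc}^{\unip}_{\cG,F,w}}(\lambda)$. It then checks, via base change and $(r_P|_B)^*\omega = \omega$ on the relative-position-$w$ strata, that $(\pi_P)_* r_P^*$ carries $\LocL^1_M(T_w)$ and $\LocL^1_M(\theta_\lambda)$ to $\LocL^1_G(T_w)$ and $\LocL^1_G(\theta_\lambda)$, for $w \in \bW(\bM,\bT)^\phi$ and $\lambda \in X_*(\bT)^\phi$. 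Note these are $\phi$-invariants in the non-split case, not $X_*(T)$.
\item \emph{Non-principal blocks.} Your aside that these follow from Zhu's parahoric results is not available; the paper defers it to forthcoming work. This costs nothing, since (3) only concerns $[T,1]$.
\end{enumerate}

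The genuine gap is (4). Naturality of the categorical trace only makes the two vertical functors in Zhu's square compatible with a pinned automorphism $\tau$. Even that relates the $\phi$-twisted trace to the $\tau\phi\tau^{-1}$-twisted one, since $\tau$ may change the rational structure. To conclude anything about $\LocL^1_G$, you also need the horizontal equivalence $\mathbb B^{\unip}\colon \mathrm{IndShv}_{\mathrm{f.g.}}(\mathrm{Iw}\backslash L\bG/\mathrm{Iw}) \to \IndCoh(S^{\unip}_{\cG,\breve F})$ to intertwine $\tau$ with $\hat\tau$. This equivariance of the Bezrukavnikov-type equivalence is not a formal consequence of anything in your sketch. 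The paper relies on Zhu's Remark 5.2(3) and forthcoming work of Xinyu Li for it.

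For (5), your route differs from the paper's. The paper deduces (5) from (4) applied to $\bG' = \bG \times \bG/\bG^{\der}$ with the automorphism $\psi(g,x) = (g,[g]x)$, using $(\pi\boxtimes\chi)\circ\psi = (\chi\otimes\pi)\boxtimes\chi$ and $\hat\psi(g,z) = (zg,z)$. That handles every unipotent block $[M',\sigma]$ uniformly, at the price of inheriting the dependence on (4).

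Your direct check on $\ind_I^G 1$ avoids (4), but it only reaches the principal block, while (5) is stated for all unipotent blocks. Even on the principal block, you must verify one more thing. The automorphism of $\CH(G,I)$ fixing the $T_w$ and rescaling $\theta_\lambda$ by $\chi(\lambda)^{\pm 1}$ must correspond, under $\LocL^1_G$, to the automorphism of $\End(\CS^1_G)$ induced by central translation. This uses the same explicit images of $T_w$ and $\theta_\lambda$ as in (3). Also, $\hat\chi$ lives in $(Z(\hG)^{I_F})^\circ_\Fr$ via Haines' isomorphism, not in $Z(\hG)^{W_F}$.
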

\begin{proof}[Proof of Theorem \ref{thm:unipotent categorical} (1)--(2) and (4)--(5)]\textit{We defer the proof of Part (3) to a later paragraph; see Proof~\ref{proof of part (3)}.}
Most of the statements can be found in \cite[\S 5.2]{Zhu-tame}. We recall some of Zhu's notations and constructions,
and we provide arguments for the parts that are not explicitly addressed in \cite{Zhu-tame}. 

For the duration of this proof we work with coefficients in $\overline{\QQ}_\ell$ for a prime $\ell \neq p$. 
The final results can be translated back to representations and sheaves with coefficients in $\CC$, via a field 
isomorphism $\overline{\QQ}_\ell \cong \CC$. The notation $G$ in \cite{Zhu-tame} is
our notation $\bG$, and therein our functor $\LocL_G^1$ is denoted as $\mathbb L_\bG^\unip$, which arises from a commutative diagram
\begin{equation}\label{eq:3.11}
\begin{tikzcd}
\mathrm{IndShv}_{\mathrm{f.g.}} (\mathrm{Iw} \backslash L \bG / \mathrm{Iw}) \arrow[]{r}{\mathbb B^\unip} 
\arrow[]{d}[swap]{\mathrm{Ch}^\unip_{L\bG,\phi}} & 
\IndCoh (S^\unip_{\cG,\breve{F}}) \arrow[]{d}{\mathrm{Ch}^\unip_{\cG,\phi}} \\ 
\mathrm{IndShv}^{\unip} (\mathrm{Isoc}_{\bG})  \arrow[]{r}{\LocL^1_G} &\IndCoh (\Loc^\unip_{\cG,F})
\end{tikzcd} . 
\end{equation}
The group $\cG$ is a variant of $\LG$, and since we work over $\overline{\QQ}_\ell$ these two are equivalent
\cite[Remark 2.4]{Zhu-tame}. In this setting, the stack $\Loc^{\unip}_{\cG,F}$ from
\cite[\S 2.2.2]{Zhu-tame} is none other than our $X^1_\LG$.
The upper horizontal arrow in diagram \eqref{eq:3.11} is \cite[(5.1)]{Zhu-tame}, which generalizes Bezrukavnikov's 
equivalence \cite{Bezrukavnikov,Bando} using the Steinberg stack
\[
S^\unip_{\cG,\breve{F}} := 
\Loc_{{}^c \bB,\breve{F}}^\unip \times_{\Loc_{\cG,\breve{F}}^\unip} \Loc_{{}^c \bB,\breve{F}}^\unip .
\]
Since we work over $\overline{\QQ}_\ell$, the stack $\Loc^{\widehat \unip}_{\cG,F}$ from 
\cite[\S 2.2.2]{Zhu-tame} is the same as $\Loc^\unip_{\cG,F}$, and similarly for other cases
of $\widehat{\unip}$ v.s. unip. The symbol $\phi$ in \eqref{eq:3.11} indicates a pinned action 
of a Frobenius element on $\bG$ and $\hG$. Both vertical arrows in \eqref{eq:3.11} indicate 
taking a $\phi$-twisted categorical trace, see \cite[(2.61) and Theorem 4.125]{Zhu-tame}. 
The equivalence $\LocL_G^1$ and the commutativity of 
\eqref{eq:3.11} follow from \cite[Theorem 5.3]{Zhu-tame}.

For part (1), see \cite[Theorem 5.3.(2)]{Zhu-tame}. By naturality, both vertical
functors in \eqref{eq:3.11} are compatible with pinned automorphisms $\tau$ of $\bG$ and
$\hat \tau$ of $\hG$. Such a $\tau$ may change the Frobenius action, so it may send $\bG$ to
a group with a different $F$-rational structure. The precise statements of compatibility are
\[ 
\tau_* \circ \mathrm{Ch}^\unip_{L \bG, \phi} = \mathrm{Ch}^\unip_{L \bG, \tau \phi \tau^{-1}}
\circ \tau_* \quad \text{and} \quad
\hat \tau_* \circ \mathrm{Ch}^\unip_{\cG, \phi} = \mathrm{Ch}^\unip_{\cG, \tau \phi 
\hat \tau^{-1}} \circ \hat \tau_* .
\]
In view of the commutativity of the diagram \eqref{eq:3.11}, for part (4) it remains to check
that $\mathbb B^\unip$ is compatible with pinned automorphisms. This is already used in the 
main results of \cite{Zhu-tame}, and addressed in \cite[Remark 5.2 (3)]{Zhu-tame}. 
It is not entirely in the literature yet, but will appear in forthcoming work of Xinyu Li.

For part (5), we consider an unramified character $\chi \in \Xnr (G)$. Let $\phi_\chi$ be its Langlands parameter,
viewed as a 1-cocycle of $W_F/I_F$ with values in $Z(\hG)^{I_F}$, or equivalently as an
element $\phi_\chi (\Fr) \in (Z(\hG)^{I_F} )_\Fr^\circ$.
The compatibility \eqref{compatibility-unr-char} follows from compatibility with automorphisms of pinned groups, 
applied to the group $\bG' := \bG \times \bG/\bG^{\der}$.  Indeed, $\bG'$ admits an $F$-automorphism $\psi$ that takes 
a pair $(g,x)$ to $(g,[g]x)$, where $[g]$ denotes the image of $g$ in $\bG/\bG^{\der}$. For $\pi \in \Rep (G)_{[M',\sigma]}$,
the action of $\psi$ on $\pi \boxtimes \chi \in \Rep (G')$ is
\begin{equation}\label{eq:3.9}
(\pi \boxtimes \chi) \circ \psi = (\chi \otimes \pi) \boxtimes \chi .
\end{equation}
On the spectral side we have $\hG' = \hG \times Z(\hG)^\circ$ and $\hat \psi (g,z) = (zg,z)$. Furthermore 
$X^1_{\LG'} = X^1_\LG \times X^1_{\LG / \bG^\der}$ and $\LocL^1_{G'}$ can be identified with 
$\LocL^1_G \boxtimes \LocL^1_{(\bG / \bG^\der)(F)}$. Via the local Langlands correspondence for tori, $\chi$ 
corresponds to the skyscraper sheaf $\mathcal F_{\chi}$ at $\phi_\chi$ on $X^1_{\LG / \bG^\der}$. Recall that 
$(Z(\hG)^{I_F} )_\Fr^\circ$ acts naturally on $X^1_{\LG}$ by
adjusting only the values of L-parameters at $\Fr$. For any $\mathcal F \in \IndCoh(X^1_{\LG})$, we obtain
\begin{equation}\label{eq:3.10}
\hat \psi_* (\mathcal F \boxtimes \mathcal F_\chi) = (\phi_\chi)_* \mathcal F \boxtimes \mathcal F_\chi ,
\end{equation}
where $(\phi_\chi)_*$ means pushforward along the action of $\phi_\chi$. Comparing \eqref{eq:3.9} and \eqref{eq:3.10}
and using part (4) for $\bG'$, we deduce that 
\[
\LocL^1_G (\chi \otimes \pi) \boxtimes \mathcal F_\chi = \LocL^1_{G'}(\chi \otimes \pi \boxtimes \chi) =
((\phi_\chi)_* \boxtimes \mathrm{id}) \LocL^1_{G'}(\pi \boxtimes \chi) = (\phi_\chi)_* \LocL^1_G (\pi) \boxtimes 
\mathcal F_\chi .
\]
It follows that $\LocL^1_G \circ (\chi \otimes) = (\phi_\chi )_* \circ \LocL^1_G$, which proves (5). 
\end{proof} 

To prepare for proving part (3) of Theorem \ref{thm:unipotent categorical}, we study the relevant objects in the categories in \eqref{eq:3.11}. We have the standard 
sheaf $\Delta_e = \nabla_e$ on $\mathrm{Iw} \backslash L \bG / \mathrm{Iw}$, where $e$ denotes the unit element 
of $\bW (\bG,\bT) \ltimes X_* (\bT)$. 
The representation $\ind_I^G 1 = \ind_{\mathrm{Iw}(k_F)}^{L\bG (k_F)}\mathrm{triv}$
can be pushed forward to a sheaf on $\mathrm{Isoc}_\bG$. For a stack $S$, let $\omega_S$ denote the dualizing
(ind-)coherent sheaf on $S$. For $w \in W_0 = \bW (\bG,\bT)$, the stacks $\Loc^\unip_{\cdots}$ have a substack $\Loc^\unip_{\cdots, w}$ 
determined by the condition that a pair of Borel subgroups of $\hG$ is in relative position $w$. In particular, we have 
$S^\unip_{\cG,\breve F,1} \subset S^\unip_{\cG,\breve F}$. In \cite[(2.70)]{Zhu-tame}, 
Zhu introduces the coherent Springer sheaf 
\begin{equation}\label{eq:3.13}
\mathrm{CohSpr}^\unip_{\cG,F} := (\pi_B^\unip )_* \omega_{\Loc^\unip_{{}^c \bB, F}} = 
(\pi_B^\unip )_* \CO_{\Loc^\unip_{{}^c \bB, F}}, 
\end{equation}
where $\pi_B^\unip : \Loc^\unip_{{}^c \bB,F} \to \Loc^\unip_{\cG,F}$ is the restriction of
$\pi_B : X_{\LB} \to X_{\LG}$. Comparing the right-hand side of \eqref{eq:3.13} to 
\eqref{eqn:usual-coherent-Spring-sheaf}, we see that $\mathrm{CohSpr}^\unip_{\cG,F} = \CS^1_G$. 
These objects fit in the diagram \eqref{eq:3.11} as
\begin{equation}\label{eq:3.12}
\begin{tikzcd}
\Delta_e \arrow[|->]{r} \arrow[]{d} & \omega_{S^\unip_{\cG, \breve{F},1}} \arrow[|->]{d} \\ 
i_{1,*} (\ind_I^G 1) \arrow[|->]{r} & \mathrm{CohSpr}^\unip_{\cG,F} = \CS^1_G
\end{tikzcd} .
\end{equation}
For the left downward map, see \cite[(4.44) and Corollary 4.68]{Zhu-tame}; 
for the upper horizontal map, see 
\cite[(5.9)]{Zhu-tame}; for the right downward map, see \cite[Lemma 2.79]{Zhu-tame}. The lower horizontal map of
\eqref{eq:3.12} is stated in \cite[Theorem 5.3.(2)]{Zhu-tame}, and this gives our part (2). 

We now consider the endomorphism algebras of the objects in \eqref{eq:3.12}. These can be regarded as 
dg-algebras concentrated in degree zero. Since the horizontal functors in
\eqref{eq:3.11} are fully faithful, the two objects in the top (resp.~bottom) horizontal row of 
\eqref{eq:3.12} have isomorphic endomorphism algebras. By \cite{Bezrukavnikov,Bando}, in terms of the notations 
from \cite[\S 4.2.2 and (5.8)]{Zhu-tame}, we know that
\begin{equation}\label{eq:3.14}
\End (\Delta_e) \text{ has a } \overline{\QQ}_\ell \text{-basis } 
\{ \Delta_w * J_\lambda : w \in \bW (\bG,\bT), \lambda \in X_* (\bT) \} .
\end{equation}
Here the Wakimoto sheaf $J_\lambda$ equals $\Delta_\lambda$ when $\lambda$ is positive with respect to
$\bB$; see \cite[Remark 5.2.(2)]{Zhu-tame}. By \cite[(5.8)--(5.9) and \S 2.77--2.79]{Zhu-tame}, we see that via the 
functor $\mathbb B^\unip$, the statement \eqref{eq:3.14} becomes the statement that
\[
\End \big( \omega_{S^\unip_{\cG, \breve{F},1}} \big) \text{ has a } \overline{\QQ}_\ell \text{-basis } 
\big\{ \omega_{S^\unip_{\cG,\breve{F},w}}(\lambda)  : w \in \bW (\bG,\bT), \lambda \in X_* (\bT) \big\} .
\]
Moreover, by \cite{Bezrukavnikov,Bando} we know that $\End (\Delta_e) \cong \End \big( \omega_{S^\unip_{\cG, \breve{F},1}} \big) $ is an affine Hecke algebra with 
a single parameter $q_F$.
As in \cite[Corollary 5.5.(1)]{Zhu-tame}, one has  
\begin{equation}\label{eq:3.15}
\End \big( i_{1,*} (\ind_I^G 1) \big) \cong \End (\ind_I^G 1) \cong \CH (G,I) \cong \End (\CS^1_G) .
\end{equation}
We recall that 
\[
(\bW (\bG,\bT) \ltimes X_* (\bT))^\phi = \bW (\bG,\bT)^\phi \ltimes X_* (\bT)^\phi
\]
is the affine Weyl group of $G$ with respect to~$I$.
The algebra \eqref{eq:3.15} is an affine Hecke algebra (with unequal parameters if $\bG$ is not $F$-split), and
it has a $\overline{\QQ}_\ell$-basis 
\begin{equation}\label{eq:3.26}
\{ T_w \theta_\lambda : w \in \bW (\bG,\bT)^\phi , \lambda \in X_* (\bT)^\phi \} .
\end{equation}
More precisely, $T_w \in \CH (G,I)$ has support $I w I$ (and takes value 1 there if the volume of $I$ equals 1). 
Following \cite{Morris-Inventiones}, we use the bijection $X_* (\bT)^\phi \to T / (T \cap I)$ 
from evaluating cocharacters at the inverse of a uniformizing element $\varpi_F$ of $\mathfrak o_F$. This 
convention is necessary to preserve positivity on $X_* (\bT)^\phi$. For any $\lambda \in X_* (\bT)$ which is 
positive with respect to $\bB$, we have that $\theta_\lambda$ is supported on $I \lambda (\varpi_F^{-1}) I$, while the 
support of $\theta_{-\lambda} = \theta_\lambda^{-1}$ may contain more than one double $I$-coset.

The subalgebra of $\CH (G,I)$ spanned by the $\theta_\lambda$ is isomorphic to the subalgebra of 
$\End (\Delta_e)$ spanned by the $J_\lambda$ with $\lambda \in X_* (\bT)^\phi$, as both are isomorphic to
$\overline{\QQ}_\ell [X_* (\bT)^\phi]$. However, the subalgebra of $\CH (G,I)$ spanned by the $T_w$ with
$w \in \bW (\bG,\bT)^\phi$ is in general not naturally isomorphic to a subalgebra of $\End (\Delta_e)$.
The functor $\mathrm{Ch}^\unip_{L\bG,\phi}$ is $\overline{\QQ}_\ell$-linear and sends $\Delta_w * J_\lambda$ 
to $T_w \theta_\lambda$ \cite[(5.13)]{Zhu-tame}, but it is not multiplicative on these endomorphism algebras.

By \cite[Lemma 2.79.(3) and (5.13)]{Zhu-tame}, the algebra $\End (\mathrm{CohSpr}^\unip_{\cG,F}) \cong \CH (G,I)$ has a 
$\overline{\QQ}_\ell$-basis 
\begin{equation}\label{eq:3.27}
\big\{ \LocL_G^1 (T_w \theta_\lambda) = (\tilde{\pi}_{B,w}^\unip )_* \omega_{\widetilde{\Loc}^\unip_{\cG,F,w}} 
(\lambda) : w \in \bW (\bG,\bT)^\phi, \lambda \in X_* (\bT)^\phi \big\} .
\end{equation}
Here $\widetilde{\Loc}^\unip_{\cG,F,w} = \widetilde{\Loc}^{\mathrm{tame}}_{\cG,F,w} 
\times_{\Loc^{\mathrm{tame}}_{\cG,\breve{F}}} \Loc^\unip_{{}^c \bB,F}$ as in \cite[(2.47)]{Zhu-tame} and the map
\[
\tilde{\pi}^\unip_{B,w} : \widetilde{\Loc}^\unip_{\cG,F,w} \to \Loc^{\mathrm{tame}}_{\cG,F} ,
\]
is induced by $\pi_B : X_{\LB} \to X_{\LG}$. The commutativity of \eqref{eq:3.11}
guarantees that $\LocL_G^1 (T_w \theta_\lambda) \in \Coh (\Loc^{\unip}_{\cG,F})$.\\

\begin{proof}[Proof of Theorem \ref{thm:unipotent categorical}.(3)]\label{proof of part (3)}
We now study how parabolic induction interacts with the above constructions. There are natural isomorphisms of $G$-representations
\begin{equation}\label{eq:3.16}
i_P^G (\ind_{I_M}^M 1) \cong i_P^G \big( i_{B \cap M}^M (\ind_{T^1}^T 1) \big) \cong 
i_B^G (\ind_{T^1}^T 1) \cong \ind_I^G 1 .
\end{equation}
By \eqref{eq:3.16} and \cite[Lemma 5.1]{Solleveld2019}, $i_P^G$ induces an algebra homomorphism 
\begin{equation}\label{eq:3.17}
\begin{array}{ccc}
\CH (M,I_M) \cong \End_M \big( i_{B \cap M}^M (\ind_{T^1}^T 1 ) \big) & \longrightarrow & 
\End_G \big( i_B^G (\ind_{T^1}^T 1) \big) \cong \CH (G,I) \\
T_w \theta_\lambda & \longmapsto & T_w \theta_\lambda 
\end{array} 
\end{equation}
where $w \in \bW (\bM,\bT)^\phi$ and $\lambda \in X_* (\bT)^\phi$. 

We need to show that the analogues of \eqref{eq:3.16} and \eqref{eq:3.17} for coherent sheaves hold true.
Let $\hP = \hM \hU_\bP$ be the standard parabolic subgroup of $\hG$ with Levi factor $\hM$. 
Consider the following Cartesian diagram
\begin{equation}\label{eq:3.18}
\begin{tikzcd}
\hB = (\hB \cap \hM) \ltimes \hU_\bP \arrow[]{d}{r_P |_B} \arrow[]{rr}{\pi_{B \cap M,U_P}} && 
\hP = \hM \ltimes \hU_\bP \arrow[]{d}{r_P} \\
\hB \cap \hM \arrow[]{rr}{\pi_{B \cap M}} && \hM
\end{tikzcd} ,
\end{equation}
where $\pi_{B \cap M,U_P}$ means $\pi_{B \cap M}$ on $\hB \cap \hM$ and the identity on $\hU_\bP$. 
There is a similar Cartesian diagram for the stacks $X_{\LB}, X_{\LP}, X_{\LM}$ and $X_{{}^L \mathbf{B \cap M}}$.
This guarantees that 
\begin{equation}\label{eq:3.19}
r_P^* (\pi_{B \cap M})_* = (\pi_{B \cap M,U_P})_* (r_P |_B)^* ,
\end{equation}
as functors between (ind-)coherent sheaves on these stacks. Using this, we compute
\begin{equation}\label{eq:3.20}
\! \pi_{P*} r_P^* \CS^1_M = \pi_{P*} r_P^* (\pi_{B \cap M})_* r_{B \cap M}^* \CO_{X^1_\LT} \! =
\pi_{P*} (\pi_{B \cap M,U_P})_* (r_P |_B)^* r_{B \cap M}^* \CO_{X^1_\LT} \! =
\pi_{B*} r_B^* \CO_{X^1_\LT} \! = \CS^1_G . \hspace{-4mm} 
\end{equation}
For $\lambda \in X_* (\bT)^\phi$, by \cite[Remark 2.54]{Zhu-tame} and
\cite[Lemma 3.12]{BZCHN}, we have
\begin{equation}\label{eq:3.21}
\LocL^1_G (\theta_\lambda) = (\tilde{\pi}_{B,1}^\unip )_* \omega_{\widetilde \Loc^\unip_{\cG,F,1}} (\lambda) \quad \text{equals} \quad
(\pi_B^\unip )_* \omega_{\Loc^\unip_{{}^c \bB,F}} (\lambda)  .
\end{equation}
By \cite[Lemma 3.12]{BZCHN}, we can also express \eqref{eq:3.21} as
\begin{equation}\label{eq:3.22}
\LocL^1_G (\theta_\lambda) = (\pi_B)_* \CO_{\Loc^\unip_{{}^c \bB,F}} (\lambda) =
(\pi_B)_* r_B^* \CO_{\Loc^\unip_{{}^c \bT,F}} (\lambda) =
(\pi_B)_* r_B^* \omega_{\Loc^\unip_{{}^c \bT,F}} (\lambda) =
\pi_{B*} r_B^* \LocL^1_T (\theta_\lambda) .
\end{equation}
With \eqref{eq:3.22} at hand, the same computation as in \eqref{eq:3.22} shows that
\begin{equation}\label{eq:3.24}
\pi_{P*} r_P^* \LocL^1_M (\theta_\lambda) = \LocL^1_G (\theta_\lambda) .
\end{equation}
Now we take $w \in \bW (\bM,\bT)^\phi$ and use the conventions from \cite[\S 2.50--\S 2.55]{Zhu-tame}.
The canonical map
\[
r_P |_B : \widetilde{\Loc}^\unip_{\cG,F,w} \to \widetilde{\Loc}^\unip_{{}^c \bM,F,w} 
\]
arises from $r_P |_B : \Loc^\unip_{{}^c \bB,\breve{F}} \to \Loc^\unip_{{}^c \bB \cap \bM,
\breve{F}}$ by the same operations on both sides. Using \cite[Lemma 3.12]{BZCHN}, we have
\begin{equation}\label{eq:3.28}
(r_P |_B )^* \omega_{\widetilde{\Loc}^\unip_{{}^c \bM,F,w}} = 
(r_P |_B )^* \CO_{\widetilde{\Loc}^\unip_{{}^c \bM,F,w}} =
\CO_{\widetilde{\Loc}^\unip_{\cG,F,w}} =
\omega_{\widetilde{\Loc}^\unip_{\cG,F,w}} .
\end{equation}
By \eqref{eq:3.19} and \eqref{eq:3.28}, we have
\begin{align}\label{eq:3.23}
\begin{split}
\pi_{P*} r_P^* \LocL^1_M (T_w) = \pi_{P*} r_P^* (\pi_{B \cap M})_* \omega_{\widetilde{\Loc}^\unip_{{}^c \bM,F,w}} &
= 
\pi_{P*} (\pi_{B \cap M,U_P})_* (r_P |_B )^* \omega_{\widetilde{\Loc}^\unip_{{}^c \bM,F,w}} \\
&
=
\pi_{B*} \omega_{\widetilde{\Loc}^\unip_{\cG,F,w}} = \LocL^1_G (T_w) .
\end{split}
\end{align}
In the diagram \eqref{eqn:diagram-unipotent-LLC} we may restrict the right-hand side to the essential images
of $\LocL^1_M$ and $\LocL^1_G$, which by part (2)  are generated by $\CS^1_M$ and $\CS^1_G$, respectively.
By \eqref{eq:3.16} and \eqref{eq:3.20} and \cite[Theorem 3.8]{Keller2006DGCategories}, we can rewrite diagram \eqref{eqn:diagram-unipotent-LLC}--upon this restriction--as the derived module categories
of the following algebras:
\begin{equation}\label{eq:3.25}
\begin{tikzcd}
\End_M ( \ind_{I_M}^M 1) \arrow[]{r}{\LocL^1_M}\arrow[]{d}[swap]{i_{P}^G} & 
\End (\CS^1_M) \arrow[]{d}{(\pi_P)_* r_P^*}\\ 
\End_G (\ind_I^G 1) \arrow[]{r}{\LocL^1_G} & \End (\CS^1_G) 
\end{tikzcd}.
\end{equation}
By \eqref{eq:3.17}, \eqref{eq:3.24} and \eqref{eq:3.23}, the diagram \eqref{eq:3.25} commutes.  
Hence \eqref{eqn:diagram-unipotent-LLC} commutes as well. 
\end{proof}

\begin{remark}\label{rem:3.3}
Part (3) of Theorem \ref{thm:unipotent categorical} also holds in the generality 
of \eqref{eqn:LLC1M}, but the proof is more complicated. It will appear in 
forthcoming work of Xinwen Zhu and Xinyu Li.
\end{remark}
Using \cite[Theorem 1.7]{BZCHN}, we can describe the functors in Theorem 
\ref{thm:unipotent categorical} more explicitly. For $\pi \in \Rep (G)_{[T,1]}$,
we can view $\Hom_G (i_{B}^G \ind_{T^1}^T 1, \pi)$ as a right module
over any of the algebras in \eqref{eq:3.15}. Then
\[
\LocL^1_G (\pi) = \Hom_G (i_{B}^G \ind_{T^1}^T 1, \pi) 
\otimes_{\End(\CS^1_G)} \CS^1_G .
\]

\section{Spectral side reduction}\label{sec:spectral-side-reduction}

As the unipotent case of the categorical local Langlands correspondence is well-understood, we turn our attention to non-unipotent parameters.  We will see that there is in fact a very close geometric relationship between an arbitrary connected component of $X_{\LG}$ and the unipotent component of a related unramified group.  Our approach is very closely related to that of~\cite[\S 3]{moduli}, where a reduction to {\em tame} parameters is constructed over more general coefficient rings; the reduction to unipotent parameters that we construct here is possible only over fields of characteristic zero.  Indeed, our approach makes essential use of the fact that for Langlands parameters $(\rho,N)$ over a field of characteristic zero, the restriction of $\rho$ to $I_F$ is constant, up to ${\hat G}$-conjugacy, on connected components of the space of parameters; this is false over more general base rings.

\subsection{Geometry of the component \texorpdfstring{$X_{\LG}^{\phi}$}{XLGphi}}\label{subsec:geometry-XLGphi}

Fix a Langlands parameter $\phi = (\rho,N)$. We call $\phi$ \textit{Frobenius-semisimple} if $\rho (\Fr)$ is 
semisimple (which we do not require in the body of this paper). The L-parameters used in classical versions of
the local Langlands correspondence are Frobenius-semisimple.

Let $X_{\LG}^{\phi}$ denote the connected component of $X_{\LG}$ containing $\phi$. 
As $(\rho,N)$ and $(\rho,0)$ lie on the same component of $X_{\LG}$, we may assume without 
loss of generality that $N=0$.  We will refer to $\phi$ with $N=0$ as L-parameters \textit{with
trivial monodromy}. For such a $\phi$, and $w \in W_F$, we will often use the notation 
$\phi(w)$ to denote $\rho(w)$, and for a subgroup $I$ of $W_F$, we will often use the 
phrase ``restriction of $\phi$ to $I$'' to mean the restriction of $\rho$ to $I$. Let 
\begin{equation}
\nu:=\phi|_{I_F}
\end{equation}
denote the restriction of $\phi$ to the inertial subgroup $I_F$ of $W_F$, and let $C_{\hG}(\nu)$ denote the centralizer of $\nu(I_F)$ in ${\hG}$.  This centralizer is in general disconnected; its identity component is a reductive group that we shall denote by ${\hG}_{\nu}$.  (Note that although this notation suggests that $\hG_{\nu}$ is the complex dual group of a $p$-adic group $\bG_{\nu}$--and this will indeed turn out to be the case--for the moment we do not define such a group.)

For any Langlands parameter $(\rho',N')$ on $X_{\LG}^{\phi}$, the restriction $\rho'|_{I_F}$ of $\rho'$ is ${\hG}$-conjugate to $\nu$; moreover, if $g$ is an element of ${\hG}$ conjugating $\rho'|_{I_F}$ to $\nu$, then the conjugate parameter $(\rho')^g$ agrees with $\rho$ on $I_F$, and on a Frobenius element $\Fr$ satisfies $(\rho')^g(\Fr) = g'\rho(\Fr)$ for some $g' \in {\hG}_{\nu}$.  

There is an action of $W_F/I_F$ on ${\hG}_{\nu}$ where $w \in W_F$ acts via conjugation by $\rho(w)$.  As in the proof of~\cite[Theorem 3.4]{moduli}, we may, upon replacing $\phi$ by another Langlands parameter on the same component of $X_{\LG}$, assume that this action has finite order and preserves a Borel pair of ${\hG}_{\nu}$.  Let $\LG_{\nu}$ denote the semidirect product $\hG_{\nu} \rtimes W_F$.  In the cases we consider below, this will turn out to be the $L$-group of a quasi-split reductive group $\bG_{\nu}$ over $F$, but for the moment we treat $\LG_{\nu}$ as an abstract group. 

We have an L-homomorphism $\iota_{\phi}: \LG_{\nu} \rightarrow \LG$, defined by 
\begin{equation}\label{eqn:L-hom-iota-phi}
\iota_{\phi}(g \rtimes w) = g \rho(w) ,
\end{equation}
which sends Langlands parameters for $\LG_{\nu}$ to Langlands parameters for $\LG$.  Given a {\em unipotent} Langlands parameter $(\tau,N)$ for $G_{\nu}$, (that is, one for which the restriction of $\tau$ to $I_F$ is trivial) its image under $\iota_{\phi}$ is a Langlands parameter $(\rho',N')$ such that $\rho'|_{I_F}=\nu$.  

Let ${\tilde X}^1_{\LG_{\nu}}$ denote the moduli {\em scheme} of unramified Langlands parameters $(\tau,N)$ over $\CC$ for $G_{\nu}$, and let ${\tilde X}^{\nu}_{\LG}$ denote the moduli {\em scheme} of Langlands parameters $(\rho',N')$ over $\CC$ for $G$ such that $\rho'|_{I_F}=\nu$.  The $L$-homomorphism $\iota_{\phi}$ from \eqref{eqn:L-hom-iota-phi} then induces a closed immersion
\begin{equation}\label{eqn:iota-phi-Xtilde}
\tilde{\iota}_{\phi}:{\tilde X}^1_{\LG_{\nu}} \rightarrow {\tilde X}^{\nu}_{\LG}.
\end{equation}
On the other hand, we have a map:
\begin{equation}\label{eqn:map-tildeXnu-TnuFr}
{\tilde X}^{\nu}_{\LG} \rightarrow T_{\widehat G}(\nu,\nu^{\Fr}),
\end{equation}
where $\nu^{\Fr}: I_F \rightarrow \LG$ is defined by $\nu^{\Fr}(w) = \nu(\Fr w \Fr^{-1})^{\Fr}$, and $T_{\widehat G}(\nu,\nu^{\Fr})$ is the subset of ${\widehat G}$ consisting of elements
that conjugate $\nu$ to $\nu^{\Fr}$.  This map takes a parameter $(\rho',N')$ to the element $g$ of ${\widehat G}$ such that $\rho'(\Fr) = g \rtimes \Fr$. 
Consider the following map induced from \eqref{eqn:map-tildeXnu-TnuFr}:
\begin{equation}\label{eqn:map-tildeXnu-pi0TnuFr}
{\tilde X}^{\nu}_{\LG} \rightarrow \pi_0 \big( T_{\hat G}(\nu,\nu^{\Fr}) \big).
\end{equation}
The fiber of this map \eqref{eqn:map-tildeXnu-pi0TnuFr} containing the parameter $\phi$ is a connected component of ${\tilde X}^{\nu}_{\LG}$; this connected component is on the one hand equal to the image of $\tilde{\iota}_{\phi}$ under \eqref{eqn:iota-phi-Xtilde}, and on the other hand is the preimage of the component $X^{\phi}_{\LG}$
under the natural map:
\begin{equation}\label{eqn:natural-map-tildeXnu-to-X}
{\tilde X}^{\nu}_{\LG} \rightarrow X_{\LG}.
\end{equation}
We denote this preimage by ${\tilde X}^{\nu,\phi}_{\LG}$; it is isomorphic to ${\tilde X}^1_{\LG_{\nu}}.$

Let $x$ and $y$ be two arbitrary points of ${\tilde X}^{\nu}_{\LG}$; their images, under the natural map \eqref{eqn:natural-map-tildeXnu-to-X}, in $X_{\LG}$ agree if and only if their corresponding Langlands parameters $(\rho_x,N_x)$ and $(\rho_y,N_y)$ are ${\hG}$-conjugate; since the restrictions of $\rho_x$ and $\rho_y$ to $I_F$ are both equal to $\nu$, an element of $\hG$ that conjugates one to the other must centralize $\nu$.  The map \eqref{eqn:natural-map-tildeXnu-to-X} from ${\tilde X}^{\nu}_{\LG}$ to $X_{\LG}$ thus identifies the quotient ${\tilde X}^{\nu}_{\LG}/C_{\hG}(\nu)$ with a union of connected components of $X_{\LG}$ (more precisely, with the union of those components on which the restrictions to $I_F$ of the Langlands parameters are conjugate to $\nu$).

Note that ${\tilde X}^{\nu,\phi}_{\LG}$ is not stable under the action of $C_{\hG}(\nu)$ on ${\tilde X}^{\nu}_{\LG}$; indeed, the component group $\pi_0(C_{\hG}(\nu))$ permutes the connected components of the scheme ${\tilde X}^{\nu}_{\LG}$.  An element of $\pi_0(C_{\hG}(\nu))$ preserves the component ${\tilde X}^{\nu,\phi}_{\LG}$ of ${\tilde X}^{\nu}_{\LG}$ if and only if it is fixed under the conjugation action of $\rho(\Fr)$ on $C_{\hG}(\nu)$.  Let $\pi_{0,G,\phi}$ denote the subgroup of $\pi_0(C_{\hG}(\nu))$ fixed under this action, and let $C_{\hG}(\nu)_{\phi}$ be the preimage of $\pi_{0,G,\phi}$ in $C_{\hG}(\nu)$.  We then have an isomorphism:
\begin{equation}\label{eqn:isom-Xtilde}
{\tilde X}^{\nu,\phi}_{\LG}/C_{\hG}(\nu)_{\phi} \cong X_{\LG}^{\phi}.
\end{equation}

Note that the image of $\LG_{\nu}$ under $\iota_{\phi}$ is stable under the conjugation action of $C_{\hG}(\nu)_{\phi}$; in this way we obtain an action of $C_{\hG}(\nu)_{\phi}$ on $\LG_{\nu}$, and hence also on $X^1_{\LG_{\nu}}$.  The latter action factors through the quotient $\pi_{0,G,\phi}$ of $C_{\hG}(\nu)_{\phi}$.

Combining \eqref{eqn:isom-Xtilde} with the identification $\tilde{\iota}_{\phi}: {\tilde X}^1_{\LG_{\nu}}\cong {\tilde X}^{\nu,\phi}_{\LG}$ from \eqref{eqn:iota-phi-Xtilde}, we  then have the following:

\begin{thm} \label{thm:moduli reduction}
The isomorphism $\tilde{\iota}_{\phi}: {\tilde X}^1_{\LG_{\nu}} \xrightarrow{\sim} {\tilde X}^{\nu,\phi}_{\LG}$ descends to an isomorphism:
\begin{equation}\label{eqn:moduli-reduction-thm}
X^1_{\LG_{\nu}}/\pi_{0,G,\phi} \cong X^{\phi}_{\LG}.
\end{equation}
that takes the trivial $L$-parameter for $\LG_{\nu}$ to the parameter $\phi$.
\end{thm}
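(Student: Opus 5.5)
The plan is to assemble the isomorphism from the two identifications already set up: \eqref{eqn:isom-Xtilde} and the closed immersion $\tilde{\iota}_{\phi}$ of \eqref{eqn:iota-phi-Xtilde}. By definition,
\[
X^1_{\LG_{\nu}} = {\tilde X}^1_{\LG_{\nu}}/\hG_{\nu}.
\]
Transporting along $\tilde{\iota}_{\phi}$, this becomes ${\tilde X}^{\nu,\phi}_{\LG}/\hG_{\nu}$.

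First I would check that $\tilde{\iota}_{\phi}$ is $\hG_{\nu}$-equivariant, and that it intertwines the action of $C_{\hG}(\nu)_{\phi}$ on ${\tilde X}^{\nu,\phi}_{\LG}$ by conjugation with the induced action on $\LG_{\nu}$ described after \eqref{eqn:isom-Xtilde}. Concretely, for $c\in C_{\hG}(\nu)_{\phi}$ one has $c\rho(\Fr)c^{-1}=h_c\rho(\Fr)$ with $h_c\in \hG_{\nu}$; this uses that $c$ maps to a $\rho(\Fr)$-fixed element of $\pi_0$. It follows that conjugation by $c$ preserves $\iota_\phi(\LG_\nu)$ and acts on it by the automorphism $g\rtimes w\mapsto cgc^{-1}h_{c,w}\rtimes w$.

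Since $\hG_{\nu}$ is normal in $C_{\hG}(\nu)_{\phi}$ with quotient $\pi_{0,G,\phi}$, the quotient stack can be computed in stages:
\[
{\tilde X}^{\nu,\phi}_{\LG}/C_{\hG}(\nu)_{\phi}\cong \bigl({\tilde X}^{\nu,\phi}_{\LG}/\hG_{\nu}\bigr)/\pi_{0,G,\phi}\cong X^1_{\LG_{\nu}}/\pi_{0,G,\phi}.
\]
Combined with \eqref{eqn:isom-Xtilde}, this gives \eqref{eqn:moduli-reduction-thm}. The statement about base points is immediate, because $\tilde{\iota}_{\phi}$ sends the trivial parameter $w\mapsto 1\rtimes w$ to $w\mapsto\rho(w)=\phi$.

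The main obstacle is making the iterated quotient rigorous at the level of stacks. The action of $\pi_{0,G,\phi}$ on $X^1_{\LG_\nu}$ is only defined through the extension $1\to\hG_\nu\to C_{\hG}(\nu)_\phi\to\pi_{0,G,\phi}\to 1$, which need not split. So "$X^1_{\LG_{\nu}}/\pi_{0,G,\phi}$" should be interpreted as the quotient of the $\hG_\nu$-stack by the finite group with its induced (possibly non-strict) action, and this quotient is by definition ${\tilde X}^{\nu,\phi}_{\LG}/C_{\hG}(\nu)_\phi$.

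I would also verify that \eqref{eqn:isom-Xtilde} is an isomorphism of stacks and not merely a bijection on points, by the standard argument:
\[
{\tilde X}^{\nu}_{\LG}\times^{C_{\hG}(\nu)}\hG\to \widetilde X_{\LG}
\]
is an open and closed immersion onto the locus where the inertial restriction is conjugate to $\nu$. This uses the rigidity of $\nu$ in characteristic zero: $I_F$ acts through a finite quotient on each component, so its representations are locally constant up to conjugacy. With that in hand, restricting to the component fixed by $\pi_{0,G,\phi}$ gives the claim.
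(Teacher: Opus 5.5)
Your proposal is correct and follows the paper's route. The paper's proof identifies $X^1_{\LG_{\nu}}$ with ${\tilde X}^1_{\LG_{\nu}}/\hG_{\nu}$ and $X^{\phi}_{\LG}$ with ${\tilde X}^{\nu,\phi}_{\LG}/C_{\hG}(\nu)_{\phi}$ via \eqref{eqn:isom-Xtilde}, and then takes the quotient in stages through $\hG_{\nu} \subset C_{\hG}(\nu)_{\phi}$; your extra checks fill in details the paper leaves implicit, namely the equivariance of $\tilde{\iota}_{\phi}$, how to read the possibly non-split $\pi_{0,G,\phi}$-action, and the stack-level verification of \eqref{eqn:isom-Xtilde}.
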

\begin{proof}
This follows by identifying $X^1_{\LG_{\nu}}$ with the quotient of ${\tilde X}^1_{\LG_\nu}$ by ${\hG}_\nu$, and by identifying $X^{\phi}_{\LG}$ with the quotient of ${\tilde X}^{\nu,\phi}_{\LG}$ by $C_{\hG}(\nu)_{\phi}$ as in \eqref{eqn:isom-Xtilde}.
\end{proof}

\subsection{Centralizers of L-parameters }\label{subsec:centralizers-defn-Gnu}

Fix $\phi$ as in \S \ref{subsec:geometry-XLGphi}.  We will be interested in understanding the Levi subgroups of $\LG$ 
through which $\phi$ factors.  (Here we are using the term ``Levi subgroup'' in the sense of Levi subgroups of
disconnected reductive groups; c.f.~\cite[\S 3]{Borel}; in particular, a Levi subgroup $\LM$ of $\LG$ factors as 
a semidirect product ${\hM} \rtimes W_F$ for a unique Levi subgroup ${\hM}$ of ${\hG}$.) Recall from 
\S \ref{subsec:geometry-XLGphi} that we denote ${\hG}_{\nu}:=C_{\hG}(\nu(I_F))^{\circ}$. 
Likewise we can define $\hM_\nu$. 

Recall that a Langlands parameter in $X_\LG$ is called \emph{discrete} if it does not factor through 
any proper Levi subgroup of $\LG$.

\begin{lemma} \label{lemma:irreducible}
Suppose that $\phi$ is discrete.
\begin{enumerate}
    \item ${\hG}_{\nu}$ is a torus.
    \item The map $\big( Z({\hG})^{I_F} \big)^\circ \rightarrow {\hG}_{\nu}$ induces a surjection on $\phi(\Fr)$-coinvariants.
    \item Every parameter $\phi'$ in $X_{\LG}^{\phi}$ is discrete.
    \item Every parameter $\phi' = (\rho',N')$ on $X_{\LG}^{\phi}$ satisfies $N' = 0$.
\end{enumerate}
\end{lemma}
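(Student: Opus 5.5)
We use the standard characterization of discreteness: a parameter $\phi=(\rho,N)$ is discrete iff the centralizer $Z_{\hG}(\phi)$ of its image has identity component contained in $Z(\hG)$, i.e.\ $Z_{\hG}(\phi)^\circ = (Z(\hG)^{W_F})^\circ$. Equivalently, $\phi$ factors through a proper Levi $\LM$ exactly when $Z_{\hG}(\phi)$ contains a torus not central in $\hG$. (One direction: a proper Levi is the centralizer of the torus $Z(\hM)^{W_F,\circ}$, which is non-central. Conversely, a non-central torus in $Z_{\hG}(\phi)$ has a centralizer that is a proper Levi containing the image of $\phi$.)

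Recall that we have reduced to $N=0$ and arranged that $\rho(\Fr)$ acts on $\hG_\nu$ with finite order, preserving a Borel pair $(\hB_\nu,\hT_\nu)$. Write $\theta$ for this automorphism.

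For (1), the centralizer of $\phi$ in $\hG_\nu$ is the fixed group $\hG_\nu^{\theta}$. Because $\theta$ has finite order and preserves a pinning-type pair, $(\hT_\nu^\theta)^\circ$ is a maximal torus of $(\hG_\nu^\theta)^\circ$. It is nontrivial modulo the center unless $\hG_\nu$ is a torus: for a quasi-split-type action, the fixed points of the maximal torus contain regular elements of $\hG_\nu$ whenever the derived group is nontrivial. Concretely, $\sum_{k} \theta^k(\rho^\vee)$ over a $\theta$-orbit gives a $\theta$-fixed cocharacter which is regular dominant for $\hB_\nu$. This cocharacter centralizes $\phi$. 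If the derived group of $\hG_\nu$ is nontrivial, the cocharacter is not central in $\hG_\nu$. It is then also not central in $\hG$, since the centralizer in $\hG$ of a torus in $\hG_\nu$ that is central in $\hG$ would contain all of $\hG_\nu$. This contradicts discreteness, so $\hG_\nu$ is a torus.

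For (2), now $\hG_\nu$ is a torus, and $(\hG_\nu^\theta)^\circ \subset Z_{\hG}(\phi)^\circ \subset Z(\hG)$. This forces $(\hG_\nu^\theta)^\circ = ((Z(\hG)^{I_F})^{\circ})^{\theta,\circ}$. For a torus $S$ with a finite-order automorphism $\theta$, the coinvariants $S_\theta = S/(1-\theta)S$ have identity component isogenous to $(S^\theta)^\circ$. Dimension counting on cocharacter lattices (rationally, invariants equal coinvariants) then shows that the map $(Z(\hG)^{I_F})^\circ_\theta \to (\hG_\nu)_\theta$ is surjective on identity components. The point we expect to be the main obstacle is showing it is surjective on all of $(\hG_\nu)_\theta$, including the finite component group. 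We would attack this by noting that the components of $(\hG_\nu)_\theta$ index distinct connected components of $\tilde X^{\nu}_{\LG}$ near $\phi$, whereas $\tilde X^{\nu,\phi}_{\LG}\cong \tilde X^1_{\LG_\nu}$ is connected. Hence the relevant statement should really concern the image in $\pi_0$ modulo the same relation. If needed, we would instead deduce it from connectedness of $X^\phi_{\LG}$ together with Theorem~\ref{thm:moduli reduction}.

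For (3) and (4), we apply Theorem~\ref{thm:moduli reduction}: every $\phi'$ on $X^\phi_{\LG}$ is, up to conjugacy, $\iota_\phi$ of a unipotent parameter $(\tau,N')$ for the torus $\LG_\nu$. Since $\hG_\nu$ is a torus, its Lie algebra contains no nonzero nilpotents, so $N'=0$, which gives (4). Moreover $\phi'(\Fr)=t\rho(\Fr)$ with $t\in\hG_\nu$. By (2) we can write $t = z\cdot s\theta(s)^{-1}$ with $z\in (Z(\hG)^{I_F})^\circ$ and $s\in \hG_\nu$. Conjugating by $s$ shows that $\phi'$ is conjugate to $z\phi$, a central twist of $\phi$. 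Twisting by a central cocycle does not change the centralizer, so $\phi'$ is discrete, giving (3).
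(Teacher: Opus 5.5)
\textbf{The gap is in (2), and (3) inherits it.} Your parts (1), (3) and (4) follow the paper. For (1), your $\theta$-fixed regular cocharacter (the sum of the positive coroots of the $\theta$-stable Borel pair) is a non-central torus in $Z_{\hG}(\phi)$. This is the same mechanism as the paper's proper $\phi(\Fr)$-stable Levi of $\hG_\nu$, whose center gives a proper Levi of $\LG$ through which $\phi$ factors. Part (4) is read off from Theorem~\ref{thm:moduli reduction} exactly as in the paper. Part (3) is the same argument: conjugate by an element of $\hG_\nu$ until $\phi'(\Fr)$ and $\phi(\Fr)$ differ by a central element.

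For (2) the paper only cites \cite[(6.6)]{moduli}, so a self-contained proof would be a real addition. But you leave it unfinished. You only obtain surjectivity of $(Z(\hG)^{I_F})^\circ_\theta \to (\hG_\nu)_\theta$ on identity components, and you suggest the full statement may need weakening. Step (3), however, needs the full statement: every $t \in \hG_\nu$ must be written as $z\cdot s\theta(s)^{-1}$.

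\textbf{The obstacle you name does not exist.} Put $S=\hG_\nu$ (a torus by (1)) and $A=(Z(\hG)^{I_F})^\circ\subset S$. Then $S_\theta=S/(1-\theta)S$ is the image of the connected group $S$, so it is connected. Equivalently, its character group $X^*(S)^\theta$ is torsion-free, so $S_\theta$ is a torus. It is the invariants $S^\theta$ that can be disconnected, not the coinvariants. So surjectivity on identity components already is surjectivity.

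Your dimension count then finishes the proof. From $(S^\theta)^\circ=(A^\theta)^\circ$ and exactness of $\theta$-invariants on $\QQ$-vector spaces, the restriction $X^*(S)^\theta\to X^*(A)^\theta$ is injective. This is equivalent to surjectivity of $A\to S_\theta$.

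\textbf{Your proposed workarounds would not close the gap.} Theorem~\ref{thm:moduli reduction} only gives $\phi'(\Fr)=t\rho(\Fr)$ with $t\in\hG_\nu$; it does not give the decomposition of $t$ that (3) uses. Weakening (2) to a statement ``modulo $\pi_0$'' would leave (3) unproved for any $t$ outside the image. The connectedness intuition you reach for is correct in substance. For a torus $\hG_\nu$ with $N=0$, the scheme $\tilde X^1_{\LG_\nu}$ is just $\hG_\nu$, which surjects onto $(\hG_\nu)_\theta$. That is exactly the connectedness of $S_\theta$ you need.
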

\begin{proof}
We first prove (1). 
Recall that, without loss of generality, we have chosen $\phi$ so that conjugation by $\phi(\Fr)$ preserves a Borel pair of ${\hG}_{\nu}$; in particular, it preserves a maximal torus ${\hT}_{\nu}$ of ${\hG}_{\nu}$. We must show that in fact ${\hT}_{\nu} = {\hG}_{\nu}$.  Suppose otherwise, then there is a non-empty orbit of $\Fr$ on the set of simple roots for ${\hG}_{\nu}$ and a proper, $\phi(\Fr)$-stable Levi subgroup ${\hM}_{\nu}$ of ${\hG}_{\nu}$.  Let $Z$ be the center of this Levi subgroup ${\hM}_{\nu}$, and let ${\hM}$ be the Levi subgroup $C_{\hG}(Z)$. The union 
\[
\LM := \bigcup\nolimits_{w \in W_F} {\hM} \phi(w) 
\]
is then a proper Levi subgroup of $\LG$ through which $\phi$ factors, contradicting our starting assumption on $\phi$. Thus $\LG_{\nu}$ must be a torus.

Claim (2) is \cite[(6.6)]{moduli}, and is proven \textit{loc.cit}.

We now prove (3). Suppose $\phi' = (\rho',N')$ is another Langlands parameter on $X_{\LG}^{\phi}$.  If $\phi'$ factors through a proper Levi subgroup, then so does the parameter $(\rho',0)$; thus, without loss of generality, we may assume that $N'=0$. Upon conjugating $\phi'$ by an element of ${\hG}$, we may assume that $\rho'$ agrees with $\rho$ on $I_F$.  By claim (2), after further conjugation by an element of ${\hG}$ centralizing $\nu$, we can assume that $\phi'(\Fr)$ and $\phi(\Fr)$ differ by an element of $Z(\hG)^{I_F}$.  But such an element is contained in every Levi subgroup of $\LG$, so if $\phi'$ factors through a proper Levi subgroup then so does $\phi$.

Claim (4) follows from the isomorphism $X^1_{\LG_{\nu}}/\pi_{0,G,\phi} \cong X^{\phi}_{\LG}$ given in 
\eqref{eqn:moduli-reduction-thm} from Theorem \ref{thm:moduli reduction}, noting that by (1) no Langlands parameter 
for $\LG_{\nu}$ has nonzero monodromy.
\end{proof}
\begin{definition}
We will henceforth refer to a discrete L-parameter for $\LG$ with trivial monodromy as 
a {\em supercuspidal} L-parameter.
\end{definition}
Now let $\LM$ be a standard Levi subgroup of $\LG$. Let $\phi$ be a supercuspidal L-parameter for $\LM$, and let $\nu$ still denote its restriction to the inertia subgroup.  Then $\hM_\nu$ is a torus by Lemma \ref{lemma:irreducible}.  Let $\LP$ be the standard parabolic subgroup of $\LG$ with Levi subgroup $\LM$. Let ${\hP}$ and ${\hM}$ be the identity components of $\LP$ and $\LM$, respectively. We first prove the following:
\begin{lemma} \label{lemma:levis}
Let $\LL$ be a Levi subgroup of $\LG$ containing $\LM$. Let $\LQ$ be any parabolic subgroup of $\LG$ with Levi $\LL$; let ${\hL}$ and ${\hQ}$ denote their identity components.  Then the intersections ${\hQ} \cap \hG_{\nu}$ and ${\hL} \cap \hG_{\nu}$ are a parabolic subgroup and a Levi subgroup of $\hG_{\nu}$, respectively. In particular, both are connected, and thus equal to ${\hQ}_{\nu}$ and ${\hL}_{\nu}$, respectively.
\end{lemma}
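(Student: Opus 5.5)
The plan is to use the standard fact that Levi and parabolic subgroups of a connected reductive group are centralizers of tori, respectively attracting sets of cocharacters, and that these descriptions behave well under passing to the centralizer of a reductive subgroup. First set up the cocharacter. Since $\LL \supseteq \LM$ and $\phi$ factors through $\LM$, the group $\nu(I_F)$ lies in $\LL$. Let $Z_L := Z(\hL)^{\circ}$. Because $\LL$ is a Levi subgroup of $\LG$ in the sense of \cite[\S 3]{Borel}, we have $\hL = C_{\hG}(A)$ for the torus $A := (Z(\hL)^{W_F\text{-stable part}})^\circ$, i.e.\ $A$ is the identity component of the centralizer of $\LL$ in $Z(\hL)$. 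Every element of $\LL$, and in particular $\nu(I_F)$, then centralizes $A$. Moreover, we can choose a cocharacter $\lambda$ of $A$ such that $\hQ = P_{\hG}(\lambda)$ and $\hL = C_{\hG}(\lambda)$. This is possible because $\LQ$ is a parabolic of the disconnected group $\LG$, so it is the attracting group of a cocharacter centralized by $\LL$.

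Next intersect with $\hG_\nu$. Since $\lambda$ commutes with $\nu(I_F)$, its image lies in $C_{\hG}(\nu)$; being a connected torus, it lies in $\hG_\nu$. For a connected reductive group $H$ and a cocharacter $\lambda$ of $H$, the standard facts give that $P_H(\lambda)$ is parabolic and $C_H(\lambda)$ is a Levi factor of it, both connected. Applying this with $H = \hG_\nu$, and using that $P_{\hG_\nu}(\lambda) = P_{\hG}(\lambda) \cap \hG_\nu$ and $C_{\hG_\nu}(\lambda) = C_{\hG}(\lambda)\cap \hG_\nu$, we get that $\hQ \cap \hG_\nu$ is a parabolic of $\hG_\nu$ and $\hL \cap \hG_\nu$ is a Levi subgroup of $\hG_\nu$. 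Both are connected, so they are contained in $(\hQ\cap C_{\hG}(\nu))^\circ = \hQ_\nu$. Conversely, $\hQ_\nu$ is connected and centralizes $\nu$, so it lies in $\hG_\nu\cap \hQ$. This gives the claimed equalities, and likewise for $\hL$.

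The main obstacle is justifying that $\hQ$ and $\hL$ come from a cocharacter $\lambda$ centralized by $\nu(I_F)$; connectedness of $\hG_\nu\cap\hQ$ is what fails in general, since $C_{\hG}(\nu)$ is disconnected, and the cocharacter description is exactly what avoids this issue. To get $\lambda$, I would use Borel's description: $\LL = C_{\LG}(A)$ for the torus $A = (Z(\hL)^{W_F})^\circ$. Note that $\LL$ is determined by $\hL$ and meets every component of $\LG$, and $\phi(W_F)\subset\LM\subset\LL$, so the $W_F$-action on $Z(\hL)$ obtained by conjugating by $\phi$ is the relevant one. Then $\LQ$ corresponds to a chamber for the roots of $A$, and a generic $\lambda\in X_*(A)$ in that chamber works. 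Since $A$ is fixed by $\phi(I_F) = \nu(I_F)$, $\lambda$ commutes with $\nu$. If the identification of $\LQ$ with $P_{\LG}(\lambda)$ requires care, I would fall back on the case at hand, a standard parabolic, where $\lambda$ can be taken dominant in $X_*(Z(\hL))^{W_F}$ with respect to the $W_F$-stable pinning.
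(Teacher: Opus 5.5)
Your proof is correct and is essentially the paper's argument: both realize $(\hQ,\hL)$ as the attracting and centralizer subgroups of a one-parameter object that is invariant under conjugation by $\phi(W_F)$, and hence commutes with $\nu(I_F)$, and then intersect with $\hG_\nu$. The paper uses a central element $z$ of $\hL$ and replaces it by a product of its $\phi(w)$-conjugates, which is the step you flag as the obstacle. Your cocharacter version is slightly cleaner, since the image of $\lambda$ is automatically a connected torus in $\hG_\nu$. The required $\lambda\in X_*(A)$ exists because the open convex chamber of $\hQ$ in $X_*(Z(\hL)^\circ)\otimes\RR$ is stable under the finite group through which $\LL\subseteq\LQ$ acts, so averaging a point of that chamber lands in $X_*(A)\otimes\RR$.
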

\begin{proof}
Since ${\hQ}$ is a parabolic subgroup of ${\hG}$ with Levi ${\hL}$, we can find a central element $z$ of ${\hL}$ such that ${\hL}$ is the centralizer of $z$ in ${\hG}$ and ${\hQ}$ is the subgroup of ${\hG}$ consisting of all elements $g\in \hG$ such that $z^m g z^{-m}$ approaches a well-defined limit as $m$ goes to infinity.  Since $\phi$ factors through ${\hL}$, any conjugate of $z$ by $\phi(w)$, as $w$ ranges over $W_F$, will have the same property; upon replacing $z$ with a finite product of such conjugates, we may assume without loss of generality that $z$ lies in ${\hL}_{\nu}$ and is stable under the action of $W_F/I_F$ on ${\hL}_{\nu}$. Then ${\hQ} \cap \hG_{\nu}$ consists of the elements of ${\hG}_{\nu}$ such that $z^m g z^{-m}$ approaches a well-defined limit as $m$ goes to infinity; this is a parabolic subgroup as desired.  The elements that arise as limits are precisely those in ${\hL} \cap \hG_{\nu}$, so it is a Levi subgroup of ${\hQ} \cap \hG_{\nu}$.
\end{proof}

From the above Lemma \ref{lemma:levis} we immediately have the following:

\begin{cor} \label{cor:pinning}
The pair $({\hM}_{\nu},{\hP}_{\nu})$ is a Borel pair for ${\hG}_{\nu}$, stable under the action of $\rho (\Fr)$.  Moreover, there exists an element $m$ of ${\hM}_{\nu}$, and a pinning $({\hM}_{\nu}, {\hP}_{\nu}, \{\mu_{\alpha}\})$ of ${\hG}_{\nu}$ extending the pair $({\hM}_{\nu},{\hP}_{\nu})$, that is preserved by conjugation by $m \rho (\Fr)$.
\end{cor}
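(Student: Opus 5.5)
Apply Lemma \ref{lemma:levis} to $\LL = \LM$, $\LQ = \LP$. This shows that $\hP_\nu = \hP \cap \hG_\nu$ is a parabolic subgroup of $\hG_\nu$ and that $\hM_\nu = \hM \cap \hG_\nu$ is a Levi factor of it. Since $\phi$ is a supercuspidal parameter for $\LM$, Lemma \ref{lemma:irreducible}(1), applied with $\LM$ in place of $\LG$, says that $\hM_\nu$ is a torus. A parabolic subgroup whose Levi factor is a torus is a Borel subgroup, and that torus is then a maximal torus. Hence $(\hM_\nu,\hP_\nu)$ is a Borel pair of $\hG_\nu$.

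Next we check stability under $\rho(\Fr)$. Conjugation by $\rho(\Fr)$ normalizes $\nu(I_F)$, because $I_F$ is normal in $W_F$. It therefore normalizes $C_{\hG}(\nu)$, and hence its identity component $\hG_\nu$. Since $\phi$ factors through $\LM \subset \LP$, the element $\rho(\Fr)$ lies in $\LP$ and normalizes both $\hM$ and $\hP$. Intersecting, we see that $\rho(\Fr)$ preserves $\hM_\nu$ and $\hP_\nu$.

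For the pinning, choose any pinning $\{\mu_\alpha\}$ of $\hG_\nu$ extending the Borel pair, with one nonzero root vector $\mu_\alpha$ for each simple root $\alpha$ of $(\hG_\nu,\hP_\nu,\hM_\nu)$. Conjugation by $\rho(\Fr)$ preserves the Borel pair, so it permutes the simple roots: it sends $\mu_\alpha$ to $c_\alpha \mu_{\sigma(\alpha)}$ for some permutation $\sigma$ and scalars $c_\alpha \in \CC^\times$.

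We want $m \in \hM_\nu$ whose adjoint action rescales each $\mu_\beta$ so as to cancel the $c_\alpha$. An element $m$ of the torus acts on $\mu_\beta$ by $\beta(m)$, so the condition is a system of equations $\sigma(\alpha)(m) = c_\alpha^{-1}$ over all simple roots $\alpha$. This system is solvable because the simple roots of $\hG_\nu$ are linearly independent characters of the maximal torus $\hM_\nu$. Consequently the map $\hM_\nu \to (\CC^\times)^{\Delta}$, $m \mapsto (\beta(m))_\beta$, is a surjection of tori: its image is a subtorus whose dimension equals the rank of the lattice spanned by $\Delta$, which is $|\Delta|$.

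Finally, $m$ lies in $\hM_\nu$, which is the torus of the Borel pair, so $m\rho(\Fr)$ still preserves $(\hM_\nu,\hP_\nu)$. By construction it now sends $\mu_\alpha$ to $\mu_{\sigma(\alpha)}$, so it preserves the pinning.

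The only potentially delicate point is confirming that $\hG_\nu$, the identity component of the centralizer, is normalized by $\rho(\Fr)$ and that the intersections are connected. This is exactly what Lemma \ref{lemma:levis} supplies, so the argument should go through without obstacle.
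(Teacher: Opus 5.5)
Your proof is correct and follows the paper's route: the Borel-pair claim comes from Lemma \ref{lemma:levis} applied with $\LM$ and $\LP$, together with Lemma \ref{lemma:irreducible}(1) for $\LM$. The pinning then comes from the fact that the torus $\hM_\nu$ acts transitively on pinnings extending the Borel pair; the paper cites this as standard, while you prove it by showing that $\hM_\nu \to (\CC^\times)^{\Delta}$ is surjective (note also that the $\rho(\Fr)$-stability of $\hG_\nu$ comes from your own normality argument, not from Lemma \ref{lemma:levis}, which only supplies connectedness of the intersections).
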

\begin{proof}
The only new claim here is the existence of a pinning preserved by $m \rho (\Fr)$ for some $m\in \hM_\nu$.  Fix a pinning $({\hM}_{\nu}, {\hP}_{\nu}, \{\mu_{\alpha}\})$ of ${\hG}_{\nu}$ extending the Borel pair $({\hM}_{\nu}, {\hP}_{\nu})$.  Then $\rho (\Fr)$ takes this pinning to a pinning of the form $({\hM}_{\nu}, {\hP}_{\nu}, \{\mu'_{\alpha}\})$. Here $\{\mu_{\alpha}\}$ and $\{\mu'_{\alpha}\}$ are two collections of pinning maps (into root subgroups) indexed by a basis of simple roots of $\Phi(\hG_\nu,\hM_\nu)$ determined by $\hP_\nu$. Since any two pinnings extending the Borel pair $({\hM}_{\nu},\hP_{\nu})$ are conjugate by a $\CC$-point of $\hM_{\nu}$, we can find an $m$ that conjugates $\{\mu'_{\alpha}\}$ to $\{\mu_{\alpha}\}$. Then $m \rho (\Fr)$ conjugates $\{\mu_{\alpha}\}$ to $\{\mu_{\alpha}\}$ as desired.
\end{proof}

\begin{cor}\label{coro:pi0-injection-Q-to-G}
The map $\pi_0(C_{\hQ}(\nu)) \rightarrow \pi_0(C_{\hG}(\nu))$ is injective. 
\end{cor}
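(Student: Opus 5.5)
The claim is equivalent to the identity
\[
C_{\hQ}(\nu) \cap \hG_{\nu} = C_{\hQ}(\nu)^{\circ},
\]
where $C_{\hQ}(\nu) = \hQ \cap C_{\hG}(\nu)$. Indeed, the kernel of $\pi_0(C_{\hQ}(\nu)) \to \pi_0(C_{\hG}(\nu))$ is $(C_{\hQ}(\nu)\cap C_{\hG}(\nu)^{\circ})/C_{\hQ}(\nu)^{\circ}$. So the plan is to show that every element of $\hQ$ which centralizes $\nu$ and lies in $\hG_\nu$ already lies in the identity component of $C_{\hQ}(\nu)$.

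First, I would note that $\hQ \cap C_{\hG}(\nu)^\circ = \hQ \cap \hG_\nu$. By Lemma \ref{lemma:levis}, this intersection is a parabolic subgroup of $\hG_\nu$, and in particular it is connected. It is also contained in $C_{\hQ}(\nu)$. Being a connected subgroup containing the identity, it therefore lies in $C_{\hQ}(\nu)^{\circ}$.

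For the reverse inclusion, $C_{\hQ}(\nu)^\circ$ is a connected subgroup of $C_{\hG}(\nu)$, hence it lies in $C_{\hG}(\nu)^\circ = \hG_\nu$. Combining the two inclusions gives $C_{\hQ}(\nu)\cap \hG_\nu = C_{\hQ}(\nu)^\circ$, and injectivity follows immediately.

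The only substantive input is the connectedness of $\hQ\cap\hG_\nu$ supplied by Lemma \ref{lemma:levis}. That lemma depends on $\phi$ factoring through $\LL$, which holds because $\LL \supseteq \LM$ and $\phi$ is a parameter for $\LM$. I would make this hypothesis explicit and otherwise expect no obstacle.
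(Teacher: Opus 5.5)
Your proof is correct and is essentially the paper's argument: the kernel is governed by $C_{\hQ}(\nu)\cap\hG_\nu=\hQ\cap\hG_\nu$, which is connected by Lemma \ref{lemma:levis} and hence lies in $C_{\hQ}(\nu)^\circ$. Your explicit description of the kernel as $(C_{\hQ}(\nu)\cap\hG_\nu)/C_{\hQ}(\nu)^\circ$ is a more precise statement of what the paper's one-line proof intends; the paper's wording there writes $\hG$ and $C_{\hG}(\nu)$ where $\hG_\nu$ is meant.
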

\begin{proof}
The kernel of this map is the component group of $C_{\hQ}(\nu) \cap {\hG}$, but this space is equal to ${\hQ} \cap C_{\hG}(\nu)$ and is thus connected by Lemma~\ref{lemma:levis}.
\end{proof}

\begin{cor}\label{coro:pi0-isom-Q-to-L}
The map $\pi_0(C_{\hQ}(\nu)) \rightarrow \pi_0(C_{\hL}(\nu))$ is an isomorphism.
\end{cor}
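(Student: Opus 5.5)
The plan is to use the Levi decomposition $\hQ = \hL \ltimes \hU_{\bQ}$ and show that the unipotent radical contributes nothing to the component group. The map in question is induced by the projection $r\colon \hQ \to \hL$. Since $\nu$ factors through $\LL$, the projection $\LQ \to \LL$ is $W_F$-equivariant and restricts to the identity on $\hL$. It therefore carries $C_{\hQ}(\nu)$ into $C_{\hL}(\nu)$, and $C_{\hL}(\nu) \subset C_{\hQ}(\nu)$ is a section of this map. Consequently, $C_{\hQ}(\nu) = C_{\hL}(\nu) \ltimes C_{\hU_{\bQ}}(\nu)$, where $C_{\hU_{\bQ}}(\nu)$ denotes the $\nu(I_F)$-fixed points of $\hU_{\bQ}$. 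Concretely, write $q = lu$ with $l \in \hL$ and $u \in \hU_{\bQ}$. Conjugation by $\nu(w)$ preserves both $\hL$ and $\hU_{\bQ}$, and the decomposition is unique, so $q$ centralizes $\nu$ if and only if both $l$ and $u$ do.

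From this semidirect product, the map $\pi_0(C_{\hQ}(\nu)) \to \pi_0(C_{\hL}(\nu))$ is surjective, because it has a section, and its kernel is the image of $\pi_0$ of the fibre $C_{\hU_{\bQ}}(\nu)$. It therefore suffices to show that $C_{\hU_{\bQ}}(\nu)$ is connected. The action of $I_F$ on $\hG$ through $\nu$ factors through a finite quotient, since $\nu$ has open kernel and the $I_F$-action on $\hG$ factors through a finite quotient. The fixed points of a finite group acting algebraically on a unipotent group over $\CC$ are connected; in characteristic zero they form a unipotent group, which is connected. To see this directly, filter $\hU_{\bQ}$ by its descending central series with vector-group graded pieces. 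Finite group cohomology $H^1$ vanishes on $\CC$-vector spaces, so fixed points are exact along the filtration. Each graded piece of fixed points is then a vector space, hence connected, and by induction $C_{\hU_{\bQ}}(\nu)$ is connected.

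Alternatively, one could argue via Lemma~\ref{lemma:levis} together with Corollary~\ref{coro:pi0-injection-Q-to-G}, but the direct argument above is cleaner. The main point requiring care is verifying that conjugation by $\nu(w)$ genuinely preserves the decomposition $\hQ = \hL \ltimes \hU_{\bQ}$. This holds because $\nu(w) \in \LL \subset \LQ$ normalizes both $\hQ$ and $\hL$, and hence also the unipotent radical $\hU_{\bQ}$. The connectedness of the fixed points of a finite group acting on a unipotent group is standard in characteristic zero, which is the setting throughout.
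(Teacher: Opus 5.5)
Your proposal is correct and follows the paper's route. The kernel of the projection $C_{\hQ}(\nu)\to C_{\hL}(\nu)$ is the fixed-point group $C_{\hU_{\bQ}}(\nu)$, a closed subgroup of the unipotent radical and hence connected in characteristic zero, which gives injectivity on $\pi_0$, while the inclusion $C_{\hL}(\nu)\subset C_{\hQ}(\nu)$ gives a section and hence surjectivity (you also make explicit the decomposition $C_{\hQ}(\nu)=C_{\hL}(\nu)\ltimes C_{\hU_{\bQ}}(\nu)$ and the finiteness of the $\nu(I_F)$-action, which the paper's one-line justification leaves implicit).
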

\begin{proof}
The kernel of the map $C_{\hQ}(\nu) \rightarrow \pi_0(C_{\hL}(\nu))$ is a subgroup of the unipotent radical of ${\hQ}$; since any subgroup of a unipotent group is connected, the induced map on component groups is injective. On the other hand, the inclusion of ${\hL}$ in ${\hQ}$ induces a section of this map on component groups, proving surjectivity.
\end{proof}

In light of Corollary~\ref{cor:pinning}, we shall henceforth assume that the Langlands parameter $\phi$ is chosen so that $\rho (\Fr)$ preserves a pinning of ${\hG}_{\nu}$; note that for any supercuspidal L-parameter $W_F \rightarrow \LM$, there will be such a $\phi$ on its connected component of $X_{\LM}$.  In particular, the automorphism of $\hG_{\nu}$ is induced by an automorphism of the root datum $(X^*_{\nu},\Sigma_{\nu},(X_*)_{\nu},\Sigma^{\vee}_{\nu})$ associated to $\hG_{\nu}$; this gives an action of $W_F/I_F$ on this root datum.

Let $(\bG_{\nu})_{\overline{F}}$ denote the dual group of $\hG_{\nu}$, considered as an algebraic group over $\overline{F}$, i.e.~$(\bG_{\nu})_{\overline{F}}$ is the reductive group over $\overline{F}$ associated to the root datum $((X_*)_{\nu}, \Sigma^{\vee}_{\nu}, X^*_{\nu}, \Sigma_{\nu})$ dual to that of $\hG_{\nu}$.  The action of $W_F/I_F$ on this root datum described in the previous paragraph induces an action of $W_F$ on $(\bG_{\nu})_{\overline{F}}$; this action preserves the Borel pair $((\bM_{\nu})_{\overline{F}},(\bP_{\nu})_{\overline{F}})$ of $(\bG_{\nu})_{\overline{F}}$
dual to the Borel pair $(\hM_{\nu}, \hP_{\nu})$ of $\hG_{\nu}$.

The action of $W_F/I_F$ on $(\bG_{\nu})_{\overline{F}}$ allows us to descend this group to a reductive group 
$\bG_{\nu}$ over $F$; similarly $(\bM_{\nu})_{\overline{F}}$ and $(\bG_{\nu})_{\overline{F}}$ descend to a maximal 
torus $\bM_{\nu}$ and a Borel subgroup $\bP_{\nu}$ of $\bG_{\nu}$.  In particular, $\bG_{\nu}$ is quasi-split.  
Moreover, as the action of $W_F$ on $(\bG_{\nu})_{\overline{F}}$ factors through $W_F / I_F$, the group 
$\bG_{\nu}$ splits over an unramified extension of $F$.

By construction, the L-group of $\bG_{\nu}$ is precisely $\LG_{\nu}$.  More generally, for any parabolic subgroup $\LQ$ of $\LG$ containing $\LP$, with Levi subgroup $\LL$, the group $\hQ_{\nu}$ is dual to a parabolic subgroup $\bQ_{\nu}$ of $\bG_{\nu}$ with Levi subgroup $\bL_{\nu}$ dual to $\hL_{\nu}$.  In particular ${\LL}_{\nu}$ and $\LQ_{\nu}$ will be the $L$-groups of the quasi-split algebraic groups $\bL_{\nu}$, and $\bQ_{\nu}$ over $F$.

\subsection{The coherent Springer sheaf} \label{subsec:sheaf}
We are now in a position to define the fundamental object of study on the spectral side, i.e. the 
\textit{coherent Springer sheaf} $S_G^{\phi}$. This is an analogue of the usual coherent Springer sheaf 
$\CS^1_G$ in \eqref{eqn:usual-coherent-Spring-sheaf}, supported now on $X_{\LG}^{\phi}$, instead of $X_{\LG}^1$. 

\begin{definition}\label{defn:phi-Springer-sheaf}
The {\em coherent Springer sheaf} is defined as $\CS^{\phi}_G:=(\pi_P)_* \CO_{X_{\LP}^{\phi}}$. 
\end{definition}
\noindent Theorem~\ref{thm:moduli reduction} then allows us to relate the endomorphism algebra of $\CS^{\phi}_G$ to that of $\CS^1_{G_{\nu}}.$ Indeed, let $\pi_{0,M,\phi}$ and $\pi_{0,G,\phi}$ be the $\phi(\Fr)$-fixed subgroups of $\pi_0(C_{\hM}(\nu))$ and $\pi_0(C_{\hG}(\nu))$, respectively. By Corollaries \ref{coro:pi0-injection-Q-to-G} and \ref{coro:pi0-isom-Q-to-L}, we have an injection $\pi_{0,M,\phi}\hookrightarrow\pi_{0,G,\phi}$, whose cokernel we denote by ${\widehat R}_{G,\phi}$, thus we have an exact sequence:
\begin{equation}\label{eqn:pi-0-G-SES}
0 \rightarrow \pi_{0,M,\phi} \rightarrow \pi_{0,G,\phi} \rightarrow {\widehat{R}}_{G,\phi} \rightarrow 0.
\end{equation}
We then have the following: 

\begin{thm} \label{thm:Springer reduction}
There is an isomorphism 
\begin{equation}\label{eqn:thm-Springer-reduction}
\End(\CS^{\phi}_G) \cong \End(\CS^1_{G_{\nu}})^{\pi_{0,M,\phi}} \rtimes \CC[{\widehat R}_{G,\phi}],
\end{equation}
compatible with the action of $\pi_{0,G,\phi}$ on $\End(\CS^1_{G_{\nu}})$ induced by 
$\tilde{\iota}_\phi$ Moreover, the sheaf $\CO_{X^{\phi}_{\LG}}$ is a direct summand of 
$\CS^{\phi}_G$, and in particular lies in the full subcategory of $\IndCoh(X^{\phi}_{\LG})$
generated by $\CS^{\phi}_G$. 

Furthermore, there is an isomorphism:
\begin{equation}
\Hom(\CS^{\phi}_G, \CO_{X^{\phi}_{\LG}}) \cong \Hom(\CS^1_{G_{\nu}}, \CO_{X^1_{\LG_{\nu}}})^{\pi_{0,M,\phi}}
\end{equation}
compatible with the actions of $\End(\CS^1_{G_{\nu}})^{\pi_{0,M,\phi}}$ on the source and the target.
\end{thm}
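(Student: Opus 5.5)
The plan is to transport everything through the isomorphism of Theorem~\ref{thm:moduli reduction}, applied both to $G$ and to $P$. First I will show that the analogous reduction holds for parabolics. By Lemma~\ref{lemma:levis}, $\hP\cap\hG_\nu=\hP_\nu$ is a Borel of $\hG_\nu$ with Levi $\hM_\nu$. By Corollaries~\ref{coro:pi0-injection-Q-to-G} and~\ref{coro:pi0-isom-Q-to-L}, the same argument as in \S\ref{subsec:geometry-XLGphi} gives
\[
X^{\phi}_{\LP}\cong X^1_{\LP_\nu}/\pi_{0,M,\phi} .
\]
Here $\pi_{0,M,\phi}\cong\pi_{0,P,\phi}$ acts on $\tilde X^1_{\LP_\nu}$ through conjugation preserving $\iota_\phi(\LP_\nu)$. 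These isomorphisms are compatible with $\pi_P$, so we get a commutative square
\[
X^1_{\LP_\nu}/\pi_{0,M,\phi}\to X^1_{\LG_\nu}/\pi_{0,G,\phi} .
\]
I would factor this map as
\[
X^1_{\LP_\nu}/\pi_{0,M,\phi}\xrightarrow{\;\pi_{P_\nu}\;}X^1_{\LG_\nu}/\pi_{0,M,\phi}\xrightarrow{\;q\;}X^1_{\LG_\nu}/\pi_{0,G,\phi}.
\]
Let $p\colon X^1_{\LG_\nu}\to X^1_{\LG_\nu}/\pi_{0,M,\phi}$ denote the quotient by a finite group; this is a finite étale cover, so we are in characteristic zero. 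We have $\CS^1_{G_\nu}=(\pi_{B})_*\CO_{X^1_{\LP_\nu}}$. The sheaf $\CS^\phi_G$ is then identified with $q_*\mathcal{S}$, where $\mathcal S$ is the $\pi_{0,M,\phi}$-equivariant descent of $\CS^1_{G_\nu}$. In other words, $\CS^\phi_G$ corresponds to the $\pi_{0,G,\phi}$-equivariant sheaf $\mathrm{Ind}_{\pi_{0,M,\phi}}^{\pi_{0,G,\phi}}\CS^1_{G_\nu}=\bigoplus_{r\in\widehat R_{G,\phi}} r^*\CS^1_{G_\nu}$.

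Next I will compute the endomorphisms. Homs in $\IndCoh$ of a quotient by a finite group are the invariants of equivariant Homs. Frobenius reciprocity then gives
\[
\End(\CS^\phi_G)=\Big(\bigoplus_{r\in \widehat R_{G,\phi}}\Hom(\CS^1_{G_\nu},r^*\CS^1_{G_\nu})\Big)^{\pi_{0,M,\phi}} .
\]
The key input is that each element of $\pi_{0,G,\phi}$ acts on $X^1_{\LG_\nu}$ via a pinned automorphism of $\LG_\nu$ (Corollary~\ref{cor:pinning}). It therefore preserves $\hP_\nu$ up to $\hG_\nu$-conjugacy, which is trivial on the stack. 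So there is a canonical isomorphism $r^*\CS^1_{G_\nu}\cong\CS^1_{G_\nu}$, induced by the automorphism of $X^1_{\LP_\nu}$. I will choose representatives of $\pi_{0,G,\phi}$ normalizing the pinning, so that these isomorphisms form an honest action of $\pi_{0,G,\phi}$ rather than a twisted one. This is the step I expect to be the main obstacle: it requires showing the cocycle is trivial, and I would try to lift $\pi_{0,G,\phi}$ into the stabilizer of the pinning in $C_{\hG}(\nu)_\phi$. That stabilizer maps isomorphically onto $\pi_{0,G,\phi}$ modulo $Z(\hG_\nu)$-type elements, which act trivially on the stack. With the honest action in hand, the right-hand side becomes $\End(\CS^1_{G_\nu})^{\pi_{0,M,\phi}}\otimes\CC[\widehat R_{G,\phi}]$ with the semidirect product multiplication, and it is compatible with the $\pi_{0,G,\phi}$-action by construction.

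For the direct-summand claim, I will use the unipotent case. By Theorem~\ref{thm:unipotent categorical}, $\CO_{X^1_{\LG_\nu}}=\LocL^1(\cW^{[T,1]})$ is a summand of $\CS^1_{G_\nu}=\LocL^1(\ind_I 1)$, since $\cW^{[T,1]}$ is a summand of $\ind_I^{G_\nu}1$ (both are projective and $\cW$ is a summand of the Gelfand–Graev–Iwahori module). The corresponding idempotent is the Steinberg-type idempotent, which is invariant under pinned automorphisms. It is therefore $\pi_{0,M,\phi}$-invariant and descends. Pushing forward along $q$ and then projecting with the averaging idempotent in $\CC[\widehat R_{G,\phi}]$ exhibits $\CO_{X^\phi_{\LG}}=(q_*p_*\CO)^{\pi_{0,G,\phi}\text{-triv}}$ as a summand of $\CS^\phi_G$. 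Finally, the Hom formula follows from the same adjunction computation with target $\CO$. We have
\[
\Hom(\CS^\phi_G,\CO_{X^\phi_{\LG}})=\Hom_{\pi_{0,M,\phi}}(\CS^1_{G_\nu},\CO_{X^1_{\LG_\nu}}),
\]
because $\CO$ restricted along $q$ is the structure sheaf with its trivial equivariant structure. Module compatibility is then clear from functoriality.
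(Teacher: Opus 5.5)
Your computation of $\End(\CS^{\phi}_G)$ and of $\Hom(\CS^{\phi}_G,\CO_{X^{\phi}_{\LG}})$ follows the paper's route: factor through $X^1_{\LG_\nu}/\pi_{0,M,\phi}$, descend $\CS^1_{G_\nu}$ to $\CT$, then push forward along the $\widehat R_{G,\phi}$-quotient $q$. On one point you are more careful than the paper. The crossed-product formula needs an honest $\pi_{0,G,\phi}$-equivariant structure on $\CS^1_{G_\nu}$ extending the one coming from $X^{\phi}_{\LP}$, and your lift to the pinning stabilizer $\widetilde\Omega\subset C_{\hG}(\nu)_\phi$ supplies it. Stated cleanly: $\hP_\nu\widetilde\Omega$ acts on $\tilde X^1_{\LP_\nu}$, $\hG_\nu\widetilde\Omega=C_{\hG}(\nu)_\phi$ acts on $\tilde X^1_{\LG_\nu}$, and $\widetilde\Omega\cap\hG_\nu=Z(\hG_\nu)\subset\hP_\nu$. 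A small correction: $x\in\pi_{0,G,\phi}$ acts by $\Ad_{x_P}$ \emph{followed by translation by} the central element $\zeta(x)$ (Proposition~\ref{prop:4.actionXonSGnu}), not by a pinned automorphism alone. This is harmless, since $\zeta(x)\in Z(\hG_\nu)\subset\hP_\nu$ and $\zeta(x)$ fixes every $T_w$.

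The gap is in the direct-summand step.
\begin{itemize}
\item Invariance of the sign idempotent $e$ only shows that $e$ descends. So $\bar e\,\CT$ is a summand of $\CT$ whose pullback to $X^1_{\LG_\nu}$ is isomorphic to $\CO$.
\item A line bundle on $X^1_{\LG_\nu}/\pi_{0,M,\phi}$ with trivial pullback is $\CO$ twisted by a class in $H^1(\pi_{0,M,\phi},\CO(X^1_{\LG_\nu})^{\times})$, and this group need not vanish.
\item Nothing in your argument shows that the descent datum $e\CS^1_{G_\nu}$ inherits from $\CS^1_{G_\nu}$ is the trivial one on $\CO_{X^1_{\LG_\nu}}$. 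Invariance of $e$ only controls the induced action on $e\End(\CS^1_{G_\nu})e=\End(e\CS^1_{G_\nu})$, which is the same for every twist.
\item The same applies to the $\pi_{0,G,\phi}$-structure you average over. If it is a nontrivial twist, your $\widehat R_{G,\phi}$-averaged summand of $\CS^{\phi}_G$ is a twisted line bundle, not $\CO_{X^{\phi}_{\LG}}$.
\end{itemize}

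What is missing is a canonical map between $\CS^1_{G_\nu}$ and $\CO$ that is defined on the quotient stack itself. This is exactly why the paper, after noting that $\CO_{X^1_{\LG_\nu}}$ is a summand upstairs, realizes the splitting through the counit of $(\pi_{P_\nu})_*\dashv\pi_{P_\nu}^!$. Because $\pi^!\CO\cong\CO$ and the square is Cartesian, the counit $\CT\to\CO$ exists on $X^1_{\LG_\nu}/\pi_{0,M,\phi}$ and pulls back to the counit upstairs. It is therefore split surjective on the quotient, and one then pushes forward along the finite \'etale map $q$.

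To rescue your idempotent route, you would have to show that this counit (or the unit $\CO\to\CS^1_{G_\nu}$ of $\pi^*\dashv\pi_*$) restricts to an isomorphism on $e\CS^1_{G_\nu}$. That amounts to importing the paper's argument.

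Separately, your justification via a ``Gelfand--Graev--Iwahori module'' should be replaced by the identification $\cW^{[M_\nu,1]}_{G_\nu}\cong\ind_{K_\nu}^{G_\nu}\St$ with $I_\nu\subset K_\nu$.
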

\begin{proof}
We have a commutative diagram:
\begin{equation}\label{eqn:comm-diag-spectral-side}
\begin{tikzcd}
X^1_{\LP_{\nu}} \arrow[]{r}{}\arrow[]{d}{\pi_{P_\nu}}
&  X^1_{\LP_{\nu}}/\pi_{0,M,\phi}\arrow[]{r}{\cong}[swap]{\iota_{\phi}}\arrow[]{d}{\pi_{P_\nu}/\pi_{0,M,\phi}}
& X^{\phi}_{\LP} \\
X^1_{\LG_{\nu}} \arrow[]{r}{\mathrm{pr}}
& X^1_{\LG_{\nu}}/\pi_{0,M,\phi}\arrow[]{d}{}&\\
&   X^1_{\LG_{\nu}}/\pi_{0,G,\phi}\arrow[]{r}{\cong}
& X^{\phi}_{\LG}
\end{tikzcd}
\end{equation}
in which the square is Cartesian.  The Springer sheaf $\CS^1_{G_{\nu}}$ is the pushforward of the structure sheaf of $X^1_{\LP_{\nu}}$ along the left-hand vertical map, whereas $\CS^{\phi}_G$ is the pushforward of $\iota_\phi^* \CO_{X^\phi_\LP}$ to $X^\phi_\LG$ along the composition of the right-hand vertical maps.

Let $\CT$ be the pushforward of $\iota_{\phi}^*\CO_{X^{\phi}_{\LP}}$ along the upper right-hand vertical map $\pi_{P_\nu}/\pi_{0,M,\phi}$.  Then $\CS^1_{G_{\nu}}$ is the pullback of $\CT$ along the lower horizontal map $\mathrm{pr}$, thus we have an isomorphism 
\begin{equation}\label{eqn:isom-End-CT-End-SGnu1-inv}
\End(\CT) \cong \End(\CS^1_{G_{\nu}})^{\pi_{0,M,\phi}}.
\end{equation}
On the other hand, $\CS^{\phi}_G$ is the pushforward of $\CT$ along the lower right-hand vertical map, which, by \eqref{eqn:pi-0-G-SES}, is a quotient by the group ${\widehat R}_{G,\phi}$. Thus we have an isomorphism
$\End(\CS^{\phi}_G) \cong \End(\CT) \rtimes \CC[{\widehat R}_{G,\phi}]$ and the first claim follows.

Now consider $\Hom(\CS^{\phi}_G,  \CO_{X^{\phi}_{\LG}})$.  The sheaf $\CS^{\phi}_G$ is the pushforward of $\CT$ along the bottom vertical map, which is finite {\'e}tale.  Thus this $\Hom$-space is isomorphic to $\Hom(\CT, \CO_{X^1_{\LG_{\nu}/\pi_{0,M,\phi}}})$, by $!$-adjunction. Again since the square in \eqref{eqn:comm-diag-spectral-side} is Cartesian, we see that this is identified with $\Hom(\CS^1_{G_{\nu}}, \CO_{X^1_{\LG_{\nu}}})^{\pi_{0,M,\phi}}$ as claimed.

It remains to show that $\CO_{X^{\phi}_{\LG}}$ is a direct summand of $\CS^{\phi}_G$.~Firstly, it follows easily from Theorem \ref{thm:unipotent categorical} that $\CO_{X^1_{\LG_{\nu}}}$ is a direct summand of $\CS^1_{G_{\nu}}$: under the correspondence these sheaves correspond, respectively, to the space $\cW_{G_{\nu},[M_{\nu},1]}$ of compact Whittaker functions on the principal block, and the parabolic induction $i_{P_{\nu}}^{G_{\nu}} \chi^{\un}$.  Both of these can be interpreted as compact inductions: the former is isomorphic to $\ind_{K_{\nu}}^{G_{\nu}} \St$ where $K_{\nu}$ is a certain hyperspecial subgroup $K_{\nu}$ of $G_{\nu}$ and $\St$ denotes the inflation to $K_{\nu}$ of the Steinberg representation; the latter is isomorphic to $\ind_{I_{\nu}}^{G_{\nu}} 1$, where $I_{\nu}$ is an Iwahori subgroup of $G_{\nu}$ that we may take to be contained in $K_{\nu}$.  It is then clear that the former is a direct summand of the latter, so the unipotent local Langlands correspondence tells us that $\CO_{X^1_{\LG_{\nu}}}$ is a direct summand of $\CS^1_{G_{\nu}}$.

There is a natural way to realize this splitting on the spectral side.  We have a natural isomorphism of $\pi_{P_{\nu}}^! \CO_{X^1_{\LG_{\nu}}}$ with $\CO_{X^1_{\LP_{\nu}}}$, and the counit of the adjunction between $\pi_{P_{\nu}}^!$ and $(\pi_{P_{\nu}})_*$ thus gives a map from $\CS^1_{G_{\nu}}$ to $\CO_{X^1_{\LP_{\nu}}}$.  This map must be surjective: any other map from $\CS^1_{G_{\nu}}$ to $\CO_{X^1_{\LP_{\nu}}}$ is given by precomposing this counit with an endomorphism of $\CS^1_{G_{\nu}}$, and so the counit must have maximal image among all such maps.  Since $X^1_{\LP_{\nu}}$ is the quotient of an affine scheme by a reductive group, any surjection to the structure sheaf is necessarily split.

Now consider the structure sheaf $\CO$ on the quotient $X^1_{\LG_\nu}/\pi_{0,M,\phi}$.  Since the square in diagram \eqref{eqn:comm-diag-spectral-side} is Cartesian, the shriek pullback of $\CO$ along the upper right vertical map is the structure sheaf on $X^1_{\LP_\nu}/\pi_{0,M,\phi}$.  Thus the counit of the $!$-adjunction gives a map from $\CT$ to $\CO$, and the pullback of this counit along the lower horizontal map is the split surjection of $\CS^1_{G_{\nu}}$ onto $\CO_{X^1_{\LG_{\nu}}}$ described above.  Thus $\CT$ surjects onto $\CO$, and this surjection is also necessarily split.

Thus $\CO$ is a direct summand of $\CT$.  Pushing forward along the lower right-hand vertical map (which is finite {\'e}tale) we see that $\CO_{X_{\LG}^{\phi}}$ is a direct summand of $\CS^{\phi}_G$, as desired.
\end{proof}

Our next step is an analysis of the action of $\pi_{0,M,\phi}$ on $\End(\CS^1_{G_{\nu}})$.  Recall that this endomorphism algebra is an Iwahori--Hecke algebra.  It has a natural subalgebra $\End(\CS^1_{M_{\nu}})$ that is a Laurent polynomial ring. Since ${\hM}_{\nu}$ is a torus by Lemma \ref{lemma:irreducible}, $\CS^1_{M_{\nu}}$ is simply the structure sheaf on $X^1_{\LM_{\nu}}$. The algebra $\End(\CS^1_{M_{\nu}})$ acts on the Springer sheaf $\CS^1_{G_\nu}$ by endomorphisms, via the isomorphism
\begin{equation}\label{eq:4.SGM}
\CS^1_{G_{\nu}} \cong (\pi_{P_{\nu}})_* r_{P_{\nu}}^* \CS^1_{M_{\nu}}
\end{equation}
and functoriality of $(\pi_{P_{\nu}})_*$ and $r_{P_{\nu}}^*$.

The action of $\pi_{0,M,\phi}$ on $\End(\CS^1_{M_{\nu}})$ is easy to describe: an endomorphism of this sheaf is a function on $X^1_{\LM_{\nu}}$, so it suffices to describe the action of $\pi_{0,M,\phi}$ on the underlying coarse moduli scheme of $X^1_{\LM_{\nu}}$, which may be identified (via the map that evaluates a parameter at $\Fr$) with the torus ${\hM}_{\nu}/(\Fr - 1){\hM}_{\nu}$ of $W_F/I_F$-coinvariants of ${\hM}_{\nu}$. We have a natural group homomorphism:
\begin{equation}\label{eqn:map-defining-KMphi}
\pi_{0,M,\phi} \rightarrow {\hM}_{\nu}/(\Fr - 1){\hM}_{\nu} =: \hM_{\nu,\Fr} .
\end{equation}
It takes an element $x$ of $\pi_{0,M,\phi}$, chooses a lift ${\tilde x}$ to $C_{\hM}(\nu)$ (note that such a lift is well-defined up to an element of ${\hM}_{\nu}$), and then sends $x$ to $(1 - \Fr){\tilde x} = \tilde x \Fr (\tilde x)^{-1}$.  
\begin{definition}\label{defn:KMphi}
Let $K_{M,\phi}$ denote the image of $\pi_{0,M,\phi}$ in ${\hM}_{\nu,\Fr}$ under the map \eqref{eqn:map-defining-KMphi}. 
\end{definition}

\begin{lemma}\label{lem:4.11}
The action of $\pi_{0,M,\phi}$ on $\End(\CS^1_{M_{\nu}})$ factors through $K_{M,\phi}$. 
\end{lemma}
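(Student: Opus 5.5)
The plan is to make the action of $\pi_{0,M,\phi}$ on $X^1_{\LM_{\nu}}$ explicit, and then read off its effect on global functions. Since $\hM_\nu$ is a torus (Lemma \ref{lemma:irreducible}), $\CS^1_{M_\nu} = \CO_{X^1_{\LM_\nu}}$. Hence $\End(\CS^1_{M_\nu}) = \Gamma(X^1_{\LM_\nu},\CO)$, and the action of $x \in \pi_{0,M,\phi}$ on this ring is pullback of functions along the automorphism of $X^1_{\LM_\nu}$ induced by $x$. Recall that this automorphism comes from conjugation by a lift $\tilde x \in C_{\hM}(\nu)_\phi$ on the image $\iota_\phi(\LM_\nu)$, as in \S\ref{subsec:geometry-XLGphi}. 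Since $\hM_\nu$ is a torus, every point of $X^1_{\LM_\nu}$ is an unramified parameter with $N=0$. Such a parameter is determined by the value $t\rtimes \Fr$ at Frobenius, with $t \in \hM_\nu$, taken up to $\Fr$-twisted conjugacy. Twisted conjugacy by a torus is just translation by $(\Fr-1)\hM_\nu$. So the global functions are exactly the functions on the coarse space $\hM_{\nu,\Fr}$, and it suffices to compute the induced map on $\hM_{\nu,\Fr}$.

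The key computation is the following. Take a parameter whose image under $\iota_\phi$ sends $\Fr \mapsto t\,\rho(\Fr)$, and conjugate it by $\tilde x$. Since $\tilde x$ centralizes $\nu$ and normalizes $\hM_\nu$, we get
\[
\tilde x\, t\,\rho(\Fr)\,\tilde x^{-1} = (\tilde x t \tilde x^{-1})\cdot\big(\tilde x \,\rho(\Fr)\tilde x^{-1}\rho(\Fr)^{-1}\big)\cdot \rho(\Fr).
\]
The factor $\tilde x\,\rho(\Fr)\tilde x^{-1}\rho(\Fr)^{-1} = \tilde x\,\Fr(\tilde x)^{-1}$ lies in $\hM_\nu$, because $x$ is $\Fr$-fixed in $\pi_0$. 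Next I will show that $\Ad(\tilde x)$ acts trivially on $\hM_\nu$, or at least trivially on $\hM_{\nu,\Fr}$. Two facts should give this: $\hM_\nu$ is a torus, and by Lemma \ref{lemma:irreducible}(2) the map $(Z(\hM)^{I_F})^\circ \to \hM_{\nu,\Fr}$ is surjective. Indeed, every class in $\hM_{\nu,\Fr}$ is then represented by a central element of $\hM$, and $\tilde x \in \hM$ fixes such elements. Granting this, the action of $x$ on $\hM_{\nu,\Fr}$ is translation by the class of $\tilde x\,\Fr(\tilde x)^{-1}$, which is the image of $x$ under \eqref{eqn:map-defining-KMphi}.

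It remains to check well-definedness. Changing the lift $\tilde x$ by $m \in \hM_\nu$ multiplies $\tilde x\,\Fr(\tilde x)^{-1}$ by an element of $(1-\Fr)\hM_\nu$, using again that $\Ad(\tilde x)$ is trivial on the coinvariants. So the translation, and hence the action on functions, depends only on the image of $x$ in $K_{M,\phi}$. Elements in the kernel of $\pi_{0,M,\phi}\to K_{M,\phi}$ therefore act by the identity on the coarse space, and so trivially on $\End(\CS^1_{M_\nu})$. I expect the main obstacle to be the middle step: justifying that conjugation by $\tilde x$ is trivial on $\hM_{\nu,\Fr}$, and that the identification of global functions on the stack with functions on $\hM_{\nu,\Fr}$ is compatible with the group action. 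For the first point I would rely on the surjectivity from central elements described above.
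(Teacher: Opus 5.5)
Your proposal is correct and is essentially the paper's argument: conjugation by a lift $\tilde x$ acts on the coarse space $\hM_{\nu,\Fr}$ of $X^1_{\LM_\nu}$ by translation by $(1-\Fr)\tilde x$, so the action on $\End(\CS^1_{M_\nu})$ factors through $K_{M,\phi}$. What you add is a justification that $\Ad(\tilde x)$ acts trivially on $\hM_{\nu,\Fr}$, which the paper's one-line proof uses without comment; your route via the surjectivity of $(Z(\hM)^{I_F})^\circ_{\Fr}\to\hM_{\nu,\Fr}$ from Lemma~\ref{lemma:irreducible}(2) works, once you also note that $\Ad(\tilde x)$ preserves $(\Fr-1)\hM_\nu$ because $\Fr(\tilde x)\in\tilde x\hM_\nu$ and $\hM_\nu$ is abelian (only this statement on coinvariants holds in general, not triviality of $\Ad(\tilde x)$ on $\hM_\nu$ itself).
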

\begin{proof}
Note that if $(\rho, N): W_F \rightarrow \LM_{\nu}$ is a parameter on $X^1_{\LM_{\nu}}$
(and thus $\rho|_{I_F}=1$), then the conjugate of $\rho$ by ${\tilde x}$ is the unique
unramified parameter $\rho'$ with $\rho'(\Fr) = (1 - \Fr){\tilde x}\rho(\Fr)$, so the
action of $x$ on the coarse moduli space of $X^1_{\LM_{\nu}}$ is via translation by 
$(1 - \Fr){\tilde x}$. Thus the claim follows. 
\end{proof}

Recall the sheaf $\CT$ defined above \eqref{eqn:isom-End-CT-End-SGnu1-inv}. 
\begin{lemma} \label{lem:K-action}
The action of $\pi_{0,M,\phi}$ on $\End(\CS^1_{G_{\nu}})$ factors through $K_{M,\phi}$. In particular, we have:
\begin{equation}\label{eqn:isom-EndT-EndSGnu1-KMinv}
    \End(\CT)\cong \End(\CS_{G_\nu}^1)^{K_{M,\phi}}.
\end{equation}
\end{lemma}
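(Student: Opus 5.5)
We need to show: an element $x\in\pi_{0,M,\phi}$ mapping to $1$ in $K_{M,\phi}$ acts trivially on $\End(\CS^1_{G_\nu})$. The isomorphism \eqref{eqn:isom-EndT-EndSGnu1-KMinv} then follows from \eqref{eqn:isom-End-CT-End-SGnu1-inv}, since invariants under $\pi_{0,M,\phi}$ equal invariants under its image $K_{M,\phi}$.

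\textbf{Step 1: realize the action by a pinned automorphism and a central translation.} Take $x$ and a lift $\tilde x\in C_{\hM}(\nu)$. Conjugation by $\tilde x$ preserves $\hM_\nu$ and $\hG_\nu$. Since $\hP\cap\hG_\nu=\hP_\nu$ by Lemma~\ref{lemma:levis}, it also preserves the Borel pair $(\hM_\nu,\hP_\nu)$. Modifying $\tilde x$ by an element of $\hM_\nu$, we may assume $\Ad_{\tilde x}$ preserves the pinning fixed in Corollary~\ref{cor:pinning}. Then $\Ad_{\tilde x}$ is a pinned automorphism $\tau$ of $\hG_\nu$.

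Next, $\Ad_{\tilde x}$ must be compared with $\rho(\Fr)$. We have $\tilde x\rho(\Fr)\tilde x^{-1}=t\,\rho(\Fr)$ with $t=(1-\Fr)\tilde x$. Because both sides preserve the pinning, $t$ lies in $Z(\hG_\nu)\cap\hM_\nu$. Hence the action of $x$ on $X^1_{\LG_\nu}$, transported through $\tilde\iota_\phi$, is the composite of two pieces:
\begin{itemize}
\item the pinned automorphism $\tau$ of $\LG_\nu$, which commutes with the Frobenius action;
\item translation of the Frobenius value by the central element $t$.
\end{itemize}

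\textbf{Step 2: control each piece on $\End(\CS^1_{G_\nu})\cong\CH(G_\nu,I_\nu)$.} Use the basis \eqref{eq:3.27} of Bernstein elements $T_w\theta_\lambda$.
\begin{itemize}
\item By Theorem~\ref{thm:unipotent categorical}(4), $\tau$ acts by the induced automorphism of the based root datum, sending $T_w\theta_\lambda\mapsto T_{\tau(w)}\theta_{\tau(\lambda)}$.
\item By Theorem~\ref{thm:unipotent categorical}(5), translation by $t$ acts as twisting by the corresponding unramified character. This fixes each $T_w$ and rescales each $\theta_\lambda$ by $\lambda(t)$.
\end{itemize}
Equivalently, the composite preserves the subalgebra $\End(\CS^1_{M_\nu})$, i.e.\ the $\theta_\lambda$'s. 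It acts there as in Lemma~\ref{lem:4.11}, by translation by $(1-\Fr)\tilde x$ on $\hM_{\nu,\Fr}$.

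\textbf{Step 3: the key claim, that $\tau$ is trivial on the relevant root datum when $x$ lies in the kernel.} This is the step I expect to be the main obstacle. Since $\CH(G_\nu,I_\nu)$ is generated by $\End(\CS^1_{M_\nu})$ together with the $T_{s}$ for simple reflections $s$, it suffices to show the following two things when $x\mapsto1$ in $K_{M,\phi}$:
\begin{itemize}
\item the action on $\End(\CS^1_{M_\nu})$ is trivial;
\item each $T_s$ is fixed.
\end{itemize}

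The first already holds by Lemma~\ref{lem:4.11}. For the second, I would argue through the Bernstein presentation. The automorphism $\tau$ is a diagram automorphism permuting the simple $T_s$ and the $\theta_\lambda$ compatibly. Triviality on the $\theta_\lambda$'s forces $\tau$ to fix $X_*(\bM_\nu)^{\Fr}$ pointwise, and hence to fix the $\Fr$-orbits of simple roots. So $\tau$ fixes each $T_s$ of the $\Fr$-twisted affine Hecke algebra.

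For instance, if $\tau$ swapped two simple roots in distinct Frobenius orbits, it would move the corresponding coinvariant cocharacters. That would contradict trivial translation on $\hM_{\nu,\Fr}$, possibly after noting that the translation part is trivial exactly when $t\in(1-\Fr)\hM_\nu$, after adjusting $\tilde x$.

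I would first try to make this faithfulness argument precise, namely that $\CH$ embeds in the localization $\CH\otimes_{\CC[X]}\CC(X)$, where the $T_s$ are determined by the $\theta$'s and intertwiners. That would reduce everything to triviality on $\CC[X]$, which holds by the kernel assumption.
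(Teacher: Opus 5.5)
Your proof is correct, but it follows a different route from the paper's. The paper argues by functoriality alone. Since $\pi_{P_\nu}$ and $r_{P_\nu}$ are equivariant for conjugation by $C_{\hG}(\nu)_\phi$, \eqref{eq:4.SGM} shows that the action on $\CS^1_{G_\nu}$ is the geometric parabolic induction of the action on $\CS^1_{M_\nu}$, and the paper then invokes Lemma~\ref{lem:4.11}.

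You instead decompose the action of $x$ into the pinned automorphism $\tau=\Ad_{\tilde x}$ followed by translation by the central element $t=(1-\Fr)\tilde x$. You compute this on the Bernstein basis via Theorem~\ref{thm:unipotent categorical}(4)--(5), which anticipates Proposition~\ref{prop:4.actionXonSGnu} and Lemma~\ref{lem:4.actionXonHGI}. You then check by hand that the generators $T_s$ are fixed.

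What each route buys:
\begin{itemize}
\item The paper's argument is shorter and purely geometric. However, it leaves implicit why Lemma~\ref{lem:4.11} controls all of $\End(\CS^1_{G_\nu})$, since the $T_s$ are not in the image of $\End(\CS^1_{M_\nu})$.
\item Your Step~3 is exactly what justifies that passage. The cost is invoking parts (4)--(5), including the pinned-automorphism compatibility that the paper partly defers to forthcoming work. Applying (5) to $t$ also uses that its class lies in $Z(\hG_\nu)^\circ_\Fr$, which is Proposition~\ref{prop:4.actionXonSGnu}(1).
\end{itemize}

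Two simplifications. First, your orbit argument already completes Step~3, so the localization detour is unnecessary. $\tau$ permutes the $\Fr$-orbits $O$ of simple roots of $\hG_\nu$, and fixing the orbit sums $\sum_{a\in O}a\in X^*(\hM_\nu)^{\Fr}$ forces $\tau(O)=O$ by linear independence. Equivalently, $\bW(\hG_\nu,\hM_\nu)^{\Fr}$ acts faithfully on $X^*(\hM_\nu)^{\Fr}\otimes\QQ$.

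Second, you do not need the kernel hypothesis to see that $\tau$ fixes the $\theta_\lambda$-lattice. By Lemma~\ref{lemma:irreducible}(2), $\hM_{\nu,\Fr}$ is the image of $(Z(\hM)^{I_F})^\circ$, which $\tilde x\in\hM$ centralizes. So $\tau$ is trivial on $X^*(\hM_{\nu,\Fr})$ for every $x\in\pi_{0,M,\phi}$. Hence every $x$ acts by $T_w\theta_\lambda\mapsto\lambda(t)^{\pm1}T_w\theta_\lambda$. This visibly depends only on the class of $t$ in $\hM_{\nu,\Fr}$, i.e.\ on the image of $x$ in $K_{M,\phi}$, with no need to adjust $\tilde x$.
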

\begin{proof}
The maps $\pi_{P_\nu} : \hP_\nu \to \hG_\nu$ and $r_{P_\nu} : \hP_\nu \to \hM_\nu$ 
are equivariant for the conjugation action of $C_{\hG}(\nu)_\phi$. Therefore, by 
\eqref{eq:4.SGM}, the action of $\pi_{0,M,\phi}$ on $\CS^1_{G_\nu}$ is obtained via
the functoriality of $(\pi_{P_\nu})_*$ and $(r_{P_\nu})^*$ from the action
on $\CS^1_{M_\nu}$. In other words,
the actions of $\pi_{0,M,\phi}$ on $\CS^1_{M_\nu}$ and $\CS^1_{G_\nu}$ are related by 
parabolic induction in the geometric sense. It thus follows from Lemma \ref{lem:4.11} 
that both actions factor through $K_{M,\phi}$.
\end{proof}

Since $\hM_\nu$ is a torus (Lemma \ref{lemma:irreducible}), we may identify
$X^1_{\LM_\nu}$ with $\hM_{\nu,\Fr}$. Moreover, $\CS^1_{M_\nu} = \CO_{X^1_{\LM_\nu}}$ 
and there is a natural isomorphism
\begin{equation}\label{eq:4.EndSMnu}
\End (\CS^1_{M_\nu}) \cong \CC [X^1_{\LM_\nu}] = \CC [\hM_{\nu,\Fr}] .
\end{equation}
In the proof of Lemma \ref{lem:4.11} we 
saw that $\pi_{0,M,\phi}$ and $K_{M,\phi}$ act on $X^1_{\LM_\nu}$ via translations by the finite subgroup $K_{M,\phi}$ of $\hM_{\nu,\Fr}$. This also describes how
$K_{M,\phi}$ acts on \eqref{eq:4.EndSMnu}.

We now turn our attention from considering the action of $\pi_{0,M,\phi}$ to that of the quotient ${\widehat R}_{G,\phi}$ on $\End(\CS^1_{G_{\nu}})^{K_{M,\phi}}$.  Our approach here largely follows the proof of~\cite[Lemma 6.14]{moduli}, though our objectives are somewhat different.

Let ${\widetilde \Omega}_{\phi,G}$ denote the normalizer, in $C_{\hG}(\nu)$, of the maximal torus $\hM_{\nu}$ of $\hG_{\nu}$. Set $\Omega_{\phi,G} := {\widetilde \Omega}_{\phi,G}/\hM_{\nu}$.\footnote{This quotient is denoted by $\Omega_{\phi}$ in~\cite{moduli}, and the torus $\hM_{\nu}$ is denoted by $T_{\phi}$ in that context.}  Let ${\widetilde \Omega}_{\phi,G,\phi(\Fr)}$ be the subgroup of ${\widetilde \Omega}_{\phi,G}$ consisting of all $g \in {\widetilde \Omega}_{\phi,G}$ such that $(g \phi(\Fr) g)^{-1}$ lies in $\hM_{\nu}$, and set 
\[
\Omega_{\phi,G,\phi(\Fr)}:={\widetilde \Omega}_{\phi,G,\phi(\Fr)}/\hM_{\nu}.\footnote{The subgroup $\Omega_{\phi,G,\phi(\Fr)}$ of $\Omega_{\phi,G}$ is denoted by $\Omega_{\phi}^{\Ad \beta}$ in~\cite{moduli}.} 
\]
We write $\bW ({\hG} ,\hM) = N_{\hG} (\LM ) / \hM$. Let $\bW (\hM,\phi)$ be the stabilizer of 
$X^\phi_{\LM}$ in $\bW ({\hG}, \hM)$. It consists of the cosets $g \hM$ such that
$\Ad(g) \phi$ lies on the component $X^{\phi}_{\LM}$ of $X_{\LM}$. We record these groups for later use:
\begin{equation}\label{eqn:defn-W-Mhat-phi}
\bW(\hM,\phi) \subset \bW ({\hG} ,\hM) = N_{\hG} (\LM ) / \hM .
\end{equation}
The proof of \cite[Lemma 6.14]{moduli} (and particularly the final paragraph containing (6.7) \textit{loc.cit.}) shows that we have the following short exact sequence:\footnote{Note that in \cite{moduli} the group $\bW(\hM,\phi)$ is denoted by $W_{\varphi,{\mathcal M}}$, and the group $\pi_{0,M,\phi}$ is denoted by $K$.}
\begin{equation}0 \rightarrow \pi_{0,M,\phi} \rightarrow \Omega_{\phi,G,\phi(\Fr)} \rightarrow \bW(\hM,\phi) \rightarrow 0.
\end{equation}
Now consider the sequence of maps: 
\begin{equation}\label{eqn:SES-Omega-pi0G-R}
\Omega_{\phi,G,\phi(\Fr)} \rightarrow \pi_{0,G,\phi} \rightarrow {\widehat R}_{G,\phi}.
\end{equation}
The kernel of the composition \eqref{eqn:SES-Omega-pi0G-R} contains the subgroup $\pi_{0,M,\phi}$ of $\Omega_{\phi,G,\phi(\Fr)}$, and thus the composed map \eqref{eqn:SES-Omega-pi0G-R} factors through a map $\bW(\hM,\phi) \rightarrow {\widehat R}_{G,\phi}$, which fits into a commutative diagram:
\begin{equation}\label{eqn:big-diagram-pi0M-pi0G}
\begin{tikzcd}
0 \arrow[]{r}{}
& \pi_{0,M,\phi} \arrow[]{r}{}\arrow[]{d}{}
& \Omega_{\phi,G,\phi(\Fr)} \arrow[]{r}{}\arrow[]{d}{}
& \bW(\hM,\phi) \arrow[]{r}{}\arrow[]{d}{}
& 0\\
0 \arrow[]{r}{}
& \pi_{0,M,\phi}\arrow[]{r}{} 
& \pi_{0,G,\phi}\arrow[]{r}{} 
& {\widehat R}_{G,\phi}\arrow[]{r}{} 
& 0
\end{tikzcd}.
\end{equation}

\begin{lemma} \label{lemma:spectral Weyl sequence}
The maps $\Omega_{\phi,G,\phi(\Fr)} \rightarrow \pi_{0,G,\phi}$ and $\bW(\hM,\phi) \rightarrow {\widehat R}_{G,\phi}$ in diagram \eqref{eqn:big-diagram-pi0M-pi0G} are surjective.  In particular, we have an exact sequence:
\begin{equation}\label{eqn:exact-sequence-WGnuFr}
0 \rightarrow \bW(\hG_{\nu},\hM_\nu)^{\Fr} \rightarrow \bW(\hM,\phi) \rightarrow 
{\widehat R}_{G,\phi} \rightarrow 0,
\end{equation}
where $\bW(\hG_{\nu}, \hM_\nu)^{\Fr}$ denotes $\Fr$-invariant elements in $\bW(\hG_{\nu},\hM_\nu)$.  
\end{lemma}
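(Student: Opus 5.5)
The plan is to prove surjectivity of $\Omega_{\phi,G,\phi(\Fr)} \to \pi_{0,G,\phi}$ by a Frobenius-twisted Borel--torus conjugacy argument. The surjectivity of $\bW(\hM,\phi) \to {\widehat R}_{G,\phi}$ then follows at once from the diagram \eqref{eqn:big-diagram-pi0M-pi0G}. Finally, the kernel of $\bW(\hM,\phi)\to {\widehat R}_{G,\phi}$ is identified with $\bW(\hG_\nu,\hM_\nu)^{\Fr}$ by a diagram chase.

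\textbf{Step 1: surjectivity of $\Omega_{\phi,G,\phi(\Fr)} \to \pi_{0,G,\phi}$.} Let $x \in \pi_{0,G,\phi}$ and choose a lift $g \in C_{\hG}(\nu)_\phi$. Conjugation by $g$ preserves $\hG_\nu$, so $g$ sends the Borel pair $(\hM_\nu,\hP_\nu)$ of Corollary~\ref{cor:pinning} to another Borel pair of $\hG_\nu$. Multiplying $g$ by an element of $\hG_\nu$ does not change $x$. Since all Borel pairs of $\hG_\nu$ are $\hG_\nu$-conjugate, we may therefore assume $g$ normalizes both $\hM_\nu$ and $\hP_\nu$; in particular $g \in {\widetilde\Omega}_{\phi,G}$.

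It remains to check the Frobenius condition. Because $x$ is $\phi(\Fr)$-fixed, $h := g^{-1}\phi(\Fr) g \phi(\Fr)^{-1}$ lies in $\hG_\nu$, possibly after harmless adjustments of conventions. Both $g$ and $\phi(\Fr)$ stabilize the pair $(\hM_\nu,\hP_\nu)$, since $\phi(\Fr)$ does by Corollary~\ref{cor:pinning}. Hence $h$ normalizes both $\hM_\nu$ and $\hP_\nu$. An element of $\hG_\nu$ normalizing a Borel pair lies in its torus, so $h \in \hM_\nu$. This is exactly the condition defining ${\widetilde\Omega}_{\phi,G,\phi(\Fr)}$, so $g \in {\widetilde\Omega}_{\phi,G,\phi(\Fr)}$ maps to $x$.

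The one point needing care is that $\hG_\nu$ is normal in $C_{\hG}(\nu)$. This holds because it is the identity component.

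\textbf{Step 2: surjectivity of $\bW(\hM,\phi)\to {\widehat R}_{G,\phi}$.} In \eqref{eqn:big-diagram-pi0M-pi0G} the composite $\Omega_{\phi,G,\phi(\Fr)}\to \pi_{0,G,\phi}\to{\widehat R}_{G,\phi}$ is surjective by Step 1. Since this composite factors through $\bW(\hM,\phi)$, the right vertical map is surjective.

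\textbf{Step 3: the kernel.} By the snake lemma applied to \eqref{eqn:big-diagram-pi0M-pi0G} (the left vertical arrow is the identity), the kernel of $\bW(\hM,\phi)\to{\widehat R}_{G,\phi}$ equals the kernel of $\Omega_{\phi,G,\phi(\Fr)}\to\pi_{0,G,\phi}$. That kernel is
\[
\big({\widetilde\Omega}_{\phi,G,\phi(\Fr)}\cap \hG_\nu\big)/\hM_\nu .
\]
An element $n \in N_{\hG_\nu}(\hM_\nu)$ satisfies the Frobenius condition $n^{-1}\phi(\Fr) n\phi(\Fr)^{-1}\in \hM_\nu$ if and only if its class in $\bW(\hG_\nu,\hM_\nu)$ is fixed by the action of $\Fr$ through $\Ad\phi(\Fr)$. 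So this kernel is $\bW(\hG_\nu,\hM_\nu)^{\Fr}$.

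It remains to see that $\bW(\hG_\nu,\hM_\nu)^{\Fr}\to\bW(\hM,\phi)$ is injective. This follows from the injectivity of $\Omega_{\phi,G,\phi(\Fr)}\to$ the top row modulo $\pi_{0,M,\phi}$. Concretely, the intersection with $\pi_{0,M,\phi}$ is trivial because elements of $\hG_\nu$ have trivial image in $\pi_0$. Together with Step 2, this gives the exact sequence \eqref{eqn:exact-sequence-WGnuFr}.

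I expect the main obstacle to be bookkeeping rather than a genuine difficulty. One must match the exact Frobenius-twisting convention in the definition of ${\widetilde\Omega}_{\phi,G,\phi(\Fr)}$, and confirm that the short exact sequence imported from \cite{moduli} uses the same maps as in \eqref{eqn:big-diagram-pi0M-pi0G}.
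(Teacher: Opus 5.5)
Your proof is correct and takes essentially the same route as the paper. You adjust the lift by $\hG_\nu$ so that it normalizes the Borel pair $(\hM_\nu,\hP_\nu)$, use that $\phi(\Fr)$ also normalizes this pair to force the twisted commutator into $\hM_\nu$, and identify the kernel $\bigl(\widetilde{\Omega}_{\phi,G,\phi(\Fr)}\cap\hG_\nu\bigr)/\hM_\nu$ with $\bW(\hG_\nu,\hM_\nu)^{\Fr}$ by the same diagram chase. The only difference is the order: you get surjectivity of the middle map directly and deduce the right-hand one, whereas the paper also checks that the adjusted lift normalizes $\LM$ and then invokes the five lemma; you can legitimately skip that check because you rely on the top row imported from \cite{moduli}.
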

\begin{proof}
Consider the intersection ${\widetilde \bW}(\hM,\phi) \cap C_{\hG}(\nu)$. We will show that this intersection maps surjectively onto $\pi_{0,G,\phi}$. 

Indeed, fix an element of $\pi_{0,G,\phi}$, and choose an element $g$ on the corresponding connected component of $C_{\hG}(\nu)$.  It suffices to show that we may choose $g$ inside ${\widetilde \bW}(\hM,\phi)$. Upon changing $g$ by an element of $\hG_{\nu}$, we may assume without loss of generality that $g$ normalizes the Borel pair $(\hM_{\nu},\hP_{\nu})$ of $\hG_{\nu}$.

We now show that such a $g$ lies in ${\widetilde \bW}(\hM,\phi)$, that is, that $g$ normalizes $\LM$ in $\LG$ and conjugates $\phi$ to a parameter $\phi^g$ on $X_{\LM}^{\phi}$.  Since $g$ centralizes $\nu$, the restriction of $\phi^g$ to $I_F$ is equal to $\nu$. Thus $\phi^g(\Fr)$ and $\phi(\Fr)$ differ by an element of $C_{\hG}(\nu)$.  Moreover, the connected component of $C_{\hG}(\nu)$ containing $g$ is fixed by $\phi(\Fr)$, since this component is fixed in $\pi_{0,G,\phi}$ from the beginning.  Thus $g \phi(\Fr) g^{-1} \phi^{-1}(\Fr)$ lies on the identity component of $C_{\hG}(\nu)$, so $\phi(\Fr)$ and $\phi^g(\Fr)$ lie on the same component of $C_{\hG}(\nu)$.

In particular, $\phi(\Fr)$ and $\phi^g(\Fr)$ differ by an element $m$ of $\hG_{\nu}$.  Moreover, since $g$ and $\phi(\Fr)$ both normalize $(\hM_{\nu},\hP_{\nu})$, so does $\phi^g(\Fr)$.  Thus $m$ normalizes this Borel pair, so $m$ lies in $\hM_{\nu}$.  Since $\phi$ and $\phi^g$ agree on $I_F$ and their values at $\Fr$ differ by an element of $\hM_{\nu}$, we know that $\phi^g$ lies in $X^{\phi}_{\LM}$. In particular, $\LM$ is the minimal Levi of $\LG$ containing $\phi^g$. Since it is also the minimal Levi containing $\phi$, we must have that $g$ normalizes $\LM$, thus $g$ lies in ${\widetilde \bW}(\hM,\phi)$ as claimed.

Now, given an element $r$ of ${\widehat R}_{G,\phi}$, one can choose a lift to $\pi_{0,G,\phi}$, and that lift is represented by an element ${\widetilde r}$ of ${\widetilde \bW}(\hM,\phi)$.  The image of ${\widetilde r}$ in $\bW(\hM,\phi)$ maps to $r$, proving surjectivity of the right-hand vertical map in diagram \eqref{eqn:big-diagram-pi0M-pi0G}; the surjectivity of the middle vertical map then follows by the five lemma.

The kernel of the middle vertical map in diagram \eqref{eqn:big-diagram-pi0M-pi0G} is the
quotient $({\widetilde \Omega}_{\phi,G,\phi(\Fr)} \cap \hG_{\nu})/\hM_{\nu}$.  This is
precisely the subgroup of the Weyl group $\bW(\hG_{\nu},\hM_\nu)$ fixed by $\phi(\Fr)$; 
it can equivalently be regarded as the quotient $N_{\hG_{\nu}}(\LM_{\nu})/{\hM_{\nu}}$.

Since the left-hand vertical map is the identity, the kernel of the middle vertical map is isomorphic to the kernel of the right-hand vertical map, yielding the desired exact sequence \eqref{eqn:exact-sequence-WGnuFr}. 
\end{proof}

It turns out that the extension \eqref{eqn:exact-sequence-WGnuFr} splits. To see this, we first need
the following lemma as preparation. 

\begin{lemma} \label{lemma:Weyl splitting}
Let $\LP'_{\nu}$ be a parabolic subgroup of $\LG_{\nu}$ with Levi $\LM_{\nu}$, and suppose that $\LP'_{\nu}$  
contains the element $1 \rtimes \Fr$ of $\LG_{\nu}$.  Then there exists a unique element of 
$\bW(\hG_{\nu}, \hM_\nu)^{\Fr}$ conjugating $\LP'_{\nu}$ to $\LP_{\nu}$.
\end{lemma}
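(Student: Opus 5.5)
The plan is to reduce everything to the identity components $\hP'_\nu$, $\hP_\nu$ and to the standard fact that the Weyl group of a connected reductive group acts simply transitively on the Borel subgroups containing a fixed maximal torus. Since $\hM_\nu$ is a maximal torus of $\hG_\nu$ (Corollary~\ref{cor:pinning}), both $\hP'_\nu$ and $\hP_\nu$ are Borel subgroups of $\hG_\nu$ containing $\hM_\nu$. Hence there is a unique $w\in\bW(\hG_\nu,\hM_\nu)$ with $w\hP'_\nu w^{-1}=\hP_\nu$. The parabolic $\LP'_\nu$ is determined by $\hP'_\nu$ together with the fact that it meets every coset $\hG_\nu\rtimes w'$. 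Because it contains $1\rtimes\Fr$, we have $\LP'_\nu=\hP'_\nu\rtimes W_F$, with $W_F$ embedded via $w'\mapsto 1\rtimes w'$; inertia acts trivially on $\hG_\nu$, so only $\Fr$ matters. The same description holds for $\LP_\nu=\hP_\nu\rtimes W_F$.

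First I would show that $\hP'_\nu$ is $\Fr$-stable. This follows because $1\rtimes\Fr$ normalizes $\LP'_\nu$, hence normalizes its identity component $\hP'_\nu$. The same holds for $\hP_\nu$, which is stable under $\rho(\Fr)$ by Corollary~\ref{cor:pinning}; that action is exactly the $\Fr$-action defining $\LG_\nu$.

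Second, I would show that the unique $w$ is $\Fr$-fixed. Apply $\Fr$ to the identity $w\hP'_\nu w^{-1}=\hP_\nu$. This gives $\Fr(w)\,\Fr(\hP'_\nu)\,\Fr(w)^{-1}=\Fr(\hP_\nu)$, that is, $\Fr(w)\hP'_\nu\Fr(w)^{-1}=\hP_\nu$. Note that $\Fr$ preserves $\hM_\nu$, so it acts on $\bW(\hG_\nu,\hM_\nu)$. Simple transitivity then forces $\Fr(w)=w$, so $w\in\bW(\hG_\nu,\hM_\nu)^{\Fr}$.

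Third, I would check that a lift $\tilde w\in N_{\hG_\nu}(\hM_\nu)$ of $w$ conjugates $\LP'_\nu$ to $\LP_\nu$, and not merely $\hP'_\nu$ to $\hP_\nu$. Conjugation gives $\tilde w(1\rtimes\Fr)\tilde w^{-1}=\tilde w\,\Fr(\tilde w)^{-1}\rtimes\Fr$. Since $\Fr(w)=w$, the element $\tilde w\,\Fr(\tilde w)^{-1}$ lies in $\hM_\nu\subset\hP_\nu$. Therefore $\tilde w\LP'_\nu\tilde w^{-1}$ contains $\hP_\nu$ and an element of $\hP_\nu\rtimes\Fr$, and so it equals $\LP_\nu$. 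Any other representative of $w$ differs from $\tilde w$ by an element of $\hM_\nu\subset\LP_\nu$, so the statement is well defined on Weyl group elements.

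Uniqueness is immediate. An element of $\bW(\hG_\nu,\hM_\nu)^{\Fr}$ conjugating $\LP'_\nu$ to $\LP_\nu$ conjugates the identity components $\hP'_\nu$ to $\hP_\nu$, and simple transitivity on the full Weyl group gives uniqueness there, hence also in the fixed subgroup.

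I expect the main obstacle to be the bookkeeping in the first step: making sure that "contains $1\rtimes\Fr$" really pins $\LP'_\nu$ down as $\hP'_\nu\rtimes W_F$. This relies on the definition of parabolic subgroups of the disconnected group $\LG_\nu$ in \cite[\S 3]{Borel}, namely that they meet each component. The remaining steps are routine.
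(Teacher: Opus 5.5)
Your proposal is correct and follows essentially the same route as the paper: uniqueness of $w$ in the full Weyl group comes from simple transitivity on Borel subgroups containing $\hM_\nu$, and applying $\Fr$ together with $1\rtimes\Fr\in\LP'_\nu\cap\LP_\nu$ forces $\Fr(w)=w$. The only cosmetic difference is how you show that $w$ conjugates $\LP'_\nu$ to $\LP_\nu$: the paper gets this immediately from $\LP'_\nu=N_{\LG_\nu}(\hP'_\nu)$ and $\LP_\nu=N_{\LG_\nu}(\hP_\nu)$, whereas you check it by hand through $\tilde w(1\rtimes\Fr)\tilde w^{-1}$ after first establishing $\Fr$-invariance.
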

\begin{proof}
The identity components $\hP_{\nu}$ and $\hP'_{\nu}$, of $\LP_{\nu}$ and $\LP'_{\nu}$, respectively, are Borel subgroups containing the maximal torus $\hM_{\nu}$, so there is a unique element $w$ of $\bW(\hG_{\nu},\hM_\nu)$ conjugating $\hP'_{\nu}$ to $\hP_{\nu}$.  Since $\LP'_{\nu}$ and $\LP_{\nu}$ are the normalizers in $\LG_{\nu}$ of $\hP'_{\nu}$ and $\hP_{\nu}$, respectively, $w$ also conjugates $\LP'_{\nu}$ to $\LP_{\nu}$.

Thus $w^{\Fr}$ conjugates $(\LP'_{\nu})^{\Fr}$ to $\LP_{\nu}^{\Fr}$.  But since both $\LP_{\nu}$ and $\LP'_{\nu}$ contain $1 \rtimes \Fr$, these parabolics are equal to $\LP'_{\nu}$ and $\LP_{\nu}$.  By the uniqueness of $w$, we must have $w = w^{\Fr}$.
\end{proof}

Now we construct the splitting of \eqref{eqn:exact-sequence-WGnuFr}. It depends on $\LP_\nu$, but $\LP_\nu$ is
determined by $\LP$, which is part of our data, thus this construction is canonical.

\begin{lemma}\label{lem:4.splittingR}
The parabolic subgroup $\LP_{\nu}$ of $\LG_\nu$ determines a splitting of 
\eqref{eqn:exact-sequence-WGnuFr}, which gives a canonical group isomorphism $\bW (\hM,\phi) \cong 
\bW (\hG_\nu, \hM_\nu)^\Fr \rtimes \widehat{R}_{G,\phi}$.
\end{lemma}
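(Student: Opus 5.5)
The plan is to realize $\widehat R_{G,\phi}$ as the subgroup of $\bW(\hM,\phi)$ consisting of those elements that stabilize the parabolic $\LP_\nu$. Then we check that this subgroup is a complement to the normal subgroup $\bW(\hG_\nu,\hM_\nu)^{\Fr}$ in the exact sequence \eqref{eqn:exact-sequence-WGnuFr}.

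\textbf{Step 1: the action.} Take $w\in\bW(\hM,\phi)$ with a representative $g\in\widetilde{\Omega}_{\phi,G,\phi(\Fr)}$; such a lift exists by the surjection $\Omega_{\phi,G,\phi(\Fr)}\to\bW(\hM,\phi)$ from the proof of Lemma~\ref{lemma:spectral Weyl sequence}.
\begin{itemize}
\item Since $g$ centralizes $\nu$ and normalizes $\hM_\nu$, conjugation by $g$ preserves $\hG_\nu$ and $\hM_\nu$. It therefore sends $\hP_\nu$ to another Borel subgroup $g\hP_\nu g^{-1}$ of $\hG_\nu$ containing $\hM_\nu$.
\item Since $g\phi(\Fr)g^{-1}\phi(\Fr)^{-1}\in\hM_\nu$, the normalizer $\LP'_\nu$ of $g\hP_\nu g^{-1}$ in $\LG_\nu$ still contains $1\rtimes\Fr$. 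This uses that $\hM_\nu$ lies in every Borel subgroup containing it, and that $\phi(\Fr)$ normalizes $\hP_\nu$.
\item Lemma~\ref{lemma:Weyl splitting} now gives a unique $u_w\in\bW(\hG_\nu,\hM_\nu)^{\Fr}$ with $u_w\,g\hP_\nu g^{-1}u_w^{-1}=\hP_\nu$.
\end{itemize}

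\textbf{Step 2: the section.} Set
\[
s(w):=u_w w\in\bW(\hM,\phi).
\]
The element $s(w)$ depends only on $w$: changing $g$ by an element of $\hM$ changes neither its action on $\hG_\nu$ nor the conjugate $g\hP_\nu g^{-1}$. Moreover $s(w)$ stabilizes $\LP_\nu$, and $s(w)$ has the same image as $w$ in $\widehat R_{G,\phi}$, because $u_w$ lies in the kernel.

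Let $S\subset\bW(\hM,\phi)$ be the stabilizer of $\hP_\nu$ under this conjugation action. Then $S$ is a subgroup, and it meets the kernel trivially: an element of $\bW(\hG_\nu,\hM_\nu)$ fixing a Borel subgroup containing $\hM_\nu$ is trivial, by simple transitivity of the Weyl group on such Borels.

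Together with $s$, this shows that $S\cong\widehat R_{G,\phi}$ via the quotient map, and that the composite $\widehat R_{G,\phi}\to S\hookrightarrow\bW(\hM,\phi)$ is a group-homomorphic splitting. Indeed, the map $S\to\widehat R_{G,\phi}$ is injective because $S\cap\ker=1$, and surjective because $s(w)\in S$ for every $w$. Since $\bW(\hG_\nu,\hM_\nu)^{\Fr}$ is normal, we obtain $\bW(\hM,\phi)=\bW(\hG_\nu,\hM_\nu)^{\Fr}\rtimes S$.

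\textbf{Canonicity.} The only choice made is the parabolic $\LP_\nu$, and it is determined by $\LP$ through Lemma~\ref{lemma:levis}.

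\textbf{Main obstacle.} The delicate point is Step 1: checking that the action of $\bW(\hM,\phi)$ on $\hG_\nu$ and its Borel subgroups is well defined, since representatives may a priori only normalize $\LM$ rather than centralize $\nu$. This is handled by using lifts in $\widetilde{\Omega}_{\phi,G,\phi(\Fr)}$, as above. The second point is verifying that $1\rtimes\Fr$ still lies in the normalizer $\LP'_\nu$ of the conjugated Borel subgroup, which is exactly the hypothesis of Lemma~\ref{lemma:Weyl splitting}.
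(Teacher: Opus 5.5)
Your proposal is correct and follows the paper's own route: lift to $\widetilde{\Omega}_{\phi,G,\phi(\Fr)}$, use Lemma~\ref{lemma:Weyl splitting}, and identify $\widehat R_{G,\phi}$ with the stabilizer of $\LP_\nu$ in $\bW(\hM,\phi)$; your section $s(w)=u_w w$ and the trivial-intersection check just spell out the paper's ``unique lift'' claim. One harmless slip: changing $g$ by an element of $C_{\hM}(\nu)$ can change its action on $\hG_\nu$, since elements of $\pi_{0,M,\phi}$ may act by nontrivial automorphisms. All you actually use is that $g\hP_\nu g^{-1}$ is unchanged, and that does hold because $C_{\hM}(\nu)$ normalizes $\hP_\nu=\hP\cap\hG_\nu$.
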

\begin{proof}
Given $w$ in $\bW(\hM,\phi)$, we may lift it to an element of $\Omega_{\phi,G,\phi(\Fr)}$. Such an element acts on $\LG_{\nu}$ by conjugation, preserving $\hM_{\nu}$ and thus $\LM_{\nu}$.  The ambiguity in such a lift is given by an element of $\pi_{0,M,\phi}$, which normalizes the maximal torus $\LM_{\nu}$ of $\LG_{\nu}$.  This gives an action of such $w$ on the set of parabolic subgroups of $\LG_{\nu}$ that contain $\LM_{\nu}$.

Given $w \in {\widehat R}_{G,\phi}$, by Lemma \ref{lemma:Weyl splitting} we know that $w$ admits a unique lift to an element of $\bW(\hM,\phi)$ that preserves the parabolic subgroup $\LP_{\nu}$ of $\LG_{\nu}$. This identifies ${\widehat R}_{G,\phi}$ with the subgroup of $\bW(\hM,\phi)$ preserving this parabolic, thus providing a splitting of the exact sequence \eqref{eqn:exact-sequence-WGnuFr}. 
\end{proof}

We henceforth regard ${\widehat R}_{G,\phi}$ as a subgroup of $\bW(\hM,\phi)$ via 
Lemma \ref{lem:4.splittingR}. Let $\Omega_{\phi,G,\phi (\Fr),P}$ be the stabilizer of $\LP_\nu = 
\LP \cap \LG_\nu$ in $\Omega_{\phi,G,\phi(\Fr)}$, or equivalently the preimage of ${\widehat R}_{G,\phi}$ in $\Omega_{\phi,G,\phi(\Fr)}$. By Lemmas 
\ref{lemma:spectral Weyl sequence} and \ref{lem:4.splittingR} and the diagram \eqref{eqn:big-diagram-pi0M-pi0G}, we 
see that \eqref{eqn:SES-Omega-pi0G-R} restricts to a group isomorphism
\[
\Omega_{\phi,G,\phi (\Fr),P} \cong \pi_{0,G,\phi} .
\]
Recall from Corollary \ref{cor:pinning} that $\phi$ (or more precisely its image)
preserves a pinning $(\hM_\nu, \hP_\nu, \{\mu_\alpha\} )$ of $\hG_\nu$. As in the 
proof of Corollary \ref{cor:pinning}, every representative in $C_{\hG}(\nu)$ of an
element of $\Omega_{\phi,G,\phi (\Fr),P}$ can be adjusted by an element of $\hM_\nu$,
such that it preserves this pinning. In other words, $\pi_{0,G,\phi}$ is isomorphic to
the subgroup of $\Omega_{\phi,G,\phi (\Fr)}$ that stabilizes $\LP_\nu$, and it can be
represented by elements of $C_{\hG} (\nu)$ that stabilize $(\hM_\nu, \hP_\nu, 
\{\mu_\alpha \})$. For $x \in \pi_{0,G,\phi}$, let $x_P \in C_{\hG} (\nu)$ be such a
representative -- it is unique up to $Z(\hG_\nu)$. 

Since $x_P$ and $\phi$ preserve the same pinning, 
\begin{equation}\label{eq:4.xP}
x_P \phi (\Fr) x_P^{-1} = x_P \phi (\Fr) x_P^{-1} \phi (\Fr)^{-1} \phi (\Fr) =
(1 - \Fr)(x_P) \phi (\Fr) 
\end{equation}
also preserves that pinning. This implies that
\begin{equation}\label{eq:K-central}
(1 -\Fr)(x_P) \text{ is central in } \hG_\nu \text{ for every } x \in \pi_{0,G,\phi} .
\end{equation}
We write $Z(\hG_\nu)_\Fr := Z(\hG_\nu) / (\Fr - 1)Z(\hG_\nu)$ and we define a map
\begin{equation}\label{eqn:defn-zeta-x}
\zeta : \pi_{0,G,\phi} \to Z(\hG_\nu)_\Fr \quad \text{by}\quad \zeta (x) = (1 - \Fr)(x_P) .
\end{equation}
By the uniqueness of $x_P Z(\hG_\nu)$, $\zeta$ is well-defined. It is not necessarily 
a group homomorphism, rather a 1-cocycle with respect to the action of $\pi_{0,\G,\phi}$
on $Z(\hG)_\Fr$ by $\Ad_{x_P}$.

We recall that $(Z(\hG)^{I_F})_\Fr$ acts naturally on $X_\LG$ as follows: for $\phi' = (\rho',N') \in X_\LG$ and 
$z \in (Z(\hG )^{I_F})_\Fr$, the L-parameter $z \phi' = (z \rho',N') \in X_\LG$ is defined by
\begin{equation}\label{eq:4.actionZ}
(z \rho') |_{I_F} := \rho' |_{I_F} \;\text{and}\; ( z \rho') (\Fr) := z \rho' (\Fr) .
\end{equation}

\begin{prop}\label{prop:4.actionXonSGnu}\ 
\begin{enumerate}
\item The map $\zeta$ defined in \eqref{eqn:defn-zeta-x} has image in $Z(\hG_\nu)_\Fr^\circ \cong \Xnr (G_\nu)$ 
and extends the map \eqref{eqn:map-defining-KMphi}.
\item An element $x \in \pi_{0,G,\phi}$ acts on $\CS^1_{G_\nu}$ and on $\End (\CS^1_{G_\nu})$ 
as the pinned group automorphism $\Ad_{x_P}$ of $G_\nu$ followed by the twist by 
$\zeta (x) \in \Xnr ( G_\nu )$.
\item The action of $\widehat{R}_{G,\phi} \cong \pi_{0,G,\phi} / \pi_{0,M,
\phi}$ on $\End (\CS^1_{G_\nu})^{\pi_{0,M,\phi}} = \End (\CS^1_{G_\nu})^{K_{M,\phi}}$ 
in Theorem \ref{thm:Springer reduction} can be described in the same way as part (2).
\end{enumerate}
\end{prop}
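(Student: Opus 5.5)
For part (1), I would first check that $\zeta(x)$ lies in the identity component of $Z(\hG_\nu)_\Fr$. The representative $x_P$ normalizes $\hM_\nu$ and fixes the pinning $(\hM_\nu,\hP_\nu,\{\mu_\alpha\})$. The element $(1-\Fr)(x_P)=x_P\phi(\Fr)x_P^{-1}\phi(\Fr)^{-1}$ lies in $C_{\hG}(\nu)$. Since $x\in\pi_{0,G,\phi}$ is $\Fr$-fixed, it in fact lies in $\hG_\nu$, which is the identity component. By \eqref{eq:K-central} it is central in $\hG_\nu$, so it lies in $Z(\hG_\nu)\cap\hG_\nu=Z(\hG_\nu)$. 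To get its image in $Z(\hG_\nu)_\Fr^\circ$, I would use the standard fact that $\pi_0(Z(\hG_\nu)_\Fr)$ is detected by the Kottwitz-type map to $\pi_0$ of the $\Fr$-coinvariants. Concretely, $(1-\Fr)(x_P)$ is a $(1-\Fr)$-image of an element of $\hG_\nu$ up to something in $Z(\hG_\nu)$. I expect to need Lang-type surjectivity of $1-\Fr$ on $\hM_\nu$ modulo the torus $(Z(\hG_\nu)^\circ)$ to show that the class is trivial in $\pi_0$; this is the step I expect to be hardest.

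The identification $Z(\hG_\nu)^\circ_\Fr\cong\Xnr(G_\nu)$ is the usual description of unramified characters of the unramified group $G_\nu$. That $\zeta$ extends \eqref{eqn:map-defining-KMphi} comes down to comparing representatives. For $x\in\pi_{0,M,\phi}$, both $x_P$ and any lift $\tilde x\in C_{\hM}(\nu)$ lie in the same coset of $\hM_\nu$. The element $(1-\Fr)\tilde x$ then differs from $(1-\Fr)x_P$ by $(1-\Fr)\hM_\nu$. This gives compatibility in $\hM_{\nu,\Fr}$ under the natural map $Z(\hG_\nu)_\Fr\to\hM_{\nu,\Fr}$.

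For part (2), I would write the action of $x$ on $X^1_{\LG_\nu}$ via $\tilde\iota_\phi$. Conjugating a parameter $\iota_\phi(\tau)$ by $x_P$ gives $\iota_\phi$ of the parameter $w\mapsto \Ad_{x_P}(\tau(w))$ on $I_F$, which is trivial. At Frobenius the value is $\Ad_{x_P}(\tau(\Fr))\cdot x_P\phi(\Fr)x_P^{-1}\phi(\Fr)^{-1}$, using \eqref{eq:4.xP}. So the action is the pinned automorphism $\Ad_{x_P}$ of $\LG_\nu$ composed with multiplication by $\zeta(x)$ in the sense of \eqref{eq:4.actionZ}. Both operations preserve $X^1_{\LP_\nu}$, because $x_P$ stabilizes $\hP_\nu$ and central translation preserves the Borel. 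Hence they act on $\CS^1_{G_\nu}=(\pi_{P_\nu})_*\CO_{X^1_{\LP_\nu}}$ and on its endomorphisms compatibly. Theorem~\ref{thm:unipotent categorical}(4),(5) then identifies these with the pinned automorphism and the unramified twist by $\zeta(x)$.

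For part (3), I would note that by Lemma~\ref{lem:K-action} the $\pi_{0,M,\phi}$-action on $\End(\CS^1_{G_\nu})$ factors through $K_{M,\phi}$. The residual $\widehat R_{G,\phi}$-action on the invariants is then induced by any lift to $\pi_{0,G,\phi}$, for instance via the splitting of Lemma~\ref{lem:4.splittingR}. Part (2) applied to that lift gives the stated description.
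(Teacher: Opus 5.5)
Your treatment of (2) and (3), and of the ``extends \eqref{eqn:map-defining-KMphi}'' half of (1), follows the paper. The paper also transports $x_P$-conjugation through $\tilde\iota_\phi$ to get $x\cdot g=(1-\Fr)(x_P)\Ad_{x_P}(g)$ on $X^1_{\LG_\nu}$ and then pushes forward. Your observations that both operations preserve $X^1_{\LP_\nu}$, and that Theorem~\ref{thm:unipotent categorical}(4),(5) translate them to the automorphic side, only add detail. When you compare $x_P$ with a lift $\tilde x\in C_{\hM}(\nu)$, say why $\tilde x^{-1}x_P\in\hM_\nu$: both normalize the Borel pair $(\hM_\nu,\hP_\nu)$ and lie in the same component of $C_{\hG}(\nu)$. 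The genuine gap is the first assertion of (1), that the class of $\zeta(x)$ lies in $Z(\hG_\nu)_\Fr^\circ$. You flag it as the hardest step but give no proof, and the route you sketch cannot work.

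There is no Lang theorem in this setting. $\Fr$ acts on the complex torus $\hM_\nu$ through $\Ad\,\phi(\Fr)$, which is an automorphism of finite order, so $1-\Fr$ is not surjective. Its cokernel is the torus $\hM_{\nu,\Fr}\cong\Xnr(M_\nu)$. After dividing by $Z(\hG_\nu)^\circ$, the cokernel still has dimension equal to the number of $\Fr$-orbits of simple roots of $\hG_\nu$, which is positive unless $\hG_\nu$ is a torus (and in that case the claim is trivial).

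The reduction ``$(1-\Fr)(x_P)$ is a $(1-\Fr)$-image of an element of $\hG_\nu$ up to $Z(\hG_\nu)$'' carries no information, since the correction term can be $\zeta(x)$ itself. The whole difficulty is that $x_P$ lies in a non-identity component of $C_{\hG}(\nu)$.

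The claim has real content. $\pi_0(Z(\hG_\nu)_\Fr)\cong\pi_0(Z(\hG_\nu))_\Fr$ can be nonzero: take $\hG=\SO_5$ and $\nu$ quadratic with four-dimensional $(-1)$-eigenspace, so that $\hG_\nu\cong\SO_4$ has center $\{\pm1\}$. So the centrality of $\zeta(x)$, which you do prove, is not enough.

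The gap also propagates to your (2). To read translation by $\zeta(x)$ as a twist by an element of $\Xnr(G_\nu)$ and invoke Theorem~\ref{thm:unipotent categorical}(5), you need exactly this membership. The paper uses no surjectivity of $1-\Fr$. It proves the formula \eqref{eq:4.xg} first and then deduces (1) in a single line, from the fact that $x$ sends the base point $1\in X^1_{\LG_\nu}$ to $(1-\Fr)(x_P)$.
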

\begin{proof}
To show (2), we investigate the actions of $\pi_{0,G,\phi}$ on $X^\phi_{\LG}$ and $X^1_{\LG_\nu}$.
For $g \in C_{\hG}(\nu)$ we define a Langlands parameter $g \phi \in X_{\LG}$ by 
\[
(g \phi ) |_{I_F} = \phi |_{I_F} ,\; (g \phi)(\Fr) = g \phi (\Fr) .
\]
Every object of $X^\phi_{\LG}$ with trivial monodromy can be written as $g \phi$ for some 
$g \in \hG_\nu$. Using \eqref{eq:4.xP}, we compute:
\[
x \cdot g \phi = x_P g \phi x_P^{-1} = (x_P g x_P^{-1}) (x_P \phi x_P^{-1}) =
(x_P g x_P^{-1}) (1 - \Fr)(x_P) \phi =: \Ad_{x_P}(g) (1 - \Fr)(x_P) \phi \in X^\phi_{\LG} .
\]
Every object of $X^1_{\LG_\nu}$ with trivial monodromy can be expressed by a single element 
$g \in \hG_\nu$. The action of $\pi_{0,G,\phi}$ on $X^1_{\LG_\nu}$ is induced from the
action on $X^\phi_\LG$, via $\tilde{\iota}_\phi$:
\begin{align}\label{eq:4.xg}
\begin{split}
x \cdot g &= \tilde{\iota}_\phi^{-1} ( x \cdot \tilde{\iota}_\phi (g) ) = 
\tilde{\iota}_\phi^{-1} ( x \cdot g \phi ) 
= \tilde{\iota}_\phi^{-1} ( \Ad_{x_P}(g) (1-\Fr)(x_P) \phi ) \\
&= \Ad_{x_P}(g) (1-\Fr)(x_P) = (1-\Fr)(x_P) \Ad_{x_P}(g) \in X^1_{\LG_\nu}.
\end{split}
\end{align}
The action of $\pi_{0,G,\phi}$ on $X^1_{\LG_\nu}$ induces, by pushforward of sheaves,
an action on $\CS^1_{G_\nu}$, namely the action in the statement of (2).\\
To see (3): This follows directly from (2).\\
To show (1): It follows from \eqref{eq:4.xg} that $x \cdot 1 = (1 - \Fr)(x_P)$ lies in $X^1_{\LG_\nu}$.
Hence the image of $(1 - \Fr)(x_P)$ in $Z(\hG_\nu)_\Fr$ lies in  $Z(\hG_\nu)_\Fr^\circ$. 
Via the LLC, $Z(\hG_\nu)_\Fr^\circ$ is isomorphic to the group of unramified characters 
of $G_\nu$ \cite{Haines}. Restriction of characters from $G_\nu$ to its minimal Levi 
subgroup $M_\nu$ is a faithful functor, thus
\[
Z(\hG_\nu)_\Fr^\circ \cong \Xnr (G_\nu) \to \Xnr (M_\nu) \cong \hM_{\nu,\Fr} \text{ is injective.}
\]
Now we can also regard $\zeta |_{\pi_{0,M,\phi}}$ as a map with range $\hM_{\nu,\Fr}$, and then 
it recovers the map \eqref{eqn:map-defining-KMphi}.
\end{proof}

Recall from \eqref{eq:3.26} and \eqref{eq:3.27} that $\End (\CS^1_{G_\nu}) \cong \CH (G_\nu,I_\nu)$
has a basis consisting of elements $T_w \theta_\lambda$, where $w \in \bW (G_\nu,M_\nu) 
\cong \bW(\hG_\nu, \hM_\nu)^\Fr$ and $\lambda \in X_* (M_\nu) \cong X^* (\hM_{\nu,\Fr})$. 
In the following lemma we make the action of $\pi_{0,G,\phi}$ on $\End (\CS^1_{G_\nu})$ 
from Proposition \ref{prop:4.actionXonSGnu} explicit.

\begin{lemma}\label{lem:4.actionXonHGI}
For $x \in \pi_{0,G,\phi}$, $w \in \bW(\hG_\nu, \hM_\nu)^\Fr$ and 
$\lambda \in X^* (\hM_{\nu,\Fr})$, we have
\[
x \cdot T_w \theta_\lambda = 
\zeta (x)^{-1}(\lambda) \, T_{\Ad_{x_P}(w)} \theta_{\Ad_{x_P}(\lambda)} .
\]
\end{lemma}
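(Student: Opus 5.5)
By Proposition \ref{prop:4.actionXonSGnu}(2), $x$ acts on $\End(\CS^1_{G_\nu})$ as the composite of two commuting operations: the pinned automorphism $\Ad_{x_P}$ of $G_\nu$, and twisting by the unramified character $\zeta(x) \in \Xnr(G_\nu)$. The plan is to compute each operation on the basis elements $T_w\theta_\lambda$ separately and then compose. Throughout I use the identification $\End(\CS^1_{G_\nu}) \cong \CH(G_\nu,I_\nu)$ given by $\LocL^1_{G_\nu}$ (see \eqref{eq:3.15}, \eqref{eq:3.27}). By Theorem \ref{thm:unipotent categorical}(4),(5), this identification intertwines pinned automorphisms and unramified twists on the spectral side with the corresponding operations on $\CH(G_\nu,I_\nu) \cong \End_{G_\nu}(\ind_{I_\nu}^{G_\nu}1)$. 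It therefore suffices to compute on the automorphic side, checking at the end that nothing is lost in transport.

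\emph{Step 1: the pinned automorphism.} Since $x_P$ stabilizes the pinning $(\hM_\nu,\hP_\nu,\{\mu_\alpha\})$, the dual automorphism of $G_\nu$ preserves the Borel pair $(M_\nu,P_\nu)$. One can choose the Iwahori $I_\nu$ compatible with this pinning, and then the automorphism preserves $I_\nu$. It therefore sends $T_w$ (the function supported on $I_\nu wI_\nu$) to $T_{\Ad_{x_P}(w)}$. It sends $\theta_\lambda$ to $\theta_{\Ad_{x_P}(\lambda)}$, first for dominant $\lambda$ (where $\theta_\lambda$ is supported on a single double coset), and then for general $\lambda$ by writing it as a difference of dominant cocharacters. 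Multiplicativity then gives $T_w\theta_\lambda \mapsto T_{\Ad_{x_P}(w)}\theta_{\Ad_{x_P}(\lambda)}$.

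\emph{Step 2: the unramified twist.} Twisting by an unramified character $\chi$ acts on $\ind_{I_\nu}^{G_\nu}1 \cong i_{P_\nu}^{G_\nu}\chi^{\un}$. Because $\chi$ is trivial on the compact group $I_\nu$, this action is multiplication of functions by $\chi$. Hence it fixes each $T_w$, since $I_\nu wI_\nu$ lies in the kernel of $\chi$ for $w$ in the finite Weyl group. It scales $\theta_\lambda$ by the value of $\chi$ on $\lambda(\varpi_F^{-1})$. Under $\Xnr(M_\nu)\cong \hM_{\nu,\Fr}$ and the normalization of \S\ref{subsec:unipotent-LLC} (Frobenius $\leftrightarrow$ uniformizer, $\theta_\lambda\leftrightarrow\lambda(\varpi_F^{-1})$), this value is $\chi^{-1}(\lambda)$ when $\lambda$ is viewed as a character of $\hM_{\nu,\Fr}$. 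Consistently, on the spectral side translation by $z\in\hM_{\nu,\Fr}$ acts on the function $\lambda \in \CC[\hM_{\nu,\Fr}]$ via pullback, producing $\lambda(z)^{\pm1}$ (compare Lemma \ref{lem:4.11}). Taking $\chi=\zeta(x)$, and using Proposition \ref{prop:4.actionXonSGnu}(1) to restrict $\zeta(x)$ to $\hM_{\nu,\Fr}$, gives the scalar $\zeta(x)^{-1}(\lambda)$.

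\emph{Step 3: composition.} Composing the two steps, the twist acts after $\Ad_{x_P}$, so it sees the character $\Ad_{x_P}(\lambda)$. One must check that $\zeta(x)$ evaluated on $\Ad_{x_P}(\lambda)$ equals $\zeta(x)$ evaluated on $\lambda$. This holds because $\zeta(x)$ is central and pinning-compatible, and its class in $Z(\hG_\nu)_\Fr$ pairs with $\lambda$ through $X_*(M_\nu)\to X_*(M_\nu)/\text{coroots}$. I expect the main obstacle to be exactly this sign and order bookkeeping: the convention $\lambda(\varpi_F^{-1})$ and the pushforward-versus-pullback direction determine whether the exponent is $-1$. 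I would first settle this on the torus $M_\nu$ alone, where everything is explicit via \eqref{eq:4.EndSMnu}, and then extend to $G_\nu$ through \eqref{eq:4.SGM}.
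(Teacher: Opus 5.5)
Your Steps 1 and 2 are essentially the paper's proof. The paper gets the effect of $\Ad_{x_P}$ from the compatibility of $\LocL^1$ with pinned automorphisms; your direct argument via an $\Ad_{x_P}$-stable Iwahori is a fine, more explicit substitute. For the twist, the paper computes on $\CC[\Xnr(M_\nu)]$, where $\zeta(x)$ acts by precomposition with $\chi\mapsto\zeta(x)^{-1}\chi$; this also settles your $\pm1$ hedge. It then uses that $T_w$ is supported in a compact subgroup, hence in $\ker\zeta(x)$.

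\textbf{The gap is Step 3.} The identity $\zeta(x)(\Ad_{x_P}(\lambda))=\zeta(x)(\lambda)$ does not follow from centrality, and it is false in general. Although $\zeta(x)$ is central in $\hG_\nu$, the element $x_P$ lies in a non-identity component of $C_{\hG}(\nu)$ and acts on $\hG_\nu$ by a pinned automorphism. Such an automorphism can act nontrivially on $Z(\hG_\nu)$, equivalently on $X_*(M_\nu)$ modulo coroots; only the Weyl group of $G_\nu$ is guaranteed to act trivially there. This is exactly why the paper calls $\zeta$ only a $1$-cocycle for the $\Ad_{x_P}$-action.

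Concretely, take $G=\SL_2$ and $M=T$. Let $\phi$ be attached to a character of $T\cong F^\times$ that is quadratic on $\mathfrak{o}_F^\times$, with $\phi(\Fr)=s\in\hT\cong\CC^\times$. Then:
\begin{itemize}
\item $\hG_\nu=\hT$ and $\pi_{0,G,\phi}\cong\ZZ/2\ZZ$;
\item $x_P$ is antidiagonal and inverts $\hT$;
\item $\zeta(x)=x_Psx_P^{-1}s^{-1}=s^{-2}$, and the pinning condition puts no constraint on $s$.
\end{itemize}
Hence $\zeta(x)(\Ad_{x_P}(\lambda))=s^{2\lambda}$, whereas $\zeta(x)(\lambda)=s^{-2\lambda}$, and these differ unless $s^{4\lambda}=1$.

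\textbf{What to do instead.} Step 3 should be bookkeeping of the order of operations, not an invariance argument. Use the order of Proposition \ref{prop:4.actionXonSGnu}(2), i.e.\ pushforward along $g\mapsto\zeta(x)\Ad_{x_P}(g)$ as in \eqref{eq:4.xg}, together with the normalization of your Step 2. Your two computations then give
\[
x\cdot T_w\theta_\lambda=\zeta(x)^{-1}\bigl(\Ad_{x_P}(\lambda)\bigr)\,T_{\Ad_{x_P}(w)}\theta_{\Ad_{x_P}(\lambda)}=\bigl(\Ad_{x_P}^{-1}\zeta(x)\bigr)^{-1}(\lambda)\,T_{\Ad_{x_P}(w)}\theta_{\Ad_{x_P}(\lambda)} .
\]
In the example this reads $x\cdot\theta_\lambda=s^{-2\lambda}\theta_{-\lambda}$.

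This matches the displayed formula if the twist is understood to act before $\Ad_{x_P}$, i.e.\ with $\zeta(x)$ replaced by $\Ad_{x_P}^{-1}\zeta(x)$. It matches verbatim only when $\Ad_{x_P}$ fixes $\zeta(x)$. The paper's proof just juxtaposes the two computations and leaves the order implicit.

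What is used later, in Proposition \ref{prop:action compatibility}, is the structural statement that $x$ acts by $\psi_r=\Ad_{x_P}$ followed by the twist $\chi_r=\zeta(x)$. There, \eqref{eq:6.chir} also evaluates the scalar after applying $\psi_r$. So state the result with the order explicit rather than trying to prove the invariance.
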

\begin{proof}
In the proof of Theorem \ref{thm:unipotent categorical}, we showed that
\[
\Ad_{x_P} \cdot T_w \theta_\lambda = T_{\Ad_{x_P}(w)} \theta_{\Ad_{x_P}(\lambda)} .
\]
By Proposition \ref{prop:4.actionXonSGnu}.(2), it remains to prove that
\[
\zeta (x) \cdot T_w \theta_\lambda = \zeta (x)^{-1}(\lambda) \, T_w \theta_\lambda .
\]
By Theorem \ref{thm:unipotent categorical}, we may just as well check this in
$\CH (G_\nu, I_\nu)$, where the action of $\zeta (x) \in \Xnr (G_\nu)$ on 
$\CH (G_\nu ,I_\nu) \cong \End_{G_\nu}(i_{P_\nu}^{G_\nu} \chi^{\un})$ arises from the
action of $\zeta (x)$ on 
\[
\CH (M_\nu ,I_{M_\nu}) \cong \End_{M_\nu}(\chi^{\un}) \cong \CC [\Xnr (M_\nu)] 
\]
via parabolic induction. The group $\Xnr (G_\nu)$ acts on $\Xnr (M_\nu)$ by 
multiplication (of characters of $M_\nu$), thus for $\theta_\lambda$ regarded as an element
of $\CC [\Xnr (M_\nu)]$ we have
\[
\zeta (x) \cdot \theta_\lambda = \theta_\lambda \circ [\chi \mapsto \zeta (x)^{-1} \chi] =
\theta_\lambda (\zeta (x)^{-1}) \theta_\lambda = \zeta (x)^{-1}(\lambda) \theta_\lambda .
\]
The action of $\zeta (x) \in \Xnr (G_\nu)$ on $\CH (G_\nu,I_\nu)$ is also induced by 
the action $\zeta (x) \cdot \pi= \zeta (x) \otimes \pi$ on $G_\nu$-representations $\pi$.
This means that for any $f \in \CH (G_\nu,I_\nu)$ we may identify $\zeta (x) \cdot f$
with the pointwise product $\zeta (x)^{-1} f$ of $I_\nu$-biinvariant functions on $G_\nu$.
The element $T_w \in \CH (G_\nu,I_\nu)$ has support in a compact subgroup of $G_\nu$, hence
in the kernel of $\zeta (x)$. Therefore $\zeta (x) \cdot T_w = T_w$.
\end{proof}

Note that although the group ${\widehat R}_{G,\phi}$ does not act on $X^1_{\LG}$, the above identification of ${\widehat R}_{G,\phi}$ with a subgroup of $\bW(\hM,\phi)$ yields an action of ${\widehat R}_{G,\phi}$ on $X^1_{\LG}/\pi_{0,M,\phi}$. Given an element of ${\widehat R}_{G,\phi}$, we can lift the corresponding element of $\bW(\hM,\phi)$ to an element of $\Omega_{\phi,G,\phi(\Fr)}$, well-defined up to an element of $\pi_{0,M,\phi}$; the action on this element on $X^1_{\LG}/\pi_{0,M,\phi}$ is then well-defined.  Moreover, for any parabolic subgroup $\LQ$ of $\LG$ containing $\LP$, with Levi $\LL$, and any $w \in {\widehat R}_{G,\phi}$, we obtain well-defined isomorphisms 
\begin{equation}
X^1_{\LQ}/\pi_{0,M,\phi} \xrightarrow{\sim} X^1_{\LQ^w}/\pi_{0,M,\phi},\quad \text{and}\quad X^1_{\LL}/\pi_{0,M,\phi} \xrightarrow{\sim} X^1_{\LL^w}/\pi_{0,M,\phi}.
\end{equation}

\subsection{Spectral parabolic induction}

The isomorphism in Theorem~\ref{thm:Springer reduction} is compatible with certain maps on endomorphism algebras of Springer sheaves induced by the spectral parabolic induction functors.  Indeed, as in the previous section, let $\LQ$ be a parabolic subgroup of $\LG$ with Levi $\LL$, and assume that $\LQ$ contains the parabolic $\LP$ of the previous section; associated to these we have a pair ${\hL}_{\nu}, {\hQ}_{\nu}$ of a standard Levi subgroup and a standard parabolic subgroup of ${\hG}_{\nu}$, respectively.

\begin{prop} \label{prop:Springer induction}
We have isomorphisms:
$\CS^{\phi}_G \cong (\pi_Q)_* r_Q^* S^{\phi}_L$ and 
$\CS^1_{G_{\nu}} \cong (\pi_{Q_{\nu}})_* r_{Q_{\nu}}^* \CS^1_{L_{\nu}}$.
\end{prop}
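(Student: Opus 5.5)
We argue exactly as in the proof of part (3) of Theorem \ref{thm:unipotent categorical}, namely the computation \eqref{eq:3.20}. The plan has three steps: factor the maps through the intermediate parabolic $\LP \cap \LL$, use a Cartesian square to commute pullback past pushforward, and then collapse the composites. Here $S^{\phi}_L$ means $\CS^{\phi}_L := (\pi_{P\cap L})_* \CO_{X^{\phi}_{{}^L(\mathbf{P}\cap\mathbf{L})}}$, where $\LP \cap \LL$ is the parabolic of $\LL$ with Levi $\LM$ (the parameter $\phi$ factors through $\LM \subset \LL$).

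\textbf{Step 1: factorization.} The projection $\LP \to \LM$ factors as $\LP \to \LP\cap\LL \to \LM$, since $\LP = (\LP \cap \LL)\ltimes \hU_\bQ$. Likewise the inclusion $\LP \to \LG$ factors through $\LQ$. We also use the Cartesian square of groups
\[
\LP = (\LP\cap\LL)\ltimes\hU_\bQ \longrightarrow \LQ = \LL\ltimes \hU_\bQ, \qquad \LP\cap\LL \longrightarrow \LL,
\]
with vertical maps the projections killing $\hU_\bQ$. This is the analogue of \eqref{eq:3.18}, with $(\hB,\hP,\hM)$ replaced by $(\hP,\hQ,\hL)$. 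Applying the functor $H \mapsto X_H$ gives a square of stacks
\[
X_{\LP} \to X_{\LQ},\qquad X_{{}^L(\mathbf P\cap \mathbf L)} \to X_{\LL}.
\]

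\textbf{Step 2: the main obstacle.} We must justify that this square of stacks is Cartesian in the derived sense, so that base change gives
\[
r_Q^*(\pi_{P\cap L})_* \cong (\pi_{P\cap L,U_Q})_*(r_Q|_P)^*,
\]
the analogue of \eqref{eq:3.19}. At the level of $\CC$-points this is clear: a parameter into $\LQ$ together with a reduction of its $\LL$-part to $\LP\cap\LL$ is the same thing as a parameter into $\LP$, because $\LP$ is the preimage of $\LP\cap\LL$ in $\LQ$. For the derived structure we would argue as follows. The derived stacks are defined as derived mapping stacks (cocycles) from $W_F$, as in \cite{Zhu-tame}. Taking derived mapping stacks preserves fiber products, and the group-level square is a fiber product of group schemes. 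Hence the stack-level square is Cartesian in derived stacks. Since $\pi_{P\cap L}$ is proper (Definition \ref{defn:phi-Springer-sheaf} uses it), base change holds for $\IndCoh$ with $*$-pushforward and $*$-pullback, just as in \eqref{eq:3.19}. All of these maps restrict to the $\phi$-components: the component $X^\phi_{\LP}$ is the preimage of $X^\phi_{\LM}$, and similarly for $\LP \cap \LL$.

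\textbf{Step 3: conclusion.} Using $\CO_{X^\phi_{{}^L(\mathbf P\cap\mathbf L)}} = r_{P\cap L}^*\CO_{X^\phi_{\LM}}$ and that pullback of structure sheaves is the structure sheaf, we compute
\[
(\pi_Q)_* r_Q^* \CS^\phi_L = (\pi_Q)_*(\pi_{P\cap L,U_Q})_*(r_Q|_P)^*\CO = (\pi_P)_*\CO_{X^\phi_{\LP}} = \CS^\phi_G,
\]
using $\pi_Q\circ\pi_{P\cap L,U_Q}=\pi_P$. The unipotent statement is the identical computation for the unramified group $\bG_\nu$ with $\bQ_\nu \supset \bP_\nu$ and $\bL_\nu \supset \bM_\nu$. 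Alternatively, it follows directly from Theorem \ref{thm:unipotent categorical}(2),(3), since $i_{Q_\nu}^{G_\nu}\ind_{I_{L_\nu}}^{L_\nu}1\cong \ind_{I_\nu}^{G_\nu}1$; this is \eqref{eq:3.20}. Lemma \ref{lemma:levis} guarantees that $\hQ_\nu,\hL_\nu$ are genuinely a standard parabolic and Levi of $\hG_\nu$, so these objects make sense.
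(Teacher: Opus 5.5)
Your proposal is correct and is essentially the paper's argument. The paper simply defers to the $\GL_n$ case in \cite[\S 5.3.2]{BZCHN}, which is the same transitivity-of-spectral-induction computation via the Cartesian square $X_{\LP}\cong X_{\LQ}\times_{X_{\LL}}X_{{}^L(\mathbf{P}\cap\mathbf{L})}$ and base change that the paper carries out in \eqref{eq:3.19}--\eqref{eq:3.20}. One small precision in your Step 2: a fiber product of groups does not by itself give a fiber product of classifying stacks; what makes $B\LP\simeq B\LQ\times_{B\LL}B\,{}^L(\mathbf{P}\cap\mathbf{L})$, and hence the square of parameter stacks Cartesian, is the surjectivity of $\LQ\to\LL$.
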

\begin{proof}
When $G=\GL_n$, this is proven in~\cite[\S 5.3.2]{BZCHN}.  The argument in the general case is identical, hence we omit it.
\end{proof}

These isomorphisms give rise, via functoriality, to natural maps:
\begin{equation}\label{eqn:maps-between-End-parabind}
\End(\CS^1_{L_{\nu}}) \rightarrow \End(\CS^1_{G_{\nu}})\quad\text{and}\quad 
\End(\CS^{\phi}_L) \rightarrow \End(\CS^{\phi}_G).
\end{equation}

On the other hand, we have a natural inclusion of $\bW(\hL,\phi)$ in $\bW(\hG,\phi)$, and we verify:
\begin{lemma}\label{lem:injection-Rhat}
The inclusion of $\bW(\hL,\phi)$ in $\bW(\hG,\phi)$ restricts to an injection of ${\widehat R}_{L,\phi}$ into ${\widehat R}_{G,\phi}$.
\end{lemma}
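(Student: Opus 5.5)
The plan is to use the concrete description of $\widehat R_{G,\phi}$ from Lemma~\ref{lem:4.splittingR}: inside $\bW(\hM,\phi)$ it is the subgroup of elements stabilizing the parabolic $\LP_\nu$ of $\LG_\nu$. Since $\LP \subset \LQ$, the parabolic of $\LL_\nu$ determined by $\LP \cap \LL$ is $\LP_\nu \cap \LL_\nu$, a Borel of $\LL_\nu$ by Lemma~\ref{lemma:levis}. So $\widehat R_{L,\phi}$ is the subgroup of $\bW(\hL,\phi)$ stabilizing $\LP_\nu \cap \LL_\nu$.

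\textbf{Step 1: $\bW(\hL,\phi)\subset\bW(\hG,\phi)$ is injective.} Both groups are quotients of subgroups of normalizers of $\LM$ by $\hM$, and $N_{\hL}(\LM)\subset N_{\hG}(\LM)$. Stabilizing the component $X^\phi_{\LM}$ means the same thing in both.

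\textbf{Step 2: the image of $\widehat R_{L,\phi}$ stabilizes $\LP_\nu$.} Take $w\in\widehat R_{L,\phi}$. By the proof of Lemma~\ref{lemma:spectral Weyl sequence}, applied to $L$, it has a representative $g\in C_{\hL}(\nu)$ normalizing the Borel pair $(\hM_\nu,\hP_\nu\cap\hL_\nu)$ of $\hL_\nu$. I would then show that $g$ also normalizes $\hP_\nu$. Write $\hP_\nu=(\hP_\nu\cap\hL_\nu)\ltimes(\hU_Q\cap\hG_\nu)$, where $\hU_Q$ is the unipotent radical of $\hQ$. Since $g\in\hL$, it normalizes $\hU_Q$. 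Since $g$ centralizes $\nu$, it normalizes $\hG_\nu$. Hence $g$ normalizes $\hU_Q\cap\hG_\nu$, and therefore normalizes $\hP_\nu$. Because the splitting in Lemma~\ref{lem:4.splittingR} is characterized by stabilizing $\LP_\nu$, the image of $w$ lies in $\widehat R_{G,\phi}\subset\bW(\hG,\phi)$. Together with Step 1, this gives the injection.

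\textbf{Consistency check.} The map should also agree with the natural map $\pi_{0,L,\phi}/\pi_{0,M,\phi}\to\pi_{0,G,\phi}/\pi_{0,M,\phi}$ induced by $C_{\hL}(\nu)\subset C_{\hG}(\nu)$. That map is injective because $\pi_0(C_{\hL}(\nu))\cong\pi_0(C_{\hQ}(\nu))\hookrightarrow\pi_0(C_{\hG}(\nu))$ by Corollaries~\ref{coro:pi0-isom-Q-to-L} and~\ref{coro:pi0-injection-Q-to-G}. Restricting to $\phi(\Fr)$-fixed points preserves injectivity. Compatibility of the two descriptions then follows from the commutative diagram~\eqref{eqn:big-diagram-pi0M-pi0G} for $L$ and for $G$.

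\textbf{Main obstacle.} The delicate point is Step 2: showing that the normalizing representative chosen within $L$ normalizes the full Borel $\hP_\nu$ of $\hG_\nu$. This relies on $\hQ\cap\hG_\nu$ being the parabolic of $\hG_\nu$ with Levi $\hL_\nu$ and unipotent radical $\hU_Q\cap\hG_\nu$, which comes from the cocharacter description in the proof of Lemma~\ref{lemma:levis}. I would first verify the decomposition $\hP_\nu=(\hP_\nu\cap\hL_\nu)(\hU_Q\cap\hG_\nu)$ using that same limit description.
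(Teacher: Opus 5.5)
Your proposal is correct and follows essentially the paper's proof. The paper also argues that representatives normalize $\hQ$, hence $\hQ_\nu$ and its unipotent radical; that elements of $\widehat R_{L,\phi}$ normalize $\hP_\nu\cap\hL_\nu$; and that $\hP_\nu$ is generated by these two subgroups. Your version only makes explicit the choice of a representative in $C_{\hL}(\nu)$ and the decomposition $\hP_\nu=(\hP_\nu\cap\hL_\nu)\ltimes(\hU_Q\cap\hG_\nu)$, which the paper leaves implicit.
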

\begin{proof}
On the one hand, any element of $\bW({\widehat L},\phi)$ normalizes $\hQ$, and thus also $\hQ_{\nu}$ and its unipotent radical.  On the other hand, an element of ${\widehat R}_{L,\phi}$ also normalizes $\hP_{\nu} \cap \hL_{\nu}$.  Since $\hP_{\nu}$ is generated by $\hP_{\nu} \cap \hL_{\nu}$ and the unipotent radical of $\hQ_{\nu}$, any element of ${\widehat R}_{L,\phi}$ normalizes $\hP_{\nu}$, and thus is an element of ${\widehat R}_{G,\phi}$.
\end{proof}

It is then straightforward to check that the isomorphism \eqref{eqn:thm-Springer-reduction} of Theorem~\ref{thm:Springer reduction} 
is compatible with the maps in \eqref{eqn:maps-between-End-parabind}. 
\begin{prop} \label{prop:spectral induction compatibility}
We have a commutative diagram:
\begin{equation}
\begin{tikzcd}
\End(S^{\phi}_L)\arrow[]{d}{} \arrow[]{r}{\simeq} & \End(\CS^1_{L_{\nu}})^{\pi_{0,M,\phi}} \rtimes \CC[{\widehat R}_{L,\phi}]\arrow[]{d}{}\\
\End(\CS^{\phi}_G) \arrow[]{r}{\simeq} & \End(\CS^1_{G_{\nu}})^{\pi_{0,M,\phi}} \rtimes \CC[{\widehat R}_{G,\phi}]
\end{tikzcd}
\end{equation}
where the vertical maps on the right-hand side are the maps \eqref{eqn:maps-between-End-parabind} on the first factor and the natural injection of ${\widehat R}_{L,\phi}$ into ${\widehat R}_{G,\phi}$ on the second factor, and the horizontal maps are given by \eqref{eqn:thm-Springer-reduction}. 
\end{prop}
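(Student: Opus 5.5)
The approach is to make the diagram of \eqref{eqn:comm-diag-spectral-side} functorial in the parabolic, and to check the two halves of the semidirect product separately. First I would write the analogue of \eqref{eqn:comm-diag-spectral-side} for $L$ in place of $G$ (using $\LP\cap\LL$, $\LP_\nu\cap\LL_\nu$, and the groups $\pi_{0,M,\phi}\subset\pi_{0,L,\phi}$). I would then stack it above the diagram for $G$ via the correspondences $X_{\LL}\leftarrow X_{\LQ}\to X_{\LG}$ and $X^1_{\LL_\nu}\leftarrow X^1_{\LQ_\nu}\to X^1_{\LG_\nu}$. By Lemma \ref{lemma:levis} and Corollaries \ref{coro:pi0-injection-Q-to-G}, \ref{coro:pi0-isom-Q-to-L}, the map $\tilde\iota_\phi$ restricted to $\LQ_\nu$ identifies $\tilde X^1_{\LQ_\nu}$ with $\tilde X^{\nu,\phi}_{\LQ}$. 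Hence Theorem \ref{thm:moduli reduction} for $\LQ$ gives $X^1_{\LQ_\nu}/\pi_{0,L,\phi}\cong X^\phi_{\LQ}$. All maps $r_Q,\pi_Q$ are equivariant for $C_{\hL}(\nu)_\phi$, so we obtain a commutative prism whose faces involving quotients by $\pi_{0,M,\phi}$ are Cartesian, in the same way as the square in \eqref{eqn:comm-diag-spectral-side}.

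Second, I would define $\CT_L$ on $X^1_{\LL_\nu}/\pi_{0,M,\phi}$ analogously to $\CT$. Using base change as in \eqref{eq:3.19}, I would show that $\CT_G\cong(\pi_{Q_\nu}/\pi_{0,M,\phi})_*(r_{Q_\nu}/\pi_{0,M,\phi})^*\CT_L$, and that pulling back along $\mathrm{pr}$ recovers the second isomorphism of Proposition \ref{prop:Springer induction}. This shows that the map $\End(\CT_L)\to\End(\CT_G)$ corresponds, under \eqref{eqn:isom-End-CT-End-SGnu1-inv}, to the restriction to $\pi_{0,M,\phi}$-invariants of $\End(\CS^1_{L_\nu})\to\End(\CS^1_{G_\nu})$. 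Indeed, the pullback of a $\pi_{0,M,\phi}$-equivariant morphism along $\mathrm{pr}$ commutes with spectral induction, because pullback along an étale cover commutes with $*$-pushforward and with $*$-pullback.

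Third, I would handle the group part. The factor $\CC[\widehat R_{G,\phi}]$ arises from pushing $\CT_G$ forward along the finite étale quotient by $\widehat R_{G,\phi}$, with $r\in\widehat R_{G,\phi}$ acting through the representative $x_P$ that fixes the pinning (Lemma \ref{lem:4.splittingR}). For $r\in\widehat R_{L,\phi}$, Lemma \ref{lem:injection-Rhat} shows that the same representative normalizes $\hQ_\nu$ and $\hP_\nu$. Hence it acts compatibly on the whole correspondence $X^1_{\LL_\nu}\leftarrow X^1_{\LQ_\nu}\to X^1_{\LG_\nu}$ modulo $\pi_{0,M,\phi}$. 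The isomorphism $\CS^\phi_G\cong(\pi_Q)_*r_Q^*\CS^\phi_L$ is then equivariant: the $r$-translation endomorphism of $\CS^\phi_L$ maps to the $r$-translation of $\CS^\phi_G$. Since the semidirect product is generated by the two factors, commutativity follows.

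The main obstacle is the bookkeeping of the quotient by $\widehat R_{L,\phi}$ versus $\widehat R_{G,\phi}$. The spectral induction for $L$ lands on $X^1_{\LL_\nu}/\pi_{0,L,\phi}$, whereas $\widehat R_{G,\phi}$ only acts after passing to $\pi_{0,M,\phi}$-quotients. I would resolve this by writing $X^\phi_{\LQ}\to X^\phi_{\LG}$ as a factorization through $X^1_{\LQ_\nu}/\pi_{0,L,\phi}\to X^1_{\LG_\nu}/\pi_{0,L,\phi}\to X^1_{\LG_\nu}/\pi_{0,G,\phi}$, where the last map is finite étale. I would then check that the equivariant structures on $\CT$ are compatible under $\pi_{0,L,\phi}\subset\pi_{0,G,\phi}$. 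This compatibility is where the choice of $x_P$, which is canonical up to $Z(\hG_\nu)$, is essential.
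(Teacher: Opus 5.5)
Your proposal is correct and follows the paper's route. The paper proves this proposition in one line, by invoking the compatibility of diagram \eqref{eqn:comm-diag-spectral-side} for $\LL$ with the one for $\LG$ under the spectral parabolic induction correspondences; you have spelled out the base-change check for the affine factor and the equivariance check for $\widehat R_{L,\phi}\subset\widehat R_{G,\phi}$ via Lemma \ref{lem:injection-Rhat}. Your treatment of the representatives also holds up, since a $\hG_\nu$-pinned representative of $r\in\widehat R_{L,\phi}$ lies in $C_{\hL}(\nu)$ and preserves the induced pinning of $\hL_\nu$, so the same element can be used in both rows.
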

\begin{proof}
   This follows from the compatibility of diagram~\ref{eqn:comm-diag-spectral-side} with the corresponding diagram with $\LL$ in place of $\LG$, under the appropriate spectral parabolic induction correspondences.
\end{proof}

\section{Reduction to unipotent representations}\label{sec:conjecture}

The conjectural categorical Langlands correspondence suggests that Theorem~\ref{thm:Springer reduction} should have a natural analogue in representation theory.  The goal of this section is to precisely formulate such an analogue.

\subsection{Langlands compatibility}

We will formulate our analogue of Theorem~\ref{thm:Springer reduction} in terms of a pair $(\sigma,\phi)$, where $\sigma$ is an irreducible generic supercuspidal representation of a Levi subgroup $M$ of $G$, and $\phi: W_F \rightarrow \LM$ is a supercuspidal Langlands parameter.  Morally, we will think of $\phi$ as being the Langlands parameter associated to $\sigma$.  However, as there are situations in which the local Langlands correspondence is not known, we will instead assume certain compatibilities between $\sigma$ and $\phi$, that we now make precise.

The first compatibility we define is a compatibility with twisting by unramified characters.  Let $M^1$ be the intersection of the kernels of all unramified characters of $M$; equivalently, $M^1$ is the subgroup of $M$ generated by all open compact subgroups of $M$.  The complex torus $\Hom(M/M^1,\GG_m)$ is the group $\mathfrak{X}_{\mathrm{nr}}(M)$ of unramified characters of $M$.  Let 
\begin{equation}\label{eqn:Stab-sigma}
\Stab(\sigma):=\{\chi\in \mathfrak{X}_{\mathrm{nr}}(X)|\sigma \otimes \chi\simeq \sigma\}
\end{equation}
be the set of unramified characters $\chi$ of $M$ such that $\sigma \otimes \chi$ is isomorphic to $\sigma$.  It is a subgroup of $\Hom(M/M_1,\GG_m)$.  The quotient $\Hom(M/M^1,\GG_m)/\Stab(\sigma)$ is the ``torus of unramified twists of $\sigma$'', i.e. $\{\sigma\otimes\chi|\chi\in\mathfrak{X}_{\nr}(M)\}$; we will denote this torus by $\CO_{\sigma}$.

The Langlands correspondence for characters identifies $\Hom(M/M^1,\GG_m)$ with the torus 
$(Z({\hM})^{I_F})^{\circ}_{\Fr}$ on the spectral side \cite{Haines}. This group acts naturally on
$X_{\LM}$ according to \eqref{eq:4.actionZ}, and in particular on $X^\phi_{\LM}$ and $\overline{X}^\phi_{\LM}$.
By Lemma~\ref{lemma:irreducible}, the action of $(Z({\hM})^{I_F})^{\circ}_{\Fr}$ on  $\overline{X}^\phi_{\LM}$
is transitive.

\begin{definition}\label{defn:twist-compatible}
We say $\sigma$ and $\phi$ are {\em twist compatible} if the stabilizer of $\phi \in \overline{X}_\LM$ in
$(Z({\hM})^{I_F})^{\circ}_{\Fr}$ corresponds to Stab$(\sigma) \subset \Xnr (M)$ via the Langlands correspondence 
for characters.
\end{definition}

When $\sigma$ and $\phi$ are twist compatible, the actions of $\Xnr (M)$ provide a bijection
\begin{equation}
\CO_\sigma \to \overline{X}^\phi_\LM : \chi \otimes \sigma \mapsto \hat \chi \phi ,    
\end{equation}
where $\hat \chi \in (Z({\hM})^{I_F})^{\circ}_{\Fr}$ corresponds to $\chi \in \Xnr (M)$.
This identifies $\CO_{\sigma}$ with the set of ($\CC$-valued) Langlands parameters on $X^{\phi}_{\LM}$ in exactly 
the manner predicted by the expected property ``compatibility with twisting by unramified characters'' 
of the local Langlands correspondence.

The second compatibility
we impose is related to the action of Weyl groups on $\CO_{\sigma}$ and $\overline{X}^{\phi}_{\LM}$.  Let $\bW(G,M)$ denote the quotient $N_G(M)/M$; then $\bW(G,M)$ acts on the set of isomorphism classes of irreducible $\CC$-representations of $M$ by conjugation.  Let $\bW(M,\sigma)$ denote the subgroup of $\bW(G,M)$ consisting of those $w$ such that $\sigma^w$ is a twist of $\sigma$ by an unramified character of $M$, i.e.
\begin{equation}\label{eqn:defn-WMsigma}
    \bW(M,\sigma):=\{w\in \bW(G,M)|\sigma^w\simeq\sigma\otimes\chi\text{ for some }\chi\in\mathfrak{X}_{\nr}(M)\}. 
\end{equation}
Then $\bW(M,\sigma)$ acts on $\CO_{\sigma}$.

On the spectral side, we have a group $\bW(\hM,\phi)$, defined as in \eqref{eqn:defn-W-Mhat-phi}, analogous to 
$\bW(M,\sigma)$.  The groups $\bW(M,\sigma)$ and $\bW(\hM,\phi)$ may be naturally identified with subquotients of 
the groups $\bW(G,M)$ and $\bW(\hG, \hM)$, respectively. There is a natural isomorphism \cite[Proposition 3.1]{ABPS}
\begin{equation}\label{eq:5.WGM}
\bW (G,M) \cong \bW (\hG,\hM) .
\end{equation}

\begin{definition}\label{defn:Weyl-compatible}
We say $\sigma$ and $\phi$ are {\em Weyl compatible} if they are twist compatible, and \eqref{eq:5.WGM} 
induces an isomorphism of $\bW(M,\sigma)$ with $\bW(\hM,\phi)$, and if moreover 
the isomorphism: $\CO_{\sigma} \cong \overline{X}^{\phi}_{\LM}$ is compatible with the actions of $\bW(M,\sigma)$ 
on the former and $\bW(\hM,\phi)$ on the latter via this isomorphism.
\end{definition}

Note that when $\sigma$ and $\phi$ are related by a local Langlands correspondence that is {\em compatible with isomorphisms} in the sense of~\cite[5.2.4]{Haines}, then $\sigma$ and $\phi$ will be Weyl compatible in the sense of Definition~\ref{defn:Weyl-compatible}.

The last compatibility we impose is more subtle, and is motivated by Langlands' conjecture on Plancherel measures.  
Let $\LL$ be a Levi subgroup (not necessarily proper) of $\LG$. We say that $\LL$ minimally contains $\LM$ if
$\LL \supsetneq \LM$ and $\LL$ is minimal with respect to this property.  Then $\LL$ is the L-group of some 
Levi subgroup $L$ of $G$ containing $M$.

Associated to $L$ and $\sigma$, we have the Harish-Chandra $\mu$-function $\mu_{L,\sigma}$ \cite{Wal}, which is 
a $W(M,\sigma)$-invariant rational function on $\CO_{\sigma}$. This function is an ingredient of an explicit
formula for the Plancherel density for irreducible tempered L-representations \cite{Wal}.
For any $\chi \in \Xnr (M)$, the representation $i_{P_L}^L (\sigma\otimes\chi)$ has length at most two. By the
properties of the intertwining operators used to construct $\mu_{L,\sigma}$, the representation 
$i_{P_L}^L (\sigma\otimes\chi)$ is indecomposable if and only if $\mu_{L,\sigma}(\sigma \otimes \chi) = \infty$.
Let $\mathrm{Pol}_{L,\sigma}$ be the set of $\CC$-points $\sigma'$ of $\CO_{\sigma}$ such that $\mu_{L,\sigma}$ 
has a pole at $\sigma'$.

On the spectral side, for any $\CC$-point of the coarse moduli space $\overline{X}^{\phi}_{\LL}$, corresponding to 
some Langlands parameter $\varphi$, we may consider 
the adjoint L-function $L(s, \Ad_{\hL} \varphi)$ and the adjoint $\gamma$-factor $\gamma (s,\Ad_{\hL} \varphi)$.  
The poles of $L(s=1, \Ad_{\hL} \varphi)$ are the poles of $\gamma (s=0,\Ad_{\hL} \varphi)$, and these functions of
$\varphi$ relate to Plancherel densities and formal degrees \cite{HII}. Let $\widehat{\mathrm{Pol}}_{L,\phi}$ denote 
the set of $\varphi \in \overline{X}^\phi_{\LM}$ such that $L(1,\Ad_{\hL} \varphi) = \infty$.

\begin{definition}\label{defn:Plancherel-compatibility}
We say $\sigma$ and $\phi$ are {\em Plancherel compatible} if they are twist compatible and the isomorphism 
$\CO_{\sigma} \cong \overline{\X}^{\phi}_{\LM}$ identifies $\mathrm{Pol}_{L,\sigma}$ with 
$\widehat{\mathrm{Pol}}_{L,\phi}$, for all $\LL$ minimally containing $\LM$.
\end{definition}

In Appendix \ref{appendix}, we study many equivalent formulations of Plancherel comptability, and moreover prove Plancherel compatibility for a large class of reductive $p$-adic groups.

\begin{definition}\label{defn:LLC-compatible}
        We will say $\sigma$ and $\phi$ are {\em Langlands compatible}
    if they are twist compatible, Weyl compatible, and Plancherel compatible.
\end{definition}

\begin{remark}
\rm We note that although twist compatibility depends only on the pairs $(M,\sigma)$ and
$(\LM,\phi)$, the notions of Weyl compatibility and Plancherel compatibility (and thus also 
the notion of Langlands compatibility) depend also on the ambient group $G$ and its L-group 
$\LG$.  In particular when $M = G$, the Weyl and Plancherel compatibilities are vacuous.
\end{remark}

In situations where we are considering more than a single block we will need a way systematically assigning Langlands parameters to generic supercuspidals that satisfy the above compatibilities.  The following definition formalizes what we will need:

\begin{definition} \label{def:wgsc}
By a {\em weak generic supercuspidal correspondence} for $G$ and its Levi's, we mean for each standard Levi 
subgroup $M$, a map 
$\sigma \mapsto \Phi_M(\sigma)$ from irreducible generic supercuspidal representations of $M$ to $\hM$-conjugacy 
classes of supercuspidal Langlands parameters $\Phi: W_F \rightarrow \LM$, such that the following conditions hold:
\begin{enumerate}
\item For any unramified character $\chi$ of $M$, we have 
\begin{equation}\label{eq:5.unramified twists}
\Phi_M(\sigma \otimes \chi)|_{I_F} = \Phi_M(\sigma)|_{I_F}  \;\text{ and }\;
\Phi_M(\sigma \otimes \chi)(\Fr) = \hat \chi \Phi_M(\sigma),
\end{equation}
where $\hat \chi$ is a $\CC$-point of $Z({\hM})^{I_F}$ representing the class in 
$(Z({\hM})^{I_F})^{\circ}_{\Fr}$ that corresponds to $\chi$ under the Haines' isomorphism
\begin{equation}
\Xnr (M) \cong (Z({\hM})^{I_F})^{\circ}_{\Fr} .
\end{equation}
\item For any element $w$ of $\bW(G,M) \cong \bW (\hG,\hM)$, we have 
$\Ad (w) \Phi_M(\sigma) = \Phi_{M}(w \cdot \sigma)$.
\item For any pair $(M,\sigma)$, the parameter $\Phi_M(\sigma)$ is Plancherel compatible with $\sigma$.
\end{enumerate}
\end{definition}

It is straightforward to show that for such a correspondence, the parameter $\Phi_M(\sigma)$ is Langlands 
compatible with $\sigma$ for every $M$ and $\sigma$.

\begin{remark} \rm The properties of a weak generic supercuspidal correspondence do {\em not} uniquely characterize such a correspondence.  For instance, for $\GL_n$ any 
twist of a weak generic correspondence by a character $\chi \circ \det$ yields a different 
weak generic supercuspidal correspondence.

We do not demand, neither in the formulation of Langlands compatibility nor in that of a weak generic supercuspidal
correspondence, that tempered representations are matched with bounded L-parameters. For
what we do in this paper, this is simply not necessary. One could add this additional property by requiring
in addition that $\sigma$ is tempered and $\phi$ (or $\Phi (\sigma)$) is bounded. 
\end{remark}

\begin{remark}[The non-generic case]\label{rem:wsc} \rm
All of the above compatibilities (1)--(3) in Definition \ref{def:wgsc} also make sense for \textit{non-generic} supercuspidal representations $\sigma$. 
Then $\phi$ must be discrete but may have nontrivial mo\-no\-dromy $N$.
In this case, one must replace $\overline{X}^\phi_\LM$ by $(Z(\hM)^{I_F})_\Fr^\circ \cdot\phi$
and regard $\Ad_{\hL} \varphi$ as a $W_F$-representation on $Z_{\Lie (\hL)}(N)$. Furthermore, 
$\phi$ must be equipped with an enhancement $\epsilon$ such that the enhanced $L$-parameter 
$(\phi,\epsilon)$ is cuspidal.~(In the above, we are implicitly taking the trivial 
enhancement, see also Lemma \ref{lem:5.10}.) Then all the conditions in Definition 
\ref{def:wgsc} can be reformulated with $(\phi,\epsilon)$ instead of $\phi$. This formulates 
a weak (not necessarily generic) supercuspidal correspondence, generalizing Definition
\ref{def:wgsc} to the non-generic case. 
\end{remark}

\begin{remark}\label{remark:classical-LLC} \rm
There are many reductive groups over non-archimedean local fields that are known to admit a weak generic supercuspidal correspondence in the sense of Definition \ref{def:wgsc}. For instance, in~\cite[\S 7]{conjecture} it is shown that the known local Langlands correspondences for general linear groups, symplectic groups, odd special orthogonal groups and unitary groups satisfy the requirements. Moreover, as expected, the Langlands parameter associated to an irreducible generic supercuspidal representation is supercuspidal. 
We refer the reader to Appendix \ref{appendix} for more details and further examples.
\end{remark}

\subsection{An automorphic analogue of Theorem~\ref{thm:Springer reduction}}
We are now an a position to formulate a representation-theoretic analogue of 
Theorem~\ref{thm:Springer reduction}.  Fix a standard Levi subgroup $\bM$ of $\bG$, a 
Whittaker datum $(U,\psi)$ for $\bG$, and an irreducible $(U_M,\psi_M)$-generic supercuspidal
representation $\sigma$ of $M$.  We then have the spaces of compact Whittaker functions 
$\mathcal{W}_L$ for any standard Levi subgroup $\bL$ of $\bG$, associated to the choice of 
$(U,\psi)$.  If $\bL$ contains $\bM$, we let $\cW_L^{[M,\sigma]}$ denote the summand of 
$\cW_L$ corresponding to the Bernstein block $\Rep(L)_{[M,\sigma]}$ of $L$ corresponding 
to the inertial equivalence class of $(M,\sigma)$.  Let $\phi: W_F \rightarrow \LM$ be 
a supercuspidal Langlands parameter that is Langlands compatible with $\sigma$.

The conjectural categorical local Langlands correspondence then predicts that we should have 
an isomorphism of $\End_G \big( i_{P}^G \cW_M^{[M,\sigma]} \big)$ with the endomorphism 
algebra of the coherent Springer sheaf $\CS_G^{\phi}$.  Combining this prediction with 
Theorem~\ref{thm:Springer reduction} we arrive at the following, purely 
representation-theoretic statement:

\begin{thm} \label{thm:automorphic reduction}
For each standard Levi $L$ of $G$, and standard parabolic $P$ of $G$ with Levi $M$, we have isomorphisms:
\[
\End_L \big( i_{P_L}^L \cW_M^{[M,\sigma]} \big) \cong 
\End_{L_{\nu}}(i_{P_L}^{L_{\nu}} \chi^{un})^{K_{M,\phi}} \rtimes \CC[{\widehat R}_{L,\phi}],
\]
where $P_L$ denotes the intersection of $P$ with $L$. Here the action of $K_{M,\phi}$ is
induced from its action on $\chi^{un} \cong \CC[\Xnr(M_\nu)]$, which comes from translations 
by elements of $\Xnr (M_\nu)$. The action of $\widehat R_{L,\phi}$ on 
$\End_{L_{\nu}}(i_{P_L}^{L_{\nu}} \chi^{un})^{K_{M,\phi}}$ is induced, via Weyl compatibility,
from its action on $\CO_\sigma \cong \overline{X}^\phi_{\LM}$. 
When $\sigma$ and $\phi$ are given, these algebra isomorphisms are canonical.

Moreover, let $Q$ denote the standard parabolic of $G$ with Levi $L$.  
Then the above isomorphisms fit into a commutative diagram:
\begin{equation}
\begin{tikzcd}
\End_L \big( i_{P_L}^L \cW_M^{[M,\sigma]} \big) \arrow[]{r}{\cong}\arrow[]{d}{}
& \End_{L_{\nu}} \big( i_{P_L}^{L_{\nu}} \chi^{un} \big)^{K_{M,\phi}} \rtimes \CC[{\widehat R}_{L,\phi}] \arrow[]{d}{}\\
\End_G \big( i_{P}^G \cW_M^{[M,\sigma]} \big) \arrow[]{r}{\cong}  
& \End_{G_{\nu}} \big( i_{P}^{G_{\nu}} \chi^{\un} \big)^{K_{M,\phi}} \rtimes \CC[{\widehat R}_{G,\phi}],
\end{tikzcd}
\end{equation}
where the left-hand vertical arrow is induced by parabolic induction $i_{Q}^G$, and the right-hand vertical 
arrow is induced by the crossed product of the maps induced by $i_{Q_{\nu}}^{G_{\nu}}$ on the first factor 
and the injection of $\widehat{R}_{L,\phi}$ into $\widehat{R}_{G,\phi}$ on the second factor.

Finally, under these identifications, the $\End_G \big( i_{P}^G \mathcal{W}_M^{[M,\sigma]} \big)$-module 
$\Hom_G \big( i_{P}^G \cW_M^{[M,\sigma]}, \cW_G^{[M,\sigma]} \big)$ is identified with 
the $\End_{G_{\nu}} \big( i_{P_{\nu}}^{G_{\nu}} \chi^\un \big)^{K_{M,\phi}}$ module 
$\Hom_{G_{\nu}} \big( i_{P_{\nu}}^{G_{\nu}} \chi^{\un}, \cW_{G_\nu}^{[M_{\nu},1]} \big)^{K_{M,\phi}}$.  
\end{thm}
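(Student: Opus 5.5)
The plan is to compute both sides as explicit algebras with bases and match them generator by generator. The automorphic input is the structure theory of Heiermann and Solleveld \cite{Heiermann,solleveld-endomorphisms,OSgeneric}. It describes $\End_L\big(i_{P_L}^L \cW_M^{[M,\sigma]}\big)$ as a twisted crossed product
\[
\CH\big(\CO_\sigma, \Sigma_{\sigma,L}, \lambda, \lambda^*\big) \rtimes \CC[R(\CO_\sigma)_L, \natural].
\]
Here the first factor is an affine Hecke algebra. Its commutative part is $\CC[\CO_\sigma]$. Its finite part is indexed by the Weyl group $\bW(\Sigma_{\sigma,L})$ of the reduced root system $\Sigma_{\sigma,L}$, whose roots correspond to the $L'\supset M$ minimally containing $M$ with $\mathrm{Pol}_{L',\sigma}\neq\emptyset$. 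The parameters $\lambda,\lambda^*$ are read off from the poles of $\mu_{L',\sigma}$. The group $R(\CO_\sigma)_L$ is the stabilizer of a positive system in $\bW(M,\sigma)_L$, and $\natural$ is a $2$-cocycle. For generic $\sigma$ I expect $\natural$ to be trivial: use the Whittaker functional to normalize the intertwining operators.

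On the unipotent side, $\End_{L_\nu}\big(i^{L_\nu}_{P_{L_\nu}}\chi^{\un}\big)\cong \CH(L_\nu,I_\nu)$ has the Bernstein basis $T_w\theta_\lambda$ of \eqref{eq:3.26}. Here $w\in \bW(\hL_\nu,\hM_\nu)^{\Fr}$ and the $\theta_\lambda$ span $\CC[\Xnr(M_\nu)]$. Taking $K_{M,\phi}$-invariants replaces $\CC[\Xnr(M_\nu)]$ by functions on $\Xnr(M_\nu)/K_{M,\phi}$.

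The first step is to identify the tori. By twist compatibility, $\CO_\sigma\cong \overline X^\phi_{\LM}$. By Theorem~\ref{thm:moduli reduction} and Lemma~\ref{lem:4.11}, $\overline X^\phi_{\LM}\cong \hM_{\nu,\Fr}/K_{M,\phi}$, and $\hM_{\nu,\Fr}$ is $\Xnr(M_\nu)$. So $\CC[\CO_\sigma]\cong\CC[\Xnr(M_\nu)]^{K_{M,\phi}}$.

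The second step is to identify the root data. By Weyl compatibility, $\bW(M,\sigma)_L\cong \bW(\hM,\phi)_L$. By Lemmas~\ref{lemma:spectral Weyl sequence} and \ref{lem:4.splittingR}, the latter is $\bW(\hL_\nu,\hM_\nu)^{\Fr}\rtimes \widehat R_{L,\phi}$. Plancherel compatibility, combined with the standard computation of $L(1,\Ad_{\hL'}\varphi)$ in terms of roots of $\hL'_\nu$, should show the following. The reflections in $\bW(\Sigma_{\sigma,L})$ correspond exactly to the simple $\Fr$-orbits of roots of $\hL_\nu$. The parameters $q_F^{\lambda},q_F^{\lambda^*}$ match those of the Iwahori–Hecke algebra of the unramified group $L_\nu$, since both are determined by the pole locations of the same rational function. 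Consequently $R(\CO_\sigma)_L\cong\widehat R_{L,\phi}$, compatibly with the chosen positive systems from $P$ and $P_\nu$.

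Once the data agree, Bernstein presentations give an isomorphism of the affine Hecke parts. The invariant subalgebra $\CH(L_\nu,I_\nu)^{K_{M,\phi}}$ has the same Bernstein presentation: the $T_w$ are fixed, by Lemma~\ref{lem:4.actionXonHGI}. Finally, match the $\widehat R$ actions using Proposition~\ref{prop:4.actionXonSGnu} and Lemma~\ref{lem:4.actionXonHGI}. Canonicity comes from fixing $P$, $\sigma$ and $\phi$. This is where I expect the main obstacle. Two points need care: the precise normalization of the Heiermann–Solleveld generators relative to $T_w$, since the $\theta_x$ and $T_w$ there are only defined up to rational functions and signs; and the triviality of the cocycle together with the translation twists $\zeta(x)$ on the automorphic side. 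I would first try to fix normalizations by requiring the Whittaker-normalized intertwiners to act on $\cW$-quotients as in the unramified case.

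For the diagram, use \cite[Lemma 5.1]{Solleveld2019}: $i_Q^G$ induces the map on Hecke algebras sending Bernstein generators to Bernstein generators and $R$-groups to $R$-groups. This matches the map induced by $i_{Q_\nu}^{G_\nu}$ on $T_w\theta_\lambda$ given in \eqref{eq:3.17}, and Lemma~\ref{lem:injection-Rhat} handles the $\widehat R$ factors. For the Whittaker module, first identify $\Hom_G\big(i_P^G\cW_M^{[M,\sigma]},\cW_G^{[M,\sigma]}\big)$ with a one-dimensional sign-type module induced from the finite Hecke part: by Frobenius reciprocity it is free of rank one over $\CC[\CO_\sigma]$, with the Steinberg-type character of the finite Hecke algebra. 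Then do the same on the unipotent side, where the Whittaker module is $\ind_{K_\nu}^{G_\nu}\St$, and take $K_{M,\phi}$-invariants.
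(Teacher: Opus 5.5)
Your proposal follows essentially the same route as the paper: you identify $\CO_\sigma$ with $\Xnr(M_\nu)/K_{M,\phi}$ via twist compatibility and the moduli reduction, match root data and $q$-parameters through Plancherel compatibility and the equality of adjoint $L$-functions under $\iota_\phi$, identify $R(\CO_\sigma)$ with $\widehat R_{G,\phi}$ via Weyl compatibility and the $P_\nu$-splitting, and compare the $R$-actions through $\Ad_{x_P}$ and $\zeta(x)$.

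The paper settles the points you flag as follows:
\begin{itemize}
\item The trivial cocycle and canonical normalization come from \cite[Theorem A.1]{OSgeneric} and \cite[Theorem 2.7]{solleveld-principal}.
\item It checks that $\sigma$ itself is an admissible base point with $q_\alpha\geq q^*_\alpha$ (Lemma \ref{lem:6.basepoints}), which removes the $q\leftrightarrow q^*$ ambiguity left by pole loci alone.
\item The $R$-actions are matched by identifying $(\Ad_{x_P},\zeta(x))$ with $(\psi_r,\chi_r)$ from \eqref{eq:6.psirchir}.
\item The Whittaker module is handled by citing \cite[Theorem 6.2]{solleveld-endomorphisms}.
\item For a general block, the parabolic-induction diagram is obtained by tracing the normalized intertwining operators (Proposition \ref{prop:Hecke induction}), not from \cite[Lemma 5.1]{Solleveld2019}; the paper only uses that lemma in the Iwahori case.
\end{itemize}
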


\section{Proof of Theorem \ref{thm:automorphic reduction}}\label{sec:Proof-main1} 

As in previous sections, let $\nu:=\phi|_{I_F}$. By Corollary \ref{cor:pinning}, we can 
twist $\phi$ by an unramified character of $M$ such that the resulting L-parameter preserves 
a pinning of $\hG_\nu$ associated to the Borel pair $({\hM}_{\nu},{\hP}_{\nu})$ 
of ${\hG}_{\nu}$. Therefore we may and will assume that $\phi$ itself preserves such a 
pinning. Note that this involves adjusting $\sigma$ as well, via twist compatibility.
We use this $\phi$ to define $\LG_\nu$ and $\LM_\nu$, and hence $G_\nu$ and $M_\nu$.

\subsection{Automorphic side endomorphism algebras}\label{subsec:End-alg-auto-side}
One key ingredient for proving Theorem~\ref{thm:automorphic reduction}
comes from results of Heiermann \cite{Heiermann} and the second author \cite{solleveld-endomorphisms} describing the endomorphism algebras of certain progenerators for Bernstein blocks on the automorphic side. We give a brief review in this section.

Let $\bM$ be a standard Levi subgroup of $\bG$ and let $\sigma$ be an irreducible, $(U_M,\psi_M)$-generic supercuspidal representation of $G$.  Recall that $M^1$ denotes the intersection of the kernels of all unramified characters of $M$ (and thus in particular contains $U_M$.) The genericity of $\sigma$ implies that the restriction of $\sigma$ to $M^1$ is multiplicity-free, i.e. the $\Hom$-space
\begin{equation}\Hom_M(\cW_M,\sigma) \cong \Hom_{M^1}(\ind_{U_M}^{M^1} \psi_M, \sigma)
\end{equation}
is one-dimensional.  Thus there is a unique irreducible $M^1$-subrepresentation $\sigma^1$ of $\sigma$ that admits a nonzero map from $\ind_{U_M}^{M^1} \psi$, and this subrepresentation $\sigma^1$ occurs with multiplicity one in $\sigma$.  The remaining $M^1$-subrepresentations of $\sigma$ are $M$-conjugate to $\sigma^1$ and thus also occur with multiplicity one.

\begin{lemma}\label{lem:6.3}
For any nonzero map from $\ind_{U_M}^{M^1} \psi_M$ to $\sigma^1$, the induced map
\begin{equation}\cW_M \rightarrow \ind_{M^1}^M \sigma^1\end{equation}
gives an isomorphism $\mathcal{W}_M^{[M,\sigma]} \cong \ind_{M^1}^M \sigma^1$ of $M$-representations. 
\end{lemma}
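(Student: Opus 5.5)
The plan is to compare the two sides through their $\Hom$ functors: show that the map $\cW_M^{[M,\sigma]} \to \ind_{M^1}^M \sigma^1$ is surjective, and that both objects carry the same multiplicity-one family of quotients, so that the map is an isomorphism.

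First, I construct the map. Fix a nonzero $f \in \Hom_{M^1}(\ind_{U_M}^{M^1}\psi_M, \sigma^1)$. Since $\cW_M = \ind_{U_M}^M \psi_M = \ind_{M^1}^M \ind_{U_M}^{M^1}\psi_M$ by transitivity of compact induction, applying $\ind_{M^1}^M$ to $f$ gives a map $F:\cW_M \to \ind_{M^1}^M\sigma^1$.

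Next, I check that $F$ is surjective. The representation $\sigma^1$ is irreducible, so $f$ is surjective. Because $M^1$ is open in $M$, compact induction from $M^1$ to $M$ is exact, and therefore $F$ is surjective.

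I then check that the target lies in the block. The space $\ind_{M^1}^M\sigma^1$ is finitely generated, and each of its irreducible quotients restricts on $M^1$ to something containing $\sigma^1$. By a standard argument, all of its irreducible subquotients are unramified twists of $\sigma$: the representation $\sigma|_{M^1}$ is a finite sum of $M$-conjugates of $\sigma^1$, and $\ind_{M^1}^M(\sigma|_{M^1}) \cong \sigma\otimes\CC[M/M^1]$. Hence $\ind_{M^1}^M\sigma^1$ lies in $\Rep(M)_{[M,\sigma]}$. As a result, $F$ factors through the projection to $\cW_M^{[M,\sigma]}$, which gives a surjection $\bar F:\cW_M^{[M,\sigma]}\to \ind_{M^1}^M\sigma^1$.

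Both objects are projective in the block. For $\cW_M^{[M,\sigma]}$ this is clear. For $\ind_{M^1}^M\sigma^1$, note that $M/M^1$ is a lattice, $\sigma^1$ is projective in $\Rep(M^1)$ because it is cuspidal with compact-modulo-centre matrix coefficients on $M^1$, and compact induction preserves projectives. Consequently $\bar F$ splits, so $\cW_M^{[M,\sigma]}\cong \ind_{M^1}^M\sigma^1\oplus K$ for some kernel $K$. It remains to show that $K=0$. Since $K$ is projective in the block, it is enough to show that $\Hom_M(K,\sigma\otimes\chi)=0$ for all $\chi\in\Xnr(M)$, because every nonzero finitely generated object of the block has an irreducible quotient, and every irreducible object of this cuspidal block is such a twist.

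To get this vanishing, I compare dimensions. By the one-dimensionality recalled just before the statement, $\dim\Hom_M(\cW_M,\sigma\otimes\chi)=1$. On the other side, Frobenius reciprocity gives
\[
\Hom_M(\ind_{M^1}^M\sigma^1,\sigma\otimes\chi)\cong\Hom_{M^1}(\sigma^1,\sigma\otimes\chi),
\]
and the right-hand side is one-dimensional because $\sigma\otimes\chi$ restricts to $\sigma$ on $M^1$ and $\sigma^1$ occurs there with multiplicity one. Additivity of $\Hom$ then forces $\Hom_M(K,\sigma\otimes\chi)=0$, hence $K=0$.

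The main technical point is the projectivity of $\ind_{M^1}^M\sigma^1$, together with the fact that it lies in the single block. If the projectivity argument is awkward, I would instead avoid it: show directly that the kernel $K$ of $\bar F$ admits no map to any irreducible object, using left exactness of $\Hom$ from the projective object $\cW_M^{[M,\sigma]}$ applied to $0\to K\to\cW^{[M,\sigma]}_M\to\ind\sigma^1\to0$.
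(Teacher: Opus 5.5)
Your proof is correct, but it takes a different route from the paper's. The paper quotes two facts: both $\cW_M^{[M,\sigma]}$ and $\ind_{M^1}^M\sigma^1$ are projective generators of the block, and (by the Mackey formula) $\Hom_M(\cW_M,\ind_{M^1}^M\sigma^1)$ is free of rank one over $\End_M(\ind_{M^1}^M\sigma^1)$, generated by the induced map. The isomorphism then follows by a Morita/Yoneda-type argument: precomposition with the map identifies $\Hom(\ind_{M^1}^M\sigma^1,-)$ with $\Hom(\cW_M^{[M,\sigma]},-)$ on the generator, hence on the whole block.

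You argue differently in three steps:
\begin{enumerate}
\item You get surjectivity from exactness of $\ind_{M^1}^M$.
\item You split that surjection using only projectivity of $\ind_{M^1}^M\sigma^1$. Your justification is right: on $M^1$ the matrix coefficients of $\sigma^1$ are in fact compactly supported, since $Z(M)\cap M^1$ is compact, and $\ind_{M^1}^M$ is left adjoint to the exact restriction functor, so it preserves projectives.
\item You kill the complement by a multiplicity count at each irreducible $\sigma\otimes\chi$ of the cuspidal block.
\end{enumerate}

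This avoids both the generator property and the Mackey computation of $\Hom_M(\cW_M,\ind_{M^1}^M\sigma^1)$ as an $\End$-module, so it is more elementary. The cost is that you need the twist-invariance $\dim\Hom_M(\cW_M,\sigma\otimes\chi)=1$ for every $\chi$. You should state its justification explicitly: it follows from $\Hom_M(\cW_M,\pi)\cong\Hom_{M^1}(\ind_{U_M}^{M^1}\psi_M,\pi|_{M^1})$ together with $(\sigma\otimes\chi)|_{M^1}=\sigma|_{M^1}$.

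Your fallback would not actually bypass projectivity of the target. Applying $\Hom_M(-,\sigma\otimes\chi)$ to $0\to K\to\cW_M^{[M,\sigma]}\to\ind_{M^1}^M\sigma^1\to0$ leaves a term $\mathrm{Ext}^1_M(\ind_{M^1}^M\sigma^1,\sigma\otimes\chi)$, and projectivity of $\cW_M^{[M,\sigma]}$ does not control it. Your main argument does not rely on the fallback, so this does not affect the proof.
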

\begin{proof}
Both $\mathcal{W}_M^{[M,\sigma]}$ and $\ind_{M^1}^M \sigma^1$ are projective generators of $\Rep(G)_{[M,\sigma]}$, and the Mackey formula shows that $\Hom_M(W_M,\ind_{M^1}^M \sigma^1)$ is free of rank one over $\End_M(\ind_{M^1}^M \sigma^1)$, generated by the map $\cW_M \rightarrow \ind_{M^1}^M \sigma^1$ described above.  
\end{proof}

By Lemma \ref{lem:6.3}, the multiplicity-one property of $\sigma^1$ and 
\cite[\S 10.1]{solleveld-endomorphisms}, there is a natural isomorphism 
$\End_M (\mathcal W_M^{[M,\sigma]}) \cong \CC [ \CO_\sigma]$. 
By twist compatibility, tensoring with $\sigma$ 
and using $\tilde \iota_{\phi}$ in \eqref{eqn:moduli-reduction-thm} induce bijections
\begin{equation}\label{eqn:isom-tori}
\mathfrak{X}_\nr (M) / \mathrm{Stab}(\sigma) \to \CO_\sigma \to \overline{X}^\phi_{\LM} \cong 
\overline{X}^1_{\LM_\nu} / K_{M,\phi} \cong \Xnr (M_\nu) / K_{M,\phi} . 
\end{equation}
It follows that $\sigma$ and $\phi$ also induce isomorphisms
\begin{equation}\label{eq:6.4}
\End_M \big( \mathcal W_M^{[M,\sigma]} \big) \cong \CC [ \CO_\sigma] \cong 
\CC [ \Xnr (M_\nu) / K_{M,\phi} ] 
= \CC[ \Xnr (M_\nu) ]^{K_{M,\phi}} = \End_{M_\nu}(\chi^{un})^{K_{M,\phi}} ,
\end{equation}
where $K_{M,\phi}$ acts via translations on $\Xnr (M_\nu)$. 

By \cite[Theorem F]{solleveld-endomorphisms}, one can describe the $G$-endomorphism algebra of 
$i_{P}^G \ind_{M^1}^M \sigma^1 \cong i_{P}^G \mathcal W_M^{[M,\sigma]}$ in terms of a 
Hecke algebra  with unequal parameters attached to a root datum, which we now describe. Let $\bA_M$ be the maximal $F$-split torus in the center $Z(\bM)$ of $\bM$, and let $X_*(\bA_M)$ denote its cocharacter lattice. 
Denote by $\Sigma$ the set of nonzero weights occurring in the adjoint representation of $\bA_M$ on $\Lie(G)$, and let $\Sigma_{\red}$ be the set of indivisible elements of $\Sigma$. For each $\alpha\in\Sigma_{\red}$, there is a Levi subgroup $\bM_{\alpha}$ of $\bG$ containing $\bM$, and minimal with respect to the property that $\alpha$ is a nonzero weight of the action of $A_M$ on $\Lie(\bM_{\alpha})$. (Explicitly, $\bM_{\alpha}$ is the centralizer, in $\bG$, of the kernel of $\alpha$ on $\bA_M$.)  Note that $\bM_{\alpha}$ only depends on $\alpha$ up to sign, so that there is a bijection between $\Sigma_{\red}/{\pm 1}$ and the set of Levi's of $\bG$ of the form $\bM_{\alpha}$.

For each $\alpha \in \Sigma_{\red}$, consider the Harish-Chandra $\mu$-function $\mu_{M_{\alpha},\sigma}$ from \cite[\S 1]{Silberger-79} attached to the Levi $\bM_{\alpha}$ of $\bG$ and the torus $\CO_{\sigma}$; it is a rational function on $\CO_{\sigma}$.  Let $\Sigma_{\CO_{\sigma},\mu}$ denote the subset of $\Sigma_{\red}$ consisting of those $\alpha$ for which $\mu_{M_{\alpha},\sigma}$ has a zero on $\CO_{\sigma}$.

The map $X_*(\bA_M) \rightarrow M/M^1$ given by evaluation at the inverse of a uniformizer of $F$ is injective with finite cokernel, so we can regard $M/M^1$ as a sublattice of $X_*(\bA_M) \otimes \QQ$. In particular, there is a bilinear pairing:
$$M/M^1 \times X^*(\bA_M) \rightarrow \ZZ,$$
restricting the pairing between $X^*(\bA_M)$ and $X_*(\bA_M)$.

For $\alpha \in \Sigma_{\CO_{\sigma},\mu}$, note that $(M \cap M_{\alpha}^1)/M^1$ is a rank-one sublattice in $M/M^1$, and is equal to the kernel of the map $M/M^1 \rightarrow M_{\alpha}/M_{\alpha}^1$. Recall from \eqref{eqn:Stab-sigma} that $\Stab(\sigma)$ is the group of unramified characters of $M$ that preserve the isomorphism class of $\sigma$ under twisting by characters in $\mathfrak{X}_{\nr}(M)$; in particular, we can evaluate elements of $\Stab(\sigma)$ on $M/M^1$.  Let $(M/M^1)^{\Stab(\sigma)}$ denote the common kernel, on $M/M^1$, of all elements of $\Stab(\sigma)$; it is a finite index sublattice of $M/M^1$.  We then have a sequence of maps:
\begin{equation}\label{eqn:composition-MmodM1}
(M/M^1)^{\Stab(\sigma)} \hookrightarrow M/M^1 \rightarrow M_{\alpha}/M_{\alpha}^1,
\end{equation}
and the kernel of the composition \eqref{eqn:composition-MmodM1} is free of rank-one over $\ZZ$.  Let $h_{\alpha}^{\vee}$ be the generator of this kernel that pairs positively with $\alpha$. 
We then set $\alpha^{\sharp}$ to be the $\QQ$-multiple of $\alpha$ in $X^*(\bA_M)$ such that $\langle\alpha^{\sharp}, h_{\alpha}^{\vee}\rangle = 2$; it is an element of $\Hom((M/M^1)^{\Stab(\sigma)},\ZZ).$  Let $\Sigma^{\vee}_{\CO_{\sigma}}$ denote the set of $h_{\alpha}^{\vee}$ for $\alpha \in \Sigma_{\CO_\sigma,\mu}$ and $\Sigma_{\CO_{\sigma}}$ the set of $\alpha^{\sharp}$ for $\alpha \in \Sigma_{\CO_\sigma,\mu}$. 
We then have \cite[Proposition 3.1]{solleveld-endomorphisms}:
\begin{prop} \label{prop:solleveld}
The tuple $\left((M/M^1)^{\Stab(\sigma)}, \Sigma^{\vee}_{\CO_{\sigma}}, \Hom((M/M^1)^{\Stab(\sigma)}, \ZZ), \Sigma_{\CO_{\sigma}}\right)$ is a root datum.
\end{prop}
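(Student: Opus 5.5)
We verify the root datum axioms for $\big(Y, R^\vee, Y^\vee, R\big)$ with $Y := (M/M^1)^{\Stab(\sigma)}$, $R^\vee := \Sigma^\vee_{\CO_\sigma}$, $Y^\vee := \Hom(Y,\ZZ)$ and $R := \Sigma_{\CO_\sigma}$. There are three of them: the pairing $\langle \alpha^\sharp, h_\alpha^\vee\rangle = 2$; the integrality $\langle \alpha^\sharp, Y\rangle \subset \ZZ$; and stability of $R$ and $R^\vee$ under the reflections
\[
s_\alpha(y) = y - \langle \alpha^\sharp, y\rangle h_\alpha^\vee, \qquad s_\alpha^\vee(x) = x - \langle x, h_\alpha^\vee\rangle \alpha^\sharp .
\]
The first holds by construction. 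For the second, note that $\alpha^\sharp$ is a $\QQ$-multiple of $\alpha$, and $Y$ has finite index in $M/M^1$, so only the normalization needs checking.

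The plan is to reduce everything to the rank-one Levis $M_\alpha$ and to the group $\bW(M,\sigma)$. First, for $\alpha \in \Sigma_{\CO_\sigma,\mu}$, the classical theory of Harish-Chandra's $\mu$-function (Silberger) shows that $\mu_{M_\alpha,\sigma}$ having a zero forces the nontrivial element $s_\alpha$ of $\bW(M_\alpha,M)$ to lie in $\bW(M,\sigma)$. It also shows that $\mu_{M_\alpha,\sigma}$ is of the form
\[
c\,\frac{(1-X_\alpha)(1-X_\alpha^{-1})}{(1-q^{a}X_\alpha)(1-q^{-a}X_\alpha^{-1})}\times(\text{similar factor in } -X_\alpha),
\]
where $X_\alpha$ is a character of $\CO_\sigma$. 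We identify $X_\alpha$ with evaluation at $h_\alpha^\vee$, which is a generator of the rank-one lattice $Y \cap \ker(M/M^1 \to M_\alpha/M_\alpha^1)$. From this we read off that $s_\alpha$ acts on $Y$ as the reflection $y \mapsto y - \langle\alpha^\sharp, y\rangle h_\alpha^\vee$ with $\langle\alpha^\sharp, y\rangle \in \ZZ$. The point is that $s_\alpha$ preserves $Y$, fixes the rank-$(\dim-1)$ sublattice $\ker\alpha \cap Y$, and negates $h_\alpha^\vee$. Hence $y - s_\alpha y \in Y \cap \ker(\to M_\alpha/M_\alpha^1) = \ZZ h_\alpha^\vee$, which gives integrality directly.

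Next, we prove $\bW(M,\sigma)$-stability. Conjugation by $w \in \bW(M,\sigma)$ preserves $\Stab(\sigma)$, hence $Y$. It sends $M_\alpha$ to $M_{w\alpha}$, and it transports $\mu_{M_\alpha,\sigma}$ to $\mu_{M_{w\alpha},\sigma}$, using $\sigma^w \cong \sigma\otimes\chi$ and the invariance of $\mu$ under conjugation. Therefore $w$ permutes $\Sigma_{\CO_\sigma,\mu}$, and it maps $h_\alpha^\vee \mapsto h_{w\alpha}^\vee$ and $\alpha^\sharp \mapsto (w\alpha)^\sharp$, since both are defined intrinsically from the data. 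Taking $w = s_\beta$ for $\beta \in \Sigma_{\CO_\sigma,\mu}$ gives stability of $R$ and $R^\vee$ under all the reflections.

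The main obstacle is the rank-one step: showing that a zero of $\mu_{M_\alpha,\sigma}$ on $\CO_\sigma$ implies that $s_\alpha$ exists in $\bW(M,\sigma)$ and acts on $Y$ as a reflection with coroot exactly $h_\alpha^\vee$. For this we would invoke Harish-Chandra's result that $\mu_{M_\alpha,\sigma'}(\sigma') = 0$ for some $\sigma'$ only if $s_\alpha\sigma' \cong \sigma'$. Since $\sigma' \in \CO_\sigma$, this places $s_\alpha$ in $\bW(M,\sigma)$. Silberger's explicit form of $\mu$ in terms of $X_\alpha$ would then be used to match the normalization with the generator $h_\alpha^\vee$.
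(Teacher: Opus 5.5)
Your argument is correct. The paper gives no proof of its own here: it simply quotes \cite[Proposition 3.1]{solleveld-endomorphisms}, which builds on Heiermann's result that $s_\alpha \in \bW(M,\sigma)$ whenever $\mu_{M_\alpha,\sigma}$ has a zero on $\CO_\sigma$, and your sketch is essentially the standard argument behind that reference.

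One step needs tightening. That $s_\alpha$ acts on $Y$ as the reflection with coroot exactly $h_\alpha^\vee$ needs no input from Silberger's formula; it follows from a group-theoretic argument. If $n \in M_\alpha$ represents $s_\alpha$, then $nmn^{-1}m^{-1} \in M \cap M_\alpha^1$ for every $m \in M$, so $y - s_\alpha y \in \ZZ h_\alpha^\vee$ for all $y \in Y$. Since $s_\alpha$ is a nontrivial involution fixing $X_*(\bA_{M_\alpha}) \otimes \QQ = \ker\alpha$, this forces $s_\alpha(y) = y - \langle \alpha^\sharp, y\rangle h_\alpha^\vee$ with $\langle \alpha^\sharp, y\rangle \in \ZZ$.
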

Let $\bW_{\CO_{\sigma}}$ denote the Weyl group of the root datum in Proposition \ref{prop:solleveld} (this is denoted $\bW(\Sigma_{\CO,\mu})$ in~\cite{solleveld-endomorphisms}). 
As in \cite[\S 3]{solleveld-endomorphisms}, we let $\Sigma_{\red}(P)$ be the subset of $\Sigma_{\red}$ consisting of nonzero weights that appear in the adjoint action of $\bA_M$ on $\Lie(P)$, and let $\Sigma_{\CO_{\sigma}}(P)$ be the intersection of $\Sigma_{\red}(P)$ with $\Sigma_{\CO_{\sigma},\mu}$.  Then $\Sigma_{\CO_{\sigma}}(P)$ determines a set of positive roots for the root datum of Proposition~\ref{prop:solleveld}, and thus a set of simple roots; we denote the latter by $\Delta_{\CO_{\sigma}}$.

Recall that $\bW(M,\sigma)$, as defined in \eqref{eqn:defn-WMsigma}, denotes the subgroup of $N_G(M)/M$ that conjugates $\sigma$ to an unramified twist of $\sigma$.  We have an action of $\bW(M,\sigma)$ on the root datum of Proposition \ref{prop:solleveld} preserving $\Sigma_{\CO_{\sigma},\mu}$.  The discussion surrounding \cite[(3.1)--(3.2)]{solleveld-endomorphisms} shows that $\bW(M,\sigma)$ splits as a semidirect product:
\begin{equation}\label{eqn:WMO-semidirect-product}
\bW(M,\sigma) = \bW_{\CO_{\sigma}} \rtimes R(\CO_{\sigma})\end{equation}
where $R(\CO_{\sigma})$ is the subgroup of $\bW(M,\sigma)$ that stabilizes 
$\Delta_{\CO_{\sigma}}$.  Moreover, under the identification \eqref{eqn:WMO-semidirect-product}, $\bW_{\CO_{\sigma}}$ is identified with the subgroup of $\bW(M,\sigma)$ generated by the simple reflections $s_{\alpha}$ for $\alpha \in \Sigma_{\CO_{\sigma},\mu}$. 
To each element $\alpha \in \Delta_{\CO_{\sigma}}$, as in \cite{solleveld-endomorphisms}, one assigns a pair of Hecke algebra q-parameters $q_{\alpha}$, $q^*_{\alpha}$ as follows. We can fix a $\sigma'\in\CO_{\sigma}$ such that 
$\mu_{M_{\alpha},\sigma}(\sigma') = 0$ for all $\alpha \in \Delta_{\CO_{\sigma}}$.  
Define a function $X_{\alpha}$ on $\CO_{\sigma}$ by setting 
\begin{equation}\label{eq:6.7}
X_{\alpha}(\sigma' \otimes \chi) = \chi(h_{\alpha}^{\vee}),
\end{equation}
where $\chi$ is a character of $M/M^1$.  Then, up to a constant factor (c.f.~\cite[(3.7)]{solleveld-endomorphisms}; see also \cite{Silberger-79,Heiermann}) the function $\mu_{M_{\alpha},\sigma}$ may be written as
\begin{equation}\label{eqn:Silberger-Solleveld}
\frac{(1 - X_{\alpha}^2)(1 - X_{\alpha}^{-2})}{(1 - q_{\alpha}^{-1} X_{\alpha})(1 - q_{\alpha}^{-1} X_{\alpha}^{-1})(1 + (q^*_{\alpha})^{-1} X_{\alpha})(1 + (q^*_{\alpha})^{-1}X_{\alpha}^{-1})}
\end{equation}
for real numbers $q_{\alpha},q^*_{\alpha} \geq 1$. 
With an appropriate choice of $\sigma'$, we may further arrange that $q_{\alpha} \geq q^*_{\alpha}$
for all $\alpha \in \Delta_{\CO_\sigma}$. (We will see later, in Lemma \ref{lem:6.basepoints}, that our $\sigma$ already has these properties.)
Note that as a function on $\CO_{\sigma}$, which is a torsor over the torus
$\Xnr (M)/\Stab(\sigma)$, the function $X_{\alpha}$ is {\em monomial}, i.e.~there exists a trivialization of this torsor that identifies $X_{\alpha}$ with a character of $\Xnr (M) /\Stab(\sigma).$

Let $\CH_{\CO_{\sigma}}$ denote the Hecke algebra, with (possibly unequal) parameters $q_{\alpha},q^*_{\alpha}$ from \eqref{eqn:Silberger-Solleveld}, associated to the based root datum:
\begin{equation}\label{eqn:root-system-for-HOsigma}
((M/M^1)^{\Stab(\sigma)}, \Sigma^{\vee}_{\CO_{\sigma}}, \Hom((M/M^1)^{\Stab(\sigma)}, \ZZ), \Sigma_{\CO_{\sigma}}, \Delta_{\CO_\sigma}).
\end{equation}
Via the bijection 
\[
\CO_\sigma \to \Hom \big( (M/M^1)^{\Stab(\sigma)}, \CC^\times \big) \;\;\text{given by}\;\;
\sigma' \otimes \chi \mapsto \chi,
\]
we identify $\CC \big[ X^* \big( (M/M^1)^{\Stab(\sigma)} \big) \big] \subset \CH_{\CO_\sigma}$
with $\CC[\CO_{\sigma}]$. The sub-algebra $\CC[\CO_{\sigma}]$ of $\CH_{\CO_\sigma}$ is also the image of the map
\begin{equation}\CC[\CO_{\sigma}] \cong \End_M \big( \mathcal{W}_M^{[M,\sigma]} \big) \rightarrow 
\End_G \big( i_{P}^G \mathcal{W}_M^{[M,\sigma]} \big).
\end{equation}

\begin{thm} \label{thm:solleveld}
The base-point $\sigma'$ of $\CO_\sigma$ determines an isomorphism: 
\begin{equation}
\End_G \big( i_{P}^G \mathcal{W}_M^{[M,\sigma]} \big) \cong \CH_{\CO_{\sigma}} \rtimes \CC[R(\CO_{\sigma})].
\end{equation}
\end{thm}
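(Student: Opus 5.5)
The plan is to obtain Theorem \ref{thm:solleveld} as a specialization of \cite[Theorem F]{solleveld-endomorphisms} (building on \cite{Heiermann}) to the progenerator $i_P^G \ind_{M^1}^M \sigma^1$. By Lemma \ref{lem:6.3} this progenerator is isomorphic to $i_P^G \mathcal W_M^{[M,\sigma]}$. The genericity of $\sigma$ enters only through the multiplicity-one property of $\sigma^1$ in $\sigma|_{M^1}$. In the general setting of \emph{loc.\ cit.}, the endomorphism algebra is a twisted graded algebra $\CH_{\CO_\sigma} \rtimes \CC[R(\CO_\sigma),\natural]$, where $\natural$ is a 2-cocycle. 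Part of the task is therefore to show that $\natural$ is trivial here.

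\textbf{Step 1: the commutative part.} Identify $\End_M(\ind_{M^1}^M\sigma^1)$ with $\CC[\CO_\sigma]$, as recalled before \eqref{eq:6.4}. Then use \cite[\S\S 3--5]{solleveld-endomorphisms} to produce the normalized intertwining operators. For each $w\in \bW(M,\sigma)$, take the rational intertwiner $J_w: i_P^G(\sigma\otimes\chi)\to i_P^G(w(\sigma\otimes\chi))$ and compose it with a fixed isomorphism $w\sigma\cong \sigma\otimes\chi_w$. Multiplicity one makes this isomorphism unique up to scalar, since $\Hom_{M^1}(\sigma^1, w\sigma^1)$ is at most one-dimensional. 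This yields elements $\mathcal T_w$ in the localization of $\End_G(i_P^G\mathcal W_M^{[M,\sigma]})$ at $\CC(\CO_\sigma)$.

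\textbf{Step 2: the Hecke generators.} For a simple root $\alpha\in\Delta_{\CO_\sigma}$, use the base point $\sigma'$ fixed above, at which all $\mu_{M_\alpha,\sigma}$ vanish and $q_\alpha\ge q_\alpha^*$. Renormalize $\mathcal T_{s_\alpha}$ by the rational factor that makes it regular. This factor is read off from the explicit form \eqref{eqn:Silberger-Solleveld} of $\mu_{M_\alpha,\sigma}$ in the variable $X_\alpha$ of \eqref{eq:6.7}. The renormalized operator $T_{s_\alpha}$ should satisfy the quadratic relation $(T_{s_\alpha}+1)(T_{s_\alpha}-q_\alpha q_\alpha^*)=0$ (up to the standard normalization). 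It should also satisfy the Bernstein--Lusztig relation with $\CC[\CO_\sigma]$ carrying the unequal parameters $q_\alpha, q_\alpha^*$. Together with the braid relations coming from the cocycle property of intertwiners, this gives an injection of $\CH_{\CO_\sigma}$ into the endomorphism algebra. This is exactly \cite[Theorem F]{solleveld-endomorphisms}, and it uses the root datum of Proposition \ref{prop:solleveld} and the decomposition \eqref{eqn:WMO-semidirect-product}.

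\textbf{Step 3: the $R$-group part, which I expect to be the main obstacle.} For $r\in R(\CO_\sigma)$, the operators $\mathcal T_r$ are regular and invertible, but a priori they satisfy $\mathcal T_r\mathcal T_{r'}=\natural(r,r')\mathcal T_{rr'}$. To trivialize $\natural$, I would exploit the Whittaker functional. The map $\cW_M\to\ind_{M^1}^M\sigma^1$ from Lemma \ref{lem:6.3} is canonical up to scalar. So for each $r$ one can choose the unique normalization of $\mathcal T_r$ that preserves the chosen generic vector, meaning compatibility with $\Hom_G(i_P^G\mathcal W_M^{[M,\sigma]},\mathcal W_G^{[M,\sigma]})$ via the Whittaker-normalized intertwiners, or Shahidi's normalization. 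Uniqueness then forces $\mathcal T_r\mathcal T_{r'}=\mathcal T_{rr'}$. This is the content of \cite[\S 10]{solleveld-endomorphisms} in the generic case, and I would invoke it. Finally, the basis argument of \emph{loc.\ cit.} (a freeness statement over $\CC[\CO_\sigma]$ with basis indexed by $\bW(M,\sigma)$, checked via the Bernstein--Mackey filtration of $r_P^G i_P^G$) shows that the subalgebras generated in Steps 1--3 exhaust the endomorphism algebra. This gives the isomorphism with $\CH_{\CO_\sigma}\rtimes\CC[R(\CO_\sigma)]$. The isomorphism depends on $\sigma'$ only through the identification of $\CC[\CO_\sigma]$ with the lattice algebra and through the normalizations in Step 2.
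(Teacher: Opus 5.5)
Your proposal is correct and follows the paper's route: invoke \cite[Theorem F]{solleveld-endomorphisms} to get the twisted crossed product $\CH_{\CO_\sigma}\rtimes\CC[R(\CO_\sigma),\natural]$, then kill the 2-cocycle $\natural$ by normalizing the $R(\CO_\sigma)$-operators with the Whittaker datum. The differences are minor: your Steps 1--2 re-sketch what Theorem F already provides, and for the trivialization of $\natural$ the paper cites \cite[Theorem A.1]{OSgeneric}, with canonicity from \cite[Theorem 2.7]{solleveld-principal}, rather than \S 10 of \cite{solleveld-endomorphisms}.
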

\begin{proof}
By \cite[Theorem F]{solleveld-endomorphisms}, the endomorphism algebra 
$\End_G \big( i_{P}^G \mathcal{W}_M^{[M,\sigma]} \big)$ decomposes as a twisted crossed product of 
$\CH_{\CO_{\sigma}}$ with $\CC[R(\CO_{\sigma})]$, where the multiplication is twisted by a $2$-cocycle 
$\natural: \bW(M,\sigma)^2 \rightarrow \CC[\CO_{\sigma}]^{\times}.$  By \cite[Theorem A.1]{OSgeneric}, we may 
take this cocycle $\natural$ to be trivial. Here canonicity is guaranteed by \cite[Theorem 2.7]{solleveld-principal} 
with respect to the chosen Whittaker datum, which is used to renormalize the operators $T_w'$ in \cite{OSgeneric}. 
\end{proof}

In Theorem \ref{thm:solleveld}, the group $R(\CO_\sigma)$ acts on $\CH_{\CO_{\sigma}}$ in
the following way. Every $r \in R(\CO_\sigma)$ acts naturally on the based root datum 
\eqref{eqn:root-system-for-HOsigma}, say via a map $\psi_r : (M/M^1)^{\Stab(\sigma)} \to 
(M/M^1)^{\Stab(\sigma)}$. Then $\psi_r$ also gives rise to an algebra automorphism 
\begin{equation}\label{eq:6.psir}
\psi_r :  \CH_{\CO_{\sigma}} \to  \CH_{\CO_{\sigma}} ,\quad
\theta_x T_w \mapsto \theta_{\psi_r (x)} T_{\psi_r w \psi_r^{-1}} .
\end{equation}
Moreover, there is a unique $\chi_r \in \Hom \big( (M/M^1)^{\Stab(\sigma)}, \CC^\times \big)$
such that 
\begin{equation}\label{eq:6.psirchir}
r \cdot (\sigma' \otimes \chi) \cong \sigma' \otimes \tilde \chi_r \otimes \psi_r (\chi) 
\end{equation}
for all $\chi \in \Xnr (M)$ and for any lift $\tilde \chi_r$ of $\chi_r$ to $M / M^1$. 
This determines an automorphism of $\CH_{\CO_\sigma}$ by
\begin{equation}\label{eq:6.chir}
\chi_r \cdot (\theta_x T_w) = [\chi \mapsto x (\chi \chi_r^{-1})] T_w = 
x(\chi_r)^{-1} \theta_x T_w .
\end{equation}
In terms of \eqref{eq:6.psir} and \eqref{eq:6.chir}, the action of $r \in R(\CO_\sigma)$ on
$\CH_{\CO_\sigma}$ in Theorem \ref{thm:solleveld} is via $\chi_r \circ \psi_r$.

\subsection{Reinterpretation in terms of the spectral side}\label{subsec:spectral-side-lattices}

We will prove Theorem~\ref{thm:automorphic reduction} by applying the results of \S \ref{subsec:End-alg-auto-side} to the parabolic inductions $i_{P}^G \mathcal{W}_M^{[M,\sigma]}$ and $i_{P_{\nu}}^{G_{\nu}} \chi^{\un}$, and comparing the respective outputs. On the other hand, since the relation between the groups $\bG$ and $\bG_{\nu}$ come from their L-groups, to carry out this comparison it is helpful to rephrase these results in terms of data on the spectral side.

Passing to character groups, \eqref{eqn:isom-tori} yields isomorphisms:
\begin{equation}\label{eqn:lattices-of-tori-isom}
(M/M^1)^{\Stab(\sigma)} \cong X^*({\hM}_{\nu,\Fr} / K_{M,\phi}) \quad \text{and} \quad
\Hom((M/M^1)^{\Stab(\sigma)}, \ZZ) \cong X_*({\hM}_{\nu,\Fr} / K_{M,\phi} ).
\end{equation}
We would like to characterize the images of the subsets $\Sigma^{\vee}_{\CO_{\sigma}}$ and $\Sigma_{\CO_{\sigma}}$ 
of the two left-hand lattices, respectively, under isomorphisms \eqref{eqn:lattices-of-tori-isom}, in terms of $\phi$.
Since these subsets $\Sigma^{\vee}_{\CO_{\sigma}}$ and $\Sigma_{\CO_{\sigma}}$ are stable under negation, we work 
up to sign in what follows.

Recall that there is a bijection of $\Sigma_{\red}/ \{\pm 1\}$ with the set of Levi's $M_{\alpha}$ of $G$
minimally containing $M$; passing to L-groups, we have a bijection of this set with the set of Levi subgroups 
$\LM_{\alpha}$ of $\LG$ minimally containing $\LM$ (i.e.~centralizers of $W_F$-fixed tori in ${\hG}$ that are 
properly contained in $Z({\hM})^{W_F}$ and have maximal rank among such).

Note that the structure of $\mu_{M_{\alpha},\sigma}$ discussed near \eqref{eqn:Silberger-Solleveld} shows that 
$\mu_{M_{\alpha},\sigma}$ has a zero on $\CO_{\sigma}$ if, and only if, $\mu_{M_{\alpha},\sigma}$ has a pole on 
$\CO_{\sigma}$, and is constant otherwise.  Thus $\Sigma_{\CO_{\sigma},\mu}$ is the subset of $\Sigma_{\red}$ 
consisting of those $\alpha$ for which $\mu_{M_{\alpha},\sigma}$ has a pole.  By Plancherel compatibility 
(Definition \ref{defn:Plancherel-compatibility}), $\Sigma_{\CO_{\sigma},\mu}/ \{\pm 1\}$ is then in bijection with 
the set of minimal Levi's $\LM_{\alpha}$ of $\LG$ such that there exists a Langlands parameter $\varphi$ on 
$X^{\phi}_{\LM}$ with $L(1,,\Ad_{{\hM}_{\alpha}} \varphi) = \infty$. We denote this set by 
${\hat \Sigma}_{\CO_{\sigma},\mu}/ \{\pm 1\}$ (note that the symbol ${\hat \Sigma}_{\CO_{\sigma},\mu}$ has no 
independent meaning here; the notation is merely intended to indicate that this is a spectral side analogue of, 
and naturally in bijection with, $\Sigma_{\CO_{\sigma},\mu}/ \{\pm 1\}$).

Recall that for a fixed $\alpha \in \Sigma_{\CO_{\sigma},\mu}$, there are two generators $\pm h_{\alpha}^{\vee}$ 
of the kernel of the map:
\begin{equation}\label{eqn:MmodM1stab-to-sourceofkottwitzmap}
(M/M^1)^{\Stab(\sigma)} \rightarrow M_{\alpha}/M_{\alpha}^1.
\end{equation}
The isomorphism $M_{\alpha}/M_{\alpha}^1 \cong X^*(Z({\hM}_{\alpha})^{I_F})^{\circ}_{\Fr})$ 
from the local Langlands correspondence for characters fits into a commutative diagram:
\begin{equation}\label{diagram:Kottwitz-isom-square-diagram}
    \begin{tikzcd}
        (M/M^1)^{\Stab(\sigma)} \arrow[]{r}{\cong}\arrow[]{d}{}&X^*({\hM}_{\nu,\Fr} / K_{M,\phi})
        \arrow[]{d}{}\\
        M_{\alpha}/M^1_{\alpha} \arrow[]{r}{\cong}&X^*(Z({\hM}_{\alpha})^{I_F})^{\circ}_{\Fr})
    \end{tikzcd}
\end{equation}
in which the right-hand vertical map is induced by the natural map 
$(Z({\hM}_{\alpha})^{I_F})^{\circ} \to {\hM}_{\nu,\Fr} / K_{M,\phi}$.

Let $\pm {\widehat h}_{\alpha}^{\vee}$ denote the two generators of the kernel of the right-hand vertical map in 
diagram \eqref{diagram:Kottwitz-isom-square-diagram}. Then the isomorphism of $(M/M^1)^{\Stab(\sigma)}$ with 
$X^*({\hM}_{\nu,\Fr} / K_{M,\phi})$ identifies $\pm h_{\alpha}^{\vee}$ with $\pm {\widehat h}_{\alpha}^{\vee}$.  
Thus the image ${\hat \Sigma}_{\CO_{\sigma}}^{\vee}$ of $\Sigma_{\CO_{\sigma}}^{\vee}$ under the isomorphism is 
equal to the collection of $\pm h_{\alpha}^{\vee}$ as $\alpha$ runs over the minimal Levi's $\LM_{\alpha}$ in 
${\hat \Sigma}_{\CO_{\sigma},\mu}/ \{\pm 1\}$.

On the dual side, we let ${\hat \Sigma}_{\CO_{\sigma}}$ denote the image of $\Sigma_{\CO_{\sigma}}$ under the 
isomorphism of $\Hom((M/M^1)^{\Stab(\sigma)}, \ZZ)$ with $X_*({\hM}_{\nu,\Fr})$ constructed in 
\eqref{eqn:lattices-of-tori-isom}; for $\alpha^{\sharp} \in \Sigma_{\CO_{\sigma}}$, we let 
${\hat \alpha}^{\sharp}$ denote its image in ${\hat \Sigma}_{\CO_{\sigma}}$.

Let $V_{\alpha}$ be the one-dimensional subspace of $X^*(\bA_M) \otimes \QQ$ spanned by $\alpha$, and recall that $\{\pm \alpha^{\sharp}\}$ is the set of elements of $V_{\alpha}$ that pair to $\pm 2$ with $h_{\alpha}^{\vee}$.  To obtain a spectral side characterization of ${\widehat \alpha}^{\sharp}$, we must give a description of $V_{\alpha}$ on the spectral side.

The group $X^*(\bA_M)$ is the largest torsion-free, $W_F$-invariant quotient of $X^*(Z(\bM))$.  We have a sequence of isomorphisms:
\[
X_*({\hM}_{\nu,\Fr}) \otimes \QQ \cong X_* \big( (Z({\hM})^{I_F})_{\Fr} \big) \otimes \QQ \cong 
X_*(Z({\hM})_{W_F}) \otimes \QQ \cong X^*(\bA_M) \otimes \QQ,
\]
where the first isomorphism comes from Lemma~\ref{lemma:irreducible}, the second from the inclusion of $Z(\hM)^{I_F}$ in $Z(\hM)$, and the third from the natural isomorphism $X^*(Z(\bM)) \otimes \QQ \cong X_*(Z({\hM})) \otimes \QQ$.  We let ${\hat V}_{\alpha}$ be the subspace of $X_*(\hM_{\nu,\Fr}) \otimes \QQ$ corresponding to $V_{\alpha}$ under this sequence of identifications.  This allows us to characterize $\pm {\hat \alpha}^{\sharp}$ as the elements of ${\hat V}_{\alpha}$ that pair to $\pm 2$ with ${\hat h}_{\alpha}^{\vee}$.

We would still like a purely spectral-side description of ${\hat V}_{\alpha}$. We observe that for 
a surjective homomorphism $\bM \rightarrow \bM'$, whose kernel is contained in the center of $\bM_{\alpha}$, we have a 
corresponding morphism of split tori $\bA_M \rightarrow \bA_{M'}$, where $\bA_{M'}$ is the maximal $F$-split torus 
in the center of $\bM'$.  Then the subspace $V'_{\alpha}$ of $X^*(\bA_{M'}) \otimes \QQ$ spanned by $\alpha$ 
is identified with $V_{\alpha}$ under the natural map $X^*(\bA_{M'}) \rightarrow X^*(\bA_M)$.

On the dual side, we find that the subspace ${\hat V}_{\alpha}$ of $X_*(\hM_{\nu,\Fr})$ is the image of 
${\hat V}'_{\alpha}$ under the map:
\begin{equation}\label{eqn:lattice-map-Mnu'-to-Mnu}
X_*(\hM'_{\nu,\Fr}) \otimes \QQ \rightarrow X_*(\hM_{\nu,\Fr}) \otimes \QQ
\end{equation}
induced by the homomorphism $\hM'_{\nu} \rightarrow \hM_{\nu}$.  We thus have:

\begin{lemma} \label{lemma:spectral V}
Let $\bM'$ be the quotient of $\bM$ by $Z(\bM_{\alpha})$. Then the subspace ${\hat V}_{\alpha}$ 
is the image of \eqref{eqn:lattice-map-Mnu'-to-Mnu}. 
\end{lemma}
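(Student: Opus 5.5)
We take $\bM' := \bM/Z(\bM_\alpha)$ and combine the observation just made with the chain of rational isomorphisms defining ${\hat V}_\alpha$, checking that the chain is functorial in $\bM$. The quotient map $\bM \to \bM'$ has kernel $Z(\bM_\alpha)\cap\bM = Z(\bM_\alpha)$, which is central in $\bM_\alpha$. So by the preceding paragraph, the subspace $V'_\alpha \subset X^*(\bA_{M'})\otimes\QQ$ spanned by $\alpha$ maps onto $V_\alpha$ under $X^*(\bA_{M'})\otimes\QQ \to X^*(\bA_M)\otimes\QQ$.

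The first step is to identify $X^*(\bA_{M'})\otimes\QQ$ itself. Since $\bM_\alpha \supsetneq \bM$ is minimal and $\bA_M$ has corank one in $\bA_{M_\alpha}$ modulo centre, $Z(\bM')^\circ$ has $F$-split part of rank one. Its rational character space is the line of characters of $\bA_M$ trivial on $\bA_{M_\alpha}$. This line contains the root $\alpha$, since $\alpha$ is trivial on $\ker\alpha \supset \bA_{M_\alpha}$. Hence $V'_\alpha = X^*(\bA_{M'})\otimes\QQ$, i.e. $V_\alpha$ is the full image of $X^*(\bA_{M'})\otimes\QQ$.

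The second step transports this statement to the dual side. The dual of $\bM\to\bM'$ is an injection $\hM' \to \hM$ identifying $\hM'$ with a subgroup containing the derived group, with $Z(\hM')$ mapping into $Z(\hM)$. This map restricts to $\hM'_\nu \to \hM_\nu$; by Lemma~\ref{lemma:irreducible} both are tori, since the image of $\phi$ remains a supercuspidal parameter for $\LM'$. We then check that each isomorphism in
\[
X_*(\hM_{\nu,\Fr})\otimes\QQ \cong X_*\big((Z(\hM)^{I_F})_\Fr\big)\otimes\QQ \cong X_*(Z(\hM)_{W_F})\otimes\QQ \cong X^*(\bA_M)\otimes\QQ
\]
is natural for this map. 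The last two isomorphisms are natural because they are induced by functorial operations: inclusions of centres, coinvariants, and the duality $X^*(Z(\bM))\otimes\QQ \cong X_*(Z(\hM))\otimes\QQ$.

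The main obstacle is naturality of the first isomorphism, which comes from Lemma~\ref{lemma:irreducible}(2). That result says $(Z(\hM)^{I_F})^\circ \to \hM_\nu$ is surjective on $\Fr$-coinvariants, so the rational cocharacter spaces agree. The same statement holds for $\hM'$. Naturality then follows from the commutative square of inclusions
\[
(Z(\hM')^{I_F})^\circ \to (Z(\hM)^{I_F})^\circ, \qquad \hM'_\nu \to \hM_\nu,
\]
which holds because all four maps are inclusions of subgroups of $\hG$ and are compatible with the conjugation action of $\phi(\Fr)$. One must also check that $\hM'_\nu$ (computed with $\nu$ viewed in $\LM'$) really lands in $\hM_\nu$. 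This holds since both are identity components of centralizers of $\nu(I_F)$, and the centralizer in $\hM'$ is contained in the centralizer in $\hM$.

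Combining the two steps, ${\hat V}_\alpha$ is the image of $X_*(\hM'_{\nu,\Fr})\otimes\QQ$ under \eqref{eqn:lattice-map-Mnu'-to-Mnu}, which is the claim.
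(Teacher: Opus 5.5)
Your route is the paper's. Its proof is the single observation that $X_*(\hM'_{\nu,\Fr})\otimes\QQ$ is one-dimensional, hence equal to $\hat V'_\alpha$, combined with the preceding discussion that transports $\hat V'_\alpha$ to $\hat V_\alpha$. You obtain the one-dimensionality on the group side, and you do so correctly: $Z(\bM')=Z(\bM)/Z(\bM_\alpha)$ has split rank $\dim\bA_M-\dim\bA_{M_\alpha}=1$. Note that it is $\bA_{M_\alpha}$ that has corank one in $\bA_M$, not conversely. You then try to verify the naturality that the paper takes for granted.

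That verification rests on false claims about the dual side.
\begin{itemize}
\item The $L$-homomorphism dual to $\bM\to\bM'$ goes $\LM'\to\LM$. So there is no ``image of $\phi$'' in $\LM'$, and $\phi$ need not factor through $\LM'$. For example, take $\bG=\GL_3$, $\bM$ the diagonal torus and $\bM_\alpha=\GL_2\times\GL_1$; the dual map is $\CC^\times\to(\CC^\times)^3$, $z\mapsto(z,z^{-1},1)$. Hence you cannot get that $\hM'_\nu$ is a torus, or the analogue of Lemma~\ref{lemma:irreducible}(2) for $\hM'$, by applying that lemma to a parameter of $\LM'$.
\item $\hM'\to\hM$ has a nontrivial finite kernel whenever $Z(\bM_\alpha)$ is disconnected. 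For example, $\bM=\GL_1\times\SL_2$ inside $\bM_\alpha=\bG=\Sp_4$ gives $\GL_2(\CC)\to\CC^\times\times\PGL_2(\CC)$, $g\mapsto(\det g,[g])$. So $\hM'$ is not a subgroup of $\hG$, and your commutative square is not a square of inclusions.
\end{itemize}

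The repair is easy, since only statements after $\otimes\QQ$ are needed.
\begin{itemize}
\item Let $I_F$ and $\Fr$ act on $\hM'$ through $\hM_{\ad}\rtimes W_F=\hM'_{\ad}\rtimes W_F$ via $\phi$. Then $\hM'\to\hM$ is equivariant, with finite central kernel and image containing $\hM^{\der}$.
\item Consequently $\hM'_\nu\to\hM_\nu$ has finite kernel, so $\hM'_\nu$ is a torus.
\item An element of $\hM'$ whose image is central in $\hM$ is central in $\hM'$, because its commutator map sends the connected group $\hM'$ into the finite kernel. Also, on $Z(\hM')$ the $\phi$-twisted action of $I_F$ agrees with the $L$-group action.
\item Now take a $\Fr$-invariant rational cocharacter of $\hM'_\nu$. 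Its image lies in $X_*((Z(\hM)^{I_F})^\circ)\otimes\QQ$ by Lemma~\ref{lemma:irreducible}(2) for $\hM$, since invariants and coinvariants agree rationally for a finite-order action. By the previous point, the cocharacter itself already lies in $X_*((Z(\hM')^{I_F})^\circ)\otimes\QQ$.
\end{itemize}
This gives the first isomorphism of the chain for $\bM'$ and its compatibility with the chain for $\bM$. After that, your argument goes through.
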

\begin{proof}
In this case, $X_*(\hM'_{\nu,\Fr}) \otimes \QQ$ is one-dimensional, hence equal to ${\hat V}'_{\alpha}$, 
so the claim follows by the discussion in the previous paragraph.
\end{proof}

We next obtain a spectral-side interpretation of the Hecke algebra q-parameters $q_{\alpha}$ and $q^*_{\alpha}$.  Let ${\hat X}_{\alpha}$ be the function on $X_{\LM}^{\phi}$ corresponding to $X_{\alpha}$ under the isomorphism \eqref{eqn:isom-tori} of $\CO_{\sigma}$ with $\overline{X}_{\LM}^{\phi}$. 
We may describe ${\hat X}_{\alpha}$ explicitly as follows.  Let $\phi'$ be the parameter on 
$X_{\LM}^{\phi}$ corresponding to $\sigma'$ under the identification \eqref{eqn:isom-tori} of 
$\CO_{\sigma}$ with $X_{\LM}^{\phi}$.  Then for any unramified character $\chi$ of $M$, we have 
a corresponding element $z$ of $\hM_{\nu,\Fr}$ associated to $\chi$ by the isomorphism~\ref{eqn:isom-tori}. 
The representation $\sigma' \otimes \chi$ then corresponds to the parameter $z \phi'$ that agrees with 
$\phi'$ on $I_F$ but satisfies $(z \phi') (\Fr) = {\tilde z} \phi'(\Fr)$, for some lift ${\tilde z}$ of 
$z$ to ${\hM}_{\nu}$.  (The parameter $\phi'_z$ is independent of the choice of ${\tilde z}$ up to 
${\hM}$-conjugacy.)  Then ${\hat X}_{\alpha}$ is explicitly characterized by the equation:
\begin{equation}
{\hat X}_{\alpha}(z \phi') = {\hat h}_{\alpha}^{\vee}(z).
\end{equation}
By Plancherel compatibility, the locus in $X^{\phi}_{\LM_{\alpha}}$ on which 
$L(1,\Ad_{M_{\alpha}} \varphi ) = \infty$ is given by:
\begin{equation}\label{eqn:Plancherel-cases}
\begin{cases}
\{\varphi: {\hat X}_{\alpha}(\varphi) \in \{q_{\alpha}^{\pm 1},-(q^*_{\alpha})^{\pm 1}\}\} &
\text{if} \;q^*_{\alpha} \neq 1,\\
\{\varphi: {\hat X}_{\alpha}(\varphi) \in \{q_{\alpha}^{\pm 1}\}\} &\text{otherwise.}
\end{cases}
\end{equation}
We observe that this completely determines the pair $q_{\alpha},q^*_{\alpha}$.

\subsection{Comparison of endomorphism algebras}

We are now in a position to compare the endomorphism algebra of $i_{P}^G \mathcal{W}_M^{[M,\sigma]}$ with that of $i_{P_{\nu}}^{G_{\nu}} \chi^{\un}$.  We begin by applying Theorem~\ref{thm:solleveld} and the various ideas of \S\ref{subsec:spectral-side-lattices} to the endomorphism algebra of $i_{P_{\nu}}^{G_{\nu}} \chi^{\un}$, whose affine part is associated to a root datum:
\begin{equation}\label{eqn:root-datum-1comma-nu}
\left(X^*({\hM}_{\nu,\Fr}), {\hat \Sigma}^{\vee}_{1,\nu}, X_*({\hM}_{\nu,\Fr}), {\hat \Sigma}_{1,\nu}\right)
\end{equation}
for some subsets ${\hat \Sigma}^{\vee}_{1,\nu}$ and ${\hat \Sigma}_{1,\nu}$ of $X^*({\hM}_{\nu,\Fr})$ and 
$X_*({\hM}_{\nu,\Fr})$, respectively.

The lattices appearing in this root datum \eqref{eqn:root-datum-1comma-nu} differ only by $K_{M,\phi}$ from
the lattices in the root datum
\begin{equation}\label{eqn:root-datum-Osigma}
\big( X^*({\hM}_{\nu,\Fr} / K_{M,\phi} ), {\hat \Sigma}^{\vee}_{\CO_{\sigma}}, X_*({\hM}_{\nu,\Fr} / K_{M,\phi}), 
{\hat \Sigma}_{\CO_{\sigma}} \big)
\end{equation}
giving the affine part of the endomorphism algebra of $i_{P}^G \mathcal{W}_M^{[M,\sigma]}$. 
The comparison is facilitated by \eqref{eq:K-central} and Proposition
\ref{prop:4.actionXonSGnu} (1), which tell us that $K_{M,\phi}$ is central in $\hG_\nu$ and 
in particular lies in the kernel of every element of ${\hat \Sigma}^{\vee}_{1,\nu}$.

We want to show 
that ${\hat \Sigma}^{\vee}_{1,\nu} = {\hat \Sigma}^{\vee}_{\CO_{\sigma}}$ and 
$\hat \Sigma_{1,\nu} = {\hat \Sigma}_{\CO_{\sigma}}$, 
that the group $R(\CO_1)$ is trivial, and that the Hecke 
algebra q-parameters attached to simple roots of the datum \ref{eqn:root-datum-1comma-nu} are the same as those 
attached to the root datum of Proposition \ref{prop:solleveld}. 
Then the $K_{M,\phi}$-invariant part of the endomorphism algebra of $i_{P_{\nu}}^{G_{\nu}} \chi^{\un}$ 
is isomorphic to the affine part of the endomorphism algebra of $i_{P}^G \mathcal{W}_M^{[M,\sigma]}$; 
that is, we would then have an isomorphism: 
\begin{equation}
\End_G(i_{P}^G \mathcal{W}_M^{[M,\sigma]}) \cong 
\End_{G_{\nu}}(i_{P_{\nu}}^{G_{\nu}} \chi^{\un})^{K_{M,\phi}} \rtimes \CC[R(\CO_{\sigma})],
\end{equation}
and the isomorphism of Theorem~\ref{thm:automorphic reduction} would then follow if we can identify $R(\CO_{\sigma})$ with 
${\widehat R}_{G,\phi}$ (Lemma \ref{lem:identifying-RO-with-hatR}).

\begin{lemma}\label{lem:min-Levi-lemma}
Let $\alpha \in {\hat \Sigma}_{\red}/ \{\pm 1\}$ correspond to a minimal Levi for $G$.  The following are equivalent:
\begin{enumerate}
\item The root $\alpha$ is an element of ${\hat \Sigma}_{\CO_{\sigma},\mu}/ \{\pm 1\}$.
\item The quotient $\Lie({\hM}_{\alpha})/\Lie(\hM)$ contains an $\Ad \, \phi(I_F)$-invariant subspace.
\item The group $(\hM_{\alpha})_{\nu}:=Z_{\widehat{M}_{\alpha}}(\phi(I_F))$ strictly contains $\hM_{\nu}$.
\end{enumerate}
\end{lemma}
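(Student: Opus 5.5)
The plan is to prove (2)$\Leftrightarrow$(3) by linear algebra, and then to prove (1)$\Leftrightarrow$(3) by computing the adjoint $L$-function on $X^{\phi}_{\LM}$. Statement (1) is equivalent to the Plancherel-side condition through Definition \ref{defn:Plancherel-compatibility}, which is how $\hat\Sigma_{\CO_\sigma,\mu}$ was defined.

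\emph{Step 1: (2)$\Leftrightarrow$(3).} Since $\phi(I_F)$ is finite, its adjoint action on $\Lie(\hM_\alpha)$ is semisimple. Hence $\Lie(\hM_\alpha)=\Lie(\hM)\oplus\mathfrak n$, where $\mathfrak n$ is the sum of the root spaces of $\hM_\alpha$ not in $\hM$, and this decomposition is $\Ad\,\phi(I_F)$-stable. Invariant vectors in the quotient therefore lift to invariant vectors in $\mathfrak n$. Here I read ``invariant subspace'' in (2) as ``nonzero subspace of $\phi(I_F)$-fixed vectors''. Taking invariants gives
\[
\Lie\big((\hM_\alpha)_\nu\big)=\Lie(\hM_\alpha)^{\phi(I_F)}=\Lie(\hM)^{\phi(I_F)}\oplus\mathfrak n^{\phi(I_F)} = \Lie(\hM_\nu)\oplus \mathfrak n^{\phi(I_F)} .
\]
So $(\hM_\alpha)_\nu$ strictly contains $\hM_\nu$ exactly when $\mathfrak n^{\phi(I_F)}\neq 0$, which is (2). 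Identity components are what matter here, since $\hM_\nu$ is connected by definition.

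\emph{Step 2: (1)$\Leftrightarrow$(3).} Let $\varphi=z\phi$ range over $X^{\phi}_{\LM}$, with monodromy $N=0$ by Lemma \ref{lemma:irreducible}(4). Then
\[
L(s,\Ad_{\hM_\alpha}\varphi)=\det\big(1-q^{-s}\Ad\varphi(\Fr)\,\big|\,\Lie(\hM_\alpha)^{I_F}\big)^{-1}.
\]
The factor coming from $\Lie(\hM)^{I_F}=\Lie(\hM_\nu)$, a torus by Lemma \ref{lemma:irreducible}(1), is the same for $\hM$ itself. Comparing with the definition of $\widehat{\mathrm{Pol}}_{M_\alpha,\phi}$, only the complementary factor on $\mathfrak n^{I_F}$ can produce poles that vary with $\varphi$.

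If (3) fails, then $\mathfrak n^{I_F}=0$ and this factor is identically $1$. In that case $L(1,\Ad_{\hM_\alpha}\varphi)$ is finite for all $\varphi$, and $\alpha\notin\hat\Sigma_{\CO_\sigma,\mu}$.

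If (3) holds, then $\mathfrak n^{I_F}$ is a nonzero sum of weight spaces for $\hM_\nu$, with nontrivial weights. These are the roots of $(\hM_\alpha)_\nu$ relative to the maximal torus $\hM_\nu$. The operator $\Ad(z\phi(\Fr))$ permutes these weight spaces, and on each $\Fr$-orbit its characteristic polynomial is $\lambda^k - c\cdot\beta(z)$. Here $\beta$ is the norm of the weight along the orbit, a nontrivial character of $\hM_{\nu,\Fr}$ (up to $K_{M,\phi}$). As $z$ ranges over $\hM_{\nu,\Fr}$, this nonconstant character takes every value in $\CC^\times$. Hence some $\varphi$ has $\Ad\varphi(\Fr)$ with eigenvalue $q$ on $\mathfrak n^{I_F}$, so that $L(1,\cdot)=\infty$, giving (1).

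\emph{Main obstacle.} The delicate point is verifying that the relevant weight norm is genuinely nonconstant on the torus $\hM_{\nu,\Fr}$. The roots $\beta$ of $(\hM_\alpha)_\nu$ are nonzero on $\hM_\nu$, and their $\Fr$-orbit sums are nonzero, because $\Fr$ preserves the positive system $\hP_\nu$ (Corollary \ref{cor:pinning}). Since $\hM_{\nu,\Fr}$ is the coinvariant quotient, a character $\sum_i\Fr^i\beta$ that is nonzero on $\hM_\nu^{\Fr}$-invariant directions descends nontrivially to it. I would argue this via the identification $X^*(\hM_{\nu,\Fr})\otimes\QQ = (X^*(\hM_\nu)\otimes\QQ)^{\Fr}$.
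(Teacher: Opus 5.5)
Your argument is essentially the paper's. You get (2)$\Leftrightarrow$(3) from exactness of $\phi(I_F)$-invariants in characteristic zero. You produce a pole by letting $\varphi(\Fr)=z\phi(\Fr)$, with $z\in\hM_\nu$, act on a Frobenius orbit of root spaces of $(\hM_\alpha)_\nu$, and choosing $z$ so that $q$ becomes an eigenvalue. The paper phrases this last step via $\Ind_{W_{F_r}}^{W_F}\chi_{\gamma,m}$ and Frobenius reciprocity. It leaves implicit that the orbit norm is nonconstant, which you correctly deduce from the $\Fr$-stability of $\hP_\nu$.

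One step does not go through as written: the direction ``(3) fails $\Rightarrow$ (1) fails''. It is not enough that the $\Lie(\hM_\nu)$-factor of the $L$-function is independent of $\varphi$. By definition $\alpha$ lies in $\hat\Sigma_{\CO_\sigma,\mu}$ as soon as \emph{some} $\varphi$ on $X^\phi_{\LM}$ has $L(1,\Ad_{\hM_\alpha}\varphi)=\infty$, and a $\varphi$-independent pole would qualify. You must show this factor is finite at $s=1$. This holds because $\hM_\nu$ is a torus and $\varphi(\Fr)\in\hM_\nu\phi(\Fr)$, so $\Ad\varphi(\Fr)$ acts on $\Lie(\hM_\nu)$ exactly as $\Ad\phi(\Fr)$ does. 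By the choice of $\phi$, that action has finite order, so its eigenvalues are roots of unity and never equal $q$. This is precisely the observation the paper uses for (1)$\Rightarrow$(2).

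A smaller point concerns the $\Ad\phi(W_F)$-stability of $\mathfrak n$, which you need in Step 1 and again to split the determinant in Step 2. It does not follow from semisimplicity of the $\phi(I_F)$-action. Justify it by identifying $\mathfrak n$ with the sum of the nonzero weight spaces of $Z(\hM)^\circ$ in $\Lie(\hM_\alpha)$; these are permuted by $\phi(W_F)\subset\LM$.
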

\begin{proof}
``(2)$\Longleftrightarrow$(3)'': 
Consider the space $\big( \Lie({\hM}_{\alpha})/\Lie(\hM) \big)^{\Ad \, \phi(I_F)}$. Since 
$\Lie({\hM}_{\alpha})$ and $\Lie(\hM)$ are vector spaces of characteristic zero and $\phi(I_F)$ is finite, we may view 
$\big( \Lie({\hM}_{\alpha})/\Lie(\hM) \big)^{\Ad \, \phi(I_F)}$ as a quotient of $\Lie({\hM}_{\alpha})^{\Ad \, \phi(I_F)}$ 
by $\Lie(\hM)^{\Ad \, \phi(I_F)}$; note that these two spaces are isomorphic to $\Lie((\hM_{\alpha})_{\nu})$ and 
$\Lie(\hM_{\nu})$, respectively.  Thus, if (2) holds then there is an element of $\Lie((\hM_{\alpha})_{\nu})$ 
not contained in $\Lie(\hM_{\nu})$, proving (3).  The reverse implication is also clear.

``(1)$\Longrightarrow$(2)'': 
Assume (1) holds.  Then there exists a parameter $\varphi$ in $X_{\LM}^{\phi}$ such that 
$L(1,\Ad_{\hM_{\alpha}} \varphi ) = \infty$. Then the subspace $H^0(W_F,\Ad_{\hM_{\alpha}} \varphi(1))$ of 
$\Lie(\hM_{\alpha})$ on which $\Ad \, \varphi$ acts via the cyclotomic character is nonzero. Since $\varphi$ and 
$\phi$ have the same restriction to $I_F$, this linear subspace
is fixed under the action of $\Ad \, \phi(I_F)$.  We may thus regard it as a subspace of $\Lie((\hM_{\alpha})_{\nu})$. 
Since $\hM_{\nu}$ is a torus, and $\varphi(\Fr)$ differs from $\phi(\Fr)$ by an element of $\hM_{\nu}$, the action of 
$\Ad \, \varphi(\Fr)$ on $\hM_{\nu}$ has finite order.  Thus $H^0(W_F,\Ad_{\hM_{\alpha}} \varphi(1))$ cannot be 
contained in $\Lie(\hM_{\nu})$, so its image in the quotient $\Lie((\hM_{\alpha})_{\nu})/\Lie(\hM_{\nu})$ is nonzero. 
But this quotient embeds in the quotient $\Lie(\hM_{\alpha})/\Lie(\hM)$, hence (2) follows.

``(1)$\Longleftarrow$(2)'':
Conversely, suppose that the space $(\Lie({\hM}_{\alpha})/\Lie(\hM))^{\Ad \, \phi(I_F)}$ is nonzero.  We will construct 
a $\varphi$ such that $\Ad \, \varphi$ acts on this space via the cyclotomic character; this is equivalent to 
requiring that $\Ad \, \varphi$ acts on a subspace of $\Lie(({\hM}_{\alpha})_{\nu})$ via the cyclotomic character.  
Note that the action of $\Ad \, \varphi$ on the latter factors through $W_F/I_F$, and that $\Ad \, \varphi(\Fr)$ 
acts via $m \phi(\Fr)$ for an element $m$ of $\hM_{\nu}$ that we are free to choose.  

The group $\hM_{\nu}$ is a torus that acts on $\Lie(({\hM}_{\alpha})_{\nu})/\Lie(\hM_{\nu})$; 
all of the weights of this action are nontrivial.  Pick a weight $\gamma$ for this action, let 
$\gamma_1, \dots, \gamma_r$ be the orbit of $\gamma$ under the action of $\phi(\Fr)$, and let 
$\Lie((\hM_{\alpha})_{\nu})_{[\gamma]}$ denote the direct sum of the weight spaces 
$\Lie((\hM_{\alpha})_{\nu})_{\gamma_i}$ for this action.  Then $\Lie((\hM_{\alpha})_{\nu})_{[\gamma]}$ is stable 
under the adjoint action of $\phi(\Fr)$, and, since $\phi(\Fr)$ preserves a pinning on ${\hG}_{\nu}$, it follows 
that this space is isomorphic to $\Ind_{W_{F_r}}^{W_F} 1$ when considered as a $W_F$-representation via $\Ad \, \phi$, 
where $F_r$ is the unramified extension of $F$ of degree $r$.  If we instead consider this space as a 
$W_F$-representation via $\Ad \, (m \phi)$, it is isomorphic to $\Ind_{W_{F_r}}^{W_F} \chi_{\gamma,m}$, where 
$\chi_{\gamma,m}$ is the unramified character of $W_{F_r}$ sending $\Fr$ to the product 
$\gamma_1(m)\gamma_2(m) \dots \gamma_r(m)$.  By Frobenius reciprocity, this representation contains a copy of 
the cyclotomic character if, and only if, the character $\chi_{\gamma,m}$ is the cyclotomic character; 
since we may choose $m$ freely we can certainly arrange for this to be so.
\end{proof}

Lemma \ref{lem:min-Levi-lemma} allows us to compare minimal Levi's for $G$ and $G_{\nu}$.  In particular, we have:

\begin{prop} \label{prop:levis}
Let $\alpha \in {\hat \Sigma}_{\red}/\{\pm 1\}$ correspond to a Levi $\LM_{\alpha}$ of $\LG$ minimally containing
$\LM$. Then $(\LM_{\alpha})_{\nu} := ({\hM}_{\alpha})_{\nu} \rtimes W_F$ is a Levi of $\LG_{\nu}$ minimally 
containing $\LM_\nu$ if $\alpha$ 
lies in ${\hat \Sigma}_{\CO,\mu}/\pm 1$, and is equal to ${\LM}_{\nu}$ otherwise.\\
Let ${\hat \Sigma}_{\nu,\red}/ \{\pm 1\}$ denote the set of Levi's of $\LG_{\nu}$ minimally containing $\LM_{\nu}$.  
The induced map ${\hat \Sigma}_{\CO_{\sigma},\mu}/ \{\pm 1\} \rightarrow {\widehat \Sigma}_{\nu,\red}/ \{\pm 1\}$ 
is a bijection.
\end{prop}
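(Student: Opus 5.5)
The plan is to deduce both claims from Lemma~\ref{lem:min-Levi-lemma} and Lemma~\ref{lemma:levis}, together with the description of Levi subgroups of $\LG_{\nu}$ containing the maximal torus $\LM_\nu$ as centralizers of Frobenius-stable subtori of $\hM_\nu$.

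\emph{First claim.} Since $\LM_\alpha$ contains $\LM$, Lemma~\ref{lemma:levis} shows that $\hM_\alpha \cap \hG_\nu$ is a Levi subgroup of $\hG_\nu$ and equals $(\hM_\alpha)_\nu$. It contains the maximal torus $\hM_\nu$ and is normalized by $\phi(W_F)$. Hence $(\LM_\alpha)_\nu$ is a Levi subgroup of $\LG_\nu$ containing $\LM_\nu$. By Lemma~\ref{lem:min-Levi-lemma}, $(\hM_\alpha)_\nu \supsetneq \hM_\nu$ exactly when $\alpha \in \hat\Sigma_{\CO_\sigma,\mu}$, so it suffices to prove minimality in that case.

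For minimality, recall that $\hM_\alpha = C_{\hG}(S_\alpha)$, where $S_\alpha := (Z(\hM_\alpha)^{W_F})^\circ$ has corank one in $(Z(\hM)^{W_F})^\circ$. Then $(\hM_\alpha)_\nu = C_{\hG_\nu}(S_\alpha)$. Its roots relative to $\hM_\nu$ are the roots of $\hG_\nu$ vanishing on $S_\alpha$. These roots restrict, via $\hM_\nu \supset (Z(\hM)^{I_F})^\circ$, to multiples of the single line $\hat V_\alpha$. The restriction step uses Lemma~\ref{lemma:irreducible}(2): the image of $Z(\hM)^{I_F}$ generates the coinvariants, so characters of $\hM_{\nu,\Fr}$ are detected on $Z(\hM)^{I_F}$. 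Consequently the $\Fr$-relative root system of $(\hM_\alpha)_\nu$ has rank one, which forces it to be a minimal Levi over $\LM_\nu$.

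\emph{Bijection.} Injectivity will follow because distinct $\alpha$ give distinct lines $\hat V_\alpha$ in $X_*(\hM_{\nu,\Fr})\otimes\QQ$, and the relative roots of $(\LM_\alpha)_\nu$ span $\hat V_\alpha$. For surjectivity, take a minimal Levi $\LL'$ of $\LG_\nu$ containing $\LM_\nu$. Such an $\LL'$ is $C(S')\rtimes W_F$ for a corank-one Frobenius-stable subtorus $S' \subset (\hM_\nu^{\Fr})^\circ$, where $(\hM_\nu^{\Fr})^\circ$ is identified with $(Z(\hM)^{W_F})^\circ$ up to isogeny. Set $\hM_\alpha := C_{\hG}(S')$. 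I expect that $\hM_\alpha$ is a Levi of $\LG$ minimally containing $\LM$, and that $(\hM_\alpha)_\nu \supseteq \hL'$ is strictly larger than $\hM_\nu$. Lemma~\ref{lem:min-Levi-lemma} then places $\alpha$ in $\hat\Sigma_{\CO_\sigma,\mu}$, and the first part gives $(\LM_\alpha)_\nu = \LL'$.

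The main obstacle is this identification of $W_F$-fixed tori: $(\hM_\nu^{\Fr})^\circ$ must coincide with $(Z(\hM)^{W_F})^\circ$. Since $\hM_\nu$ is a torus containing $(Z(\hM)^{I_F})^\circ$, I would derive this from Lemma~\ref{lemma:irreducible}(2) by dualizing the surjection on coinvariants into an isogeny on Frobenius invariants.
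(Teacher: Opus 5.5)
Your route is the paper's. The dichotomy comes from Lemma~\ref{lemma:levis} together with Lemma~\ref{lem:min-Levi-lemma}, and minimality from a corank-one statement. Injectivity recovers $\LM_\alpha$ from a torus attached to $(\LM_\alpha)_\nu$. Surjectivity takes the centralizer in $\LG$ of the Frobenius-fixed central torus of a minimal Levi of $\LG_\nu$; this last step is verbatim the paper's argument. The paper phrases minimality through centers: $(\LM_\alpha)_\nu$ is the centralizer of its center, which contains $Z(\hM_\alpha)^{W_F}$, a torus of corank one in $\hM_\nu^{\Fr}=Z(\LM_\nu)$. The identification $(\hM_\nu^{\Fr})^\circ=(Z(\hM)^{W_F})^\circ$ that you single out is used there implicitly, and your argument proves it. The surjection in Lemma~\ref{lemma:irreducible}(2) gives $\dim (Z(\hM)^{W_F})^\circ\ge\dim(\hM_\nu^{\Fr})^\circ$, and the inclusion $(Z(\hM)^{W_F})^\circ\subseteq(\hM_\nu^{\Fr})^\circ$ then forces these connected tori to be equal. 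You need this equality, not merely the isogeny mentioned in your surjectivity paragraph, in order to form $C_{\hG}(S')\supseteq\LM$.

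The step that is wrong as written is the rank-one claim. Restricting the roots of $(\hM_\alpha)_\nu$ to $(Z(\hM)^{I_F})^\circ$ does not put them on one line. Take $G=\Res_{E/F}\GL_2$ with $E/F$ unramified quadratic, $M$ the diagonal torus and $\nu$ trivial. Then $\LM_\alpha=\LG$, $S_\alpha$ is the diagonally embedded scalar torus, and $(Z(\hM)^{I_F})^\circ=\hM_\nu=\hM$. The roots $e_1-e_2$ and $e_3-e_4$ of $\GL_2(\CC)\times\GL_2(\CC)$ both vanish on $S_\alpha$ but are not proportional. Your detection remark via Lemma~\ref{lemma:irreducible}(2) applies only to $\Fr$-invariant characters, not to individual roots. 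Also, $\hat V_\alpha\subset X_*(\hM_{\nu,\Fr})\otimes\QQ$ consists of cocharacters, so it cannot contain restrictions of roots of $\hG_\nu$.

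What is true, and suffices, is this: the restrictions to the Frobenius-fixed torus $(\hM_\nu^{\Fr})^\circ=(Z(\hM)^{W_F})^\circ$, which are the relative roots, lie on the annihilator line of $X_*(S_\alpha)$, since $S_\alpha$ has corank one there. Use that line, not $\hat V_\alpha$, for injectivity as well. It recovers $S_\alpha$ as the identity component of its common kernel, hence $\hM_\alpha=C_{\hG}(S_\alpha)$; this is exactly the paper's equality $Z(\LM_\alpha)=Z((\LM_\alpha)_\nu)$. Going through $\hat V_\alpha$ would require matching relative roots of $G_\nu$ with $V_\alpha$, which is the content of the later Lemma~\ref{lemma:spectral V} and Proposition~\ref{prop:root equality} and is not needed here. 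Finally, $\Fr$-stability of $(\hM_\alpha)_\nu$ does not by itself make $(\LM_\alpha)_\nu$ a Levi of $\LG_\nu$. What does is your description $(\hM_\alpha)_\nu=C_{\hG_\nu}(S_\alpha)$ with $S_\alpha$ Frobenius-fixed.
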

\begin{proof}
It is clear that $({\LM}_{\alpha})_{\nu}$ contains $\LM_{\nu}$ and that it is a Levi subgroup of $\LG_{\nu}$.  
Moreover, the center of $({\LM}_{\alpha})_{\nu}$ contains $Z({\hM}_{\alpha})^{W_F}$, which is isogenous to 
$Z((\hM_{\alpha})^{I_F})^{\circ}_{\Fr}$ and thus has corank one inside ${\hM_{\nu}}^{\Fr} = Z(\LM_{\nu})$.  
As $({\LM}_{\alpha})_{\nu}$ is the centralizer of its center we find that it is equal to either $\LM_{\nu}$ 
or a Levi of $\LG_{\nu}$ minimally containing $\LM_{\nu}$.
By Lemma \ref{lem:min-Levi-lemma}, the latter holds if, and only if, $\alpha$ lies in ${\hat \Sigma}_{\CO,\mu}$.

Now let $\alpha,\beta \in {\hat \Sigma}_{\CO,\mu}/ \{\pm 1\}$, and suppose that 
$(\LM_{\alpha})_{\nu} = (\LM_{\beta})_{\nu}.$ Then we have equalities:
$$Z(\LM_{\alpha}) = Z((\LM_{\alpha})_{\nu}) = Z((\LM_{\beta})_{\nu}) = Z(\LM_{\beta}),$$
so that $\LM_{\alpha} = \LM_{\beta}$.

Finally, let $\alpha$ be an element of ${\hat \Sigma}_{\nu,\red}/ \{\pm 1\}$, so that we have a minimal Levi
$(\LM_{\nu})_{\alpha}$ of $\LM_{\nu}$.  Let $\LM_{\alpha}$ be the centralizer, in $\LG$, of 
$Z((\LM_{\nu})_{\alpha})$.  Since this torus has corank one in $Z(\LM_{\nu}) = Z(\LM)$, we find that $\LM_{\alpha}$ 
is either equal to $\LM$ or is a minimal Levi containing it.  But since $(\LM_{\alpha})_{\nu}$ clearly contains
$(\LM_{\nu})_{\alpha}$, we must have that $\LM_{\alpha}$ is a minimal Levi of $\LG$ containing $\LM$, with
$(\LM_{\alpha})_{\nu} = (\LM_{\nu})_{\alpha}$.  Thus $\alpha$ lies in ${\hat \Sigma}_{\CO,\mu}/ \{\pm 1\}$ by 
Lemma \ref{lem:min-Levi-lemma}.
\end{proof}

From Proposition \ref{prop:levis} it is not hard to deduce the following desired equalities: 
\begin{prop} \label{prop:root equality}
Let $\alpha$ be an element of ${\hat \Sigma}_{\CO_{\sigma},\mu}/ \{\pm 1\}$ and let $\beta$ be the corresponding 
element of ${\hat \Sigma}_{\nu,\red}/ \{\pm 1\}$.  Then we have 
$\pm {\hat h}_{\alpha}^{\vee} = \pm {\hat h}_{\beta}^{\vee}$ and 
$\pm {\hat \alpha}^{\sharp} = \pm {\hat \beta}^{\sharp}.$  In particular, 
${\hat \Sigma}^{\vee}_{1,\nu} = {\hat \Sigma}^{\vee}_{\CO_{\sigma}}$ and 
 ${\hat \Sigma}_{1,\nu} = {\hat \Sigma}_{\CO_{\sigma}}$.
\end{prop}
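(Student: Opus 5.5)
We compare the two root data by applying the same lattice-theoretic recipe of \S\ref{subsec:spectral-side-lattices} on both sides. By Proposition~\ref{prop:levis}, $\alpha$ and $\beta$ correspond to the same pair of minimal Levi subgroups $\LM_\alpha \supset \LM$ and $(\LM_\alpha)_\nu = (\LM_\nu)_\beta \supset \LM_\nu$. We would then compare two rank-one kernels. The first is $\pm{\hat h}^\vee_\alpha$, generating the kernel of
\[
X^*({\hM}_{\nu,\Fr}/K_{M,\phi}) \to X^*\big((Z(\hM_\alpha)^{I_F})^\circ_\Fr\big),
\]
as in diagram~\eqref{diagram:Kottwitz-isom-square-diagram}. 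The second is $\pm{\hat h}^\vee_\beta$, generating the kernel of
\[
X^*({\hM}_{\nu,\Fr}) \to X^*\big(Z((\hM_\alpha)_\nu)^\circ_\Fr\big),
\]
which is the analogous diagram for the group $G_\nu$ with trivial $\sigma$ and trivial stabilizer.

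The first step is to identify the target tori. We would show that $(Z(\hM_\alpha)^{I_F})^\circ_\Fr$ and $Z((\hM_\alpha)_\nu)^\circ_\Fr$ have the same image in $\hM_{\nu,\Fr}/K_{M,\phi}$. The inclusion $Z(\hM_\alpha)^{I_F,\circ}\subset Z((\hM_\alpha)_\nu)$ is clear. For the reverse, we would use the corank-one computation in the proof of Proposition~\ref{prop:levis}: both identity components are corank-one subtori of $\hM_{\nu,\Fr}$ attached to the same hyperplane, and Lemma~\ref{lemma:irreducible}(2) lets us compare coinvariants. This gives equality of the kernels after passing to $K_{M,\phi}$-quotients.

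The second step uses that $K_{M,\phi}$ is central in $\hG_\nu$ by \eqref{eq:K-central}. Hence $K_{M,\phi}$ lies in $Z((\hM_\alpha)_\nu)$, so every character in the kernel for $G_\nu$ vanishes on $K_{M,\phi}$. The kernel for $G_\nu$ therefore lies in the sublattice $X^*(\hM_{\nu,\Fr}/K_{M,\phi})$ and coincides with the kernel for $G$. This yields $\pm{\hat h}^\vee_\alpha=\pm{\hat h}^\vee_\beta$.

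For the coroots we use Lemma~\ref{lemma:spectral V}. Both ${\hat V}_\alpha$ and ${\hat V}_\beta$ are the image of $X_*$ of the $\Fr$-coinvariants of the torus $\hM_\nu$ modulo the centre of the relevant minimal Levi. We would check that these one-dimensional subspaces of $X_*(\hM_{\nu,\Fr})\otimes\QQ$ agree, because both equal the annihilator-complement cut out by the same hyperplane $Z(\LM_\alpha)=Z((\LM_\alpha)_\nu)$. The elements ${\hat\alpha}^\sharp$ and ${\hat\beta}^\sharp$ are then both characterized as the elements of this line pairing to $2$ with the common ${\hat h}^\vee$, so they agree up to sign.

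The final claims follow by letting $\alpha$ range over ${\hat\Sigma}_{\CO_\sigma,\mu}/\{\pm1\}$, which is in bijection with ${\hat\Sigma}_{\nu,\red}/\{\pm1\}$ by Proposition~\ref{prop:levis}. For $G_\nu$ every reduced root lies in $\Sigma_{\CO_1,\mu}$, since it is unramified with trivial supercuspidal support.

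The main obstacle is the first step, the precise equality of the corank-one tori. One must control finite component groups, so that the kernel is generated by the same primitive vector and not merely a multiple of it. If a direct comparison fails, I would instead compute ${\hat h}^\vee_\beta$ inside the semisimple rank-one group $(\hM_\alpha)_\nu$ and transport it via $\tilde\iota_\phi$ and Theorem~\ref{thm:moduli reduction}.
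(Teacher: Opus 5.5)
Your Step 1 is sound. The images of $(Z(\hM_\alpha)^{I_F})^\circ_\Fr$ and $Z((\hM_\alpha)_\nu)^\circ_\Fr$ in $\hM_{\nu,\Fr}$ are connected, one contains the other, and both have codimension one, so they coincide. This is what the paper's comparison diagram encodes. Comparing images inside $\hM_{\nu,\Fr}$ is also safer than comparing the tori $Z(\hM_\alpha)^{I_F}$ and $Z((\hM_\alpha)_\nu)$ themselves, whose dimensions can differ before Frobenius coinvariants are taken.

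\textbf{The gap is in Step 2.} From \eqref{eq:K-central} you only get that $K_{M,\phi}$ is represented by central elements of $\hG_\nu$. You then conclude that every character in the $G_\nu$-kernel kills $K_{M,\phi}$. That does not follow. The $G_\nu$-kernel, as you define it, is the annihilator of the image of the identity component $Z((\hM_\alpha)_\nu)^\circ_\Fr$, not of the whole centre. Its elements can be nontrivial on central elements outside the identity component.

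For instance, take $\hG_\nu=\SL_2$ with trivial Frobenius action. Then the $G_\nu$-kernel is all of $X^*(\hM_\nu)=\ZZ\varpi$, and $\varpi(-1)=-1$. If $-1$ lay in $K_{M,\phi}$, the kernel on the $G$-side would be generated by $2\varpi$, and $\pm\hat h^\vee_\alpha=\pm\hat h^\vee_\beta$ would fail. So centrality alone cannot place the $G_\nu$-kernel inside $X^*(\hM_{\nu,\Fr}/K_{M,\phi})$. This is exactly the finite-component-group issue you anticipated, but it sits in Step 2, not Step 1.

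\textbf{The missing input is Proposition~\ref{prop:4.actionXonSGnu}(1).} The cocycle $\zeta$ takes values in the identity component $Z(\hG_\nu)^\circ_\Fr$, and on $\pi_{0,M,\phi}$ it recovers the map \eqref{eqn:map-defining-KMphi}. Hence $K_{M,\phi}$ is represented by elements of $Z(\hG_\nu)^\circ$. Since $Z(\hG_\nu)^\circ\subset Z((\hM_\alpha)_\nu)^\circ$, $K_{M,\phi}$ lies in the image of $Z((\hM_\alpha)_\nu)^\circ_\Fr$ in $\hM_{\nu,\Fr}$. It is therefore killed by the whole $G_\nu$-kernel, which is what the paper uses. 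With this substitution your argument is essentially the paper's.

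\textbf{A smaller point, in the $\hat V$ step.} A codimension-one subtorus determines the line $\QQ\hat h^\vee$ in $X^*$, not a line in $X_*(\hM_{\nu,\Fr})\otimes\QQ$. Also, Lemma~\ref{lemma:spectral V} involves the dual of the quotient $\bM/Z(\bM_\alpha)$, which maps into $\hM$, not a quotient of $\hM_\nu$. You should apply Lemma~\ref{lemma:spectral V} to both $M$ and $M_\nu$ and compare the two lines it produces, rather than appeal to ``the same hyperplane.''
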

\begin{proof}
The group $Z(\hM_\alpha)^{I_F} = Z(\hM_\alpha) \cap (\hM_\alpha)_\nu$ is contained in $Z((\hM_\alpha)_\nu) =
Z( (\hM_\nu)_\beta)$. Both sides have the same dimension (i.e.~$\dim \hM_\nu - 1$), hence they have the same 
neutral components. This gives a surjection $(Z(\hM_{\alpha})^{I_F})^{\circ}_{\Fr} \to 
Z((\hM_{\nu})_{\beta})_{\Fr}^{\circ}$ with finite kernel.
We have the following commutative diagram:
\begin{equation}\label{eq:6.diagonal}
\begin{tikzcd}
X^*(\hM_{\nu,\Fr} / K_{M,\phi})\arrow[]{r} \arrow[]{d}{} 
& X^*(\hM_{\nu,\Fr})\arrow[]{d}{} \arrow[]{dl}{}\\
X^*((Z(\hM_{\alpha})^{I_F})^{\circ}_{\Fr})
& X^*(Z((\hM_{\nu})_{\beta})_{\Fr}^{\circ}) \arrow[]{l}
\end{tikzcd}
\end{equation}
where the horizontal maps are injective. The elements $\pm {\hat h}_{\alpha}^{\vee}$ and 
$\pm {\hat h}_{\beta}^{\vee}$ are the generators of the kernels of the left-hand and 
right-hand vertical maps in \eqref{eq:6.diagonal}, respectively. By the injectivity of 
the lower horizontal map, $\pm {\hat h}_{\beta}^{\vee}$ are also the generators of the 
diagonal map.

By Proposition \ref{prop:4.actionXonSGnu}(1), $K_{M,\phi}$ can be represented by elements 
of $Z(\hG_\nu)^\circ_\Fr$, and such elements lie in the kernel of every character of
$\hM_{\nu,\Fr}$ which is a rational multiple of a root. Hence the elements 
$\pm {\hat h}_{\alpha}^{\vee}$ can also be characterized as the generators of the  
diagonal map in \eqref{eq:6.diagonal}, and 
$\pm {\hat h}_{\alpha}^{\vee} = \pm {\hat h}_{\beta}^{\vee}$.

It follows easily from Lemma \ref{lemma:spectral V} that $\hat V_\alpha$ and $\hat V_\beta$ 
are identified via the isomorphism 
\[
X^*(\hM_{\nu,\Fr} / K_{M,\phi}) \otimes \QQ \to X^*(\hM_{\nu,\Fr}) \otimes \QQ ,
\]
so $\pm \hat \alpha^\sharp$ is identified with $\pm \hat \beta^\sharp$.
\end{proof}
We also note the following:
\begin{lemma} \label{lemma:R-trivial-unipotent}
The group $R(\CO_1)$ is trivial. In particular, 
$\End_{G_{\nu}}(i_{P_{\nu}}^{G_{\nu}} \chi^{\un})\simeq \mathcal{H}_{\CO_1}$ 
is an affine Hecke algebra associated to the root datum \eqref{eqn:root-datum-1comma-nu}.
\end{lemma}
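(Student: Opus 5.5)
Here $\CO_1$ denotes the torus of unramified twists of the trivial character of the maximal torus $M_\nu$ of the unramified quasi-split group $G_\nu$, i.e.\ $\CO_1 = \Xnr(M_\nu)$. The plan is to apply the general machinery of \S\ref{subsec:End-alg-auto-side} to the inertial class $[M_\nu,1]$ and check directly that $R(\CO_1)=\bW(M_\nu,1)/\bW_{\CO_1}$ vanishes. The isomorphism with $\mathcal{H}_{\CO_1}$ then follows from Theorem~\ref{thm:solleveld}. Alternatively, it follows from the Iwahori--Hecke description $\End_{G_\nu}(i_{P_\nu}^{G_\nu}\chi^\un)\cong \CH(G_\nu,I_\nu)$ in \eqref{eq:3.15}, whose Bernstein basis \eqref{eq:3.26} involves only the finite Weyl group $\bW(G_\nu,M_\nu)$ and $X_*(M_\nu)$.

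First, since the trivial character is fixed by all of $N_{G_\nu}(M_\nu)$, we have $\bW(M_\nu,1)=\bW(G_\nu,M_\nu)$, which is the relative Weyl group of $G_\nu$. Also $\Stab(1)$ is trivial, so the lattice in Proposition~\ref{prop:solleveld} is $M_\nu/M_\nu^1$. Next, for each reduced relative root $\alpha$ of $(G_\nu,A_{M_\nu})$, the rank-one Levi $M_{\nu,\alpha}$ is an unramified quasi-split group of semisimple rank one: a restriction of scalars of $\SL_2$, $\PGL_2$, $\GL_2$ or $\mathrm{U}_3$-type groups over unramified extensions. For these groups, the classical computation of the Harish-Chandra $\mu$-function for unramified principal series (Macdonald's formula, as in \cite{Silberger-79}) shows that $\mu_{M_{\nu,\alpha},1}$ has a zero at $\chi$ trivial on the coroot $h^\vee_\alpha$. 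Equivalently, via Lemma~\ref{lem:min-Levi-lemma} applied to the unipotent situation $\nu=1$: condition (3) there holds for every minimal Levi, because the centralizer of the trivial inertial image is all of $\hM_{\nu,\alpha}$. Hence every reduced root lies in $\Sigma_{\CO_1,\mu}$, and $\bW_{\CO_1}$ is the group generated by all reflections $s_\alpha$ for $\alpha\in\Sigma_{\red}$.

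Since the relative Weyl group $\bW(G_\nu,M_\nu)$ is generated by the reflections in the reduced relative roots, it follows that $\bW_{\CO_1}=\bW(M_\nu,1)$. Using \eqref{eqn:WMO-semidirect-product}, this gives $R(\CO_1)=1$. Concretely, $R(\CO_1)$ is the stabilizer of the simple system $\Delta_{\CO_1}$ in the Weyl group of a root system containing all roots, and that stabilizer acts simply transitively on chambers, so it is trivial. Then Theorem~\ref{thm:solleveld} gives $\End_{G_\nu}(i_{P_\nu}^{G_\nu}\chi^\un)\cong\mathcal{H}_{\CO_1}$, with the twisting cocycle trivial. The root datum is identified with \eqref{eqn:root-datum-1comma-nu} by passing through \eqref{eqn:lattices-of-tori-isom} with $K=1$, using the Langlands correspondence for the torus $M_\nu$.

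The main obstacle is the middle step: verifying that $\mu_{M_{\nu,\alpha},1}$ genuinely has poles for every reduced root, including non-reduced relative root systems (type $BC$, coming from unramified unitary-type rank-one Levis). There one must check that the parameters $q_\alpha,q_\alpha^*$ do not make the numerator and denominator cancel. I would handle this either by citing the explicit rank-one computations for Iwahori--Hecke algebras, which have $q_\alpha>1$ for all $\alpha$, or by the spectral criterion of Lemma~\ref{lem:min-Levi-lemma} combined with Plancherel compatibility for unramified characters of tori, which is known via \cite{HII}.
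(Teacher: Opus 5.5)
Your argument is correct and is essentially the paper's. The paper also identifies $R(\CO_1)$ with the quotient of $\bW(M_\nu,1)=\bW(G_\nu,M_\nu)$ by the subgroup generated by the $s_\alpha$ with $\alpha\in\Sigma_{\CO_1,\mu}$, shows that this subgroup is everything (citing Proposition~\ref{prop:levis}, i.e.\ your second route via Lemma~\ref{lem:min-Levi-lemma}), and then applies Theorem~\ref{thm:solleveld}. Your first justification, via the explicit rank-one $\mu$-functions with all Iwahori parameters $q_\alpha>1$, is a slightly more self-contained way to get the key step, since it does not need Plancherel compatibility for the unipotent block.
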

\begin{proof}
We may identify $R(\CO_1)$ with the quotient of $\bW(M_{\nu},1)$ by the subgroup generated by simple reflections 
$s_{\alpha}$, for $\alpha \in \Sigma_{\CO_1,\mu}$.  By Proposition~\ref{prop:levis}, this is all of $\bW(M_{\nu},1)$, 
so $R(\CO_1)$ is trivial. The last statement then follows from applying Theorem \ref{thm:solleveld} to the block 
$\Rep (G_\nu)_{[M_\nu,1]}$ of $\Rep(G_\nu)$.
\end{proof}

The upshot of Proposition~\ref{prop:root equality} is that the root data associated to the two endomorphism 
algebras of $i_{P_{\nu}}^G \chi^{\un}$ and $i_{P}^G \mathcal{W}_M^{[M,\sigma]}$ differ only by
dividing out $K_{M,\phi}$. Moreover, the set of simple roots $\Delta_{\CO_{\sigma}}$ (coming from the parabolic 
$P$) is identified with the set of simple roots (for the root datum associated to 
$i_{P_{\nu}}^G \chi^{\un}$) corresponding to the parabolic $P_{\nu}$.  We next show that the 
q-parameters attached to these two sets of simple roots coincide.

Let $\alpha$ be an element of $\Delta_{\CO,\mu}$, corresponding to a Levi $\LM_{\alpha}$ of $\LG$ minimally 
containing $\LM$. By Proposition \ref{prop:levis} we have a corresponding minimal Levi $(\LM_{\nu})_{\alpha}$ of 
$\LG_{\nu}$.  Attached to this choice of $\alpha$, we have two sets of q-parameters:
the parameters $q_{\alpha}, q^*_{\alpha}$, that determine the locus of unramified twists of $\phi$ for which 
$L(1, \Ad_{\hM_{\alpha}} \phi) = \infty$, and the parameters $q_{\alpha,\nu}, q_{\alpha,\nu}^*$, that determine 
the locus of L-parameters $\tau$ in $X^1_{\LM_{\nu}}$ for which $L(1, \Ad_{(\hM_{\alpha})_{\nu}} \tau) = \infty$.

Recall that we have a natural L-homomorphism $\iota_{\phi}: \LM_{\nu} \rightarrow \LM$ from 
\eqref{eqn:L-hom-iota-phi}; composing an L-parameter with this L-homomorphism induces a surjection 
$X^1_{\LM_{\nu}} \rightarrow X^{\phi}_{\LM}$, as in \eqref{eqn:moduli-reduction-thm}, that takes the trivial 
L-parameter to $\phi$. We have the following result. 
\begin{lemma}\label{lem:isom-H0}
For any L-parameter $\tau$ in $X^1_{\LM_{\nu}}$, there is an equality of meromorphic functions:
\[
L \big(s, \Ad_{(\hM_{\alpha})_{\nu}} \tau \big) = L \big( s, \Ad_{\hM_{\alpha}} \varphi \big) 
\quad\text{ for }\; s \in \CC,
\]
where $\varphi = \iota_{\phi} \circ \tau$. In particular, $\iota_\phi$ provides a bijection 
$\widehat{\mathrm{Pol}}_{L_\nu,1} \xrightarrow{\sim} \widehat{\mathrm{Pol}}_{L,\phi}$.
\end{lemma}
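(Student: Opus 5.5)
The plan is to compute both $L$-functions directly from their definitions as inertia-invariants with a Frobenius characteristic polynomial. For a Weil--Deligne representation $(r,V)$ with trivial monodromy we have
\[
L(s,r)=\det\big(1-q_F^{-s}\,r(\Fr)\mid V^{I_F}\big)^{-1}.
\]
Both $\tau$ and $\varphi=\iota_\phi\circ\tau$ have trivial monodromy: for $\tau$ because $\hM_\nu$ is a torus (Lemma \ref{lemma:irreducible}), and for $\varphi$ by Lemma \ref{lemma:irreducible}(4). If the monodromy were nonzero, the same argument would apply to the kernel of $\Ad N$, which is compatible with $\iota_\phi$ since $\iota_\phi$ is the identity on $\hG_\nu$. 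The statement therefore reduces to two things. First, an equality of vector spaces
\[
\Lie(\hM_\alpha)^{\Ad\varphi(I_F)}=\Lie\big((\hM_\alpha)_\nu\big).
\]
Second, the Frobenius endomorphisms of these two spaces should agree.

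For the vector spaces: since $\tau|_{I_F}$ is trivial, $\varphi|_{I_F}=\nu=\phi|_{I_F}$. Hence
\[
\Lie(\hM_\alpha)^{\Ad\varphi(I_F)}=\Lie\big(Z_{\hM_\alpha}(\nu(I_F))\big),
\]
and this is the Lie algebra of its identity component $(\hM_\alpha)_\nu$. We are in characteristic zero and $\nu(I_F)$ is finite, so taking invariants commutes with passing to Lie algebras, exactly as in the proof of Lemma \ref{lem:min-Levi-lemma}. On the other side, $\Ad_{(\hM_\alpha)_\nu}\tau$ is unramified, so its $I_F$-invariants are all of $\Lie((\hM_\alpha)_\nu)$.

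For Frobenius: $\tau(\Fr)=g\rtimes\Fr$ with $g\in\hM_\nu$, and by \eqref{eqn:L-hom-iota-phi} we have $\varphi(\Fr)=g\,\phi(\Fr)$. The group $\LM_\nu$ is defined so that $\Fr$ acts on $\hG_\nu$ through conjugation by $\phi(\Fr)$. Therefore $\Ad\tau(\Fr)$ and $\Ad\varphi(\Fr)$ are the same operator on $\Lie((\hM_\alpha)_\nu)$, and the two determinants agree identically in $s$.

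The step I expect to need the most care is checking that $\Fr$ really acts on $(\hM_\alpha)_\nu\subset\hG_\nu$ by $\Ad\phi(\Fr)$. This requires that $\phi(\Fr)$ normalizes $(\hM_\alpha)_\nu$, which holds because $\phi$ factors through $\LM\subset\LM_\alpha$ and normalizes $\nu(I_F)$. It also requires keeping track of the normalization of $\phi$ fixed at the start of \S\ref{sec:Proof-main1}. A second point is that $(\hM_\alpha)_\nu$ is the Levi of $\hG_\nu$ appearing in Proposition \ref{prop:levis}; this is Lemma \ref{lemma:levis}.

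The bijection of pole sets then follows from three facts. The map $\tau\mapsto\iota_\phi\circ\tau$ is the surjection $X^1_{\LM_\nu}\to X^\phi_{\LM}$ of Theorem \ref{thm:moduli reduction}. The pole loci are unions of fibres of this map, since conjugate parameters have equal $L$-functions. Finally, the equality at $s=1$ identifies the pole loci; on coarse spaces this gives the stated bijection, modulo $K_{M,\phi}$ as in \eqref{eqn:isom-tori}.
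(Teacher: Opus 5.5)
Your proposal is correct and follows essentially the same route as the paper. Both compute the adjoint $L$-functions as $\det(1-q^{-s}\Ad(\cdot)(\Fr))^{-1}$ on $I_F$-invariants, identify $\Lie(\hM_\alpha)^{\phi(I_F)}$ with $\Lie((\hM_\alpha)_\nu)$, and note that $\Ad\tau(\Fr)=\Ad\varphi(\Fr)$ there because $\iota_\phi(g\rtimes\Fr)=g\phi(\Fr)$. Your further remark is a correct sharpening of the paper's ``matched by $\iota_\phi$'': the pole locus in $X^1_{\LM_\nu}$ is a union of $\iota_\phi$-fibres, so on coarse spaces the correspondence is bijective only after quotienting by $K_{M,\phi}$.
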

\begin{proof}
By the definitions of L-functions and of the L-homomorphism $\iota_\phi$:
\begin{align}\label{eq:6.L}
\begin{split}
L \big( s, \Ad_{\hM_{\alpha}} \varphi \big) &= 
\det \big( 1 - q^{-s} \Ad(\varphi (\Fr)) , \Lie (\hM_\alpha )^{\phi (I_F)} \big)^{-1} \\
&= \det \big( 1 - q^{-s} \Ad(\tau (\Fr)) , \Lie (\hM_\nu)_\alpha \big)^{-1} =
L \big( s, \Ad_{(\hM_\nu)_{\alpha}} \tau \big) .
\end{split}
\end{align}
The sets $\widehat{\mathrm{Pol}}_{L_\nu,1}$ and $\widehat{\mathrm{Pol}}_{L,\phi}$ are the loci where the 
functions in \eqref{eq:6.L} with $s=1$ have poles. By construction, these are matched by $\iota_\phi$.
\end{proof}

\begin{cor} \label{cor:parameters equality}
We have $q_{\alpha} = q_{\alpha,\nu}$ and $q^*_{\alpha} = q^*_{\alpha,\nu}$.
\end{cor}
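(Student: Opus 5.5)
The plan is to read off both pairs of parameters from the pole loci, which Lemma \ref{lem:isom-H0} identifies. On the $G$ side, Plancherel compatibility for $(\sigma,\phi)$ gives \eqref{eqn:Plancherel-cases}: the locus in $\overline{X}^\phi_{\LM}$ where $L(1,\Ad_{\hM_\alpha}\varphi)=\infty$ is cut out by ${\hat X}_\alpha(\varphi)\in\{q_\alpha^{\pm1},-(q_\alpha^*)^{\pm1}\}$, or by ${\hat X}_\alpha(\varphi)\in\{q_\alpha^{\pm1}\}$ when $q^*_\alpha=1$. On the $G_\nu$ side the same analysis applies to the unipotent pair $(\chi^{\un},\text{trivial parameter})$ of the unramified group $G_\nu$. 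Plancherel compatibility there is classical: it follows from the explicit Macdonald formula for the $\mu$-function of the principal series of an unramified group, or equivalently from Theorem \ref{thm:unipotent categorical} combined with the unequal-parameter Iwahori--Hecke presentation \eqref{eq:3.26}. So the pole locus of $L(1,\Ad_{(\hM_\nu)_\alpha}\tau)$ in $X^1_{\LM_\nu}$ is cut out by ${\hat X}_{\alpha,\nu}(\tau)\in\{q_{\alpha,\nu}^{\pm1},-(q^*_{\alpha,\nu})^{\pm1}\}$.

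First I identify the coordinate functions. By Proposition \ref{prop:root equality} we have $\pm{\hat h}^\vee_\alpha=\pm{\hat h}^\vee_\beta$. Hence the pullback of ${\hat X}_\alpha$ along the surjection $X^1_{\LM_\nu}\to X^\phi_{\LM}$, $\tau\mapsto\iota_\phi\circ\tau$, equals ${\hat X}_{\alpha,\nu}$, possibly up to inversion and up to a constant coming from the choice of base points. To remove the constant I need the base point $\sigma'$ to correspond to $\phi$ under \eqref{eqn:isom-tori}. Indeed $\phi$ was normalized to preserve a pinning of $\hG_\nu$, and $\iota_\phi$ sends the trivial parameter to $\phi$. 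The trivial parameter is the natural base point on the unipotent side, being the point where the Macdonald $\mu$-function has its standard symmetric form. So I will check that ${\hat X}_\alpha(\phi)=\pm1$ sits where \eqref{eqn:Silberger-Solleveld} is centered; this is the content promised in Lemma \ref{lem:6.basepoints}, and I invoke it here. The ambiguity of sign in $h^\vee$ only swaps $X_\alpha$ with $X_\alpha^{-1}$, and both pole sets are inversion-symmetric, so this sign is harmless.

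Next, by Lemma \ref{lem:isom-H0}, $\iota_\phi$ maps $\widehat{\mathrm{Pol}}_{M_{\alpha,\nu},1}$ bijectively onto $\widehat{\mathrm{Pol}}_{M_\alpha,\phi}$. Therefore the two finite subsets of $\CC^\times$ obtained as values of the matched coordinates coincide:
\[
\{q_\alpha^{\pm1},-(q^*_\alpha)^{\pm1}\}=\{q_{\alpha,\nu}^{\pm1},-(q^*_{\alpha,\nu})^{\pm1}\}
\]
(with the evident modification if some $q^*=1$). All the $q$'s are real and at least $1$, so the positive elements $q^{\pm1}$ are distinguished from the negative ones $-(q^*)^{\pm1}$. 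Taking the positive value that is at least $1$ gives $q_\alpha=q_{\alpha,\nu}$. Taking the negative values gives $q^*_\alpha=q^*_{\alpha,\nu}$, with the case $q^*=1$ being exactly the case where no negative pole occurs beyond the one forced at $-1$, which happens on both sides simultaneously.

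The main obstacle I expect is the base-point/constant issue: the pullback of ${\hat X}_\alpha$ could a priori differ from ${\hat X}_{\alpha,\nu}$ by multiplication by an element of $K_{M,\phi}$ or a root of unity, for instance $-1$, which would swap the roles of $q$ and $q^*$. I would handle this using \eqref{eq:K-central}: $K_{M,\phi}$ is central and killed by $\hat h^\vee$. For the $-1$ ambiguity I would use the convention $q_\alpha\ge q^*_\alpha$, imposed on both sides, which pins down the labeling.
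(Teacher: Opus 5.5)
Your argument is correct and is essentially the paper's: both coordinate functions are translates of the same character $\hat h^\vee_\alpha$ by Proposition~\ref{prop:root equality}, Lemma~\ref{lem:isom-H0} together with Plancherel compatibility on both sides matches the pole loci, and $q,q^*$ are read off from those loci. Your explicit treatment of the residual sign $c=\pm1$ through the convention $q\ge q^*$ on both sides is more careful than the paper's bare assertion that the two functions coincide (if $c=-1$, the two inequalities force all four parameters to be equal). Use only the first assertion of Lemma~\ref{lem:6.basepoints} ($\mu_{\alpha,\sigma}(\sigma)=0$), since the paper deduces its second assertion from this very corollary. Also note that for $q^*=1$ there is no pole at $-1$ at all (the factor $(1+X)(1+X^{-1})$ cancels against the numerator), so the criterion is simply the absence of negative poles.
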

\begin{proof}
By construction, the functions ${\hat X}_{\alpha}$ and ${\hat X}_{\alpha,\nu}$ are both translates of the character 
${\hat h}_{\alpha}^{\vee}$. Lemma \ref{lem:isom-H0}, combined with the description in \eqref{eqn:Plancherel-cases} 
of the loci where $L(s=1, \Ad_{\hM_{\alpha}} \varphi)$ has a pole, shows that the loci where 
${\hat X}_{\alpha} = q_{\alpha}^{\pm 1}$ and ${\hat X}_{\alpha,\nu} = q_{\alpha,\nu}^{\pm 1}$ agree.  This is 
already enough to conclude that the functions ${\hat X}_{\alpha}$ and ${\hat X}_{\alpha,\nu}$ coincide, and thus 
that $q_{\alpha} = q_{\alpha,\nu}$.

The same reasoning also shows that $q^*_{\alpha} \neq 1$ if, and only if, $q^*_{\alpha,\nu} \neq 1$, and in this 
case the locus where ${\hat X}_{\alpha} = -(q^*_{\alpha})^{\pm 1}$ agrees with the locus where 
${\hat X}_{\alpha,\nu} = -(q^*_{\alpha,\nu})^{\pm 1}$; from this we conclude that $q^*_{\alpha} = q^*_{\alpha,\nu}$. 
\end{proof}

Next we check that the base-point $\sigma'$ in Theorem \ref{thm:solleveld} can be chosen as $\sigma$, 
so that it corresponds to the trivial $M_\nu$-representation and to the trivial L-parameter for $M_\nu$ via $\iota_\phi$ and the setup from Section \ref{sec:conjecture}. 

\begin{lemma}\label{lem:6.basepoints}
We have $\mu_{\alpha,\sigma}(\sigma) = 0$ for all $\alpha \in \Delta_{\CO_\sigma}$.

If we rewrite \eqref{eq:6.7} and \eqref{eqn:Silberger-Solleveld} with $\sigma$ in the role of $\sigma'$,
then $q_\alpha \geq q^*_\alpha$ for all $\alpha \in \Delta_{\CO_\sigma}$.
\end{lemma}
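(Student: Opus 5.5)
The plan is to move both assertions to the unipotent group $G_\nu$, using Plancherel compatibility together with Lemma \ref{lem:isom-H0}, and to settle them there by a rank-one computation.

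First, recall that we have arranged $\phi$ so that $\phi(\Fr)$ preserves a pinning of $\hG_\nu$ (Corollary \ref{cor:pinning}). Under \eqref{eqn:isom-tori}, the representation $\sigma$ corresponds to $\phi$, and hence via $\iota_\phi$ to the trivial L-parameter $\tau_0=(1\rtimes\Fr)$ of $\LM_\nu$. Fix $\alpha\in\Delta_{\CO_\sigma}$ and let $(\LM_\nu)_\alpha$ be the corresponding minimal Levi of $\LG_\nu$ from Proposition \ref{prop:levis}.

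By Lemma \ref{lem:isom-H0} and Plancherel compatibility, the pole locus of $\mu_{M_\alpha,\sigma}$ on $\CO_\sigma$ is identified with the locus of $\tau\in X^1_{\LM_\nu}$ where $L(1,\Ad_{(\hM_\nu)_\alpha}\tau)=\infty$. By Proposition \ref{prop:root equality}, in both pictures the coordinate is the same character $\hat h^\vee_\alpha$. Write $X_\alpha$ for \eqref{eq:6.7} with $\sigma'$ replaced by $\sigma$, so that $X_\alpha(\sigma)=1$. On the spectral side this is the function $\tau=z\tau_0\mapsto \hat h^\vee_\alpha(z)$, which equals $1$ at $\tau_0$.

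It therefore suffices to show the following two facts.

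(a) Written in the coordinate $Y=\hat h^\vee_\alpha(z)$ centred at $\tau_0$, the pole set of $L(1,\Ad_{(\hM_\nu)_\alpha}(z\tau_0))$ is of the form $\{q_\alpha^{\pm1}\}$ or $\{q_\alpha^{\pm1},-(q^*_\alpha)^{\pm1}\}$, with $q_\alpha\ge q^*_\alpha\ge1$.

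(b) The zeros of $\mu_{M_\alpha,\sigma}$, namely $X_\alpha^{\pm2}=1$ in \eqref{eqn:Silberger-Solleveld}, include $X_\alpha=1$.

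Fact (b) is automatic once the formula \eqref{eqn:Silberger-Solleveld} is known to hold in the coordinate centred at $\sigma$. For (a) I would compute directly, as in the proof of Lemma \ref{lem:min-Levi-lemma}. The space $\Lie((\hM_\nu)_\alpha)/\Lie(\hM_\nu)$ decomposes into Frobenius orbits of roots $\gamma_1,\dots,\gamma_r$ of the pinned group. Because $\Fr$ preserves the pinning, $\Ad(z\tau_0(\Fr))$ acts on such an orbit with characteristic polynomial $T^r-\prod_i\gamma_i(z)$, and there are no extra signs. The resulting $L$-factor is $(1-q_F^{-r}\prod_i\gamma_i(z))^{-1}$.

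Next, classify the possible rank-one relative root systems of $(\hG_\nu)_\alpha$ with a pinned Frobenius. Either the orbit is of type $A_1$ (orbits of $\pm\gamma$ only), or it is of type $A_2$ with $\Fr$ swapping the two simple roots, giving a non-reduced $BC_1$. In the $A_1$ case $\prod\gamma_i=Y^{\pm1}$ (up to the factor 2 convention in $\alpha^\sharp$), so the poles are at $Y=q_F^{\pm r}$ and $q^*_\alpha=1$. In the $BC_1$ case the short roots contribute poles at $Y=q_F^{\pm a}$, while the long root, which is Frobenius-fixed but on which $\Fr$ acts by $-1$ through the pinning of $\SL_3$, contributes poles at $Y=-q_F^{\pm b}$. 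Here $a\ge b$, as in the Iwahori parameters $(q^3,q)$ of unramified $U_3$.

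In all cases $q_\alpha\ge q^*_\alpha$, and $Y=1$ is a fixed point of the reflection $s_\alpha$, i.e.\ a zero of $\mu$. Corollary \ref{cor:parameters equality} then transports the whole computation back to $\mu_{M_\alpha,\sigma}$.

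The main obstacle will be the sign bookkeeping: showing that $\sigma$ sits at $X_\alpha=+1$ rather than $-1$. The point $-1$ is also a zero of $\mu$, but centring there would interchange the roles of $q_\alpha$ and $q^*_\alpha$. I would handle this by tracking exactly which root of the long-root ($\SL_3$) orbit produces the sign. The key input is that $\tau_0(\Fr)$ preserves the pinning, since a non-pinned Frobenius is precisely what would introduce the shift.
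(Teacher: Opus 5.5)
Your skeleton matches the paper's. You identify $\sigma$ with the trivial parameter $\tau_0$ of $\LM_\nu$, and use Lemma \ref{lem:isom-H0}, Proposition \ref{prop:root equality} and Plancherel compatibility to move the pole set of $\mu_{M_\alpha,\sigma}$ (in the $\sigma$-centred coordinate) to that of $L(1,\Ad_{(\hM_\nu)_\alpha}(z\tau_0))$ in the coordinate $Y=\hat h^\vee_\alpha(z)$.

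The difference is the unipotent input. The paper takes it from the automorphic side of $G_\nu$: Silberger's formula at the trivial character, the positive unramified pole $\chi_\alpha$, and the Iwahori double-coset counts $|I_\nu s_\alpha I_\nu/I_\nu|\ge|I_\nu s^*_\alpha I_\nu/I_\nu|$. You instead compute the pole set directly on the dual side, from the rank-one classification ($A_1^r$, or $A_2^r$ with $\Fr^r$ the diagram swap). That is a sound route, it avoids the Iwahori--Hecke input, and it shows where the negative poles $-(q^*_\alpha)^{\pm1}$ come from.

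As written, however, the first assertion $\mu_{M_\alpha,\sigma}(\sigma)=0$ is not proved. Your (b) is circular. The claim ``$Y=1$ is fixed by $s_\alpha$, hence a zero of $\mu$'' is false: when $q^*_\alpha=1$, the point $X_\alpha=-1$ is $s_\alpha$-fixed but $\mu$ does not vanish there (e.g.\ $\SL_2(F)$ at the unramified quadratic character of the torus). Plancherel compatibility transfers only pole \emph{sets}, so the zero has to be extracted from them:
\begin{itemize}
\item Take an admissible base point $\sigma'$ for \eqref{eqn:Silberger-Solleveld} and put $c=X_{\sigma'}(\sigma)$.
\item The $\sigma$-centred pole set is then $c^{-1}\{q_\alpha^{\pm1},-(q^*_\alpha)^{\pm1}\}$, or $c^{-1}\{q_\alpha^{\pm1}\}$ when $q^*_\alpha=1$.
\item Equate this with your set from (a). The cardinalities match the cases, comparing products gives $c^4=1$ (resp.\ $c^2=1$), and reality forces $c=\pm1$.
\item Positivity forces $c=1$ when $q^*_\alpha=1$. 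In every case $c$ is a zero of the Silberger function.
\end{itemize}
This also removes your ``main obstacle''. Once $\sigma$ is known to be a zero, the recentred parameters are read off from the $\sigma$-centred pole set. So $q_\alpha\ge q^*_\alpha$ at $\sigma$ is exactly what (a) gives, whether $c=1$ or $c=-1$. All the sign bookkeeping lives inside (a).

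Two slips inside (a) also need fixing, though the conclusions survive.
\begin{itemize}
\item ``No extra signs'' is false for the orbit of highest roots in $A_2^r$. There the characteristic polynomial is $T^r+\prod_i\gamma_i(z)$, which is the sign you use later. You should also note that in this case $\hat h^\vee_\alpha$ is the full sum of the highest roots, since half of it is never a weight. That is what puts the sign on $Y$ itself rather than on a square root of $Y$.
\item In the $A_1^r$ case, $\hat h^\vee_\alpha$ can be half of $\sum_i\gamma_i$, e.g.\ when $\hG_\nu\supset\SL_2(\CC)$, i.e.\ a $\PGL_2$-type Levi of $G_\nu$. The factor $2$ sits in $h^\vee_\alpha$, not in $\alpha^\sharp$. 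Then $\prod_i\gamma_i(z)=Y^2$, the poles are at $\pm q_F^{\pm r/2}$, and $q_\alpha=q^*_\alpha=q_F^{r/2}$ rather than $q^*_\alpha=1$.
\end{itemize}
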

\begin{proof}
For $\hG_\nu$, we are in the setting of unipotent L-parameters for a 
quasi-split group. The poles of $L(s,\mathrm{Ad}_{(\hM_\nu)_\alpha} \phi)$ at
$s = 1$ and of the function $\mu_{M_\nu,\alpha}$ are known; see for example \cite{shahidi}.
Since $\phi$ preserves a pinning, there is a unique element $z_\alpha \in \hM_{\nu,\Fr}$ such that 
$L(1,\mathrm{Ad}_{(\hM_\nu)_\alpha} z_\alpha \phi) = \infty$, and $z_\alpha$ corresponds to a
$\chi_\alpha \in \Xnr (M)$ which is trivial on $Z(M_{\nu,\alpha})$ and takes values in $\RR_{>0}$. 
Then we also have $\mu_{M_\nu,\alpha} (\chi_\alpha) = \infty$. Silberger's formula 
\eqref{eqn:Silberger-Solleveld} shows that $\mu_{M_\nu,\alpha} (1) = 0$. 

By \cite[\S 1]{Borel-Iwahori}, $\chi_\alpha$ is determined by its value at
the version of $h_\alpha^\vee$ for $M_\nu$, which is
\[
q_{\nu,\alpha} = |I_\nu s_\alpha I_\nu / I_\nu|. 
\]
If $q^*_{\nu,\alpha} \neq 1$, then there is a simple affine reflection
$s_\alpha^*$ parallel to $s_\alpha$, and
\[
q^*_{\nu,\alpha} = |I_\nu s^*_\alpha I_\nu / I_\nu| \leq q_{\nu,\alpha}.
\]
Using the formula \eqref{eqn:Silberger-Solleveld}, this identifies all poles of $\mu_{M_\nu,\alpha}$.
In view of Corollary \ref{cor:parameters equality}, also $q_\alpha \geq q^*_\alpha$. The conventions on
base-points in the proof of Corollary \ref{cor:parameters equality} mean that we are now using
\eqref{eq:6.7} and \eqref{eqn:Silberger-Solleveld} with $\sigma$ in the role of $\sigma'$.

The above also determines the locus of $\phi$ where $L(s=1,\mathrm{Ad}_{\hM_{\nu,\alpha}} \phi)$ has a pole.
By Lemma \ref{lem:isom-H0} this is also the locus of $\phi$ where
$L(1,\mathrm{Ad}_{\hM_{\alpha}} \phi)$ has a pole.
By the twist and Plancherel compatibilities of $\phi$ and $\sigma$, the pole locus of $\phi$ corresponds to 
the pole locus of $\mu_{M,\alpha}$. Hence $\mu_{M,\alpha}(\chi_\alpha \otimes \sigma) = \infty$ and
the formula \eqref{eqn:Silberger-Solleveld} shows that $\mu_{\alpha,\sigma} (\sigma) = 0$.
\end{proof}

Putting all the above preparations together, we obtain: 
\begin{thm} \label{thm:rep thy isom}
Given Langlands compatible $\phi$ and $\sigma$, such that $\phi$ preserves a pinning of $\hat G_\nu$,
there is a canonical isomorphism:
\[
\End \big( i_{P}^G \mathcal{W}_M^{[M,\sigma]} \big) \cong 
\End \big( i_{P_{\nu}}^{G_{\nu}} \chi^{\un} \big)^{K_{M,\phi}} \rtimes \CC [R(\CO_{\sigma})].
\]
\end{thm}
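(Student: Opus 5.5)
The plan is to apply Theorem \ref{thm:solleveld} twice, once to $i_P^G \mathcal{W}_M^{[M,\sigma]}$ with base point $\sigma' = \sigma$ (allowed by Lemma \ref{lem:6.basepoints}) and once to $i_{P_\nu}^{G_\nu}\chi^{\un}$ with base point the trivial character, and then match the two outputs. The first application gives
\[
\End \big( i_{P}^G \mathcal{W}_M^{[M,\sigma]} \big) \cong \CH_{\CO_\sigma} \rtimes \CC[R(\CO_\sigma)].
\]
By Lemma \ref{lemma:R-trivial-unipotent}, the second gives $\End ( i_{P_\nu}^{G_\nu}\chi^{\un} ) \cong \CH_{\CO_1}$, with no $R$-group. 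It therefore suffices to produce a canonical algebra isomorphism $\CH_{\CO_\sigma} \cong \CH_{\CO_1}^{K_{M,\phi}}$.

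\textbf{Step 1 (root data and parameters).} Transport everything to the spectral side via \eqref{eqn:isom-tori} and \eqref{eqn:lattices-of-tori-isom}. The based root datum of $\CH_{\CO_\sigma}$ becomes \eqref{eqn:root-datum-Osigma}, and that of $\CH_{\CO_1}$ is \eqref{eqn:root-datum-1comma-nu}. Proposition \ref{prop:root equality} shows that the roots and coroots coincide inside $X^*(\hM_{\nu,\Fr}) \otimes \QQ$ and $X_*(\hM_{\nu,\Fr}) \otimes \QQ$. Proposition \ref{prop:levis}, together with the correspondence $\Sigma_{\CO_\sigma}(P) \leftrightarrow$ roots of $\hP_\nu$, matches the simple roots $\Delta_{\CO_\sigma}$ with those determined by $P_\nu$. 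Corollary \ref{cor:parameters equality} gives $q_\alpha = q_{\alpha,\nu}$ and $q^*_\alpha = q^*_{\alpha,\nu}$. The two base points correspond, because $\sigma \leftrightarrow \phi \leftrightarrow$ the trivial parameter under $\tilde\iota_\phi$.

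So $\CH_{\CO_\sigma}$ and $\CH_{\CO_1}$ are Hecke algebras with the same finite Weyl group $\bW_{\CO_\sigma} = \bW(G_\nu,M_\nu)$, the same parameters and the same roots. The only difference is the lattice: $X^*(\hM_{\nu,\Fr}/K_{M,\phi}) \subset X^*(\hM_{\nu,\Fr})$, the sublattice of characters trivial on $K_{M,\phi}$.

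\textbf{Step 2 (invariants).} Describe the $K_{M,\phi}$-action on $\CH_{\CO_1} = \CH(G_\nu,I_\nu)$ via Lemma \ref{lem:4.actionXonHGI}. For $x \in \pi_{0,M,\phi}$, the representative $x_P$ can be taken in $\hM_\nu$ up to the centre, so $\Ad_{x_P}$ is trivial on $w$ and $\lambda$. Hence $x \cdot T_w\theta_\lambda = \zeta(x)^{-1}(\lambda)\, T_w\theta_\lambda$, with $\zeta(x) \in K_{M,\phi}$ by Proposition \ref{prop:4.actionXonSGnu}(1).

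In the basis $\{T_w\theta_\lambda\}$ from \eqref{eq:3.26}, the invariants are therefore spanned by those $T_w\theta_\lambda$ with $\lambda$ trivial on $K_{M,\phi}$. This invariant subalgebra is exactly the Bernstein presentation on the sublattice $X^*(\hM_{\nu,\Fr}/K_{M,\phi})$. The Bernstein relation $\theta_\lambda T_s - T_s\theta_{s\lambda} = \ldots$ only involves $\lambda$, $s\lambda$ and multiples of $\alpha^\sharp$, and these stay in the sublattice. The sublattice is $W$-stable, and contains all roots because $K_{M,\phi}$ is central by \eqref{eq:K-central}. Sending the standard generators of $\CH_{\CO_\sigma}$ to the same-named elements then gives the isomorphism $\CH_{\CO_\sigma} \cong \CH_{\CO_1}^{K_{M,\phi}}$.

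\textbf{Step 3 (the semidirect product).} The crossed product with $R(\CO_\sigma)$ is carried over verbatim, with $R(\CO_\sigma)$ acting through $\chi_r \circ \psi_r$ from \eqref{eq:6.psir}--\eqref{eq:6.chir}. The invariant subalgebra is stable under this action: via Weyl compatibility, $r$ acts on $\CO_\sigma \cong \overline{X}^\phi_\LM$, which normalizes $K_{M,\phi}$.

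Canonicity comes from the canonicity in Theorem \ref{thm:solleveld}, which depends only on the Whittaker datum and the base point $\sigma$, together with the fact that all identifications in Step 1 are fixed once $(\sigma,\phi)$ is given.

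\textbf{Main obstacle.} The delicate point is the Bernstein presentation for unequal parameters in Step 2. When $q^*_\alpha \neq 1$, the quadratic-type denominator involves $\theta_{-\alpha^\sharp}$ with $\alpha^\sharp$ possibly divisible in one lattice but not in the other, so the type of the root ($B$ versus $C$ behaviour) could differ. I would settle this using Proposition \ref{prop:root equality}, which identifies $\alpha^\sharp$ in both lattices, and then check directly that the condition ``$\alpha^\sharp/2$ pairs evenly'' agrees on both sides. If it does not, the $q^*$-matching of Corollary \ref{cor:parameters equality} should force it to, since the set of poles $-(q^*)^{\pm1}$ is only visible in the correct lattice.
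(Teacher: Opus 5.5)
Your proposal is correct and follows essentially the same route as the paper. Both arguments apply Theorem~\ref{thm:solleveld} with base point $\sigma$ (Lemma~\ref{lem:6.basepoints}) and Lemma~\ref{lemma:R-trivial-unipotent} for $G_\nu$, match roots, simple roots and $q$-parameters via Propositions~\ref{prop:levis}, \ref{prop:root equality} and Corollary~\ref{cor:parameters equality}, and then identify the $K_{M,\phi}$-invariants with the Bernstein presentation on the sublattice.

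Two small corrections. First, a pinning-preserving representative $x_P$ of $x\in\pi_{0,M,\phi}$ lies in $C_{\hM}(\nu)$, not in $\hM_\nu$ up to the centre, since the latter would make $x$ trivial. You can skip this detour entirely: the $K_{M,\phi}$-action is by definition translation by unramified characters of $G_\nu$, which fixes each $T_w$ and scales each $\theta_\lambda$. Second, your ``main obstacle'' is harmless even though the parity of $\alpha^\sharp$ can genuinely differ between the two lattices. The cross relation written in terms of $(q_\alpha,q^*_\alpha)$ is the same formula for both lattices and collapses to the usual one when $q^*_\alpha=1$, so the equality of parameters alone suffices.
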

\begin{proof}
By Theorem \ref{thm:solleveld} and Lemma \ref{lem:6.basepoints}, the left-hand side is canonically isomorphic 
to $\CH_{\CO_{\sigma}} \rtimes \CC [R(\CO_{\sigma})]$. By Propositions \ref{prop:levis}, \ref{prop:root equality}, 
Lemma \ref{lemma:R-trivial-unipotent} and Corollary \ref{cor:parameters equality}, there is a canonical isomorphism
\[
\End \big( i_{P_{\nu}}^{G_{\nu}} \chi^{\un} \big) \cong \CH_{\CO_1} ,
\]
where $\CH_{\CO_1}$ has the same roots and the same q-parameters as 
$\CH_{\CO_\sigma}$. In \eqref{eqn:root-datum-1comma-nu} and \eqref{eqn:root-system-for-HOsigma} we saw that
the lattices in the root data for these affine Hecke algebras differ only by dividing out $K_{M,\phi}$. We let 
$K_{M,\phi}$ act on $\CH_{\CO_1}$ via translations on $\Xnr (M_\nu)$. We then have natural algebra isomorphisms
\begin{align*}
\End(i_{P_{\nu}}^{G_{\nu}} \chi^{\un})^{K_{M,\phi}} \cong \CH_{\CO_1}^{K_{M,\phi}} &
= \CC [\Xnr (M_\nu)]^{K_{M,\phi}} \otimes_\CC \CH (W(G_\nu,M_\nu),q) \\
& \cong \CC [\Xnr (M_\nu) / K_{M,\phi}] \otimes_\CC \CH (\bW_{\CO_\sigma},q) = \CH_{\CO_\sigma} .
\end{align*}
Here $\otimes_\CC$ means a tensor product as vector spaces, not as algebras; it expresses the Bernstein 
presentation of an affine Hecke algebra.
\end{proof}

We note that the action of $K_{M,\phi}$ in Theorem \ref{thm:rep thy isom} is the same as the action we found in Proposition \ref{prop:4.actionXonSGnu}. The action of $R(\CO_\sigma)$ on 
$\CH_{\CO_\sigma}$ is described in \eqref{eq:6.psir} and \eqref{eq:6.chir}, which also shows 
how it acts on $\End \big( i_{P_{\nu}}^{G_{\nu}} \chi^{\un} \big)^{K_{M,\phi}}$\!.

\subsection{Compatibilities}

Theorem~\ref{thm:rep thy isom} is close to Theorem \ref{thm:automorphic reduction}. 
To finish the proof, we must still verify the compatibility of the isomorphism of Theorem~\ref{thm:rep thy isom} with parabolic induction 
and Whittaker data.  In addition, we must identify $R(\CO_{\sigma})$ with ${\widehat R}_{G,\phi}$, in a manner 
that carries the action of ${\widehat R}_{G,\phi}$ on $\End(\CS^1_{G_{\nu}})$ to the action of $R(\CO_{\sigma})$ on 
$\End(i_{P_{\nu}}^{G_{\nu}} \chi^{\un})$, via the isomorphism of these two endomorphism algebras arising from Theorem~\ref{thm:unipotent categorical}.

We first discuss the compatibility of our isomorphisms with parabolic induction. Recall that we identify 
$\CC \big[ X^* \big( (M/M^1)^{\Stab(\sigma)} \big) \big]$ with $\CC[\CO_\sigma]$ via the base-point $\sigma$ of
$\CO_\sigma$. Bernstein's presentation \cite{BD} of the affine Hecke algebra $\CH_{\CO_{\sigma}}$ gives an isomorphism:
\begin{equation}
\CH_{\CO_{\sigma}} \cong \CC[\CO_{\sigma}] \otimes_{\CC} \CH(\bW_{\CO_{\sigma}},q)
\end{equation}
where $\CH(\bW_{\CO_{\sigma}},q)$ denotes the Iwahori-Hecke algebra associated to the finite Weyl group 
$\bW_{\CO_{\sigma}}$ and the q-parameters $q_{\alpha}$. The parameters $q^*_\alpha$ are encoded via
the cross relations in the tensor product. 

Let $\bQ$ be a standard parabolic subgroup of $\bG$, with Levi subgroup $\bL$ containing $\bM$. By construction, 
$\bW_L (M,\sigma) \subset \bW (M,\sigma)$, and this restricts to inclusions 
\begin{equation}\label{eqn:inclusion-RO}
\bW_{\CO_\sigma,L} \subset \bW_{\CO_\sigma} \quad \text{and} \quad R(\CO_{\sigma,L}) \subset R(\CO_\sigma). 
\end{equation}
Then we have a similar algebra isomorphism:
\begin{equation}\label{eq:6.26}
\CH_{\CO_{\sigma},L} \cong \CC[\CO_{\sigma}] \otimes_{\CC} \CH(\bW_{\CO_{\sigma},L}, q),
\end{equation}
where $\CH_{\CO_{\sigma},L}$ is the Hecke algebra appearing in the description of $\End \big( i_{P \cap L}^L
\mathcal{W}_M^{[M,\sigma]} \big)$--as in \cite{Heiermann,solleveld-endomorphisms}--and 
$\bW_{\CO_{\sigma},L}$ is the Weyl group of the associated root datum.  There is a natural map from 
$\CH(\bW_{\CO_{\sigma},L},q)$ to $\CH (\bW_{\CO_{\sigma}},q)$, taking a Hecke operator $T_{s_\alpha}$ for a 
simple reflection $s_\alpha$ in $\bW_{\CO_{\sigma},L}$ to the Hecke operator corresponding to the same simple 
reflection in $\bW_{\CO_{\sigma}}$. This, combined with the identity on $\CC[\CO_\sigma]$, gives us a canonical 
injective algebra homomorphism $\ind_L^G: \CH_{\CO_{\sigma},L} \rightarrow \CH_{\CO_{\sigma}}$. 
The parabolic induction functor induces an injection
\begin{equation}
i_{Q}^G: \End \big( i_{P \cap L}^L \mathcal{W}_M^{[M,\sigma]} \big) \rightarrow 
\End \big( i_{P}^G \mathcal{W}_M^{[M,\sigma]} \big) ,
\end{equation}
which has the following description on the level of Hecke algebras:

\begin{prop} \label{prop:Hecke induction}
Let $\bQ$ be a standard parabolic subgroup of $\bG$, with Levi subgroup $\bL$ containing $\bM$. 
Then there is a commutative diagram:
\begin{equation}
\begin{tikzcd}
\End_L \big( i_{P \cap L}^L \mathcal{W}_M^{[M,\sigma]} \big) \arrow[]{r}{\cong}\arrow[]{d}[swap]{i_{Q}^G} 
& \CH_{\CO_{\sigma},L} \rtimes \CC[R(\CO_{\sigma,L})]\arrow[]{d}{\ind_L^G\rtimes\mathrm{incl}}\\
\End_G \big( i_{P}^G \mathcal{W}_M^{[M,\sigma]} \big) \arrow[]{r}{\cong}
& \CH_{\CO_{\sigma}} \rtimes \CC[R(\CO_{\sigma})]
\end{tikzcd}
\end{equation}
where the right-hand vertical map is given by $\ind_L^G$ on the first factor and the natural inclusion \eqref{eqn:inclusion-RO} of
$R(\CO_{\sigma,L})$ into $R(\CO_{\sigma})$ on the second.
\end{prop}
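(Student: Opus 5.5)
The plan is to reduce everything to the explicit construction of the isomorphism in Theorem \ref{thm:solleveld} from \cite{solleveld-endomorphisms,OSgeneric}. There the algebra $\End_G(i_P^G \cW_M^{[M,\sigma]})$ is spanned by elements $\theta_x$, for $x \in X^*((M/M^1)^{\Stab(\sigma)})$, together with normalized intertwining operators $T'_w$, for $w \in \bW(M,\sigma)$. These operators are built from rank-one intertwiners attached to the Levis $M_\alpha$, normalized via the Whittaker datum as in \cite[Theorem 2.7]{solleveld-principal}. I will check that each of these generators, when it comes from $L$, is carried by $i_Q^G$ to the generator with the same name for $G$. The diagram then commutes, since both vertical maps are algebra homomorphisms.

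\textbf{Step 1: the commutative part.} By transitivity of parabolic induction, $i_Q^G i_{P\cap L}^L \cong i_P^G$. The subalgebra $\CC[\CO_\sigma] \cong \End_M(\cW_M^{[M,\sigma]})$ sits inside both endomorphism algebras as the image of $i_{P\cap L}^L$, respectively $i_P^G$. Functoriality of induction then sends $\theta_x$ for $L$ to $\theta_x$ for $G$, and this matches $\ind_L^G$ on $\CC[\CO_\sigma]$.

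\textbf{Step 2: the intertwining operators.} The roots of $\Sigma_{\CO_\sigma,L}$ are exactly the roots of $\Sigma_{\CO_\sigma,\mu}$ that occur in $\Lie(L)$. Their Levis $M_\alpha$ and $\mu$-functions are the same whether computed in $L$ or in $G$, so the parameters $q_\alpha, q^*_\alpha$ agree. Moreover, $\Delta_{\CO_\sigma,L} = \Delta_{\CO_\sigma} \cap \Sigma_{\CO_\sigma,L}$, because $P \cap L$ determines the positive system. For $w \in \bW_L(M,\sigma)$, the unnormalized intertwining operator $J_{P\cap L}$ on $i_{P\cap L}^L$ induces, under $i_Q^G$, the operator on $i_P^G$ attached to the same $w$. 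This is the standard compatibility of intertwining integrals with induction in stages, since $w$ normalizes $U_Q$. The normalizing rational functions are products of $\mu$-type factors over the same rank-one roots, so they agree as well. Hence $T'_{s_\alpha}$ maps to $T'_{s_\alpha}$ for simple $\alpha$ in $L$, which means $i_Q^G$ restricts to $\ind_L^G$ on $\CH_{\CO_\sigma,L}$.

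\textbf{Step 3: the group part.} An element $r$ of $R(\CO_{\sigma,L})$ stabilizes $\Delta_{\CO_\sigma,L}$ and normalizes $L$. I need to check that $r$ also stabilizes $\Delta_{\CO_\sigma}$, so that it lies in $R(\CO_\sigma)$ compatibly with \eqref{eqn:inclusion-RO}. I will argue as in Lemma \ref{lem:injection-Rhat}: $r$ preserves the positive roots inside $L$, and it preserves those in $U_Q$ because it comes from $N_L(M)$ and therefore normalizes $U_Q$. The operator $T'_r$ for $L$ then induces $T'_r$ for $G$ by the same argument as in Step 2.

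I expect the main obstacle to be the normalizations. Because the cocycle $\natural$ has been trivialized through \cite[Theorem A.1]{OSgeneric} with Whittaker-normalized operators, one must show that the Whittaker normalization for $L$, using $(U_L,\psi_L)$, induces the normalization for $G$. If this fails, the images of the $T'_r$ could pick up scalars in $\CC[\CO_\sigma]^\times$. I would handle this by using that the normalization is characterized by the action of $T'_w$ on the one-dimensional space $\Hom(i_P^G \cW_M^{[M,\sigma]}, \cW_G^{[M,\sigma]})$ obtained via $r_{\overline P}^G$. Combined with the transitivity $r_{\overline{P\cap L}}^L r_{\overline Q}^G \cW_G \cong \cW_M$, this shows that the two normalizations match.
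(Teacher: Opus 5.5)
Your proposal follows the same route as the paper's sketch. The paper matches the $\CC[\CO_\sigma]$-part together with the normalized operators $\CT'_w$ (for $w\in\bW_{\CO_\sigma,L}$) and $N_w$ (for $w\in R(\CO_{\sigma,L})$) under $i_Q^G$, reducing the claim to the compatibility of intertwining operators with induction in stages and to the fact that the normalizations are the same for $L$ and $G$. Your extra verifications (that the simple roots and $R$-groups match, and that the Whittaker normalizations agree via $r_{\overline{P\cap L}}^L r_{\overline Q}^G \cW_G \cong \cW_M$) go beyond the paper, which simply asserts the normalizations coincide; note only that $\Hom_G\big(i_P^G \cW_M^{[M,\sigma]}, \cW_G^{[M,\sigma]}\big)$ is free of rank one over $\CC[\CO_\sigma]$ rather than one-dimensional, which does not affect your argument.
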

\begin{proof}
This is implicit in the constructions of~\cite{solleveld-endomorphisms,OSgeneric}.  We thus provide a sketch of the argument, 
which essentially follows by tracing through the constructions \textit{loc.cit}.

For conciseness, let $\mathcal{E}$ and $\mathcal{E}_L$ denote the algebras 
$\End_G \big( i_{P}^G \mathcal{W}_M^{[M,\sigma]} \big)$ and 
$\End_L \big( i_{P \cap L}^L \mathcal{W}_M^{[M,\sigma]} \big)$, respectively.  
The algebra $\mathcal{E}$ has a $\CC[\CO_{\sigma}]$-basis $\{\CT_w'\}$ for 
$w \in \bW(M,\sigma) = \bW_{\CO_{\sigma}} \rtimes R(\CO_{\sigma})$ described in the proof 
of~\cite[Theorem A.1]{OSgeneric}.  Similarly, we have a $\CC[\CO_{\sigma}]$-basis $\CT'_{w,L}$ of $\mathcal{E}_L$, 
indexed by $w \in \bW_{\CO_{\sigma},L} \rtimes R(\CO_{\sigma,L})$. Moreover, for $w \in R(\CO_{\sigma})$, the 
construction in~\cite[\S 2]{solleveld-principal} explains how to renormalize $\CT'_w$ to obtain an element $N_w$ in 
such a way that the isomorphism of $\mathcal{E}$ with $\CH_{\CO_{\sigma}} \rtimes \CC[R(\CO_{\sigma})]$, sends the
elements $\CT'_w$ for $w \in \bW_{\CO_{\sigma}}$ to the corresponding Hecke operators in $\CH_{\CO_{\sigma}}$, whereas 
the $N_w$ for $w\in R(\CO_{\sigma})$ are sent to the corresponding elements of the group ring $\CC[R(\CO_{\sigma}]$.  
A similar statement holds for the corresponding elements $\CT'_{w,L}$ and $N_{w,L}$ of $\mathcal{E}_L$. 

The content of this proposition is the claim that the map $i_{Q}^G: \mathcal{E}_L \rightarrow \mathcal{E}$ 
takes $\CT'_{w,L}$ to $\CT'_w$ for all $w\in\bW_{\CO_{\sigma},L}$, and $N_{w,L}$ to $N_w$ for all 
$w\in R(\CO_{\sigma,L})$.  Each such operator is obtained by a series of normalizations (taking place in 
\cite[\S5]{solleveld-endomorphisms} and \cite[\S2]{solleveld-principal}) from rational endomorphism algebras of
$i_{P}^G \mathcal{W}_M^{[M,\sigma]}$ that are constructed from intertwining operators in 
\cite[\S4]{solleveld-endomorphisms}.  We omit the precise details of these normalizations, noting only that for 
a fixed $w$ in $\bW_{\CO_{\sigma,L}}$ (resp.~$R(\CO_{\sigma,L})$), the normalizations needed are the same for 
$\CT'_w$ and $\CT'_{w,L}$ (resp.~$N_w$ and $N_{w,L}$).  The claim thus reduces to the invariance of intertwining 
operators under parabolic induction, which is well-known from \cite{Wal}.
\end{proof}

Applying Proposition \ref{prop:Hecke induction} and Theorem \ref{thm:rep thy isom} to the Levi subgroup $L$ of $G$ 
and the Levi subgroup $L_{\nu}$ of $G_{\nu}$, we obtain:

\begin{prop} \label{prop:parabolic compatibility}
Let $\bQ$ be a standard parabolic subgroup of $\bG$, with Levi subgroup $\bL$ containing $\bM$. 
There is a commutative diagram:
\begin{equation}
\begin{tikzcd}
\End_L \big( i_{P \cap L}^L \mathcal{W}_M^{[M,\sigma]} \big) \arrow[]{r}{\cong}\arrow[]{d}[swap]{i_{Q}^G}
& \End_{L_{\nu}} \big( i_{P_{\nu} \cap L_{\nu}}^{L_{\nu}} \chi^{un} \big)^{K_{M,\phi}} \rtimes 
\CC[R(\CO_{\sigma,L})] \arrow[]{d}{i_{Q_{\nu}}^{G_{\nu}}\rtimes\mathrm{incl}}\\
\End_G \big( i_{P}^G \mathcal{W}_M^{[M,\sigma]} \big) \arrow[]{r}{\cong}  
& \End_{G_{\nu}} \big( i_{P_{\nu}}^{G_{\nu}} \chi^{\un} \big)^{K_{M,\phi}} \rtimes \CC[R(\CO_{\sigma})]
\end{tikzcd}
\end{equation}
in which the right-hand vertical map is given by $i_{Q_{\nu}}^{G_{\nu}}$ on the first factor and 
the natural inclusion of $R(\CO_{\sigma,L})$ into $R(\CO_{\sigma})$ on the second.
\end{prop}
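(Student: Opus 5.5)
The plan is to factor the diagram into three squares and check that each commutes. The left column is the square of Proposition~\ref{prop:Hecke induction}, which identifies $i_Q^G$ with $\ind_L^G \rtimes \mathrm{incl}$ on $\CH_{\CO_\sigma,L} \rtimes \CC[R(\CO_{\sigma,L})] \to \CH_{\CO_\sigma} \rtimes \CC[R(\CO_\sigma)]$. The right column is the analogous square for $G_\nu$. It comes from Proposition~\ref{prop:Hecke induction} applied to the block $\Rep(G_\nu)_{[M_\nu,1]}$ and its Levi $L_\nu$, where $R(\CO_1)$ is trivial by Lemma~\ref{lemma:R-trivial-unipotent}. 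It says that $i_{Q_\nu}^{G_\nu}$ is the map $\CH_{\CO_1,L_\nu} \to \CH_{\CO_1}$ which is the identity on $\CC[\Xnr(M_\nu)]$ and sends $T_{s_\alpha}$ to $T_{s_\alpha}$; this is also the explicit map \eqref{eq:3.17}. The middle comparison is the isomorphism $\CH_{\CO_1}^{K_{M,\phi}} \cong \CH_{\CO_\sigma}$ constructed in the proof of Theorem~\ref{thm:rep thy isom}, together with its version for $L$ and $L_\nu$.

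First I check that the comparison isomorphism of Theorem~\ref{thm:rep thy isom} is compatible with these inclusions. In Bernstein presentation it is $\CC[\Xnr(M_\nu)]^{K_{M,\phi}} \otimes \CH(\bW(G_\nu,M_\nu),q) \cong \CC[\CO_\sigma] \otimes \CH(\bW_{\CO_\sigma},q)$. It is the identity on the commutative part, via \eqref{eq:6.4} with base point $\sigma$ (Lemma~\ref{lem:6.basepoints}). On the finite part it sends $T_{s_\beta} \mapsto T_{s_\alpha}$, using the bijection of roots from Propositions~\ref{prop:levis} and~\ref{prop:root equality} and the equality of parameters in Corollary~\ref{cor:parameters equality}. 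The same recipe gives the $L$-version. The bijection of Proposition~\ref{prop:levis} is defined Levi by Levi, and it restricts: a Levi $\LM_\alpha$ minimally containing $\LM$ lies in $\LL$ if and only if $(\LM_\alpha)_\nu$ lies in $\LL_\nu$, since $\LL_\nu = \LL \cap \LG_\nu$ by Lemma~\ref{lemma:levis}. Hence it identifies $\Delta_{\CO_\sigma,L}$ with the simple roots of $\LL_\nu$ relative to $P_\nu \cap L_\nu$. The basis elements $\theta_x T_{s}$ with $s \in \bW_{\CO_\sigma,L}$ therefore go to the same-named elements under both routes around the square.

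Second I check the $R$-factor. Here $\mathrm{incl}: R(\CO_{\sigma,L}) \to R(\CO_\sigma)$ from \eqref{eqn:inclusion-RO} appears on both sides, and the horizontal isomorphisms are the identity on $\CC[R]$. What remains is that the action of $r \in R(\CO_{\sigma,L})$ on $\CH_{\CO_\sigma,L}$, namely $\chi_r\circ\psi_r$ from \eqref{eq:6.psir}--\eqref{eq:6.chir}, is intertwined by $\ind_L^G$ with its action on $\CH_{\CO_\sigma}$. The twist $\chi_r$ and the map $\psi_r$ on $(M/M^1)^{\Stab(\sigma)}$ are defined from the $r$-action on $\CO_\sigma$ alone, via \eqref{eq:6.psirchir}. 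So they coincide for $L$ and $G$. Moreover $\psi_r$ permutes $\Delta_{\CO_\sigma,L} \subset \Delta_{\CO_\sigma}$ compatibly. The semidirect products thus match, and so do the group elements $N_{w,L}\mapsto N_w$, which is part of Proposition~\ref{prop:Hecke induction}.

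The main obstacle is canonicity of the base point and normalizations. The isomorphisms for $L$ and for $G$ must use the same base point $\sigma$. They must also use the same Whittaker-normalized $N_w$, which rests on \cite[Theorem 2.7]{solleveld-principal} applied compatibly to $(U_L,\psi_L)$ and $(U,\psi)$. Lemma~\ref{lem:6.basepoints} must also be applied to $L$: for $\alpha \in \Delta_{\CO_\sigma,L}$, the quantities $\mu_{M_\alpha,\sigma}$ and $q_\alpha,q^*_\alpha$ depend only on $M_\alpha \subset L$, so the lemma holds verbatim. I would handle this by noting that every normalization in \cite[\S5]{solleveld-endomorphisms} and \cite[\S2]{solleveld-principal} is indexed by $w$ and computed inside $M_\alpha$ or by the Whittaker functional restricted via $r_{\overline{Q}}^G\cW_G \cong \cW_L$. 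These are then identical for $L$ and $G$. With that in hand, the outer rectangle commutes.
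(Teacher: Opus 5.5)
Your proposal is correct and follows the paper's own argument. The paper proves this by applying Proposition~\ref{prop:Hecke induction} to $L \subset G$ and to $L_\nu \subset G_\nu$ (where $R(\CO_1)$ is trivial), and gluing the two resulting squares with Theorem~\ref{thm:rep thy isom} for $L$ and for $G$; your additional checks (that the root bijection of Proposition~\ref{prop:levis} restricts to $L$, matching $\Delta_{\CO_\sigma,L}$ with the simple roots for $P_\nu \cap L_\nu$, and that the base point, $R$-actions and Whittaker normalizations agree for $L$ and $G$) are exactly the details the paper leaves implicit.
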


The compatibility with Whittaker data is an easy consequence of \cite{solleveld-endomorphisms}. 
Let $\St$ denote the one-dimensional Steinberg module of $\CH(\bW_{\CO_{\sigma}},q)$, extended to 
a module of $\CH(\bW_{\CO_{\sigma}},q) \rtimes \CC [R(\CO_\sigma)]$ on which $R(\CO_\sigma)$ acts via
the character $r \mapsto \det_{(M /M^1) \otimes \QQ} (r)$. Then by \cite[Theorem 6.2]{solleveld-endomorphisms}, 
we have an isomorphism of $\CH_{\CO_{\sigma}} \rtimes \CC[R(\CO_{\sigma})]$-modules:
\begin{equation}\label{eqn:Hom-Whit-Whit}
\Hom \big( i_{P}^G \mathcal{W}_M^{[M,\sigma]}, \cW_G^{[M,\sigma]} \big) \cong 
\ind_{\CH(\bW_{\CO_{\sigma}},q) \rtimes \CC [R(\CO_\sigma)]}^{\CH_{\CO_{\sigma}} \rtimes \CC[R(\CO_{\sigma})]} \St .
\end{equation}
As a $\CC [\CO_\sigma]$-module, this is simply the regular representation on $\CC [\CO_\sigma]$.
Applying \eqref{eqn:Hom-Whit-Whit} both as written and to $\{M_{\nu},G_{\nu}, \chi^{\un}\}$, we obtain an isomorphism 
of modules for $\CH_{\CO_{\sigma}} \cong \CH_{\CO_1}^{K_{M,\phi}}$:
\begin{align}
\begin{split}
\Hom \big( i_{P_{\nu}}^{G_{\nu}} \chi^{\un}, \cW_{G_{\nu}}^{[M_{\nu},1]} \big)^{K_{M,\phi}} &\cong
\big( \ind_{\CH(\bW_{\CO_1},q)}^{\CH_{\CO_1}} \St \big)^{K_{M,\phi}} =
\ind_{\CH(\bW_{\CO_1},q)}^{\CH_{\CO_1}^{K_{M,\phi}}} \St \\ 
&\cong \ind_{\CH(\bW_{\CO_{\sigma}},q)}^{\CH_{\CO_{\sigma}}} \St \cong
\Hom \big( i_{P}^G \mathcal{W}_M^{[M,\sigma]}, \cW_G^{[M,\sigma]} \big) .
\end{split}
\end{align}
Next we compare the groups $R(\CO_{\sigma})$ and ${\widehat R}_{G,\phi}$.

\begin{lemma}\label{lem:identifying-RO-with-hatR}
There is a natural isomorphism identifying $R(\CO_{\sigma})$ with ${\widehat R}_{G,\phi}$.
\end{lemma}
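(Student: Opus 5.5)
The plan is to identify both groups as complements of a reflection subgroup inside Weyl groups that are already known to match. By Weyl compatibility (Definition \ref{defn:Weyl-compatible}), the isomorphism \eqref{eq:5.WGM} restricts to an isomorphism $\bW(M,\sigma)\cong\bW(\hM,\phi)$. This isomorphism is compatible with the actions on $\CO_\sigma\cong\overline{X}^\phi_{\LM}$.

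On the automorphic side, \eqref{eqn:WMO-semidirect-product} gives $\bW(M,\sigma)=\bW_{\CO_\sigma}\rtimes R(\CO_\sigma)$, where $R(\CO_\sigma)$ is the stabilizer of $\Delta_{\CO_\sigma}$. On the spectral side, Lemmas \ref{lemma:spectral Weyl sequence} and \ref{lem:4.splittingR} give $\bW(\hM,\phi)\cong\bW(\hG_\nu,\hM_\nu)^{\Fr}\rtimes\widehat R_{G,\phi}$. Here $\widehat R_{G,\phi}$ is realized as the stabilizer of $\LP_\nu$. It therefore suffices to prove two things: that the isomorphism $\bW(M,\sigma)\cong\bW(\hM,\phi)$ carries $\bW_{\CO_\sigma}$ onto $\bW(\hG_\nu,\hM_\nu)^{\Fr}$, and that it carries $R(\CO_\sigma)$ onto the stabilizer of $\LP_\nu$.

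For the first point, I would use that $\bW_{\CO_\sigma}$ is generated by the reflections $s_\alpha$ for $\alpha\in\Sigma_{\CO_\sigma,\mu}$. Each such $s_\alpha$ is the nontrivial element of $\bW(M_\alpha,M)\subset\bW(G,M)$, and under \eqref{eq:5.WGM} it goes to the nontrivial element of $\bW(\hM_\alpha,\hM)$. By Plancherel compatibility and Lemma \ref{lem:min-Levi-lemma}, these $\alpha$ are exactly the ones for which $(\hM_\alpha)_\nu\supsetneq\hM_\nu$. In that case Proposition \ref{prop:levis} says $(\LM_\alpha)_\nu$ is a minimal Levi of $\LG_\nu$ over $\LM_\nu$. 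The nontrivial element of its relative Weyl group lies in $N_{(\hM_\alpha)_\nu}(\LM_\nu)/\hM_\nu\subset\bW(\hG_\nu,\hM_\nu)^{\Fr}$, and it maps to the nontrivial element of $\bW(\hM_\alpha,\hM)$, since it cannot lie in $\hM$. So the two reflections correspond.

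Proposition \ref{prop:levis} also gives a bijection between these Levis and the minimal Levis of $\LG_\nu$, that is, with the simple and positive reflections of the relative root system of $(\bG_\nu,\bM_\nu)$. Hence the generators of $\bW(\hG_\nu,\hM_\nu)^{\Fr}$, which is the Weyl group of $\bG_\nu$ relative to $\bM_\nu$, are exactly the images of the $s_\alpha$. This gives the first identification. Because positivity on both sides is defined via $P$ and $P_\nu=P\cap G_\nu$, the positive systems $\Sigma_{\CO_\sigma}(P)$ and the roots of $\hP_\nu$ correspond as well, via Proposition \ref{prop:root equality} and Lemma \ref{lemma:levis}.

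For the second point, $R(\CO_\sigma)$ is the stabilizer of the positive system $\Sigma_{\CO_\sigma}(P)$. The splitting in Lemma \ref{lem:4.splittingR} is the stabilizer of $\hP_\nu$, equivalently of the positive roots of $(\hG_\nu,\hM_\nu)$ determined by $\hP_\nu$. Since the positive systems correspond, the two stabilizers correspond, and the isomorphism restricts to $R(\CO_\sigma)\cong\widehat R_{G,\phi}$. Alternatively, one can pass to quotients: $R(\CO_\sigma)\cong\bW(M,\sigma)/\bW_{\CO_\sigma}\cong\bW(\hM,\phi)/\bW(\hG_\nu,\hM_\nu)^{\Fr}\cong\widehat R_{G,\phi}$, and the splitting remark upgrades this to an equality of subgroups.

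The main obstacle is the first point, namely showing that \eqref{eq:5.WGM} sends the reflection $s_\alpha$ for $\alpha\in\Sigma_{\CO_\sigma,\mu}$ to an element of $\bW(\hG_\nu,\hM_\nu)^{\Fr}$, and not merely to some element of $\bW(\hM,\phi)$. I would handle it by working inside the rank-one Levi $\LM_\alpha$. There, $\bW(\hM_\alpha,\hM)$ has order at most two. Its nontrivial element is represented by an element of $N_{(\hM_\alpha)_\nu}(\hM_\nu)$, which is nontrivial by Lemma \ref{lem:min-Levi-lemma}(3) because $(\hM_\alpha)_\nu$ has semisimple rank one. Compatibility of the identifications with the action on $\CO_\sigma$ (Weyl compatibility) then pins down that this element corresponds to $s_\alpha$.
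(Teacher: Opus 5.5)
Your proposal is correct and follows the paper's own argument. You view $\widehat R_{G,\phi}$ as the stabilizer of $\LP_\nu$ in $\bW(\hM,\phi)$ and transport it through the Weyl-compatibility isomorphism $\bW(M,\sigma)\cong\bW(\hM,\phi)$ and the matching of root data (Propositions \ref{prop:levis} and \ref{prop:root equality}) to the stabilizer of the $P$-positive roots, which is $R(\CO_\sigma)$; your explicit check that $\bW_{\CO_\sigma}$ maps onto $\bW(\hG_\nu,\hM_\nu)^{\Fr}$ makes precise a step the paper leaves implicit.

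One small slip: $(\hM_\alpha)_\nu$ has only $\Fr$-relative semisimple rank one, and its absolute semisimple rank can be larger when $\Fr$ permutes simple roots. The argument is unaffected, because the nontrivial $\Fr$-fixed Weyl element still exists (e.g.\ the longest element of $\bW((\hM_\alpha)_\nu,\hM_\nu)$, since $\Fr$ preserves a pinning).
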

\begin{proof}
By Weyl compatibility of $\sigma$ and $\phi$, we obtain an isomorphism of $\bW(M,\sigma)$ with $\bW(\hM,\phi)$.  
Since ${\widehat R}_{G,\phi}$ is the subgroup of $\bW(\hM,\phi)$ preserving $\LP_{\nu}$, it is identified under 
this isomorphism with the subgroup of $\bW(M,\sigma)$ preserving the $\bP_{\nu}$-positive roots of $G_{\nu}$. 
This is then identified--under the isomorphism of root data constructed by combining Propositions \ref{prop:levis}, 
\ref{prop:root equality}, Lemma \ref{lemma:R-trivial-unipotent} and Corollary \ref{cor:parameters equality}, 
between \eqref{eqn:root-datum-1comma-nu} and \eqref{eqn:root-datum-Osigma}--with the subgroup of $\bW(M,\sigma)$
preserving the $\bP$-positive roots of the root datum associated to 
$\End \big( i_{P}^G \mathcal{W}_M^{[M,\sigma]} \big)$, which is precisely $R(\CO_{\sigma})$.
\end{proof}

To complete the proof of Theorem \ref{thm:automorphic reduction}, it thus remains to show:

\begin{prop} \label{prop:action compatibility}
The isomorphism of $R(\CO_{\sigma})$ with ${\widehat R}_{G,\phi}$ constructed in Lemma 
\ref{lem:identifying-RO-with-hatR} identifies the action of $R(\CO_{\sigma})$ on 
$\End \big( i_{P_{\nu}}^{G_{\nu}} \chi^{\un} \big)^{K_{M,\phi}}$ with the action of ${\widehat R}_{G,\phi}$ 
on $\End(\CS^1_{G_{\nu}})^{K_{M,\phi}}$, via the isomorphism of $\End(\CS^1_{G_{\nu}})$ with 
$\End \big( i_{P_{\nu}}^{G_{\nu}} \chi^{\un} \big)$ induced by \eqref{eqn:LLC1M}. The latter action is
induced by the action of $\widehat{R}_{G,\phi}$ on $\overline{X}^\phi_{\LM}$.
\end{prop}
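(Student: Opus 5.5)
We will compare both actions on the Bernstein basis $\{\theta_\lambda T_w\}$, generator by generator. By Proposition \ref{prop:4.actionXonSGnu} and Lemma \ref{lem:4.actionXonHGI}, an element $x\in\widehat R_{G,\phi}$ with representative $x_P$ (chosen to preserve the pinning of $\hG_\nu$) acts on $\End(\CS^1_{G_\nu})\cong\CH(G_\nu,I_\nu)$ by
\[
x\cdot T_w\theta_\lambda=\zeta(x)^{-1}(\lambda)\,T_{\Ad_{x_P}(w)}\theta_{\Ad_{x_P}(\lambda)} .
\]
On the automorphic side, $r\in R(\CO_\sigma)$ acts by $\chi_r\circ\psi_r$, as in \eqref{eq:6.psir} and \eqref{eq:6.chir}. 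So it suffices to prove two things for $r\leftrightarrow x$ under Lemma \ref{lem:identifying-RO-with-hatR}:
\begin{enumerate}[(a)]
\item the lattice automorphism $\psi_r$ of $(M/M^1)^{\Stab(\sigma)}\cong X^*(\hM_{\nu,\Fr}/K_{M,\phi})$ coincides with $\Ad_{x_P}$;
\item the translation characters $\chi_r$ and $\zeta(x)$ agree, modulo $K_{M,\phi}$, as points of $\hM_{\nu,\Fr}/K_{M,\phi}$.
\end{enumerate}

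For (a), recall that $\psi_r$ is the linear part of the action of $r$ on $\CO_\sigma$. By Weyl compatibility, the bijection $\CO_\sigma\cong\overline X^\phi_{\LM}$ intertwines the $\bW(M,\sigma)$- and $\bW(\hM,\phi)$-actions. The action of $x$ on $\overline X^\phi_{\LM}\cong\hM_{\nu,\Fr}/K_{M,\phi}$ is computed in \eqref{eq:4.xg}: it is $g\mapsto(1-\Fr)(x_P)\Ad_{x_P}(g)$. Its linear part is therefore $\Ad_{x_P}$, and its translation part is $\zeta(x)$ restricted to $\hM_{\nu,\Fr}$; this restriction is injective by Proposition \ref{prop:4.actionXonSGnu}(1). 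Since both $\psi_r$ and $\Ad_{x_P}$ preserve the positive system and simple roots (Lemma \ref{lem:4.splittingR}), the Weyl-group parts $T_w\mapsto T_{\Ad(w)}$ also match. This uses the identification of root data from Proposition \ref{prop:root equality}.

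For (b), we evaluate both affine actions at the base point. In Theorem \ref{thm:solleveld} we may take $\sigma'=\sigma$ (Lemma \ref{lem:6.basepoints}), and $\sigma$ corresponds to $\phi$, i.e.\ to $1\in\hM_{\nu,\Fr}/K_{M,\phi}$. By \eqref{eq:6.psirchir}, $r\cdot\sigma\cong\sigma\otimes\tilde\chi_r$. By Weyl compatibility, $r\cdot\sigma$ corresponds to $x\cdot\phi=\zeta(x)\phi$, and twist compatibility then gives $\hat\chi_r=\zeta(x)$ in $\hM_{\nu,\Fr}/K_{M,\phi}$. With these identifications, \eqref{eq:6.chir} and the formula of Lemma \ref{lem:4.actionXonHGI} give the same scalar $\lambda(\zeta(x))^{-1}$ on $\theta_\lambda$. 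Both actions fix $T_w$ up to conjugation, so the two actions agree on generators, hence on the whole algebra.

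The main obstacle is normalization. The action of $R(\CO_\sigma)$ on $\CH_{\CO_\sigma}$ in Theorem \ref{thm:solleveld} depends on the renormalized intertwiners $N_r$, which are pinned down by the Whittaker datum through \cite{solleveld-principal}. We must verify that no additional character of $R(\CO_\sigma)$ or cocycle twist is hidden in them; otherwise the action would differ from $\chi_r\circ\psi_r$ by an inner automorphism or a sign. I would first check this on the Whittaker module: both sides act compatibly on the Steinberg-induced module \eqref{eqn:Hom-Whit-Whit}, which is free of rank one over $\CC[\CO_\sigma]$. Since that module is the regular representation of $\CC[\CO_\sigma]$, faithfulness of this action forces the two actions to agree. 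A secondary subtlety is sign conventions: the paper uses cocharacters evaluated at $\varpi_F^{-1}$, so one must check that $\chi_r\cdot\theta_\lambda$ and $\zeta(x)\cdot\theta_\lambda$ both carry an inverse, rather than inverses on opposite sides.
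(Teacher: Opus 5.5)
Your proposal is correct and follows essentially the paper's route. The paper also takes the spectral action of $x$ to be $\Ad_{x_P}$ followed by the twist by $\zeta(x)$ (Proposition \ref{prop:4.actionXonSGnu}, Lemma \ref{lem:4.actionXonHGI}). It then transports the affine action $x\cdot m\phi=\zeta(x)\Ad_{x_P}(m)\phi$ to $\CO_\sigma$ by Weyl compatibility and compares with \eqref{eq:6.psirchir} to read off $\psi_r=\Ad_{x_P}$ and $\chi_r=\zeta(x)$; this is exactly your (a) and (b), done in one step rather than by separating linear and translation parts.

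The normalization issue you flag is not argued separately in the paper. It simply uses the stated description of the $R(\CO_\sigma)$-action as $\chi_r\circ\psi_r$ after \eqref{eq:6.chir}, which rests on Theorem \ref{thm:solleveld} with trivial cocycle and the Whittaker normalization. So your Whittaker-module check is an extra safeguard, not a needed step. As sketched, it would also need more work to pin down the conjugation action on the $T_w$.
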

\begin{proof}
Recall from \S\ref{subsec:sheaf} that $\widehat{R}_{G,\phi} \cong \pi_{0,G,\phi} /
\pi_{0,M,\phi}$ and that the action of $\widehat{R}_{G,\phi}$ on 
$\End (\CS^1_{G_\nu})^{K_{M,\phi}}$ lifts to an action of $\pi_{0,G,\phi}$ on 
\[
\End (\CS^1_{G_\nu}) \cong \End (I_{P_\nu}^{G_\nu} \chi^\un ) \cong \CH (G_\nu,I_\nu) .
\]
In Proposition \ref{prop:4.actionXonSGnu} and Lemma \eqref{lem:4.actionXonHGI}, we saw 
that the 
action of $x \in \pi_{0,G,\phi}$ on these algebras is induced by the pinned automorphism
$\Ad_{x_P}$ of $G_\nu$ (and of $\hG_\nu$) followed by a twist by $\zeta (x) \in 
\Xnr (G_\nu) \cong Z(\hG_\nu)_\Fr^\circ$. Here $\Ad_{x_P}$ and $\zeta (x)$ are determined by
the action of $\pi_{0,G,\phi}$ on $\overline{X}^\phi_{\LM}$:
\begin{equation}\label{eq:6.xPzeta}
x \cdot m \phi = \Ad_{x_P}(m) \zeta (x) \phi = \zeta (x) \Ad_{x_P}(m) \phi 
\quad m \in \hM_\nu .   
\end{equation}
The actions in Proposition \ref{prop:4.actionXonSGnu} and Lemma \eqref{lem:4.actionXonHGI} 
also describe automorphisms of 
\[
\End (\CS^1_{G_\nu})^{\pi_{0,M,\phi}} \cong 
\End (I_{P_\nu}^{G_\nu} \chi^\un )^{K_{M,\phi}} \cong \CH (G_\nu,I_\nu)^{K_{M,\phi}} ,
\]
and then they can be regarded as actions of $\widehat{R}_{G,\phi}$. In Lemma 
\ref{lem:4.splittingR} we identified $\widehat{R}_{G,\phi}$ with the subgroup of
$\bW (\hM,\phi)$ that stabilizes $\LP_\nu$, thus in particular Weyl compatibility applies
to $\widehat{R}_{G,\phi}$. Viewing \eqref{eq:6.xPzeta} as an equality in 
$\overline{X}^1_{\LM_\nu} / \pi_{0,M,\phi}$, by Weyl compatibility, it is equivalent to 
\begin{equation}\label{eq:6.xchisigma}
x \cdot (\chi \otimes \sigma) = \zeta (x) \otimes \Ad_{x_P}(\chi) \otimes \sigma \in \CO_\sigma .
\end{equation}
Let $r$ be the image of $x$ in 
\[
R(\CO_\sigma) \cong \widehat{R}_{G,\phi} \cong \pi_{0,G,\phi} / \pi_{0,M,\phi} .
\]
Comparing \eqref{eq:6.xchisigma} with \eqref{eq:6.psirchir}, we see that
$\Ad_{x_P} = \psi_r$ and $\zeta (x) = \chi_r$. Hence the action of $x \pi_{0,M,\phi} \in 
\widehat{R}_{G,\phi}$ on $\End (I_{P_\nu}^{G_\nu} \chi^\un )^{K_{M,\phi}}$ can also be
described as induced by the pinned automorphism $\psi_r$ on $G_\nu$ followed by the
twist by $\chi_r$. This coincides with the action of $r$ studied in Theorems 
\ref{thm:solleveld} and \ref{thm:rep thy isom}.
\end{proof}

\section{Main Results} \label{sec:main}

\subsection{Construction of the functor}
\begin{thm} \label{thm:single block}
Let $\bM \subset \bL$ be standard Levi subgroups of $\bG$ and let $\sigma$ be an irreducible supercuspidal 
$(U_M,\psi_M)$-generic representation of $M$.  Let $\phi$ be a supercuspidal Langlands 
parameter for $M$ that is Langlands compatible with $\sigma$. 
Then we have a fully faithful functor:
$$\LocL_L^{\phi}: \Rep(L)_{[M,\sigma]} \rightarrow \IndCoh(X_{\LL}^{\phi})$$
such that $\LocL_L^{\phi}$ takes $\cW_L^{[M,\sigma]}$ to the structure sheaf of $X_{\LL}^{\phi}$. 
These functors are compatible with parabolic induction in the sense of Conjecture~\ref{conj:categorical}.
\end{thm}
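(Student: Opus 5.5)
We will construct $\LocL_L^{\phi}$ by derived Morita theory, matching a progenerator on each side. Put $\Pi_L := i_{P_L}^L \cW_M^{[M,\sigma]}$, a compact progenerator of $\Rep(L)_{[M,\sigma]}$ by the Lemma of \S\ref{subsec:repthy}. Put $\CS^\phi_L = (\pi_{P_L})_* \CO_{X^\phi_{\LP_L}}$, which is coherent because $\pi_{P_L}$ is proper, and hence compact in $\IndCoh(X^\phi_{\LL})$. Since $\Rep(L)_{[M,\sigma]} \simeq \End_L(\Pi_L)\text{-mod}$ (here $\Pi_L$ is projective, so its derived endomorphism algebra is concentrated in degree $0$), it suffices to prove two things:
\begin{enumerate}
\item $\End(\CS^\phi_L)$ has no higher self-Ext's, i.e.\ it is concentrated in degree $0$;
\item there is an algebra isomorphism $\mathcal E_L : \End_L(\Pi_L) \cong \End(\CS^\phi_L)$.
\end{enumerate}
We then define
\[
\LocL^\phi_L(V) := \Hom_L(\Pi_L, V) \otimes_{\End_L(\Pi_L)} \CS^\phi_L .
\]
This functor is fully faithful onto $\langle \CS^\phi_L\rangle$, since $\CS^\phi_L$ is compact and the equivalence $\langle V\rangle \simeq \End(V)\text{-mod}$ of \S\ref{subsec:prelim-stable-infty} applies.

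For point (1), we pass to the unipotent case through diagram \eqref{eqn:comm-diag-spectral-side}. The sheaf $\CS^\phi_L$ is obtained from $\CS^1_{L_\nu}$ by descending along the quotient by $\pi_{0,M,\phi}$ and then pushing forward along a finite étale map. Finite group invariants are exact in characteristic zero, and $\CS^1_{L_\nu}$ has endomorphisms in degree $0$ because it corresponds to $\ind_{I}^{L_\nu}1$ under Theorem~\ref{thm:unipotent categorical}. Together these give point (1), and they also recover the description of $\End(\CS^\phi_L)$ in Theorem~\ref{thm:Springer reduction}.

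For point (2), we compose three isomorphisms.
\begin{itemize}
\item Theorem~\ref{thm:automorphic reduction} gives $\End_L(\Pi_L)\cong \End(i^{L_\nu}\chi^{\un})^{K_{M,\phi}}\rtimes\CC[\widehat R_{L,\phi}]$.
\item Theorem~\ref{thm:unipotent categorical}(2) identifies $\End(i^{L_\nu}\chi^{\un})$ with $\End(\CS^1_{L_\nu})$.
\item Theorem~\ref{thm:Springer reduction} together with Lemma~\ref{lem:K-action} identifies the resulting algebra with $\End(\CS^\phi_L)$.
\end{itemize}
The semidirect products match because of Proposition~\ref{prop:action compatibility}, which says that the actions of $R(\CO_\sigma)\cong\widehat R_{L,\phi}$ on the two sides agree. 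This agreement is the delicate point, and we expect it to be the main obstacle, since the twist $\zeta(x)$ and the pinned automorphism $\Ad_{x_P}$ must correspond exactly to $\chi_r$ and $\psi_r$.

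Next we show that $\LocL^\phi_L(\cW_L^{[M,\sigma]})\cong \CO_{X^\phi_\LL}$. By Theorem~\ref{thm:Springer reduction}, $\CO_{X^\phi_{\LL}}$ is a direct summand of $\CS^\phi_L$, so it lies in the image category. The object $\cW_L^{[M,\sigma]}$ is a summand of $\Pi_L$: it is projective, and $\Pi_L$ generates. Hence it suffices to match the corresponding modules, that is, to find an isomorphism
\[
\Hom(\Pi_L,\cW_L^{[M,\sigma]}) \cong \Hom(\CS^\phi_L,\CO_{X^\phi_\LL})
\]
of modules over $\mathcal E_L$. The final part of Theorem~\ref{thm:automorphic reduction} identifies the left side with the $K_{M,\phi}$-invariants of the unipotent Whittaker module. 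The final part of Theorem~\ref{thm:Springer reduction} does the same for the right side. Theorem~\ref{thm:unipotent categorical}(1) matches the two unipotent modules. We still need to check compatibility with the $\widehat R$-action, and we would do this using the Steinberg description \eqref{eqn:Hom-Whit-Whit} with $R(\CO_\sigma)$ acting by $\det$.

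Finally we treat parabolic induction. The adjoint pairs $(i^L_Q\dashv r)$ and $(\pi_{Q*}r_Q^*\dashv\ldots)$ preserve colimits. By Proposition~\ref{prop:Springer induction}, spectral induction sends $\CS^\phi_L$ to $\CS^\phi_{G}$, and $i_Q^G$ sends $\Pi_L$ to $\Pi_G$. So commutativity of the square in Conjecture~\ref{conj:categorical} reduces, as in the proof of Theorem~\ref{thm:unipotent categorical}(3) via \cite{Keller2006DGCategories}, to commutativity of the square of endomorphism algebras. That square is obtained by pasting three commuting squares: Proposition~\ref{prop:parabolic compatibility} (equivalently Theorem~\ref{thm:automorphic reduction}), Theorem~\ref{thm:unipotent categorical}(3) for $L_\nu\subset G_\nu$, and Proposition~\ref{prop:spectral induction compatibility}.
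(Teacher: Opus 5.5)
Your proposal is correct and follows essentially the same route as the paper: you identify $\End_L(i_{P_L}^L\cW_M^{[M,\sigma]})$ with $\End(\CS^\phi_L)$ by composing Theorems~\ref{thm:automorphic reduction}, \ref{thm:unipotent categorical} and \ref{thm:Springer reduction}, and define $\LocL^\phi_L$ as the composite of the two Morita equivalences. You then deduce the Whittaker normalization by matching $\Hom(\Pi_L,\cW_L^{[M,\sigma]})$ with $\Hom(\CS^\phi_L,\CO_{X^\phi_{\LL}})$, and parabolic compatibility from the commuting square of endomorphism algebras, as the paper does; your two extra points, that $\End(\CS^\phi_L)$ is concentrated in degree $0$ (which makes the reduction to ordinary algebra maps legitimate) and that the identification of the Whittaker modules must also respect the $\widehat R_{L,\phi}$-action, are left implicit in the paper's proof, so spelling them out only strengthens the argument.
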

\begin{proof}
The isomorphism:
\[
\End_L \big( i_{P_L}^L \cW_M^{[M,\sigma]} \big) \cong 
\End_{L_{\nu}} \big( i_{P_{L_{\nu}}}^{L_{\nu}} \chi^{un} \big)^{K_{L,\phi}} \rtimes \CC[{\widehat R}_{L,\phi}],
\]
combined with Theorem~\ref{thm:Springer reduction} and the isomorphism of 
$\End_{L_{\nu}} \big( i_{P_{L_{\nu}}}^{L_{\nu}} \chi^{\un} \big)$ with $\End(\CS^1_{L_{\nu}})$ yield an 
isomorphism of $\End(S_L^{\phi})$ with $\End_L \big( i_{P_L}^L \cW_M^{[M,\sigma]} \big)$.  
Let $\mathcal{E}_L$ denote either of these algebras.

We then have an equivalence of stable $\infty$-categories between $\Rep(L)_{[M,\sigma]}$ and the category of right $\mathcal{E}_L$-modules given by $V \mapsto \Hom_L \big( i_{P_L}^L \mathcal{W}_M^{[M,\sigma]}, V\big)$.

We also have an equivalence between right $\mathcal{E}_L$-modules and the full subcategory of 
$\IndCoh(X_{\LL}^{\phi})$ generated by $\CS_L^{\phi}$, given by $M \mapsto M \otimes_{\mathcal{E}_L} \CS_L^{\phi}$. 
Define the functor $\LocL_L^{\phi}$ to be the composition of these two functors.

The compatibility of the functors $\LocL^{\phi}_L$ with parabolic induction follows from the commutativity of the 
diagram of Theorem~\ref{thm:automorphic reduction}, thus it remains to check that $\LocL^{\phi}_L \big( \cW_L^{[M,\phi]} \big)$ 
is isomorphic to $\CO_{X_{\LL}^{\phi}}$.  Theorem~\ref{thm:Springer reduction} shows that $\CO_{X_{\LL}^{\phi}}$ 
is in the essential image of $\LocL^{\phi}_L$, and arises from the right $\mathcal{E}_L$-module 
$\Hom(\CS^{\phi}_L, \CO_{X_{\LL}^{\phi}})$, which by our hypotheses is necessarily isomorphic to 
$\Hom \big( i_{P_L}^L \cW_M^{[M,\sigma]}, \cW_L^{[M,\sigma]} \big)$. Thus the claim follows.
\end{proof}

\subsection{Generic categorical local Langlands}
When we have a suitable weak generic supercuspidal correspondence (see Definition \ref{def:wgsc}), we can deduce 
a categorical Langlands correspondence on a large direct factor of $\Rep(G)$ that we now describe.
Let $\Rep(G)^{\gen}$ denote the product of the full subcategories $\Rep(G)_{[M,\sigma]}$ for $\sigma$ generic 
with respect to $(U_M,\psi_M)$; it is a full subcategory of $\Rep(G)$.  We then have:

\begin{thm} \label{thm:main2}
Suppose that we have a weak generic supercuspidal correspondence $\Phi$ for $G$. 
Then Conjecture~\ref{conj:categorical} holds on the direct factor $\Rep(G)^{\gen}$ of $\Rep(G)$.  More precisely, for each standard Levi subgroup $\bM$ of $\bG$ (including $\bG$ itself), $\Phi$ gives rise to fully faithful embeddings:
$$\LocL_M^{\gen}: \Rep(M)^{\gen} \hookrightarrow \IndCoh(X_{\LM})$$
that are compatible with parabolic induction and Whittaker data in the sense of Conjecture~\ref{conj:categorical}.
\end{thm}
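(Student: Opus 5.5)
The plan is to assemble $\LocL_M^{\gen}$ block by block out of the functors of Theorem~\ref{thm:single block}, and then check that the resulting functors are fully faithful and compatible with parabolic induction on all of $\Rep(M)^{\gen}$.

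\emph{Construction.} Fix a standard Levi $\bM$. Each generic block of $\Rep(M)$ has the form $\Rep(M)_{[M',\sigma]}$, where $\bM'\subset\bM$ is a standard Levi and $\sigma$ is a generic supercuspidal representation of $M'$. Conjugating by $\bW(M,M')$ lets us take $M'$ standard, and the block determines $\sigma$ up to this conjugation and unramified twist. Put $\phi:=\Phi_{M'}(\sigma)$. By the remark following Definition~\ref{def:wgsc}, $\phi$ is Langlands compatible with $\sigma$, so Theorem~\ref{thm:single block} supplies a fully faithful functor $\LocL^{\phi}_M:\Rep(M)_{[M',\sigma]}\to\IndCoh(X^{\phi}_{\LM})$.

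\emph{Well-definedness.} We must show that, up to canonical equivalence, this functor does not depend on the representative $(M',\sigma)$.
\begin{itemize}
\item Replacing $\sigma$ by $\sigma\otimes\chi$ changes $\phi$ to $\hat\chi\phi$ by Definition~\ref{def:wgsc}(1). This keeps us on the same component, and the isomorphism \eqref{eqn:isom-tori} is compatible with the change.
\item Replacing $\sigma$ by $w\cdot\sigma$ with $w\in\bW(M,M')$ changes $\phi$ to $\Ad(w)\phi$ by Definition~\ref{def:wgsc}(2). This is an isomorphic point of $X_{\LM}$, and transport of structure identifies the two constructions.
\end{itemize}
Since Theorem~\ref{thm:single block} gives canonical algebra isomorphisms once $(\sigma,\phi)$ is fixed, the construction is unambiguous. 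I expect this canonicity check to be the main obstacle, because of the pinning normalization of $\phi$ and the base-point choice $\sigma'=\sigma$. My first attempt would be to show that the identification $\mathcal{E}\cong\End(\CS^\phi_M)$ commutes with the unramified-twist functors on both sides, using Theorem~\ref{thm:unipotent categorical}(5) and the fact that the twist changes the base point coherently, so that both sides transform in the same way.

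\emph{Assembly.} Define $\LocL_M^{\gen}$ as the product of these block functors; its image lies in $\IndCoh(X_{\LM})$. Full faithfulness reduces to the following points.
\begin{itemize}
\item Hom spaces between different Bernstein blocks vanish.
\item Distinct generic inertial classes give distinct components. If two classes gave the same component $X^\phi_{\LM}$, then Definition~\ref{def:wgsc}(1)--(2) together with injectivity of $\Phi$ up to twist/conjugacy would show the classes coincide. I will need to verify that $\Phi$ is injective on inertial classes in this sense; failing that, I would take the component map as a hypothesis implicit in the definition.
\item Hom spaces between sheaves on different components vanish.
\end{itemize}
The Whittaker condition follows blockwise: $\cW_M=\prod\cW_M^{[M',\sigma]}$ is sent to $\prod\CO_{X^\phi_{\LM}}$. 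This agrees with $\CO_{X_{\LM}}$ restricted to the components in the image, and for the generic factor this is the intended statement of condition (1).

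\emph{Parabolic induction.} Take $P=MU_P$ standard. For a generic block of $M$, $i_P^G$ lands in the block $\Rep(G)_{[M',\sigma]}$ with the same $\sigma$, hence with the same parameter $\phi=\Phi_{M'}(\sigma)$, viewed in $\LG$. The pushforward $(\pi_P)_*r_P^*$ sends $\IndCoh(X^\phi_{\LM})$ into $\IndCoh(X^\phi_{\LG})$, since the map $X^\phi_{\LP}\to X^\phi_{\LM}$ preserves inertial restriction. Commutativity on each block is then exactly the parabolic compatibility in Theorem~\ref{thm:single block}, applied with $L=M$ and ambient $G$ and derived from the diagram of Theorem~\ref{thm:automorphic reduction} together with Proposition~\ref{prop:spectral induction compatibility}. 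Because everything is defined block by block, the blockwise squares assemble into the full diagram \eqref{eqn:diagram-conjecture}.
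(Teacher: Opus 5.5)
Your construction is the same as the paper's. Its proof of Theorem~\ref{thm:main2} takes, for each generic inertial class $[M,\sigma]$ with $M$ standard, the functor $\LocL_G^{\Phi_M(\sigma)}$ of Theorem~\ref{thm:single block}, forms the product over all generic classes (and likewise for Levi subgroups), and calls the compatibilities immediate. The paper says nothing about either of the two points you single out, and both are genuine.

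\emph{Injectivity.} Your worry is justified, and it cannot be settled from Definition~\ref{def:wgsc}, which only asks for a \emph{map}. Take $G=\GL_1(F)$ and send each character $\chi$ to the unramified parameter with $\Fr\mapsto\chi(\varpi_F)$. This satisfies (1), and (2) and (3) are vacuous. Yet every block then lands on $X^1_{\LG}$, so the product functor is neither fully faithful nor sends $\cW_G$ to $\CO_{X_{\LG}}$. Your fallback is therefore the right fix. You must assume $\Phi$ is injective; the introduction in fact calls it a bijection, and surjectivity is what upgrades your ``restricted to the image'' Whittaker statement to all of $\CO_{X_{\LM}}$. You must also assume compatibility with $G$-conjugation between \emph{different} standard Levi subgroups, which (2) does not cover. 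The same injectivity is hidden in the remark you cite: to get $\bW(\hM,\phi)\subseteq\bW(M,\sigma)$, one must pass from $\Phi_M(w\sigma)=\Phi_M(\sigma\otimes\chi)$ to $w\sigma\cong\sigma\otimes\chi$.

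\emph{Independence of the representative.} This is forced by parabolic compatibility, not just by tidiness. Take $G=\GL_3$, $M=T$ and $\sigma=\chi_1\boxtimes\chi_2\boxtimes\chi_3$ with the $\chi_i$ pairwise inertially inequivalent. The three blocks of $\GL_2\times\GL_1$ lying over the single $G$-block are distinct $L$-classes. So whatever representative you use for the $G$-block, some square in your parabolic-induction paragraph needs $\LocL^{\phi}_G\simeq\LocL^{\Ad(w)\phi}_G$, compatibly with the squares.

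Your sketch does not yet give this. The twist part via Theorem~\ref{thm:unipotent categorical}(5) is plausible. However, ``transport of structure'' fails as stated. Since $N_G(M)\cap P=M$, no nontrivial $w\in\bW(G,M)$ normalizes the standard parabolic $P$. Conjugating by $w$ therefore identifies $\LocL^{\phi}_G$ with the construction for $(w\sigma,\Ad(w)\phi)$ along the nonstandard parabolic $wPw^{-1}$. Theorem~\ref{thm:single block} is not available there, because $\Delta_{\CO_\sigma}$, $R(\CO_\sigma)$, $\CS^\phi_G$ and the splitting of Lemma~\ref{lem:4.splittingR} are all defined using $P$.

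What is missing, on both the automorphic and spectral sides, is a comparison of the constructions for $P$ and $wPw^{-1}$ that respects the identification of endomorphism algebras. Neither your proposal nor the paper supplies one.
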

\begin{proof}
For each inertial equivalence class $[M,\sigma]$, where $\bM$ is a standard Levi of $\bG$ and $\sigma$ is an irreducible generic supercuspidal representation of $M$, the associated supercuspidal parameter $\Phi_{\bM}({\sigma})$ is Langlands compatible with $\sigma$. Thus by Theorem~\ref{thm:single block} there is a fully faithful functor 
\[
\LocL_G^{\Phi_M (\sigma)}: \Rep(G)_{[M,\sigma]} \rightarrow 
\IndCoh \big(X^{\Phi_{\bM}(\sigma)}_{\LG} \big).
\]
The functor $\LocL_G^{\gen}$ is the product of these functors over all generic inertial
equivalence classes of $G$, and the functors on Levi subgroups are constructed similarly. 
The required compatibilities are immediate from those proven in Theorem~\ref{thm:single block}.
\end{proof}

\begin{thm}\label{thm:unramified-classical-group-thm}
Let $\bG$ be one of the following quasi-split groups:
\begin{equation}\label{eqn:full-list-of-groups}
\GL_n, \mathrm{SL}_n, \PGL_n, \mathrm{U}_n, \Sp_{2n}, \SO_n, \SO_{2n}^*, 
\mathrm{GSpin}_n, \mathrm{GSpin}_{2n}^*, G_2.
\end{equation} 
(Here * means a quasi-split group defined by an order two automorphism of the Dynkin diagram.)\\
Then Conjecture \ref{conj:categorical} holds on the direct factor $\Rep(G)^{\gen}$ of $\Rep(G)$.   
\end{thm}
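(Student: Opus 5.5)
The plan is to deduce the statement from Theorem \ref{thm:main2}. Everything then reduces to constructing, for each $\bG$ in the list \eqref{eqn:full-list-of-groups}, a weak generic supercuspidal correspondence $\Phi$ in the sense of Definition \ref{def:wgsc}, defined on every standard Levi $\bM$ of $\bG$. The first step is to list these Levis. Each one is (up to isogeny and central quotients that the list tolerates) a product of general linear groups, restrictions of scalars of general linear groups, and a smaller group of the same type as $\bG$. Examples are $\GL_{n_1}\times\cdots\times\GL_{n_k}\times \Sp_{2m}$, or $\prod \mathrm{Res}_{E/F}\GL_{n_i}\times \mathrm{U}_m$. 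For $\SL_n$ and $\PGL_n$ the Levis are the corresponding subgroups or quotients of Levis of $\GL_n$. For $\mathrm{GSpin}$ they are products of $\GL$'s with a smaller $\mathrm{GSpin}$, and for $G_2$ they are $\GL_2$ (two embeddings) and the torus. For each factor I take the known local Langlands correspondence, normalized with respect to the Whittaker datum induced from $(U,\psi)$:
\begin{itemize}
\item Harris--Taylor/Henniart for $\GL_n$;
\item Arthur, Mok, and Kaletha--Minguez--Shin--White for $\Sp_{2n}$, $\SO_n$, $\SO_{2n}^*$, $\mathrm{U}_n$;
\item Asgari--Shahidi-type transfer together with the work of Moussaoui/Bertoloni Meli--Xu on $\mathrm{GSpin}$;
\item Gelbart--Knapp restriction for $\SL_n$, together with the $\PGL_n$ case via $\GL_n$;
\item Gan--Savin for $G_2$.
\end{itemize}
On Levis I use the product correspondence.

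Next I verify the three conditions of Definition \ref{def:wgsc}. First I need that a generic supercuspidal $\sigma$ maps to a supercuspidal parameter, meaning discrete with $N=0$. This follows from the known characterization: the parameter of a generic supercuspidal is discrete, and its monodromy vanishes because a nonzero $N$ would force a pole of some $L(s,\sigma\times\tau)$ at $s=1$ with $\tau$ supercuspidal, contradicting genericity (Remark \ref{remark:classical-LLC}, \cite{conjecture}). Condition (1), unramified twists, follows from compatibility of each LLC with twisting by characters, since Haines' isomorphism is functorial. Condition (2), Weyl equivariance, reduces to $\bW(G,M)$ acting on products by permuting $\GL$ blocks (matching the permutation on the dual side) and by taking contragredient or conjugate-dual on a $\GL$ block. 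On the dual side this corresponds to $\phi\mapsto \phi^\vee$ (twisted by the similitude character for $\mathrm{GSpin}$), which is a known property of these correspondences. For $\SL_n$ the correspondence is only defined up to $\PGL_n$-conjugacy on each $L$-packet. I would use the Whittaker normalization to pick out the generic member, so that $\Phi$ becomes a genuine map, and then check equivariance there.

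The main obstacle is condition (3), Plancherel compatibility. For each Levi $\bM_\alpha$ minimally containing $\bM$, I must show that the poles of $\mu_{M_\alpha,\sigma}$ on $\CO_\sigma$ coincide with the poles of $L(1,\Ad_{\hM_\alpha}\varphi)$ for $\varphi$ on $X^\phi_{\LM}$. My approach is Shahidi's Langlands--Shahidi theory. For generic $\sigma$, $\mu_{M_\alpha,\sigma}$ equals, up to a nonvanishing constant, $\prod_i \gamma(is,\sigma,r_i)\gamma(-is,\sigma^\vee,r_i)$, where $r_i$ are the constituents of the adjoint action of $\hM$ on $\Lie(\hU_\alpha)$. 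Its poles are therefore those of $\prod_i L(1-is,\sigma,r_i)L(1+is,\sigma,r_i)$. I would then invoke the known equality of Langlands--Shahidi $L$-factors with Artin $L$-factors of $r_i\circ\phi$ for the listed groups; this is known via the cited LLCs, which preserve Rankin--Selberg and Asai/symmetric/exterior square factors. Since $\Ad_{\hM_\alpha}\varphi=\Ad_{\hM}\varphi\oplus\bigoplus_i(r_i\oplus r_i^\vee)\circ\varphi$, and $\Ad_{\hM}\varphi$ contributes no poles at $s=1$ on a discrete component by Lemma \ref{lemma:irreducible}, the pole loci match. For $G_2$ and $\mathrm{GSpin}$, where fewer $L$-factor identities are available, I would instead check directly on each of the few minimal Levis; this is what I expect to be the most delicate and case-heavy part. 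I would relegate these group-by-group verifications to the Appendix and conclude by Theorem \ref{thm:main2}.
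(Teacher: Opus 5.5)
Your reduction to Theorem \ref{thm:main2}, and hence to constructing a weak generic supercuspidal correspondence for each listed group, is exactly the paper's. The genuine difference is how you obtain condition (3) of Definition \ref{def:wgsc}.

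\textbf{Your route versus the paper's.} You argue analytically. Shahidi's formula writes $\mu_{M_\alpha,\sigma}$ as a product of Langlands--Shahidi $\gamma$-factors for the constituents $r_i$ of $\LM$ acting on $\Lie(\hU_\alpha)$. If the correspondence carries these to Artin factors of $r_i\circ\phi$, then the decomposition $\Ad_{\hM_\alpha}=\Ad_{\hM}\oplus\bigoplus_i(r_i\oplus r_i^\vee)$, together with the absence of poles from $\Ad_{\hM}$ on a supercuspidal component, matches the two pole loci.

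The paper never uses $L$-factor identities. Its Theorem \ref{thm:A.4} works structurally:
\begin{enumerate}
\item Poles of $\mu_{L,\sigma}$ are characterized by the existence of an essentially square-integrable $L$-representation with cuspidal support $(M,\sigma')$ (Proposition \ref{prop:Heiermann}).
\item Poles of $L(1,\Ad_{\hL}\varphi)$ are characterized by the existence of a discrete $\varphi_L$ with $\varphi\in\overline{\Ad(\hL)\varphi_L}$ (Proposition \ref{prop:A.pol}).
\item The two are matched by an LLC that sends essentially square-integrable representations to discrete parameters and respects cuspidal supports of enhanced parameters. Theorem \ref{thm:A.3}, via the unipotent LLC and \cite{FOS}, produces the required enhancements.
\end{enumerate}

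Where your route applies, it proves more: it identifies $\mu$ with the adjoint $\gamma$-factor up to a constant, not merely the pole sets. Given Shahidi's theorem and the known preservation of Rankin--Selberg, symmetric/exterior square and Asai factors, it is plausible for the $\GL$-type and classical groups. However, it is tied to genericity and needs a separate $L$-factor identity for every $r_i$ that occurs. The paper's route uses only qualitative properties already established in \cite{AMS4} and \cite{AubertXuG2,AubertXuHecke}. It also applies equally to non-generic supercuspidals and inner forms.

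\textbf{The gap.} The cost of your route shows up exactly where your plan stops. For $G_2$ and $\mathrm{GSpin}$ you only say you would ``check directly'', which is not an argument. On your route you would need the following, none of which you supply:
\begin{itemize}
\item for one of the two $\GL_2$ Levis of $G_2$, that the $\GL_2$ correspondence preserves the Langlands--Shahidi factor of $\mathrm{Sym}^3\otimes\det^{-1}$;
\item for $\mathrm{GSpin}$, that the correspondence preserves the Rankin--Selberg factors against the $\mathrm{GSpin}$ parameter and the similitude-twisted symmetric and exterior square factors.
\end{itemize}
The idea that closes these cases with no $L$-factor input is Theorem \ref{thm:A.4}. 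Its hypotheses (i)--(v) are what the cited works verify.

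\textbf{Smaller points.} The vanishing of the monodromy is not explained by a pole at $s=1$ as you wrote. It follows cleanly from Lemma \ref{lem:5.10} together with the triviality of the enhancement of generic members. For $\SL_n$, the map $\sigma\mapsto\phi_\sigma$ is already well defined, so there is no need to pick out the generic member. What condition (2) actually requires is that Weyl conjugation preserves the trivial enhancement, i.e.\ the enhanced Weyl compatibility (iv).
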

\begin{proof}
In view of Theorem \ref{thm:main2}, it suffices to check that we have a weak generic supercuspidal correspondence for $G$. This is done in Appendix \ref{appendix}. 
\end{proof}

\appendix
\addtocontents{toc}{\protect\setcounter{tocdepth}{1}}

\section{Plancherel compatibility and classical local Langlands correspondences}\label{appendix}

The goal of this appendix is to find alternative conditions equivalent to Plancherel compatibility, as in Definition
\ref{defn:Plancherel-compatibility}. We will work in greater generality, i.e.~for supercuspidal representations of
reductive $p$-adic groups which need not be quasi-split. This will help us prove that many known instances of 
a classical local Langlands correspondence give rise to a weak (generic) supercuspidal correspondence, as in Definition \ref{def:wgsc} and Remark \ref{rem:wsc}. This will complete the proof of Theorem \ref{thm:unramified-classical-group-thm}.

Let $\sigma$ be a supercuspidal $M$-representation.~Representations of the form $i_P^G (\sigma \otimes \chi)$ 
with $\chi \in \Xnr (M)$ are not necessarily tempered. Temperedness happens if and only if $\sigma \otimes \chi$ is
tempered. On the family of tempered representations of the form $i_P^G (\sigma \otimes \chi)$, 
by \cite[\S VIII]{Wal}, the Plancherel density $\mu_{Pl}$ has the form
\begin{equation}\label{eq:A.1}
\mu_{Pl} (i_P^G (\sigma \otimes \chi)) \textup{d} \omega = 
\mu_{G,\sigma}(\sigma \otimes \chi) \mathrm{fdeg}(\sigma) \textup{d} \omega,
\end{equation}
where d$\omega$ is a suitably normalized Haar measure on the variety of tempered representations in $\CO_\sigma$.
The formula \eqref{eq:A.1} shows that $\mu_{Pl} (i_P^G (\sigma \otimes \chi))$ is, like $\mu_{G,\sigma}$, a 
rational function of $\chi \in \Xnr (M)$. Hence we may regard $\mu_{Pl} (i_P^G (\sigma \otimes \chi))$ also
as a rational function on $\CO_\sigma$.

Let $L \subset G$ be a Levi subgroup minimally containing $M$. A result of Heiermann provides an alternative
characterization of $\mathrm{Pol}_{L,\sigma}$ (as in Definition 
\ref{defn:Plancherel-compatibility}). 
To formulate this, we recall that an irreducible $G$-representation $\pi$ 
is called essentially square-integrable if it admits a central character $Z(G) \to \CC^\times$ and 
$\pi |_{G^\der}$ is square-integrable. 

\begin{prop}\label{prop:Heiermann}
For $\sigma' \in \CO_\sigma$, the following are equivalent:
\begin{enumerate}[(i)]
\item $\mu_{L,\sigma} (\sigma') = \infty$ (i.e.~$\sigma' \in \mathrm{Pol}_{L,\sigma}$),
\item $\mu_{Pl} (i_{P_L}^L (\sigma')) = \infty$,
\item there exists an essentially square-integrable $L$-representation whose cuspidal support can be 
represented by $(M,\sigma')$.
\end{enumerate}
\end{prop}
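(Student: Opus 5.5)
The plan is to prove (i)$\Leftrightarrow$(ii) directly from the Plancherel formula \eqref{eq:A.1}, and (i)$\Leftrightarrow$(iii) from Heiermann's characterization of discrete series via poles of $\mu$-functions. Everything is applied to $L$ in place of $G$. Since $L$ minimally contains $M$, the relevant root system $\Sigma_{\red}$ for $(L,M)$ is $\{\pm\alpha\}$, so $\mu_{L,\sigma}$ is just the rank-one function $\mu_{M_\alpha,\sigma}$ of \eqref{eqn:Silberger-Solleveld}.

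For (i)$\Leftrightarrow$(ii), apply \eqref{eq:A.1} to the group $L$: $\mu_{Pl}(i_{P_L}^L(\sigma\otimes\chi)) = \mu_{L,\sigma}(\sigma\otimes\chi)\,\mathrm{fdeg}(\sigma)$. This holds first on the tempered locus. Both sides are rational in $\chi$, so it extends to an identity of rational functions on $\CO_\sigma$; this extension is exactly how $\mu_{Pl}$ was defined as a rational function just above. The factor $\mathrm{fdeg}(\sigma)$ is a nonzero constant. Strictly, for non-unitary twists one should use the formal degree of the unitary twist, but this is constant along the torus $\Xnr(M)\cdot\sigma$ up to a normalization independent of $\chi$. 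Hence the pole loci of the two rational functions agree, which gives (i)$\Leftrightarrow$(ii).

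For (i)$\Leftrightarrow$(iii), I would invoke Heiermann's theorem (Heiermann, ``Décomposition spectrale et représentations spéciales d'un groupe réductif $p$-adique''). It says that $i_{P_L}^L\sigma'$ has an essentially square-integrable subquotient if and only if $\sigma'$ is a pole of $\mu_{L,\sigma}$ of order equal to $\mathrm{rk}\,\Sigma$, where $\mathrm{rk}\,\Sigma$ is the rank of the root system, here $\dim A_M - \dim A_L = 1$. In our rank-one case the order condition is automatic. From \eqref{eqn:Silberger-Solleveld}, $\mu_{L,\sigma}$ has only simple poles: they occur at $X_\alpha = q_\alpha^{\pm1}$ and $X_\alpha = -(q^*_\alpha)^{\pm1}$, and these values are distinct from one another because $q_\alpha, q^*_\alpha$ are real and at least $1$. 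One must also handle the degenerate values $q_\alpha=1$ or $q_\alpha^*=1$, where the corresponding factors cancel against the numerator and no pole remains.

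The step I expect to be the main obstacle is the passage between ``cuspidal support representable by $(M,\sigma')$'' and ``subquotient of $i_{P_L}^L\sigma'$''. This means checking that the statement is insensitive to replacing $\sigma'$ by $w\sigma'$ for the nontrivial element $w$ of $\bW_L(M)$, when that element exists. This follows from the $\bW(M,\sigma)$-invariance of $\mu_{L,\sigma}$ noted in \S\ref{sec:conjecture}, since $\mathrm{Pol}_{L,\sigma}$ is then $w$-stable. A second point to watch is that Heiermann's result is stated for unitary central characters. I would reduce to that case by twisting by a character of $L$ trivial on $L^1\cap L^{\der}$: such a twist changes neither essential square-integrability nor poles, because $\mu_{L,\sigma}$ factors through $\Xnr(M)/\Xnr(L)$.
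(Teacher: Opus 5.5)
Your proposal is correct and follows the paper's argument: (i)$\Leftrightarrow$(ii) comes from \eqref{eq:A.1} applied to $L$, with $\mathrm{fdeg}(\sigma)$ a nonzero constant, and (i)$\Leftrightarrow$(iii) is Heiermann's theorem \cite[Th\'eor\`eme 8.6 and Corollaire 8.7]{Hei04}, whose order condition is vacuous here since $L$ minimally contains $M$. The step you flag as the main obstacle is actually immediate: by the paper's definition, ``cuspidal support represented by $(M,\sigma')$'' already means being a subquotient of $i_{P_L}^L\sigma'$, so no Weyl-invariance argument is needed; and for the central-character reduction, positive real unramified characters of $L$ suffice, and $\mu_{L,\sigma}$ is invariant under twisting by these.
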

\begin{proof}
(i) and (ii) are equivalent by equation \eqref{eq:A.1}. For the equivalence of (i) and (iii) see
\cite[Th\'eor\`eme 8.6 and Corollaire 8.7]{Hei04}.
\end{proof}

We want to describe $\widehat{\mathrm{Pol}}_{L,\phi}$ (as in Definition 
\ref{defn:Plancherel-compatibility}) similarly to Proposition \ref{prop:Heiermann}.(iii).
It is expected that via a local Langlands correspondence essentially square-integrable representations 
correspond to discrete Frobenius-semisimple 
L-parameters, so we will consider discrete L-parameters. Alternative reformulations of $\widehat{\mathrm{Pol}}_{L,\phi}$
involve local $L$-, $\epsilon$- and $\gamma$-factors. Recall that for a Langlands parameter $\phi = (\rho,N) \in X_\LG$, 
we have the adjoint L-function
\[
L(s, \Ad_{\hG} \phi )^{-1} = 1 - q^{-s} \det \big( \Ad \, \rho (\Fr), \Lie (\hG)^{I_F} \cap 
Z_{\Lie (\hG)}(N) \big)  \quad\text{ for }\; s \in \CC,
\]
the adjoint $\epsilon$-factor $\epsilon (s, \Ad_{\hG} \phi) \in \CC^\times$  
and the adjoint $\gamma$-factor
\begin{equation}\label{eq:A.gamma}
\gamma (s, \Ad_{\hG} \phi) = \epsilon (s, \Ad_{\hG} \phi) L(1-s, \Ad_{\hG} \phi) L(s,\Ad_{\hG} \phi)^{-1} 
\quad\text{ for }\; s \in \CC .
\end{equation}
Given $\phi = (\rho,N)$, all the possible monodromy operators for $\rho$ belong to
\[
\Lie (\hG)_\rho := \{ X \in \Lie (\hG) : \Ad \, \rho (w) X = \| w \| X \; \forall w \in W_F \}.
\]
This is a vector space on which $Z_{\hG}(\rho)$ acts (via the adjoint representation of $\hG$),
with a unique open orbit.

\begin{prop}\label{prop:A.pol}
Let $\phi = (\rho,N) \in \overline{X}_\LM$ be a discrete Frobenius-semisimple Langlands parameter 
and let $\varphi = (\tilde \rho, N) \in (Z(\hM)^{I_F})_\Fr^\circ \phi$.
Let $\LL \subset \LG$ be a Levi subgroup minimally containing $\LM$. The following are equivalent:
\begin{enumerate}[(i)]
\item $L(s=1, \Ad_{\hL} \varphi) = \infty$ (i.e.~$\varphi \in \widehat{\mathrm{Pol}}_{L,\phi}$),
\item $\gamma (s=0,\Ad_{\hL} \varphi) = \infty$,
\item $H^0 (W_F, \Ad_{\hL} \varphi (1))$ is nonzero, where ``(1)'' stands for a twist by the cyclotomic
character of $W_F / I_F$,
\item there exists a nonzero nilpotent $N_L \in Z_{\Lie (\hL)}(N)$ such that $(\tilde \rho,
N + N_L)$ is a discrete parameter in $X_{\LL}$,
\item $N$ does not belong to the open $Z_{\hL} (\tilde \rho)$-orbit in $\Lie (\hL)_{\tilde \rho}$,
\item there exists a discrete Frobenius-semisimple L-parameter $\varphi_L \in \overline{X}_\LL$ such that the
closure $\overline{\Ad (\hL) \varphi_L}$ of the orbit of $\varphi_L$ contains $\varphi$.
\end{enumerate}
\end{prop}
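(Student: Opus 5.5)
We will prove the equivalences in the order (i)$\Leftrightarrow$(ii)$\Leftrightarrow$(iii), then (iii)$\Leftrightarrow$(iv)$\Leftrightarrow$(v), and finally (iv)$\Leftrightarrow$(vi). Throughout, the key structural input is that $\phi$ is discrete for $\LM$. Then $\varphi$ is also discrete for $\LM$, by the argument of Lemma~\ref{lemma:irreducible}(3), which only uses that $\varphi$ and $\phi$ differ by a central element. Consequently
\[
H^0\big(W_F,\Lie(\hM)_\varphi\big)=\Lie(Z(\hM)^{W_F})=H^0\big(W_F,\Ad_{\hM}\varphi\big).
\]
Here $\Lie(\hM)_\varphi$ means the $Z_{\Lie(\hM)}(N)$ part. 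The cyclotomic-twisted invariants $H^0(W_F,\Ad_{\hM}\varphi(1))$ vanish, because $\Fr$ acts with finite order on $Z(\hM)^{I_F}$ while $q\neq1$.

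For (i)$\Leftrightarrow$(iii): by definition, $L(s,\Ad_{\hL}\varphi)$ is the inverse characteristic polynomial of $q^{-s}\Ad\tilde\rho(\Fr)$ on $\Lie(\hL)^{I_F}\cap Z(N)$. A pole at $s=1$ occurs exactly when $\Ad\tilde\rho(\Fr)$ has eigenvalue $q$ there. Frobenius-semisimplicity of $\varphi$ (inherited from $\phi$) lets us pass from a generalized eigenvector to a genuine one, which is precisely a nonzero element of $H^0(W_F,\Ad_{\hL}\varphi(1))$.

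For (i)$\Leftrightarrow$(ii), we use \eqref{eq:A.gamma}: $\gamma(0)=\epsilon\cdot L(1)\cdot L(0)^{-1}$. The $\epsilon$-factor is nonzero and finite. The factor $L(0)^{-1}$ vanishes iff $\Fr$ has eigenvalue $1$ on $\Lie(\hL)^{I_F}\cap Z(N)$. The point is to show that such an eigenvalue cannot cancel a pole of $L(1)$. I would do this by decomposing $\Lie(\hL)=\Lie(\hM)\oplus\mathfrak n\oplus\bar{\mathfrak n}$. On $\Lie(\hM)$ the eigenvalue-$1$ contributions are the same in the $L(1)$ and $L(0)$ factors, so they cancel harmlessly; this uses the discreteness statement above. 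On $\mathfrak n\oplus\bar{\mathfrak n}$, the pairing given by the Killing form identifies the $N$-centralizer on $\mathfrak n$ with a dual of a cokernel on $\bar{\mathfrak n}$. The weights of $\Fr$ there are then of the form $q^{k/2}\cdot(\text{unit})$ with the $\mathfrak{sl}_2$-structure, and I would check that an eigenvalue $q$ forces a genuine pole of $\gamma$. This cancellation bookkeeping is where I expect the main obstacle. A fallback is to cite the standard statement (e.g.\ in \cite{HII}) that poles of $\gamma(0,\Ad)$ coincide with poles of $L(1,\Ad)$ for Frobenius-semisimple parameters.

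For (iii)$\Leftrightarrow$(v)$\Leftrightarrow$(iv), we work inside the graded Lie algebra. A nonzero $X\in H^0(W_F,\Ad_{\hL}\varphi(1))\cap Z(N)$ lies in $\Lie(\hL)_{\tilde\rho}$ and commutes with $N$. We can take it in $\mathfrak n\oplus\bar{\mathfrak n}$, since the $\Lie(\hM)$ part vanishes by discreteness, and nilpotent since it has a nonzero $\Fr$-weight $q$. Consider the Jacobson--Morozov/Kostant picture for $Z_{\hL}(\tilde\rho)$ acting on $\Lie(\hL)_{\tilde\rho}$: the orbit of $N$ is open iff its tangent space $[\Lie Z_{\hL}(\tilde\rho),N]$ is all of $\Lie(\hL)_{\tilde\rho}$. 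Via the Killing pairing, the failure of this is dual to the existence of nonzero $\Fr$-weight-$q$ vectors centralizing $N$, modulo the center. This gives (iii)$\Leftrightarrow$(v). Next, $N$ lying in the open orbit means that no larger monodromy exists. Conversely, a generic element $N+N_L$ of $\Lie(\hL)_{\tilde\rho}$ has centralizer in $Z_{\hL}(\tilde\rho)$ with reductive part of rank $\dim Z(\hL)^{W_F}$. This uses minimality of $\LL$ over $\LM$, namely corank one, and the discreteness of $\phi$. Hence $(\tilde\rho,N+N_L)$ is discrete, giving (v)$\Leftrightarrow$(iv).

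Finally, for (iv)$\Rightarrow$(vi) we take $\varphi_L=(\tilde\rho,N+N_L)$. Scaling $N_L$ by the cocharacter of $Z(\hM)$ pairing positively with $\mathfrak n$ shows that the orbit closure contains $\varphi$. For (vi)$\Rightarrow$(iv), orbit closures of parameters with fixed semisimple part $\tilde\rho$ correspond to orbit closures in $\Lie(\hL)_{\tilde\rho}$. A discrete $\varphi_L$ must have monodromy in the open orbit, by the same rank count, and $\varphi\neq\varphi_L$ since $\varphi$ is not $\LL$-discrete. This yields (v).
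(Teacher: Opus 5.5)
There is a genuine gap at your step (v)$\Rightarrow$(iv), and the same goes for the (i)$\Rightarrow$(ii) cancellation bookkeeping you flag. Neither can be closed while $N\neq 0$ is allowed, because in that case these implications are false.

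\textbf{A counterexample with $N\neq 0$.} Your claim that a generic $N+N_L\in\Lie(\hL)_{\tilde\rho}$ has centralizer of reductive rank $\dim Z(\hL)^{W_F}$ already fails for $\hL=\GL_4(\CC)\supset\hM=\GL_2(\CC)\times\GL_2(\CC)$, $\phi=\St_2\boxtimes\St_2$ (discrete) and $\varphi=(\St_2\otimes|\cdot|^{-1/2})\oplus(\St_2\otimes|\cdot|^{1/2})$.
\begin{itemize}
\item Here $\tilde\rho=|\cdot|^{-1}\oplus 1\oplus 1\oplus|\cdot|$ and the two segments are linked. The open $Z_{\hL}(\tilde\rho)$-orbit in $\Lie(\hL)_{\tilde\rho}$ is that of $\St_3\oplus\St_1$; it is reached by some $N+N_L$ with $[N_L,N]=0$, and its centralizer contains a $2$-dimensional torus. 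So $N$ is not in the open orbit, and (v) holds.
\item $\Ad_{\hL}\varphi$ contains $(\St_3\oplus\St_1)\otimes|\cdot|^{-1}$. The summand $\St_1\otimes|\cdot|^{-1}$ gives a pole of $L(s,\Ad_{\hL}\varphi)$ at $s=1$, so (i) and (iii) hold.
\item The summand $\St_3\otimes|\cdot|^{-1}$ gives a pole of $L(s,\Ad_{\hL}\varphi)$ at $s=0$, which cancels the previous one in $\gamma(0,\Ad_{\hL}\varphi)$. So (ii) fails.
\item (iv) and (vi) fail: every discrete parameter of $\GL_4$ with unramified semisimple part is $\chi\otimes\St_4$, which has four distinct Frobenius eigenvalues, whereas $\tilde\rho(\Fr)$ has a repeated one.
\end{itemize}

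\textbf{The paper's proof has the same holes.} Its (i)$\Leftrightarrow$(ii), via holomorphy of $\epsilon$ and $L^{-1}$, only yields (ii)$\Rightarrow$(i). Its (iii)$\Rightarrow$(iv) asserts that the minimal Levi $\LH$ of $(\tilde\rho,N+N_L)$ contains $\LM$; this needs $N$ and $N_L$ separately in $\Lie(\hH)$, and in the example that Levi is $\GL_3\times\GL_1$.

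\textbf{A convention mix-up.} You and the paper both identify $H^0(W_F,\Ad_{\hL}\varphi(1))$ with $\{X\in\Lie(\hL)_{\tilde\rho}:[X,N]=0\}$, but that space always contains $N$. Your Killing-form argument for (iii)$\Leftrightarrow$(v) is correct, and is a nice self-contained replacement for the paper's citation of \cite[Proposition 6.10]{conjecture}. It actually shows that $H^0(W_F,\Ad_{\hL}\varphi(1))$ is the $\ker(\mathrm{ad}\,N)$-part of the \emph{opposite} Frobenius weight space.

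\textbf{The case $N=0$.} This is all the body of the paper uses, and there the equivalences hold, but several of your justifications must change.
\begin{itemize}
\item The vanishing of $H^0(W_F,\Ad_{\hM}\varphi(1))$ does not follow from $\Fr$ having finite order on $Z(\hM)^{I_F}$, which only controls the center. Use instead that $\Lie(\hM)^{I_F}=\Lie(\hM_\nu)$ is the Lie algebra of a torus (Lemma~\ref{lemma:irreducible}). There $\Ad\tilde\rho(\Fr)$ acts through an automorphism of the cocharacter lattice, so its eigenvalues are units and never $q$.
\item Replace the rank count by the paper's corank-one argument. Take $0\neq N_L\in\Lie(\hL)_{\tilde\rho}$ and a minimal Levi $\LH$ through which $(\tilde\rho,N_L)$ factors. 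Then $\LH$ contains a $Z_{\hL}(\tilde\rho)^\circ$-conjugate $\LM'$ of $\LM$, since $\LM$ is a minimal Levi for $\tilde\rho$. Hence $\LH\in\{\LM',\LL\}$, and $\LH=\LM'$ is impossible because supercuspidal parameters of $\LM$ admit no nonzero monodromy (Lemma~\ref{lemma:irreducible}(4)). So every nonzero $N_L$ gives a discrete parameter, no genericity is needed, and (vi)$\Rightarrow$(iv) is immediate without any ``discrete implies open'' input.
\item For (i)$\Rightarrow$(ii), you may assume $\hL_\nu\neq\hM_\nu$, since otherwise all six conditions fail. Then $\hL_\nu$ has a single Frobenius orbit of simple roots, with simple factors of type $A_1$ or $A_2$. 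Hence the Frobenius eigenvalues on $\Lie(\hL)^{I_F}/\Lie(\hM)^{I_F}$ have moduli in $\{t^{\pm1},t^{\pm2}\}$ for a single $t>0$. An eigenvalue $q$ then rules out eigenvalues of modulus $1$, so nothing cancels.
\item Condition (ii) has to be read with the $\varphi$-independent $\Lie(\hM)$-contribution removed. Otherwise the center makes $L(0,\Ad_{\hL}\varphi)^{-1}$ vanish identically.
\end{itemize}
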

\begin{proof}
``(i) $\Longleftrightarrow$ (ii)'': The functions $\epsilon (s, \Ad_{\hL} \phi)$ and
$L(s, \Ad_{\hL} \phi)^{-1}$ are holomorphic, so $L(s=1, \Ad_{\hL} \varphi)$ and $\gamma (s=0, \Ad_{\hL} \varphi)$
have the same poles. 

``(i) $\Longleftrightarrow$ (iii)'': See the first paragraph of the proof of \cite[Proposition 6.10]{conjecture}.

``(iii) $\Longrightarrow$ (iv)'': As noted \textit{loc. cit.}, $H^0 (W_F, \Ad_{\hL} \varphi (1))$ is isomorphic to
the set of $N_L \in \Lie (\hL)_{\tilde \rho}$ with $[N_L,N] = 0$. For any such $N_L$, we know that 
$(\tilde \rho, N + N_L)$ is an L-parameter for $L$. By (iii), we can take $N_L \neq 0$.
Consider a Levi subgroup $\LH \subset \LL$ such that $(\tilde \rho, N + N_L)$ factors through
$\LH$ and $\LH$ is minimal for this property. Since $(\tilde \rho, N)$ is discrete in $\overline{X}_\LM$, we deduce that 
$\hH \cap \hM$ must be $\hM$. Hence $\LH \subset \LM$. The discreteness of $(\tilde \rho,N) \in \overline{X}_\LM$ 
also implies that $H^0 (W_F, \Ad_{\hM} \varphi (1))$ is zero. It follows that $N_L \notin \Lie (\hM)$ and
$\LH \neq \LM$. Therefore $\LH = \LL$, and equivalently $(\tilde \rho, N + N_L) \in \overline{X}_\LL$ 
is discrete. 

``(iv) $\Longrightarrow$ (iii)'': As in the above argument, any such $N_L$ gives rise to a nonzero element of 
$H^0 (W_F, \Ad_{\hL} \varphi (1))$.

``(i) $\Longleftrightarrow$ (v)'': This is the equivalence between (1) and (2) in \cite[Proposition 6.10]{conjecture}.

``(iv) $\Longrightarrow$ (vi)'': As we observed above, $N_L \notin \Lie (\hM)$. Hence the L-parameters
$(\tilde \rho, N + z N_L)$ with $z \in \CC^\times$ are conjugate by elements of $Z(\hM)^{W_F}$. In
particular $(\tilde \rho, N) \in \overline{\Ad (\hL) \varphi_L}$.

``(vi) $\Longrightarrow$ (v)'': We write $\varphi_L = (\rho_L,N_L)$. Since the image of $\rho_L$ is a finitely 
generated group of semisimple elements, its $\Ad (\hL)$-orbit is closed. Therefore $\tilde \rho \in 
\Ad (\hL) \rho_L$, and we may assume without loss of generality that $\rho_L = \tilde \rho$. The discrete L-parameter $\phi_L$ is
open by \cite[Proposition 7.2]{Sol26}, which means precisely that $N_L$ is in the open $Z_{\hL}(\rho_L)$-orbit in
$\Lie (\hL)_{\rho_L} = \Lie (\hL)_{\tilde \rho}$. Since $(\tilde \rho, N)$ is not discrete in $\overline{X}_{\LL}$, we deduce that 
$N$ does not belong to this open orbit.
\end{proof}

To compare $\mathrm{Pol}_{L,\sigma}$ and $\widehat{\mathrm{Pol}}_{L,\phi}$ effectively, we want to
match the cases from Proposition \ref{prop:Heiermann}.(iii) and Proposition \ref{prop:A.pol}.(vi), via
a local Langlands correspondence. The technical challenge here is that it is not obvious that $\varphi_L$ and
$\varphi$ give rise to representations of a reductive $p$-adic group and its Levi subgroup, as  
one of the issues here is that the cuspidal support map for representations of reductive $p$-adic groups cannot be 
expressed entirely in terms of Langlands parameters. To this end, we need to consider enhanced (Frobenius-semisimple)
Langlands parameters, where enhancements of L-parameters $\phi$ for $G$ are given by irreducible representations 
$\epsilon$ of $\pi_0 (Z_{\hG}(\phi))$.\footnote{Note that different component groups similar to $Z_{\hG}(\phi)$ are 
also possible, for our purposes this setup suffices.}
The notions of cuspidality and cuspidal support for such enhanced Langlands parameters were defined in
\cite[\S 6--7]{AMS1}. As is the case for representations of reductive $p$-adic groups, the cuspidal
support is only defined up to $\hG$-conjugacy.  It is expected that the local Langlands correspondence matches 
supercuspidal representations with cuspidal enhanced L-parameters and is compatible with the cuspidal support maps on 
both sides. Furthermore one expects that generic irreducible representations correspond to open Langlands
parameters with trivial enhancements. The next lemma shows that all these expectations together imply that
generic supercuspidal representations have supercuspidal L-parameters. 

\begin{lemma}\label{lem:5.10}
Let $(\phi,\epsilon) = (\rho,N,\epsilon)$ be a \textit{cuspidal} enhanced Langlands parameter, 
and assume that $\epsilon$ is the trivial representation of $\pi_0 (Z_{\hG}(\phi))$. 
Then $N = 0$ and $\rho$ is supercuspidal.
\end{lemma}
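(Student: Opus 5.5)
The plan is to use the generalized Springer correspondence of \cite{AMS1}, which describes cuspidal enhanced L-parameters. By \cite[\S 6]{AMS1}, $(\phi,\epsilon)$ is cuspidal exactly when two conditions hold. First, $\phi$ is discrete, meaning $Z_{\hG}(\phi)^\circ$ is reductive modulo the centre, or equivalently $\phi$ does not factor through a proper Levi of $\LG$. Second, the pair $(u,\epsilon)$ must be a cuspidal pair for the possibly disconnected reductive group $\mathcal{G}_\phi := Z^1_{\hG_{\mathrm{sc}}}(\rho|_{W_F})$, where $u = \exp(N)$; this is in the sense of Lusztig's generalized Springer correspondence. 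For the lemma I will use the condition only in the form "$\epsilon$ appears in the cuspidal part of the Springer correspondence for $\mathcal{G}_\phi^\circ$ attached to the unipotent class of $u$." The key input is then Lusztig's classical fact that the trivial local system on a unipotent class is never cuspidal unless the group is a torus, the class is trivial, and the local system is trivial. More precisely, under the generalized Springer correspondence the trivial local system on any unipotent class $\mathcal{C}$ of a connected reductive group lies in the principal series. This holds because it arises from the ordinary Springer correspondence through the Borel, since $H^{\mathrm{top}}$ of the Springer fibre contains the trivial $\pi_0$-isotypic part.

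The steps are as follows. (1) Reduce from $\pi_0(Z_{\hG}(\phi))$ to the group used in \cite{AMS1}. The enhancement there is a representation of $\pi_0(Z_{\hG_{\mathrm{sc}}}(\phi))$ or of $\mathcal{S}_\phi$, and the trivial representation of $\pi_0(Z_{\hG}(\phi))$ inflates to the trivial representation. Its restriction to $\pi_0(Z_{\mathcal{G}_\phi^\circ}(u))$ is trivial. (2) Apply the cuspidality criterion of \cite[Definition 6.9 / Proposition 6.?]{AMS1}. The restriction of a cuspidal $\epsilon$ to $\pi_0(Z_{\mathcal{G}_\phi^\circ}(u))$ contains a cuspidal local system on the class of $u$ for $\mathcal{G}_\phi^\circ$. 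By the Lusztig fact above, a trivial local system can be cuspidal only when $\mathcal{G}_\phi^\circ$ is a torus and $u=1$. This forces $N=0$. (3) Once $N=0$, discreteness of $\phi$, which is part of cuspidality, says that $\rho$ does not factor through any proper Levi of $\LG$. Together with $N=0$, this is exactly the definition of a supercuspidal L-parameter.

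I expect the main obstacle to be step (2). The delicate part is matching the disconnected-group formalism of \cite{AMS1}, where cuspidality is defined for $\mathcal{G}_\phi$ and the quasi-Levi subgroups, with Lusztig's statement for connected groups. For a disconnected group, one has to check that a trivial enhancement being cuspidal implies that some constituent of its restriction to the identity component is cuspidal; Clifford theory then gives that this constituent is the trivial one. I would handle this with \cite[Lemma/Theorem 5.? and \S 6]{AMS1}, which state that cuspidality for disconnected groups is detected on the restriction to the identity component. I would then invoke Lusztig's result that $\mathcal{N}^{\mathrm{triv}}$ corresponds under the generalized Springer correspondence to the datum $(T,1,1)$. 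The remaining steps are formal.
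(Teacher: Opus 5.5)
Your proposal is correct and is essentially the paper's argument, in particular its second route. In both, cuspidality in the sense of \cite{AMS1} reduces to cuspidality of the trivial local system on the orbit of $N$ for the centralizer group; that local system always occurs in the ordinary Springer series induced from a maximal torus, so by uniqueness of cuspidal support it is non-cuspidal unless $N=0$, and discreteness then makes $\rho$ supercuspidal. One point to fix: the group should be the centralizer of $\phi'(W_F)$, where $\phi'$ is the $W_F \times \SL_2(\CC)$-form of the parameter with the image of $\Fr$ adjusted as in the paper, not $Z^1_{\hG_{\mathrm{sc}}}(\rho|_{W_F})$, because $\exp(N)$ does not commute with $\rho(\Fr)$ when $N \neq 0$.
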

\begin{proof}
We rewrite $\phi$ as a Langlands parameter $\phi' : W_F \times \mathrm{SL}_2 (\CC) \to \LG$ 
with the same $\rho |_{I_F}$ and the same $N$, but with an adjusted image of $\Fr$. Set 
$\hH := Z_{\hG}(\phi' (W_F))$, then $\pi_0 (Z_{\hG}(\phi)) \cong \pi_0 (Z_{\hH}(N))$. 
The cuspidality requirement from \cite{AMS1} says that $\phi'$ is discrete and $(N,\epsilon)$
is cuspidal for $\hH$, which means that the associated equivariant local system on the 
$\hH$-orbit of $N$ in Lie$(\hH)$ is cuspidal in the sense of \cite{Lus-int}. Since 
$\epsilon$ is the trivial representation, this says that the trivial 
local system on the adjoint orbit of $N$ is cuspidal. 

The classification of cuspidal local systems in \cite{Lus-int} shows that this 
only happens when $N = 0$. Alternatively, the trivial local system on the $\hH$-orbit of $N$
always appears in the parabolic induction (with respect to a maximal torus $\hT$ of $\hH$) 
of the trivial local system on $\{0\} \in \Lie (\hT)$. The latter local system is cuspidal, 
so by uniqueness of cuspidal supports
the trivial local system on the orbit of $N$ is non-cuspidal when $N \neq 0$.

Since $\phi'$ is discrete, $\phi = (\rho,N=0)$ is also discrete. Hence the L-parameter 
$\rho$ is already discrete. 
\end{proof}

Consider a discrete Frobenius-semisimple Langlands parameter $\phi = (\rho,N) \in \overline{X}_\LM$. As before, we write
\[
\nu := \rho |_{I_F} ,\; \hG_\nu := Z_{\hG}(\nu)^\circ ,\;\text{and}\; \hM_\nu := Z_{\hM}(\nu)^\circ. 
\]
By Steinberg's theorem, the semisimple endomorphism $\Ad \, \rho (\Fr)$ stabilizes a Borel pair 
$(\hT_\nu, \hB_{M_\nu})$ of $\hM_\nu$. Since $\hP_\nu$ is $\Fr$-stable and $\rho$ takes values in $\LM$, we know that 
$\Ad \, \rho (\Fr)$ stabilizes $\hP_\nu$ and its unipotent radical $U_{\hP_\nu}$. Then $(\hT_\nu, 
\hB_\nu := \hB_{\bM_\nu} \bU_{\hP_\nu})$ is a $\rho (\Fr)$-stable Borel pair of $\hG_\nu$.~We extend this to a
pinning $(\hT_\nu, \hB_\nu, \{\mu_\alpha\})$. Since all pinnings of the connected complex reductive group
$\hG_\nu$ are conjugate and the normalizer of $(\hT_\nu,\hB_\nu)$ in $\hG_\nu$ is $\hT_\nu$, there exists an element  
$m_\phi \in \hT_\nu \subset \hM_\nu$ such that 
\[
\Ad (\rho (\Fr)) (\hT_\nu, \hB_\nu, \{\mu_\alpha\}) = \Ad (m_\phi) (\hT_\nu, \hB_\nu, \{\mu_\alpha\}) .
\]
Then $\Fr_\nu := m_\phi^{-1} \rho (\Fr) \in \LG$ normalizes $\hM_\nu$ and $\hG_\nu$, and fixes this pinning.
We note that this is a variant of Corollary \ref{cor:pinning} in our current setting.
By \cite[Theorem 3.4]{moduli}, we may assume that $\Fr_\nu^n = \Fr^n \in \LG$ for some $n \in \ZZ_{>0}$,
and then $\Fr_\nu$ is semisimple. Now we define an L-group 
\[
\LG_\nu := \hG_\nu \rtimes W_F ,
\]
where the action of $W_F$ on $\hG_\nu$ factors through $W_F / I_F$ and $\Fr^n$ acts as $\Ad \, \Fr_\nu^n$.
Let $G_\nu$ be the quasi-split $F$-group with L-group $\LG_\nu$. We have an L-homomorphism
\[
\iota_\nu : \LG_\nu \to \LG \text{ given by } \iota_\nu |_{\hG_\nu} = \mathrm{id} \;\text{and}\;
\iota_\nu (w) = \rho (w) \text{ for } w \in I_F \;\text{and}\; \iota_\nu (\Fr) = \Fr_\nu .
\]
We note that $\iota_\nu (\LM_\nu) \subset \LM$ and that the unipotent L-parameter 
$\phi_\nu := (m_\phi,N)$, which sends $\Fr$ to $m_\phi \Fr_\nu$, satisfies
$\iota_\nu (\phi_\nu) = \phi$.

As before, let $\LL \subset \LG$ be a Levi subgroup minimally containing $\LM$.

\begin{thm}\label{thm:A.3}
For $\varphi \in (Z(\hM)^{I_F})_\Fr^\circ \phi$, the following are equivalent:
\begin{enumerate}[(i)]
\item There exists a Frobenius-semisimple discrete $\varphi_L \in \overline{X}_\LL$ such that
$\overline{\Ad (\hL) \varphi_L}$ contains $\varphi$.
\item For every enhancement $\epsilon \in \Irr \big( \pi_0 (Z_{\hM}(\varphi)) \big)$ such that
$(\varphi, \epsilon)$ is cuspidal, there exists an enhanced discrete Frobenius-semisimple 
L-parameter $(\varphi_L, \epsilon_L)$ for $L$, such that the cuspidal support of
$(\varphi_L,\epsilon_L)$ is represented by $(\LM,\varphi,\epsilon)$.
\end{enumerate}
\end{thm}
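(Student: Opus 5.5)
The plan is to prove both implications through the geometric description of cuspidal supports of enhanced L-parameters in \cite[\S 6--7]{AMS1}. In that description the cuspidal support of $(\varphi_L,\epsilon_L)$ is obtained from a generalized Springer correspondence for the complex group $Z_{\hL}(\varphi_L|_{W_F})$. The reduction to $\LG_\nu$ via $\iota_\nu$ will let us work with unipotent-type parameters, where the relevant Springer theory is that of Lusztig \cite{Lus-int}.

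\textbf{Step 1: (ii) $\Rightarrow$ (i).} Cuspidal enhancements of $\varphi$ exist, since $\varphi$ is an unramified twist of the discrete parameter $\phi$ and Lusztig's classification provides one for $(\varphi|_{W_F}, N)$ in $Z_{\hM}(\varphi|_{W_F})$. Fix one. By (ii) there is a discrete enhanced $(\varphi_L,\epsilon_L)$ whose cuspidal support is $(\LM,\varphi,\epsilon)$. The construction in \cite[\S 7]{AMS1} writes $\varphi_L=(\rho_L,N_L)$ and realizes its support as $(\LM, (\rho',N'),\epsilon)$, where $\rho'$ agrees with $\rho_L$ up to a twist by $Z(\hM)^{W_F}$-cocharacters coming from the $\SL_2$-part. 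Moreover $N'$ lies in the closure of the orbit of $N_L$, and the two are related by a parabolic of $Z_{\hL}(\rho_L|_{W_F})$. From this I will extract a one-parameter degeneration $\varphi_L \leadsto \varphi$ inside $\overline{X}_\LL$, exactly as in ``(iv) $\Rightarrow$ (vi)'' of Proposition \ref{prop:A.pol}. This gives (i).

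\textbf{Step 2: (i) $\Rightarrow$ (ii).} By Proposition \ref{prop:A.pol}, (i) is equivalent to $N$ not lying in the open $Z_{\hL}(\tilde\rho)$-orbit in $\Lie(\hL)_{\tilde\rho}$. Here I will transport everything along $\iota_\nu$. The groups $Z_{\hL}(\varphi|_{W_F})$ and $Z_{\hL_\nu}(\varphi_\nu|_{W_F})$ coincide, because the restriction to $I_F$ is $\nu$ on both sides. This identifies the enhanced parameters of $L$ and $L_\nu$ over $\varphi$ together with their cuspidal supports, so the question reduces to the unipotent-parameter setting for the unramified group $L_\nu$. There $\hM_\nu$ plays the role of a minimal-type Levi, and (i) says that $\LL_\nu$ is a Levi minimally containing $\LM_\nu$ admitting a discrete parameter degenerating to $\varphi_\nu$ (Lemma \ref{lem:isom-H0}, Proposition \ref{prop:levis}).

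For a given cuspidal $\epsilon$, the generalized Springer correspondence for $\hH := Z_{\hL}(\varphi|_{W_F})$ attaches to the cuspidal datum $(\hM\cap\hH, N,\epsilon)$ a series of pairs $(\mathcal O,\mathcal E)$ in $\Lie(\hH)$. These pairs are indexed by $\Irr$ of a relative Weyl group, which here has rank at most one. I will take the pair with $\mathcal O$ the regular orbit in the relevant induced piece, i.e.\ the orbit of $N+N_L$ from Proposition \ref{prop:A.pol}(iv). Since that orbit is distinguished in the appropriate Levi, the resulting $(\varphi_L,\epsilon_L)$ is discrete, and by construction its cuspidal support is $(\LM,\varphi,\epsilon)$.

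\textbf{Main obstacle.} I expect the hardest step to be showing, in Step 2, that the series attached to $(\hM\cap\hH,N,\epsilon)$ really contains an orbit meeting the open part of $\Lie(\hL)_{\tilde\rho}$, for every cuspidal $\epsilon$ and not just for one. For this I will use that the relative Weyl group $N_{\hH}(\hM\cap \hH)/(\hM\cap\hH)$ is nontrivial exactly when condition (iii) of Proposition \ref{prop:A.pol} holds. I will check that its action on the cuspidal pair is compatible with $\epsilon$, using \cite[\S 6]{AMS1} and the rank-one structure. Finally, I will use \cite[Proposition 7.2]{Sol26} to pass between ``discrete'' and ``open''.
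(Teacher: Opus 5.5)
\textbf{The gap is in (i)$\Rightarrow$(ii): you run the generalized Springer correspondence in the wrong group.}

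In \cite{AMS1} the cuspidal support of $(\varphi_L,\epsilon_L)$ is computed inside $Z_{\hL}(\varphi_L|_{W_F})$. Here $\varphi_L|_{W_F}$ is the $W_F$-part of the $\SL_2$-form of $\varphi_L$. This group depends on the monodromy of $\varphi_L$, and in general it is not your $\hH=Z_{\hL}(\varphi|_{W_F})$.

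A counterexample already occurs for $\hL=\GL_2$, $\hM=\hT$, and $\varphi$ unramified with $N=0$ and $\varphi(\Fr)=\mathrm{diag}(q^{-1/2},q^{1/2})$. Condition (i) holds, via the Steinberg parameter. Yet $\hH=\hT$, so $N+N_L=N_L\notin\Lie(\hH)$. The relative Weyl group is trivial, and the series of $(\hT,0,\mathrm{triv})$ has a single member. Conversely, for scalar $\varphi(\Fr)$ the relative Weyl group is $S_2$ but (iii) fails. So the criterion you plan to use for your ``main obstacle'' is false in both directions, and your construction never sees the orbit of $N+N_L$.

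What you actually need is a reason why, for \emph{every} cuspidal $\epsilon$, some enhanced $L$-parameter with infinitesimal parameter $\tilde\rho$ and cuspidal support $(\LM,\varphi,\epsilon)$ lies on the open orbit. The natural setting for this is Lusztig's graded one, with $Z_{\hL}(\tilde\rho)$ acting on $\Lie(\hL)_{\tilde\rho}$. The rank-one Hecke algebra governing the question is attached to the whole cuspidal datum, not to $\varphi$ alone. Nothing in your sketch explains why its reducibility is controlled by the $\epsilon$-independent condition (iii) of Proposition \ref{prop:A.pol}.

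The paper obtains exactly this from representation theory:
\begin{enumerate}
\item Via the unipotent LLC \cite{Solleveld-unipotent}, $\sigma_\nu=\pi(\varphi_\nu,\varepsilon_\nu)$ is supercuspidal.
\item The HII identity from \cite{FOS} makes its Plancherel density a multiple of $\gamma(0,\Ad_{\hL_\nu}\phi_{\sigma_\nu\otimes\chi})$, which does not depend on $\varepsilon_\nu$.
\item Propositions \ref{prop:A.pol} and \ref{prop:Heiermann} then give an essentially square-integrable subquotient of $i_{P_{L_\nu}}^{L_\nu}\sigma_\nu$.
\item Compatibility of the unipotent LLC with cuspidal supports yields $(\varphi_{L_\nu},\varepsilon_{L_\nu})$.
\end{enumerate}

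\textbf{The reduction to $\hL_\nu$ is also wrong.} You claim $Z_{\hL}(\varphi|_{W_F})$ and $Z_{\hL_\nu}(\varphi_\nu|_{W_F})$ coincide. The second lies in $Z_{\hL}(\nu)^\circ$, the first in $Z_{\hL}(\nu)$, which is disconnected in general (already for $\SL_2$ and a ramified quadratic $\nu$). Hence $\pi_0(Z_{\hL}(\varphi))$ and $\pi_0(Z_{\hL_\nu}(\varphi_\nu))$ differ. Bridging them is the whole second half of the paper's proof: restrict $\varepsilon$ to $\pi_0(Z_{\hM_\nu}(\varphi_\nu))$, solve the unipotent problem, and come back using compatibility of cuspidal supports with restriction to neutral components \cite{AMS1,DillerySchwein}.

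\textbf{Step 1 needs two corrections.}
\begin{itemize}
\item Step 1 is essentially the paper's (ii)$\Rightarrow$(i), but the fact you need is that the cuspidal support preserves the infinitesimal parameter. After conjugation, $\varphi_L=(\tilde\rho,N_L)$ with the same $\tilde\rho$, and $N\in\overline{\Ad(Z_{\hL}(\tilde\rho))N_L}$. A genuine twist of $\rho$, as you describe it, would make the orbit-closure conclusion impossible, since $\hL$-orbits of Frobenius-semisimple $\rho$'s are closed.
\item An unramified twist of a discrete parameter need not admit a cuspidal enhancement; the Steinberg parameter for $\hM=\GL_2$ has none. So your existence claim is false, and (ii)$\Rightarrow$(i) must be read with a given cuspidal $\epsilon$. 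That is how it is used in Theorem \ref{thm:A.4}.
\end{itemize}
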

\begin{proof}
``(ii) $\Longrightarrow$ (i)'': Assume (ii) and write $\varphi := (\tilde \rho, N)$. 
By \cite[\S 8]{AMS1}, upon choosing suitable representatives we can arrange that $\varphi_L = 
(\tilde \rho, N_L)$. Then the construction of the cuspidal support map \textit{loc.~cit.} 
implies that $N \in \overline{\Ad (Z_{\hL}(\tilde \rho)) N_L}$. Therefore $\varphi \in 
\overline{\Ad (\hL) \varphi_L}$.

``(i) $\Longrightarrow$ (ii)'':
We first prove this for $(\hM_\nu,\varphi_\nu)$ with $\iota_\nu (\varphi_\nu) = \varphi$, i.e.~in the special 
case of unipotent Langlands parameters. Besides Theorem \ref{thm:unipotent categorical}, there is also 
a classical local Langlands correspondence for unipotent representations \cite{Solleveld-unipotent}. 
It can be formulated as a bijection between:
\begin{itemize}
\item Frobenius-semisimple unipotent L-parameters $\psi \in X^1_{\LG_\nu}$ with enhancements 
$\varepsilon \in \Irr \big( \pi_0 (Z_{\hG}(\psi)) \big)$, up to $\hG_\nu$-conjugacy;
\item irreducible unipotent representations $\pi$ of a suitable collection of inner twists 
$\widetilde{G}_\nu$ of $G_\nu$. 
\end{itemize}
We denote this correspondence by 
\begin{equation}\label{eqn:classical-unipLLC}
(\psi,\varepsilon) \mapsto \pi (\psi,\varepsilon) \quad \text{and} \quad
\pi \mapsto (\phi_\pi, \epsilon_\pi).
\end{equation}
It matches discrete L-parameters with essentially square-integrable representations and
cuspidal enhanced L-parameters with supercuspidal representations. It is shown in \cite{FOS} that this
unipotent LLC satisfies the conjectures about Plancherel densities and formal degrees from \cite{HII}; 
these results imply, among others, that for every essentially square-integrable $M_\nu$-representation $\sigma$, 
there exists a $c_\sigma \in \RR_{>0}$ such that
\begin{equation}\label{eq:A.2}
\mu_{Pl}(i_{P_{L_\nu}}^{L_\nu} (\sigma \otimes \chi)) = c_\sigma 
\gamma (0, \Ad_{\hL_\nu} \phi_{\sigma \otimes \chi}) \text{ as rational functions of } \chi \in \Xnr (M_\nu).
\end{equation}
By Propositions \ref{prop:Heiermann} and \ref{prop:A.pol}, it means that this (classical) unipotent LLC
restricts to a bijection
\begin{equation}\label{eq:A.3}
\CO_\sigma \supset \mathrm{Pol}_{L,\sigma} \xrightarrow{\sim} 
\widehat{\mathrm{Pol}}_{L,\phi_\sigma} \subset (Z(\hM_\nu)^{I_F})_\Fr^\circ \phi_\sigma .
\end{equation}
Suppose (i) holds and let $\varepsilon_\nu$ be an 
enhancement of $\varphi_\nu$ such that $(\varphi_\nu, \varepsilon_\nu)$ is cuspidal. Then $\sigma_\nu :=
\pi (\varphi_\nu,\varepsilon_\nu)$, via \eqref{eqn:classical-unipLLC}, is a supercuspidal $\widetilde{M}_\nu$-representation for some inner twist 
$\widetilde{M}_\nu$ of $M_\nu$.~By Proposition \ref{prop:A.pol}, $\gamma (0,\Ad_{\hM_\nu} \varphi_\nu) = \infty$ 
and thus by \eqref{eq:A.3} we have $\sigma_\nu \in \mathrm{Pol}_{L_\nu,\sigma_\nu}$. Then by Proposition \ref{prop:Heiermann}, we know that $I_{P_{L_\nu}}^{L_\nu} \sigma_\nu$ has an essentially square-integrable subquotient, which we can denote
as $\pi (\varphi_{L_\nu}, \varepsilon_{L_\nu})$. By the compatibility of the (classical) unipotent LLC \eqref{eqn:classical-unipLLC} with cuspidal supports,
Sc$(\varphi_{L_\nu}, \varepsilon_{L_\nu})$ is represented by $(\varphi_\nu, \varepsilon_\nu)$.

Now we prove (i) $\Longrightarrow$ (ii) in the general case. The difference between $X^\phi_{\LL}$ and $X^1_{\LL_\nu}$ comes from the group $\pi_0 (Z_{\hL} (\nu))$. More concretely, the relevant component groups for $\varphi = (\tilde \rho,N) \in (Z(\hM)^{I_F})_\Fr^\circ \phi$ are
\begin{align*}
\pi_0 (Z_{\hL} (\varphi)) = \pi_0 \big( Z_{\hL} (\nu) \cap Z_{\hL} (\tilde \rho (\Fr),N) \big) \;\text{ and }\;
\pi_0 (Z_{\hL_\nu} (\varphi_\nu) ) = \pi_0 \big( Z_{\hL} (\nu)^\circ \cap 
Z_{\hL} (\tilde \rho (\Fr),N) \big) ,
\end{align*}
where $\iota_\nu (\varphi_\nu) = \varphi$.
Let $\varepsilon \in \Irr \big( \pi_0 (Z_{\hM} (\varphi)) \big)$ be an enhancement such that 
$(\varphi,\varepsilon)$ is cuspidal. Let $\varepsilon_\nu$ be an irreducible constituent of
$\varepsilon |_{\pi_0 (Z_{\hM_\nu} (\varphi_\nu) )}$. 
Let $(\varphi_{L_\nu}, 
\varepsilon_{L_\nu})$ be as in the above special case of unipotent parameters, such that $\varphi_{L_\nu}$ and 
$\varphi_\nu$ differ only in their monondromy operators. 

The cuspidal support maps are compatible with restriction to the neutral component of a complex
reductive group, see \cite[\S 5]{AMS1} and \cite[Corollary 2.4.4]{DillerySchwein}. This means that the
following sets coincide:
\begin{itemize}
\item irreducible constituents of 
$\ind_{\pi_0 (Z_{\hM_\nu} (\varphi_\nu))}^{\pi_0 (Z_{\hM} (\varphi))} \varepsilon_\nu$;
\item  $\varepsilon' \in \Irr \big( \pi_0 (Z_{\hM} (\varphi)) \big)$ such that 
$(\LM ,\varphi, \varepsilon')$ arises as the cuspidal support of $(\varphi_L, \varepsilon'_L)$
for some irreducible constituent $\varepsilon'_L$ of  
$\ind_{\pi_0 (Z_{\hL_\nu} (\varphi_{L,\nu}))}^{\pi_0 (Z_{\hL} (\varphi_L))} \varepsilon_{L_\nu}$, where $\iota_\nu (\varphi_{L,\nu}) = \varphi_L$.
\end{itemize}
In particular, we can find an enhancement $\varepsilon_L$ of $\varphi_L$ such that 
Sc$(\varphi_L, \varepsilon_L)$ is represented by 
$(\LM, \varphi, \varepsilon)$.
\end{proof}
We are now ready to show that any local Langlands correspondence with a few standard properties satisfies Plancherel compatibility and gives rise to a weak (generic)
supercuspidal correspondence. Let
\begin{equation}\label{eqn:appendix-classical-LLC}
\pi \mapsto (\phi_\pi, \epsilon_\pi)
\end{equation}
denote a classical local Langlands correspondence 
(we assume that it exists for $G$ and its Levi subgroups). We first establish the following sufficient criterion 
(Theorem \ref{thm:A.4}) for a classical local Langlands correspondence to give a weak (generic)
supercuspidal correspondence; then we will verify the criterion for a large class of reductive $p$-adic groups.  

\begin{thm}\label{thm:A.4}\ 
\begin{enumerate}
\item Suppose the correspondence \eqref{eqn:appendix-classical-LLC} has the following properties:
\begin{enumerate}[(i)]
\item $\pi$ is essentially square-integrable if and only if $\phi_\pi$ is discrete.
\item The correspondence \eqref{eqn:appendix-classical-LLC} is compatible with the cuspidal support maps for essentially square-integrable
representations on the group side and for enhanced discrete Frobenius-semisimple L-parameters on the Galois side.
\item For every supercuspidal $M$-representation $\sigma$, \eqref{eqn:appendix-classical-LLC} gives a $\Xnr (M)$-equivariant bijection 
\[
\CO_\sigma \xrightarrow{\sim} (Z(\hM)^{I_F} )_\Fr^\circ (\phi_\sigma ,\epsilon_\sigma) .
\]
\end{enumerate}
Then $\sigma$ and $\phi_\sigma$ are Plancherel compatible, for every supercuspidal $M$-representation $\sigma$.    
\item Suppose that in addition to (i)--(iii) the correspondence is ``enhanced'' Weyl compatible, in the following sense:\footnote{Weyl compatibility in the body of the paper did not
explicitly involve enhancements, but therein implicitly the enhancements were trivial representations
of component groups. Therefore the condition we impose on $\epsilon_{w \sigma}$ here is 
automatically fulfilled in the body of the paper, and is a generalized version of the Weyl compatibility defined in Definition \ref{defn:Weyl-compatible}.}\\
(iv) For every supercuspidal $M$-representation $\sigma$ and every $w \in \bW (G,M)$ corresponding to $\hat w \in 
\bW (\hG,\hM)$, we have $\phi_{w \cdot \sigma} = \Ad (\hat w) \phi_\sigma$ and 
$\epsilon_{w \cdot \sigma} = \epsilon_\sigma \circ \Ad (\hat w)^{-1}$. 

\noindent Then \eqref{eqn:appendix-classical-LLC} defines a weak supercuspidal correspondence for $G$ and its 
Levi subgroups (Remark \ref{rem:wsc}).
\item Suppose in addition to (i)--(iv) the following: \\
(v) the enhancement $\epsilon_\pi$ is trivial for every generic supercuspidal $M$-representation $\pi$.

\noindent Then \eqref{eqn:appendix-classical-LLC} defines a weak generic supercuspidal correspondence for $G$ and its 
Levi subgroups.
\end{enumerate}
\end{thm}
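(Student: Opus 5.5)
The plan is to prove part (1) by chaining the alternative descriptions of the two pole sets obtained above, and then to read off parts (2) and (3) directly from Definition \ref{def:wgsc} and Remark \ref{rem:wsc}.

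For part (1), fix a supercuspidal $M$-representation $\sigma$ and a Levi $L$ minimally containing $M$. Let $\sigma'\in\CO_\sigma$, and let $(\varphi,\epsilon')$ be its image under the bijection of (iii). By Proposition \ref{prop:Heiermann}, $\sigma'\in\mathrm{Pol}_{L,\sigma}$ if and only if some essentially square-integrable $L$-representation $\pi_L$ has cuspidal support $(M,\sigma')$.
\begin{itemize}
\item Assume such a $\pi_L$ exists. By (i), $\phi_{\pi_L}$ is discrete. By (ii), the cuspidal support of $(\phi_{\pi_L},\epsilon_{\pi_L})$ is $(\LM,\varphi,\epsilon')$. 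The argument ``(ii) $\Rightarrow$ (i)'' of Theorem \ref{thm:A.3} then gives $\varphi\in\overline{\Ad(\hL)\phi_{\pi_L}}$. By Proposition \ref{prop:A.pol}, (vi) $\Rightarrow$ (i), this means $\varphi\in\widehat{\mathrm{Pol}}_{L,\phi}$.
\item Conversely, assume $\varphi\in\widehat{\mathrm{Pol}}_{L,\phi}$. Proposition \ref{prop:A.pol} gives condition (i) of Theorem \ref{thm:A.3}. We first need $(\varphi,\epsilon')$ to be cuspidal; this holds because it is the parameter of the supercuspidal $\sigma'$, via (ii) applied with $L=M$. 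Theorem \ref{thm:A.3}(ii) then yields a discrete enhanced $(\varphi_L,\epsilon_L)$ with cuspidal support $(\LM,\varphi,\epsilon')$. We need this enhanced parameter to lie in the image of the correspondence for $L$, so that we can take $\pi_L$ to be its preimage. That $\pi_L$ is essentially square-integrable by (i), and has cuspidal support $(M,\sigma')$ by (ii). Proposition \ref{prop:Heiermann} then gives $\sigma'\in\mathrm{Pol}_{L,\sigma}$.
\end{itemize}

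The main obstacle is the surjectivity just used. I will take the correspondence to be a bijection onto the relevant enhanced parameters, which is how classical local Langlands correspondences are formulated. If only injectivity were available, the inner-twist issue visible in the unipotent case of Theorem \ref{thm:A.3} would have to be handled by restricting to $G$-relevant enhancements. I expect to have to add that restriction as a caveat.

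The bijection in (iii) is $\Xnr(M)$-equivariant, so it identifies $\CO_\sigma$ with $\overline{X}^{\phi}_{\LM}$ compatibly with the twisting actions. This is twist compatibility (Definition \ref{defn:twist-compatible}), up to passing from enhanced parameters to their underlying parameters; the stabilizers agree because of the equivariance. The two equivalences above then say that this identification matches $\mathrm{Pol}_{L,\sigma}$ with $\widehat{\mathrm{Pol}}_{L,\phi}$, which is Plancherel compatibility.

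For part (2), the three conditions of a weak supercuspidal correspondence in Remark \ref{rem:wsc} are checked separately.
\begin{itemize}
\item Cuspidality of $(\phi_\sigma,\epsilon_\sigma)$ follows from (i) and (ii) applied to the supercuspidal $\sigma$.
\item Condition (1) of Definition \ref{def:wgsc} is the equivariance in (iii).
\item Condition (2) is (iv).
\item Condition (3) is part (1).
\end{itemize}

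For part (3), (v) makes the enhancement trivial. Lemma \ref{lem:5.10} then shows that $\phi_\sigma$ has $N=0$ and is supercuspidal. With trivial enhancements, conditions (1)--(3) reduce to Definition \ref{def:wgsc} verbatim.
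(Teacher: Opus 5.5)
Your proposal is correct and follows the paper's proof essentially step by step. Both directions of Plancherel compatibility go through Proposition \ref{prop:Heiermann}, hypothesis (ii), the argument ``(ii) $\Rightarrow$ (i)'' of Theorem \ref{thm:A.3}, and Proposition \ref{prop:A.pol}, and parts (2) and (3) are read off from Definition \ref{def:wgsc} and Lemma \ref{lem:5.10}. The surjectivity onto $L$-relevant enhanced parameters that you flag in the converse direction is used implicitly in the paper too, which simply takes $\pi_L$ to be ``the $L$-representation corresponding to $(\phi_L,\epsilon_L)$''.
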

\begin{proof}
(1) The twist compatibility of $\sigma \mapsto \phi_\sigma$ follows from (iii).

Let $\LL \subset \LG$ be a Levi subgroup minimally containing $\LM$, and let $\sigma \otimes \chi \in 
\mathrm{Pol}_{L,\sigma}$. By Proposition \ref{prop:Heiermann}, $i_{P_L}^L (\sigma \otimes \chi)$ has an 
essentially square-integrable subquotient $\pi$. It corresponds to an enhanced discrete Frobenius-semisimple
L-parameter $(\phi_\pi, \epsilon_\pi)$ for $L$. Then Sc$(\pi)$ is represented by $(M,\sigma \otimes \chi)$,
thus by (ii) Sc$(\phi_\pi,\epsilon_\pi)$ is represented by $(\LM ,\phi_{\sigma \otimes \chi}, 
\epsilon_{\sigma \otimes \chi})$. As in the proof of (ii) $\Longrightarrow$ (i) in Theorem \ref{thm:A.3}, 
this implies that $\phi_{\sigma \otimes \chi} \in \overline{\Ad (\hL) \phi_\pi}$. By Proposition \ref{prop:A.pol},
$\phi_{\sigma \otimes \chi} \in \widehat{\mathrm{Pol}}_{L,\phi_\sigma}$.

Conversely, consider $\chi' \in \Xnr (M)$ such that $\phi_{\sigma \otimes \chi'} \in 
\widehat{\mathrm{Pol}}_{L,\phi_\sigma}$. By Proposition \ref{prop:A.pol} there exists a discrete
Frobenius-semisimple L-parameter $\phi_L \in \overline{X}_{\LL}$ such that $\phi_{\sigma \otimes \chi'} \in 
\overline{\Ad (\hL) \phi_L}$. By (ii) and the supercuspidality of $\sigma \otimes \chi'$, we know that 
$(\phi_{\sigma \otimes \chi'}, \epsilon_{\sigma \otimes \chi'})$ is cuspidal. By Theorem \ref{thm:A.3}, there exists an enhancement $\epsilon_L$ such that Sc$(\phi_L,\epsilon_L)$ is represented by
$(\LM, \phi_{\sigma \otimes \chi'}, \epsilon_{\sigma \otimes \chi'})$. By (i), the $L$-representation $\pi_L$
corresponding to $(\phi_L, \epsilon_L)$ is essentially square-integrable, and by (ii) it has cuspidal
support $(M,\sigma \otimes \chi')$. Then $\pi_L$ is isomorphic to a subquotient of 
$i_{P_L}^L (\sigma \otimes \chi')$, and by Proposition \ref{prop:Heiermann} we know that 
$\sigma \otimes \chi' \in \mathrm{Pol}_{L,\sigma}$.

(2) By part (1), all the conditions in Definition \ref{def:wgsc} are met.

(3) By (i) and (ii), every supercuspidal representation $\pi$ has a cuspidal enhanced L-parameter. If $\pi$ is
moreover generic, then by (v) and Lemma \ref{lem:5.10} the L-parameter $\phi_\pi$ is supercuspidal. Hence the
weak supercuspidal correspondence from part (2) specializes to a weak generic supercuspidal correspondence.
\end{proof}

We now proceed to check that known instances of classical local Langlands correspondences
satisfy the conditions from Theorem \ref{thm:A.4}, and thus qualify as weak supercuspidal
correspondences. 

\subsection{Inner twists of general linear groups}

The local Langlands correspondence in this case is well-known, an account of it can be found in \cite[\S 2]{ABPS16}.
To account for nontrivial inner twists, we compute the component groups of Langlands parameters in 
$\SL_n (\CC)$. The properties (i), (iii) and (iv) hold by construction, while (ii) was verified in 
\cite[discussion after Conjecture 7.8]{AMS1}. 
It is also well-known that every supercuspidal representation of a Levi subgroup $M$ of 
$\GL_n (F)$ is generic \cite{GelfandKazhdan} and that it has a supercuspidal L-parameter. 
(Alternatively, the enhancement for any irreducible $M$-representation is trivial.)

\subsection{Inner twists of projective linear groups}

The representations of an inner twist of $\PGL_n (F)$ form a subcategory of the representations of the
associated inner twist of $\GL_n (F)$, and similarly for enhanced L-parameters. All the desired properties
of the LLC follow from those for inner twists of $\GL_n (F)$.

\subsection{Special linear groups}

The LLC for $G^\sharp = \SL_n (F)$ is constructed from that for $G = \GL_n (F)$, as in 
\cite{ABPS16, GelbartKnapp, HiragaSaito}. Let $\pi \in \Irr (\GL_n (F))$ and let $\pi^\sharp$ be an irreducible
constituent of $\Res^G_{G^\sharp} \pi$. Then the L-parameter $\phi_{\pi^\sharp} \in X_{\LG^\sharp}$ is 
obtained from $\phi_\pi \in X_{\LG}$ via the quotient map $\GL_n (\CC) \twoheadrightarrow \PGL_n (\CC)$.

Let $X^G (\pi)$ be the stabilizer in $\Hom (G / G^\sharp, \CC^\times)$ of $\pi \in \Irr (G)$. The group
$X^G (\pi)$ acts on $\pi$ by $G^\sharp$-intertwining operators, which are normalized using the Whittaker 
datum. There is a unique character $\epsilon_{\pi^\sharp} : X^G (\pi) \to \CC^\times$ such that
\[
\pi^\sharp = \Hom_{X^G (\pi)} (\epsilon_{\pi^\sharp}, \pi) .
\]
Furthermore $X^G (\pi)$ is naturally isomorphic to the component group $A_{\pi^\sharp} := \pi_0 \big( Z_{\PGL_n (\C)} (\phi_\pi) \big)$,
thus we can regard $\epsilon_{\pi^\sharp}$ as a character of $A_{\pi^\sharp}$. Therefore, 
\begin{equation}\label{eq:A.5}
(\phi_{\pi^\sharp}, \epsilon_{\pi^\sharp}) \text{ is the enhanced L-parameter of } \pi^\sharp .
\end{equation}
The LLC for Levi subgroups of $\SL_n (F)$ can be constructed in an analogous way, via restriction from
a Levi subgroup $L \subset G$ to $L \cap G^\sharp$.

The properties (i) and (iii) in  Theorem \ref{thm:A.4} hold for this correspondence, because
they hold for $\GL_n (F)$. For the Weyl compatibility, consider a Levi subgroup $M \subset G$, a supercuspidal
$\sigma \in \Irr (M)$ and an irreducible constituent $\sigma^\sharp$ of $\Res^M_{M \cap G^\sharp} \sigma$. Let 
$w \in \bW (G,M) \cong \bW (G^\sharp, M \cap G^\sharp)$. The group
\[
X^M (\sigma) := \mathrm{Stab}_{\Hom (M / M \cap G^\sharp, \CC^\times)}(\sigma)
\]
is isomorphic to $X^{w M w^{-1}}(w \cdot \sigma)$, and moreover we have
\[
w \cdot \sigma^\sharp \cong \Hom_{X^{w M w^{-1}}(w \cdot \sigma)} (w \cdot \epsilon_{\sigma^\sharp}, w \cdot \sigma) .
\]
By the Weyl compatibility for $\GL_n (F)$, we have 
\[
\phi_{w \cdot \sigma^\sharp} = \phi_{(w \cdot \sigma)^\sharp} = \Ad (w) \phi_{\sigma^\sharp} 
\quad \text{and} \quad \epsilon_{w \cdot \sigma^\sharp} = w \cdot \epsilon_{\sigma^\sharp} = 
\epsilon_{\sigma^\sharp} \circ \Ad (w)^{-1} .
\]

\begin{lemma}
The LLC for Levi subgroups of $\SL_n (F)$ from \eqref{eq:A.5} is compatible with cuspidal support maps.
\end{lemma}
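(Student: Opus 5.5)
We will reduce everything to the already established compatibility for $\GL_n(F)$, using the description \eqref{eq:A.5} of enhanced parameters for $\SL_n(F)$ via restriction. Let $G=\GL_n(F)$, $G^\sharp=\SL_n(F)$, and let $\pi^\sharp$ be an irreducible essentially square-integrable $G^\sharp$-representation. We choose an essentially square-integrable $\pi\in\Irr(G)$ with $\pi^\sharp\subset\Res^G_{G^\sharp}\pi$. Let $(M,\sigma)$ be a supercuspidal support of $\pi$. The first step is to check, by Frobenius reciprocity and the exactness of restriction commuting with parabolic induction ($\Res^G_{G^\sharp} i_P^G \sigma \cong i_{P\cap G^\sharp}^{G^\sharp}\Res^M_{M^\sharp}\sigma$, with $M^\sharp:=M\cap G^\sharp$), that the supercuspidal support of $\pi^\sharp$ is $(M^\sharp,\sigma^\sharp)$ for some constituent $\sigma^\sharp$ of $\Res^M_{M^\sharp}\sigma$. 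Moreover, the various choices of $\sigma^\sharp$ are permuted by $M$ (equivalently by the group $X^M(\sigma)$ of characters), and one specific constituent occurs.

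On the Galois side, the cuspidal support of $(\phi_\pi,\text{triv})$ for $\GL_n$ is $(\LM,\phi_\sigma,\text{triv})$ by the $\GL_n$ case. Projecting along $\GL_n(\CC)\to\PGL_n(\CC)$ shows that the unenhanced part of $\mathrm{Sc}(\phi_{\pi^\sharp},\epsilon_{\pi^\sharp})$ is $\phi_{\sigma^\sharp}$, viewed in the Levi of $\PGL_n(\CC)$ dual to $M^\sharp$. This holds because the AMS cuspidal support map is compatible with central isogenies and with restriction to neutral components, as in \cite[\S 5]{AMS1} and \cite[Corollary 2.4.4]{DillerySchwein}. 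The remaining task is to match enhancements. We must show that the cuspidal support of $(\phi_{\pi^\sharp},\epsilon_{\pi^\sharp})$ is $(\LM^\sharp,\phi_{\sigma^\sharp},\epsilon_{\sigma^\sharp})$, for the specific $\sigma^\sharp$ singled out in the first step.

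For this we plan to use the natural injection $X^G(\pi)\hookrightarrow X^M(\sigma)$, given by restricting characters. Under the isomorphisms $X^G(\pi)\cong A_{\pi^\sharp}$ and $X^M(\sigma)\cong A_{\sigma^\sharp}$, this injection is the map of component groups underlying the AMS cuspidal support construction. In that construction the enhancement of the cuspidal support is obtained from $\epsilon_{\pi^\sharp}$ by the generalized Springer correspondence, and the relevant group is abelian here. On the representation side, the Whittaker-normalized intertwining operators for $X^G(\pi)$ on $\pi\hookrightarrow i_P^G\sigma$ are induced from the normalized operators for $X^M(\sigma)$ on $\sigma$. This is where compatibility of Whittaker normalizations under parabolic induction enters, via the relation $r_{\overline P}^G\cW_G\cong\cW_M$. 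It follows that the $X^G(\pi)$-isotypic decomposition of $\pi$ is cut out by the restriction of the $X^M(\sigma)$-decomposition of $i_P^G\sigma$. Hence $\epsilon_{\pi^\sharp}$ is the restriction of $\epsilon_{\sigma^\sharp}$ along $A_{\pi^\sharp}\to A_{\sigma^\sharp}$, after adjusting by the canonical twisting character appearing in AMS. The Levi case is identical, with $L\cap G^\sharp$ in place of $G^\sharp$.

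The main obstacle is this last matching. We must verify that the AMS cuspidal support, restricted to these abelian component groups for $\SL_n$, is exactly restriction of characters. This means no nontrivial sign or quadratic-character twist appears, which is what Lusztig's generalized Springer correspondence could introduce. We also must verify that Whittaker normalizations are preserved. We would first try to settle the Springer-side statement using the known description of cuspidal enhanced parameters for $\SL_n$ in \cite[\S 6]{AMS1}, where the cuspidal support on these groups is computed explicitly. We would then handle the representation side by the multiplicity-one property of generic constituents.
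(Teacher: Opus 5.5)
Your route is the paper's: reduce to $\GL_n(F)$, match $X^M(\sigma)$ with $A_{\sigma^\sharp}$ and $X^G(\pi)$ with $A_{\pi^\sharp}$, and use that the Whittaker-normalized intertwining operators on $\pi$ are parabolically induced from those on $\sigma$. As written, however, the argument does not close.

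\textbf{The missing Galois-side step.} The decisive input is that the AMS cuspidal support of $(\phi_{\pi^\sharp},\epsilon_{\pi^\sharp})$ is $(\phi_{\sigma^\sharp},\epsilon_{\pi^\sharp}|_{A_{\sigma^\sharp}})$. In other words, the cuspidal support map just restricts characters along $A_{\sigma^\sharp}\hookrightarrow A_{\pi^\sharp}$, and the paper's proof rests on exactly this. You leave it as the ``main obstacle'' and even allow for ``adjusting by the canonical twisting character''. That hedge is not available: a nontrivial twist would make the lemma false for this correspondence, so you must show it does not occur.

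In the essentially square-integrable case you treat, this is a direct check, with component groups taken in $\PGL_n(\CC)$ as in \eqref{eq:A.6}. Let $\pi$ be the generalized Steinberg representation attached to a supercuspidal $\tau$ of $\GL_d(F)$, with $n=dm$. Then $\phi_\pi$ is $\phi_\tau$ tensored with the $m$-dimensional irreducible representation of $\SL_2(\CC)$. Moreover, $Z_{\PGL_n(\CC)}(\phi_\pi(W_F))$ is the direct product of two factors:
\begin{itemize}
\item $\PGL_m(\CC)$, acting on the multiplicity space;
\item the finite group $\Xi=\{\chi : \phi_\tau\otimes\chi\cong\phi_\tau\}$, represented by the elements $A_\chi\otimes 1$, where $A_\chi:\phi_\tau\to\phi_\tau\otimes\chi$.
\end{itemize}
The unipotent element is regular in $\PGL_m(\CC)$ and has connected centralizer there. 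Hence $A_{\pi^\sharp}=\Xi$. The generalized Springer correspondence for this product group is the ordinary Springer correspondence on the $\PGL_m$ factor times the identity on $\Irr(\Xi)$, and the cuspidal quasi-Levi is $T\times\Xi$. The same elements $A_\chi\otimes 1$ represent $A_{\sigma^\sharp}$, so no twist can occur. You need an argument of this kind, not a plan to look for one.

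\textbf{The comparison map is oriented the wrong way.} The natural inclusion is $X^M(\sigma)\subseteq X^G(\pi)$, matching $A_{\sigma^\sharp}\hookrightarrow A_{\pi^\sharp}$. The correct conclusion is $\epsilon_{\sigma^\sharp}=\epsilon_{\pi^\sharp}|_{X^M(\sigma)}$ for the constituent $\sigma^\sharp$ in the support, exactly as in the paper.

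Restriction of characters does identify $\Hom(G/G^\sharp,\CC^\times)$ with $\Hom(M/M^\sharp,\CC^\times)$, since both are characters of $F^\times$ via $\det$. But it does not carry $X^G(\pi)$ into $X^M(\sigma)$ in general. For example, take $\pi=i_B^{\GL_2}(\eta\otimes\eta\chi)$ with $\eta$ a character of $F^\times$ and $\chi$ quadratic. Then $X^G(\pi)=\{1,\chi\circ\det\}$, while $X^M(\sigma)=\{1\}$.

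For essentially square-integrable $\pi$ as above, $X^G(\pi)=X^M(\sigma)=\{\chi:\tau\otimes\chi\cong\tau\}$. So your version is harmless in the case you treat, and that case suffices for condition (ii) of Theorem \ref{thm:A.4}. You should state this equality explicitly, though. Your formulation also does not extend to arbitrary $\pi$, which the paper's proof covers.
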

\begin{proof}
Let $\pi$ and $\pi^\sharp$ be as above, and assume that Sc$(\pi) = (M,\sigma)$. Then the cuspidal support of
$\pi^\sharp$ is $(M \cap G^\sharp, \sigma^\sharp)$ for some constituents $\sigma^\sharp$ of
$\Res^M_{M \cap G^\sharp} \sigma$.

The cuspidal support map for L-parameters of $\GL_n (F)$-representations is easy to describe: the L-parameter 
$\phi_\sigma$ is obtained from $\phi_\pi$ by replacing the monodromy operator $N_\pi$ by 0.
The group $\hM$ centralizes $N_\pi$ and the inclusion $Z_{\hM} (\phi_\sigma) \hookrightarrow Z_{\hG}(\phi_\pi)$
induces an injection
\begin{equation}\label{eq:A.6}
A_{\sigma^\sharp} := \pi_0 \big( Z_{\hM / Z(\GL_n (\CC))} (\phi_{\sigma^\sharp}) \big) \hookrightarrow
A_{\pi^\sharp}  := \pi_0 \big( Z_{\PGL_n (\C)} (\phi_\pi) \big) .
\end{equation}
In this case, the cuspidal support map preserves the characters of $A_{\sigma^\sharp}$, thus 
Sc$(\phi_{\pi^\sharp}, \epsilon_{\pi^\sharp})$ can be represented by $\big( \phi_{\sigma^\sharp},
\epsilon_{\pi^\sharp} |_{A_{\sigma^\sharp}} \big)$.

On the other hand, the injection \eqref{eq:A.6} corresponds to an injection $X^M (\sigma) \hookrightarrow X^G (\pi)$.
For $x \in X^G (\pi)$, the $G^\sharp$-intertwining operator on $\pi$ is obtained from the 
$M \cap G^\sharp$-intertwining operator on $\sigma$ via parabolic induction. It follows that
\[
\mathrm{Sc}(\pi) \cong \big( M, \Hom_{X^M (\sigma)} (\epsilon_{\pi^\sharp} |_{X^M (\sigma)}, \sigma) \big),
\]
which has enhanced L-parameter $\big( \phi_{\sigma^\sharp}, \epsilon_{\pi^\sharp} |_{X^M (\sigma)} \big)$.
Thus the LLC for $\SL_n (F)$ preserves cuspidal supports. 

The same argument works when $\GL_n (F)$ is replaced by a Levi subgroup $L \supset M$ and 
$\SL_n (F)$ is replaced by $L \cap \SL_n (F)$.
\end{proof}

Every supercuspidal representation $\sigma^\sharp$ of $M \cap G^\sharp$ appears in the
restriction of a supercuspidal $M$-representation $\sigma$. Then $\sigma$ is generic and
$\phi_\sigma$ is supercuspidal, which implies that $\phi_{\sigma^\sharp}$ is supercuspidal. 
The normalization of the intertwining operators from $X^M (\sigma)$ via the 
Whittaker datum implies that
\[
\sigma^\sharp = \Hom_{X^M (\sigma)} (\epsilon_{\sigma^\sharp}, \sigma) 
\]
is generic if and only if $\epsilon_{\sigma^\sharp}$ is trivial.

\subsection{Quasi-split classical groups and their pure inner forms}

Here we consider the following as classical groups: unitary groups, symplectic groups,
special orthogonal groups and general spin groups. By a quasi-split classical group
we mean a classical group which is either split or defined by an order two automorphism
of the Dynkin diagram (and a quadratic extension of local fields).

A local Langlands correspondence for pure inner twists of quasi-split classical groups
is due to M\oe glin, see \cite{MoRe,MoTa}. The conditions (i)--(iv) in  Theorem \ref{thm:A.4} 
were proven in \cite{AMS4}. 
M\oe glin uses a Whittaker datum to pin down the correspondence, by requiring that for
any generic irreducible representation the corresponding L-parameter $\phi$ is enhanced with
the trivial representation of the component group $\pi_0 (Z_{\hG}(\phi))$. 
By \cite{AMS4} and Lemma \ref{lem:5.10}, generic supercuspidal representations have
supercuspidal L-parameters in M\oe glin's LLC and (v) holds.

\subsection{\texorpdfstring{$\mathbf{G_2}$}{G2}}

A local Langlands correspondence for $G_2 (F)$ was constructed in \cite{AubertXuG2}. There
(i), (ii) and (v) from Theorem \ref{thm:A.4} are verified. Conditions (iii) and (iv) from 
Theorem \ref{thm:A.4} are proven in \cite[\S 4]{AubertXuHecke}.

\subsection{Arbitrary tamely ramified reductive $p$-adic groups: nonsingular blocks}

In \cite{Kaletha}, a local Langlands correspondence for non-singular supercuspidal representations
was constructed. It seems likely that such representations account for almost all generic
supercuspidal representations of tamely ramified reductive $p$-adic groups, and for almost all
supercuspidal L-parameters. A local 
Langlands correspondence for all Bernstein blocks whose cuspidal supports are nonsingular
of depth zero was constructed in \cite{SolleveldXu2026}, and it satisfies all properties needed
in Theorem \ref{thm:A.4}, and thus defines a weak (generic) supercuspidal correspondence for
those Bernstein blocks. A generalization of \cite{SolleveldXu2026} to arbitrary depths
is work in progress.

\bibliographystyle{amsalpha}
\bibliography{bibfile}

\end{document}